\newcommand{\vertiii}[1]{{\left\vert\kern-0.25ex\left\vert\kern-0.25ex\left\vert #1 
    \right\vert\kern-0.25ex\right\vert\kern-0.25ex\right\vert}}
\theoremstyle{plain}
\newtheorem{teorema}{Theorem}[section]
\newtheorem{proposizione}[teorema]{Proposition}
\newtheorem{lemma}[teorema]{Lemma}
\newtheorem{corollario}[teorema]{Corollary}
\newtheorem*{theorem*}{Theorem}
\theoremstyle{definition}
\newtheorem{definizione}{Definition}[section]
\theoremstyle{remark}
\newtheorem{osservazione}{Remark}[section]
\newcommand{\Tan}{\mathrm{Tan}}
\newcommand{\N}{\mathbb{N}}
\newcommand{\R}{\mathbb{R}}
\newcommand{\G}{Gr}
\newcommand{\res}
\newcounter{const}
\newcommand{\newC}{\refstepcounter{const}\ensuremath{C_{\theconst}}}
\newcommand{\oldC}[1]{\ensuremath{C_{\ref{#1}}}}
\newcounter{eps}
\newcommand{\newep}{\refstepcounter{eps}\ensuremath{\varepsilon_{\theeps}}}
\newcommand{\oldep}[1]{\ensuremath{\varepsilon_{\ref{#1}}}}
\DeclareMathOperator*{\lip}{Lip_1^+}
\DeclareMathOperator*{\diam}{diam}
\DeclareMathOperator*{\dist}{dist}
\title{On rectifiable measures in Carnot groups: existence of density} 
\author{Gioacchino Antonelli\textsuperscript{*} and Andrea Merlo\textsuperscript{**}}
\date{}
\begin{document}

\renewcommand{\sectionmark}[1]{\markright{\spacedlowsmallcaps{#1}}} 
\lehead{\mbox{\llap{\small\thepage\kern1em\color{halfgray} \vline}\color{halfgray}\hspace{0.5em}\rightmark\hfil}} 
\pagestyle{scrheadings}
\maketitle 
\setcounter{tocdepth}{2}
\paragraph*{Abstract}
In this paper we start a detailed study of a new notion of rectifiability in Carnot groups: we say that a Radon measure is $\mathscr{P}_h$-rectifiable, for $h\in\mathbb N$, if it has positive $h$-lower density and finite $h$-upper density almost everywhere, and, at almost every point, it admits a unique tangent measure up to multiples. 

First, we compare  $\mathscr{P}_h$-rectifiability with other notions of rectifiability previously known in the literature in the setting of Carnot groups, and we prove that it is strictly weaker than them. Second, we prove several structure properties of $\mathscr{P}_h$-rectifiable measures. Namely, we prove that the support of a $\mathscr P_h$-rectifiabile measure is almost everywhere covered by sets satisfying a cone-like property, and in the particular case of $\mathscr P_h$-rectifiabile measures with complemented tangents, we show that they are supported on the union of intrinsically Lipschitz and differentiable graphs. Such a covering property is used to prove the main result of this paper: we show that a $\mathscr{P}_h$-rectifiable measure has almost everywhere positive and finite $h$-density whenever the tangents admit at least one complementary subgroup. 

{\let\thefootnote\relax\footnotetext{* \textit{Scuola Normale Superiore, Piazza dei Cavalieri, 7, 56126 Pisa, Italy,}}}
{\let\thefootnote\relax\footnotetext{** \textit{Univerité Paris-Saclay, 307 Rue Michel Magat Bâtiment, 91400 Orsay, France.}}}

\paragraph*{Keywords} Carnot groups, Rectifiability, Rectifiable measure, Density, intrinisic Lipschitz graph, intrinsic differentiable graph.
\paragraph*{MSC (2010)} 53C17, 22E25, 28A75, 49Q15, 26A16.

\section{Introduction}

In Euclidean spaces a Radon measure $\phi$ is said to be $k$-\emph{rectifiable} if it is absolutely continuous with respect to the $k$-dimensional Hausdorff measure $\mathcal{H}^k$ and it is supported on a countable union of $k$-dimensional Lipschitz surfaces, for a reference see \cite[\S 3.2.14]{Federer1996GeometricTheory}. This notion of regularity for a measure is an established, thoroughly studied and well understood concept and its versatility is twofold. On the one hand it can be effortlessly extended to general metric spaces. On the other, it can be shown, at least in Euclidean spaces, that the global regularity properties arise as a consequence of the local structure of the measure, as it is clear from the following classical proposition, see e.g., \cite[Theorem 16.7]{Mattila1995GeometrySpaces}.

\begin{proposizione}\label{prop:loctoglob}
Assume $\phi$ is a Radon measure on $\R^n$ and $k$ is a natural number such that $1\leq k\leq n$. Then, $\phi$ is a $k$-rectifiable measure if and only if  for $\phi$-almost every $x\in\mathbb{R}^n$ we have
\begin{itemize}
     \item[(i)]$0<\Theta^k_*(\phi,x)\leq\Theta^{k,*}(\phi,x)<+\infty$,
    \item[(ii)]$\mathrm{Tan}_k(\phi,x) \subseteq \{\lambda\mathcal{H}^k\llcorner V:\lambda>0,\,\,\text{and $V$ is a $k$-dimensional vector subspace}\}$,
\end{itemize}
where $\Theta^k_*(\phi,x)$ and $\Theta^{k,*}(\phi,x)$ are, respectively, the lower and the upper $k$-density of $\phi$ at $x$, see \cref{def:densities}, and $\mathrm{Tan}_k(\phi,x)$ is the set of $k$-tangent measures to $\phi$ at $x$, see \cref{def:TangentMeasure}, while $\mathcal{H}^k$ is the Hausdorff measure.
\end{proposizione}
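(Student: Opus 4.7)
The proposition is a biconditional, so the plan splits into an easy direction (rectifiable $\Rightarrow$ (i) and (ii)) and a hard one (the converse). I would treat them separately.

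For the forward implication, suppose $\phi$ is $k$-rectifiable, so $\phi=\theta\,\mathcal{H}^k\llcorner E$ with $\theta\in L^1_{\mathrm{loc}}(\mathcal{H}^k\llcorner E)$ and $E\subseteq \bigcup_i \Sigma_i$, each $\Sigma_i$ being a $k$-dimensional Lipschitz image. The area formula and the classical result that $k$-dimensional Lipschitz images have a (classical) approximate tangent $k$-plane $V_x$ at $\mathcal{H}^k$-a.e. point yield that $\Theta^k(\mathcal{H}^k\llcorner E,x)=1$ a.e. Combining this with the Lebesgue differentiation theorem applied to the density $\theta$ gives (i) with $\Theta^k_\ast(\phi,x)=\Theta^{k,\ast}(\phi,x)=\theta(x)\in(0,\infty)$. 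The same blow-up then identifies the unique tangent measure as $\theta(x)\mathcal{H}^k\llcorner V_x$, yielding (ii).

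For the converse, the plan is to show that the combined hypotheses force the support to be covered, up to $\phi$-null sets, by $k$-dimensional Lipschitz graphs; absolute continuity $\phi\ll\mathcal{H}^k$ then follows from the upper density bound via a standard comparison. I would first localize: by Egorov and the finiteness of $\Theta^{k,\ast}$, one can restrict to a compact set $K$ of large $\phi$-measure on which (a) the density ratios $\phi(B(x,r))/r^k$ are bounded above and below uniformly in $x\in K$ for all $r\le r_0$, and (b) for every $\varepsilon>0$ the weak-$\ast$ distance of the rescaled measure $T_{x,r\,\sharp}\phi/r^k$ to the cone $\{\lambda \mathcal{H}^k\llcorner V:\lambda>0,\ V\in\mathrm{Gr}(k,n)\}$ is at most $\varepsilon$ for all $r\le r_0(\varepsilon)$ and all $x\in K$. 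This uniformization is the key to passing from a qualitative, pointwise statement about tangents to a quantitative, uniform one.

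Once this uniform ``flatness'' of the blow-ups is in hand, the core geometric step is to show that the plane $V_x$ selected by the tangents actually controls the support at small scales: for $x\in K$ and $r\le r_0(\varepsilon)$, $\mathrm{supp}(\phi)\cap B(x,r)\subseteq \{y:\mathrm{dist}(y-x,V_x)\le \varepsilon r\}$. This is the Marstrand--Mattila type criterion, and it follows from a contradiction argument combined with the density lower bound: if a sizable portion of $\mathrm{supp}(\phi)$ stuck out of a thin slab around $V_x$, one could extract a subsequential tangent that is not supported on $V_x$, contradicting (ii). I expect this \emph{uniform slab containment} to be the main technical obstacle, since it demands matching the scale on which tangents approximate $\phi$ with the scale on which the Grassmannian-valued map $x\mapsto V_x$ is stable.

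Finally, with uniform slab containment secured, standard cone/graph arguments apply: fix a direction, partition $K$ into countably many pieces where the tangent plane $V_x$ lies in a small Grassmannian neighborhood of a fixed $V$, and verify a cone condition with axis $V$ on each piece. Whitney's extension or a direct graph construction then expresses each piece as a Lipschitz graph over $V$, so $\phi$ is supported on a countable union of $k$-dimensional Lipschitz surfaces; together with $\phi\ll\mathcal{H}^k$, this gives $k$-rectifiability and completes the proof.
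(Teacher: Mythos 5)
The paper does not actually prove this proposition: it is quoted as a classical fact with a pointer to Mattila's book (Theorem 16.7 there), so your attempt can only be measured against that classical argument. Your forward implication is fine and standard: Radon--Nikodym on the $\sigma$-finite carrier, the density-one theorem for rectifiable sets, and Lebesgue differentiation give (i), and the blow-up identifies the tangent as $\theta(x)\mathcal{H}^k\llcorner V_x$, giving (ii).

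The converse, however, has a genuine gap: you treat hypothesis (ii) as if it supplied a single plane $V_x$ at $\phi$-almost every $x$ (``the plane $V_x$ selected by the tangents''), but (ii) only says that every element of $\mathrm{Tan}_k(\phi,x)$ is of the form $\lambda\mathcal{H}^k\llcorner V$ for \emph{some} $k$-plane $V$, which may depend on the sequence $r_i\to 0$. What your compactness/contradiction argument actually yields is a scale-dependent containment: for every $\varepsilon>0$ and all $r\le r_0(\varepsilon)$ there is a plane $V_{x,r}$ with $K\cap B(x,r)$ contained in the $\varepsilon r$-slab around $x+V_{x,r}$. Since $V_{x,r}$ may rotate as $r\downarrow 0$, this does not give the cone condition $\mathrm{dist}(y-x,V_x)\le \varepsilon\,|y-x|$ with a fixed axis, and the final step --- partitioning $K$ into pieces where the tangent plane lies near a fixed $V$ and writing each piece as a Lipschitz graph --- collapses. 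Almost-everywhere uniqueness of the tangent plane under (i)+(ii) is not available as a preliminary soft fact (connectivity of $\mathrm{Tan}_k(\phi,x)$ only says the set of admissible planes is a continuum in the Grassmannian); it is part of the conclusion one is trying to prove. The classical proof resolves this by verifying the hypotheses of the Marstrand--Mattila rectifiability criterion (Theorem 16.2 in Mattila's book), which is designed precisely to tolerate planes varying with the scale and whose proof is substantially more delicate than a cone/graph construction; note also that this criterion requires the two-sided control that the support actually fills the slab, which does follow from $\lambda>0$ in (ii) and is the analogue of the role played by \cref{prop:bil2} in the Carnot-group setting of this paper. Either invoke that criterion or supply an argument for the a.e.\ uniqueness of $V_x$; as written, the deduction is incomplete.
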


As mentioned above one can define rectifiable measures in arbitrary metric spaces: however one quickly understands that there are some limitations to what the classical rectifiability can achieve.

The first example of this is the curve in $L^1([0,1])$ that at each $t\in [0,1]$ assigns the indicator function of the interval $[0,t]$. This curve is Lipschitz continuous, however it fails to be Fr\'echet differentiable at every point of $[0,1]$, thus not admitting a tangent. This shows that we cannot expect anything like \cref{prop:loctoglob} to hold in infinite dimension.

For the second example we need to briefly introduce Carnot groups, see \cref{sec:Prel} for details. A Carnot group $\mathbb{G}$ is a simply connected nilpotent Lie group, whose Lie algebra is stratified and generated by its first layer. Carnot groups are a generalization of Euclidean spaces, and we remark that (quotients of) Carnot groups arise as the infinitesimal models of sub-Riemannian manifolds and their geometry, even at an infinitesimal scale, might be very different from the Euclidean one. We endow $\mathbb G$ with an arbitrary left-invariant homogeneous distance $d$, and we recall that any two of them are bi-Lipschitz equivalent. These groups have finite Hausdorff dimension, that is commonly denoted by $Q$, and any Lipschitz map $f:\R^{Q-1}\to(\mathbb{G},d)$ has $\mathcal{H}^{Q-1}$-null image, unless $\mathbb G$ is an Euclidean space, see for instance \cite{AK00} and \cite[Theorem 1.1]{MagnaniUnrect}. This from an Euclidean perspective means that there are no Lipschitz regular parametrized one-codimensional surfaces inside $(\mathbb{G},d)$, unless $\mathbb G$ is an Euclidean space. However, as shown in the fundational papers \cite{Serapioni2001RectifiabilityGroup, step2}, in Carnot groups there is a well defined notion of finite perimeter set and in Carnot groups of step 2 their reduced boundary can be covered up to $\mathcal{H}^{Q-1}$-negligible sets by countably many \textbf{intrinsic} $C^1$-regular hypersurfaces, $C^1_{\mathrm{H}}$ hypersurfaces from now on, see \cite[Definition 6.4]{Serapioni2001RectifiabilityGroup}.
The success of the approach attempted in \cite{Serapioni2001RectifiabilityGroup} has started an effort to study Geometric Measure Theory in sub-Riemannian Carnot groups, and in particular to study various notion(s) of rectifiability, see, e.g., \cite{step2, FSSC03b, FSSC06, FSSC07, MagnaniJEMS, MatSerSC, FSSC11, FMS14, DLDMV19, LDM20, JNGV20, Merlo1, MarstrandMattila20, IMM20, DDFO20, JNGV21Area}.
The big effort represented by the aformentioned papers in trying to understand rectifiability in Carnot groups has given rise to a multiplication of definitions, each one suiting some particular cases.

As we shall see in the subsequent paragraphs, not only one could consider our approach reversed with respect to the ones known in the literature but it also has a twofold advantage. On the one hand the definition of $\mathscr{P}$-rectifiable measure is natural and intrinsic with respect to the (homogeneous) structure of Carnot groups and it is equivalent to the usual one in the Euclidean setting; on the other hand we do not have to handle the problem of distinguishing, in the definition, between the low-dimensional and the low-codimensional rectifiability. 

Nevertheless, for arbitrary Carnot groups, we prove non-trivial structure results for rectifiable measures, see \cref{mainresultsintro}. These structure results will be used to prove the main Theorem of this paper, see \cref{coroll:DENSITYIntro}. Moreover, the study of the structure properties proved in \cref{mainresultsintro} is completed in the subsequent paper \cite{antonelli2020rectifiable2}. 

In a companion paper \cite{antonelli2020rectifiableB}, which roughly corresponds to an elaboration of Section 5 of the Preprint \cite{antonelli2020rectifiable}, we shall prove a Marstrand--Mattila type rectifiability criterion for $\mathscr{P}$-rectifiable measures that in turn will lead to the proof of the one-dimensional Preiss's theorem for the first Heisenberg group $\mathbb H^1$ endowed with the Koranyi distance. 

\vspace{0.2cm}
\textit{Additional remark.} The present work consists of an elaboration of Sections 2, 3, 4, and 6 of the Preprint \cite{antonelli2020rectifiable}. This is the first of two companion papers derived from \cite{antonelli2020rectifiable}. The second one is an elaboration of Sections 2, and 5 of \cite{antonelli2020rectifiable}. We stress that the results in Sections 2, 3, 4, and 6 of \cite{antonelli2020rectifiable} do not use the results in Section 5 of \cite{antonelli2020rectifiable}. As a result this paper can be read fully and independently from its companion paper.

\subsection{$\mathscr{P}$-rectifiable measures}\label{sub:P}
In this paper we study structure results in the class of $\mathscr{P}$-rectifiable measures, which have been introduced in \cite[Definition 3.1 \& Definition 3.2]{MarstrandMattila20}. Let $\mathbb G$ be a Carnot group of Hausdorff dimension $Q$.

\begin{definizione}[$\mathscr{P}$-rectifiable measures]\label{def:PhRectifiableMeasureINTRO}
Let $1\leq h\leq Q$ be a natural number. A Radon measure $\phi$ on $\mathbb G$ is said to be {\em $\mathscr{P}_h$-rectifiable} if for $\phi$-almost every $x\in \mathbb{G}$ we have \begin{itemize}
    \item[(i)]$0<\Theta^h_*(\phi,x)\leq\Theta^{h,*}(\phi,x)<+\infty$,
    \item[(\hypertarget{due}{ii})]$\mathrm{Tan}_h(\phi,x) \subseteq \{\lambda\mathcal{S}^h\llcorner \mathbb V(x):\lambda\geq 0\}$, where $\mathbb V(x)$ is a homogeneous subgroup of $\mathbb G$ of Hausdorff dimension $h$,
\end{itemize}
where $\Theta^h_*(\phi,x)$ and $\Theta^{h,*}(\phi,x)$ are, respectively, the lower and the upper $h$-density of $\phi$ at $x$, see \cref{def:densities}, $\mathrm{Tan}_h(\phi,x)$ is the set of $h$-tangent measures to $\phi$ at $x$, see \cref{def:TangentMeasure}, and $\mathcal{S}^h$ is the spherical Hausdorff measure of dimension $h$, see \cref{def:HausdorffMEasure}. Furthermore, we say that $\phi$ is {\em $\mathscr{P}_h^*$-rectifiable} if (\hyperlink{due}{ii})  is replaced with the weaker
\begin{equation*}
    \mathrm{(ii)^*}\,\, \mathrm{Tan}_h(\phi,x) \subseteq \{\lambda \mathcal{S}^h\llcorner \mathbb V: ~\lambda \geq 0, \text{ $\mathbb V$ is a homogeneous } \text{subgroup}\text{  of $\mathbb G$ of Hausdorff dimension $h$}\}.
\end{equation*}
If we impose more regularity on the tangents we can define different subclasses of $\mathscr{P}$-rectifiable or $\mathscr{P}^*$-rectifiable measures, see \cref{def:SubclassesPh} for details. We notice that, a posteriori, in the aformentioned definitions we can and will restrict to $\lambda>0$, see \cref{rem:AboutLambda=0}.
\end{definizione}

The definition of $\mathscr{P}$-rectifiable measure is natural in the setting of Carnot groups. Indeed, we have on $\mathbb G$ a family of dilations $\{\delta_{\lambda}\}_{\lambda>0}$, see \cref{sec:Prel}, that we can use to give a good definition of blow-up of a measure. Hence we ask, for a measure to be rectifiable, that the tangents are \textbf{flat}. The natural class of flat spaces, i.e., the analogous of vector subspaces of the Euclidean space, seems to be the class of homogeneous subgroups of $\mathbb G$. This latter assertion is suggested also from the result in \cite[Theorem 3.2]{Mattila2005MeasuresGroups} according to which on every locally compact group $G$ endowed with dilations and isometric left translations, if a Radon measure $\mu$ has a unique (up to multiplicative constants) tangent $\mu$-almost everywhere then this tangent is $\mu$-almost everywhere (up to multiplicative constants) the left Haar measure on a closed dilation-invariant subgroup of $G$. As a consequence, in the definition of $\mathscr{P}_h$-rectifiable measure we can equivalently substitute item (ii) of \cref{def:PhRectifiableMeasureINTRO} with the weaker requirement
$$
\mathrm{(ii)'} \,\, \mathrm{Tan}_h(\phi,x)\subseteq \{\lambda\nu_x:\lambda>0\},\,\,\text{where $\nu_x$ is a Radon measure on $\mathbb G$}.
$$
Moreover, we stress that if a metric group is locally compact, isometrically homogeneous and admits one dilation, as it is for the class of metric group studied in \cite{Mattila2005MeasuresGroups}, and moreover the distance is geodesic, then it is a sub-Finsler Carnot group, see \cite[Theorem 1.1]{EnricoCarnot}. 

As already mentioned, according to one of the approaches to rectifiability in Carnot groups, the good parametrizing objects for the notion of rectifiability are \textbf{$C^1_{\mathrm{H}}$-regular surfaces with complemented tangents} in $\mathbb G$, i.e., sets that are locally the zero-level sets of $C^1_{\mathrm H}$ functions $f$ - see \cref{def:C1h} - with surjective Pansu differential $df$, and such that $\mathrm{Ker}(df)$ admits a complementary subgroup in $\mathbb G$. This approach has been taken to its utmost level of generality through the works \cite{MagnaniJEMS, MagnaniTwoardsDiff, JNGV20}. In particular in \cite[Definition 2.18]{JNGV20} the authors give the most general, and available up to now, definition of $(\mathbb G,\mathbb G')$-rectifiable sets, see \cref{def:C1Hmanifold} and \cref{def:GG'Rect}, and they prove area and coarea formulae within this class of rectifiable sets. We stress that an improvement of the area formula in \cite{JNGV20} is obtained by the two authors of this work in \cite[Theorem 1.3]{antonelli2020rectifiable2}. Related results are in \cite{CM20}.

We remark that our definition of rectifiability is strictly weaker than the one in \cite{JNGV20}, see \cref{prop:GG'IsPh} and \cref{oss:LackOfGenerality}. Moreover for discussions on the converse of the following \cref{prop:GG'Intro} we refer the reader to \cref{rem:C1GG'PhQuandoPossono}. We stress that, as a result of the subsequent work \cite[Corollary 5.3]{antonelli2020rectifiable2}, at least in the co-horizontal setting, the notion of $\mathscr{P}$-rectifiable measure and the notion of rectifiability given in terms of $(\mathbb G,\mathbb G')$-rectifiable sets coincide.

\begin{proposizione}\label{prop:GG'Intro}
Let us fix $\mathbb G$ and $\mathbb G'$ two arbitrary Carnot groups of homogeneous dimensions $Q$ and $Q'$ respectively. Let us take $\Sigma\subseteq \mathbb G$ a $(\mathbb G,\mathbb G')$-rectifiable set. Then $\mathcal{S}^{Q-Q'}\llcorner \Sigma$ is a $\mathscr{P}_{Q-Q'}$-rectifiable measure with complemented tangents. Moreover, there exists $\mathbb G$ a Carnot group, $\Sigma\subseteq \mathbb G$, and $1\leq h\leq Q$ such that $\mathcal{S}^h\llcorner \Sigma$ is a $\mathscr{P}_h$-rectifiable measure and, for every Carnot group $\mathbb G'$, $\Sigma$ \textbf{is not} $(\mathbb G,\mathbb G')$-rectifiable.
\end{proposizione}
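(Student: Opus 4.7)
The statement splits into two independent parts. For the forward implication, I would use the local graph structure of $(\mathbb G,\mathbb G')$-rectifiable sets together with a blow-up argument; for the counterexample, I would exhibit a homogeneous subgroup admitting no complementary subgroup and take $\Sigma$ to be that subgroup itself.

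For the forward part, up to an $\mathcal S^{Q-Q'}$-negligible set, $\Sigma$ is a countable union of pieces $\Sigma_i$, each a $C^1_{\mathrm H}$-submanifold, i.e., locally the zero-set of a $C^1_{\mathrm H}$ function $f_i:\Omega_i\to\mathbb G'$ with surjective Pansu differential and with $\mathbb V_i(x):=\ker df_i(x)$ admitting a complementary homogeneous subgroup for every $x\in\Sigma_i$. The argument has two steps. First, for a density point $x\in\Sigma_i$ the intrinsic graph chart gives two-sided bounds $c\, r^{Q-Q'}\leq\mathcal S^{Q-Q'}(\Sigma_i\cap B(x,r))\leq C\, r^{Q-Q'}$ for small $r$, which yields both the positivity of the lower density and the finiteness of the upper density at $\mathcal S^{Q-Q'}$-almost every point of $\Sigma$. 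Second, the Pansu differentiability of $f_i$ at $x$ makes $\delta^{\mathbb G'}_{1/r}\circ f_i\circ\tau_x\circ\delta_r$ converge locally uniformly to the Carnot morphism $df_i(x)$, so the rescaled sets $\delta_{1/r}(\tau_{x^{-1}}\Sigma_i)$ converge to $\mathbb V_i(x)$ in the local Hausdorff sense. Combined with the density bounds and the homogeneity of the limit, the rescaled measures converge weakly to a positive multiple of $\mathcal S^{Q-Q'}\llcorner\mathbb V_i(x)$, giving (ii); since $\mathbb V_i(x)$ is complemented by the definition of $(\mathbb G,\mathbb G')$-rectifiability, one obtains a $\mathscr P_{Q-Q'}$-rectifiable measure with complemented tangents.

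For the counterexample, let $\mathbb E$ be the Engel group, whose Lie algebra is $\mathfrak e=V_1\oplus V_2\oplus V_3$ with $V_1=\mathrm{span}(X_1,X_2)$, $V_2=\mathrm{span}(X_3)$, $V_3=\mathrm{span}(X_4)$ and nontrivial brackets $[X_1,X_2]=X_3$, $[X_1,X_3]=X_4$, so that $Q=7$. Set $\mathfrak h:=\mathrm{span}(X_2,X_4)$. A Jacobi computation gives $[X_2,X_4]=[X_2,[X_1,X_3]]=[[X_2,X_1],X_3]=[-X_3,X_3]=0$, so $\mathfrak h$ is an abelian graded subalgebra, $\mathbb V:=\exp(\mathfrak h)$ is a homogeneous subgroup, and its Hausdorff dimension equals $1+3=4$. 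Take $\Sigma:=\mathbb V$ and $h:=4$. Because left translations by elements of $\mathbb V$ and the dilations $\delta_r$ preserve $\mathbb V$, the measure $\mathcal S^4\llcorner\mathbb V$ is its own tangent up to multiples at every $x\in\mathbb V$, hence it is $\mathscr P_4$-rectifiable. On the other hand, any graded complement $\mathfrak k$ of $\mathfrak h$ in $\mathfrak e$ must have Hausdorff weight $3$; the grading constraints force $\mathfrak k\cap V_1=\mathrm{span}(X_1+\alpha X_2)$ for some $\alpha\in\R$, $\mathfrak k\cap V_2=\mathrm{span}(X_3)$, $\mathfrak k\cap V_3=\{0\}$; but then $[X_1+\alpha X_2,X_3]=X_4\notin\mathfrak k$, so $\mathfrak k$ fails to be a subalgebra. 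Thus $\mathbb V$ has no complementary homogeneous subgroup, and consequently $\Sigma$ cannot be $(\mathbb E,\mathbb G')$-rectifiable for any Carnot group $\mathbb G'$, because $(\mathbb E,\mathbb G')$-rectifiability would force the tangent subgroup at $\mathcal S^4$-a.e.\ point to be the kernel of a surjective Pansu differential, hence complemented.

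The main obstacle is the blow-up step in the forward direction: upgrading the Hausdorff convergence of the rescaled level sets of $f_i$, together with the two-sided density bounds, into the weak convergence of the rescaled measures to a positive multiple of $\mathcal S^{Q-Q'}\llcorner\mathbb V_i(x)$. The standard way is to work through the intrinsic-graph parametrization of $\Sigma_i$ and check that the intrinsic Jacobian is continuous at $x$, so that the area density in the rescaled chart tends to a constant. The linear-algebra check for the counterexample is elementary once the right non-complemented subgroup is identified.
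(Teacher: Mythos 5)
Your proposal is correct; the two halves compare differently with what the paper does. For the forward implication your route is essentially the paper's (\cref{prop:GG'IsPh}): reduce to a single level set of a $C^1_{\mathrm H}$ function, get two-sided density bounds, blow up to $\mathcal{S}^{Q-Q'}\llcorner\mathrm{Ker}(df_p)$, and note that the kernel is complemented by definition. The difference is that the paper outsources both analytic facts to \cite{JNGV20} (Corollary 3.6 for the densities, Lemma 3.4 for the weak convergence of the rescaled measures), while you sketch a self-contained derivation. One warning there: as literally written, ``Hausdorff convergence of $\delta_{1/r}(x^{-1}\Sigma_i)$ to $\mathbb V_i(x)$ plus two-sided density bounds'' only forces subsequential weak limits to be measures supported on $\mathbb V_i(x)$ and \emph{comparable} to $\mathcal{S}^{Q-Q'}\llcorner\mathbb V_i(x)$, whereas item (ii) of \cref{def:PhRectifiableMeasure} requires an exact constant multiple; the fix you name (continuity of the intrinsic Jacobian in the graph chart, i.e.\ the area formula) is precisely the content of the results the paper cites, so the ``obstacle'' you flag is the whole citation, not a side issue. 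Also keep in mind that passing from the pieces $\Sigma_i$ to $\Sigma$ itself genuinely requires the locality of tangents (\cref{prop:LocalityOfTangent}) together with \cref{prop:Lebesuge}, since $\Sigma\cap\Sigma_i$ is only a positive-measure subset of $\Sigma_i$.

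For the counterexample you take a genuinely different route. The paper (\cref{oss:LackOfGenerality}) uses a horizontal line $L$ in $\mathbb H^1$, which \emph{is} complemented — so $\mathcal{S}^1\llcorner L$ is even $\mathscr{P}_1^c$-rectifiable — but is not normal, and kernels of surjective Pansu differentials are normal. You instead take $\mathbb V=\exp(\mathrm{span}(X_2,X_4))$ in the Engel group, which admits no complementary subgroup at all; your layer-by-layer argument is correct, since any graded complement would have to contain $X_3$ and some $X_1+\alpha X_2$ and hence $X_4$. Both obstructions are legitimate, and each example says something the other does not: the paper's shows that even measures in the class $\mathscr{P}_h^c$ (for which the main density theorem of the paper holds) need not arise from $(\mathbb G,\mathbb G')$-rectifiable sets, while yours shows that the tangent of a $\mathscr{P}_h$-rectifiable measure can fail to be complemented altogether. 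In both cases the non-rectifiability can be closed off even more cheaply at the relevant codimension: the target would have to have homogeneous dimension $3$, hence be $\mathbb R^3$, and no homogeneous homomorphism from $\mathbb H^1$ or from the Engel group onto $\mathbb R^3$ exists because such maps factor through the two-dimensional abelianization; this avoids having to run the tangent-identification argument on the positive-measure overlaps.
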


Let us stress that the second part of \cref{prop:GG'Intro} is not surprising. Indeed, the approach to rectifiability described above and used in \cite{JNGV20} is selecting rectifiable sets whose tangents are \textbf{complemented normal} subgroups of $\mathbb G$, see \cite[Section 2.5]{JNGV20} for a more detailed discussion. This can be easily understood if one thinks that, according to this approach to rectifiability, the parametrizing class of objects is given by $C^1_{\mathrm{H}}$-regular surfaces $\Sigma$ with complemented tangents $\mathrm{Ker}(df_p)$ at $p\in\Sigma$, which are complemented (and normal) subgroups.

In some sense we could say that the approach of \cite{JNGV20} is covering, in the utmost generality known up to now, the case of low-codimensional rectifiable sets in a Carnot group $\mathbb G$. It has been clear since the works \cite{FSSC07, MatSerSC} that, already in the Heisenberg groups $\mathbb H^n$, one should approach the low-dimensional rectifiability in a different way with respect to the low-codimensional one. Indeed, in the low-dimensional case in $\mathbb H^n$, the authors in \cite{FSSC07, MatSerSC} choose as a parametrizing class of objects the images of $C^1_{\mathrm{H}}$-regular (or Lipschitz-regular) functions from subsets of $\mathbb R^d$ to $\mathbb H^n$, with $1\leq d\leq n$, see \cite[Definition 3.1 \& Definition 3.2]{FSSC07}, and \cite[Definition 2.10 and Definition 3.13]{MatSerSC}. 

The bridge between the definition of $\mathscr{P}$-rectifiability and the ones disscused above  is done in \cite{MatSerSC} \textbf{in the setting of Heisenberg groups} and in \cite{IMM20} in arbitrary homogeneous groups but in the case of \textbf{horizontal tangents}. Let us stress that the result in \cite[(i)$\Leftrightarrow$(iv) of Theorem 3.15]{MatSerSC} shows that in the Heisenberg groups the $\mathscr{P}$-rectifiability with tangents that are vertical subgroups is equivalent to the rectifiability given in terms of $C^1_{\mathrm H}$-regular surfaces. Moreover \cite[(i)$\Leftrightarrow$(iv) of Theorem 3.14]{MatSerSC} shows that in the Heisenberg groups the $\mathscr{P}$-rectifiability with tangents that are horizontal subgroups is equivalent to the rectifiability given in terms of Lipschitz-regular images. 

Moreover, very recently, in \cite[Theorem 1.1]{IMM20}, the authors prove a generalization of \cite[Theorem 3.14]{MatSerSC} in arbitrary homogeneous groups. Namely they prove that in a homogeneous group the $k$-rectifiability of a set in the sense of Federer can be characterized with the fact that the tangent measures to the set are horizontal subgroups, or equivalently with the fact that there exists an approximate tangent plane that is a horizontal subgroup almost everywhere. In our setting this implies that the $\mathscr{P}$-rectifiability with tangents that are horizontal subgroups is equivalent to the rectifiability given in terms of Lipschitz-regular images, which is Federer's one. We observe that in the subsequent paper \cite{antonelli2020rectifiable2} we shall exploit the results proved in this paper and we shall further develop the theory of $\mathscr{P}$-rectifiable measures thus obtaining generalizations of \cite[Theorem 3.14 and 3.15]{MatSerSC} in arbitrary Carnot groups and in all dimensions. See the introduction of \cite{antonelli2020rectifiable2}, and \cite[Theorem 1.1]{antonelli2020rectifiable2}. For results similar to the ones of \cite{MatSerSC, IMM20, antonelli2020rectifiable2} but in the different setting of the parabolic $\mathbb R^n$ and in all the codimensions, we point out the recent \cite{MattilaParabolic}.

We stress that the previous results leave open the challenging question of understanding what is the precise structure of a measure $\phi$ on $\mathbb H^1$ such that the tangents are $\phi$-almost everywhere the vertical line. 

\subsection{Results}\label{mainresultsintro}

The main contribution of this paper is the proof of the fact that a $\mathscr{P}_h$-rectifiable measure with complemented tangents has density, see \cref{coroll:DENSITY}, and \cref{prop:density} for the last part of the following statement. We recall that when we say that a homogeneous subgroup $\mathbb V$ of a Carnot group $\mathbb G$ {\em admits a complementary subgroup}, we mean that there exists a homogeneous subgroup $\mathbb L$ such that $\mathbb G=\mathbb V\cdot \mathbb L$ and $\mathbb V\cap\mathbb L=\{0\}$. 

\begin{teorema}[Existence of the density]\label{coroll:DENSITYIntro}
Let $\phi$ be a $\mathscr{P}_h$-rectifiable measure \textbf{with complemented tangents} on $\mathbb G$, and assume $d$ is a homogeneous left-invariant metric on $\mathbb{G}$. Let $B(x,r)$ be the closed metric ball relative to $d$ of centre $x$ and radius $r$. Then, for $\phi$-almost every $x\in\mathbb G$ we have
$$
0<\liminf_{r\to 0}\frac{\phi(B(x,r))}{r^h}=\limsup_{r\to 0}\frac{\phi(B(x,r))}{r^h}<+\infty.
$$
Moreover, 
for $\phi$-almost every $x\in\mathbb G$ we have
$$
r^{-h}T_{x,r}\phi \rightharpoonup \Theta^h(\phi,x)\mathcal{C}^h\llcorner\mathbb V(x),\qquad \text{as $r$ goes to $0$},
$$
where the map $T_{x,r}$ is defined in \cref{def:TangentMeasure}, the convergence is understood in the duality with the continuous functions with compact support on $\mathbb G$, $\Theta^h(\phi,x)$ is the $h$-density with respect to the distance $d$, and $\mathcal{C}^h\llcorner\mathbb V(x)$ is the $h$-dimensional centered Hausdorff measure, with respect to the distance $d$, restricted to $\mathbb V(x)$, see \cref{def:HausdorffMEasure}.
\end{teorema}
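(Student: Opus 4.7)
The plan is to reduce the question to a single intrinsic Lipschitz graph by using the structural results of \cref{mainresultsintro}, and then to combine Lebesgue differentiation on that graph with the flatness of the tangent measure, which together will pin down both the density value and the weak-$*$ limit. By the covering statement announced in the introduction, at $\phi$-almost every $x$ the support of $\phi$ is contained, up to a $\phi$-null remainder, in a countable union of intrinsic Lipschitz (in fact intrinsically differentiable) graphs $\Sigma_i$, each one associated with a splitting $\mathbb G=\mathbb V_i\cdot\mathbb L_i$. By $\sigma$-additivity and localization it is enough to work at a $\phi$-a.e.\ point $x\in\Sigma_i$ (with $\mathbb V_i=\mathbb V(x)$), where the standard differentiation theorem for Radon measures additionally gives $\phi\bigl(B(x,r)\setminus\Sigma_i\bigr)/\phi\bigl(B(x,r)\bigr)\to 0$ as $r\to 0$, so that the mass coming from the other graphs is asymptotically negligible.

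Since $\Theta^{h,*}(\phi,\cdot)<+\infty$ and $\mathcal S^h\llcorner\Sigma_i$ is locally finite on an intrinsic Lipschitz graph, one has $\phi\llcorner\Sigma_i\ll \mathcal S^h\llcorner \Sigma_i$; Radon--Nikodym then produces a Borel $g\geq 0$ with $\phi\llcorner\Sigma_i=g\,\mathcal S^h\llcorner\Sigma_i$, and Lebesgue differentiation (applicable since the restricted measure is doubling on the graph) yields at $\phi$-a.e.\ $x\in\Sigma_i$
\[
g(x)=\lim_{r\to 0}\frac{\phi(B(x,r))}{\mathcal S^h(\Sigma_i\cap B(x,r))},
\]
which is finite and, by the positive lower density hypothesis, strictly positive. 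Next, intrinsic differentiability of $\Sigma_i$ at $x$ with tangent $\mathbb V(x)$ --- a consequence of the complemented-tangent hypothesis together with the fact that the tangent measure is of the form $\lambda\,\mathcal S^h\llcorner \mathbb V(x)$ --- gives $\delta_{1/r}(x^{-1}\cdot\Sigma_i)\to \mathbb V(x)$ locally in Hausdorff distance. Combined with the intrinsic Lipschitz (Ahlfors-type) estimates for $\mathcal S^h\llcorner\Sigma_i$, this upgrades to $r^{-h}T_{x,r}(\mathcal S^h\llcorner\Sigma_i)\rightharpoonup \mathcal S^h\llcorner \mathbb V(x)$, and in particular $r^{-h}\mathcal S^h(\Sigma_i\cap B(x,r))\to c_x:=\mathcal S^h(\mathbb V(x)\cap B(0,1))\in(0,+\infty)$.

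Putting these two ingredients together gives $r^{-h}T_{x,r}\phi\rightharpoonup g(x)\,\mathcal S^h\llcorner \mathbb V(x)$. Using the defining normalization of $\mathcal C^h$ on homogeneous subgroups --- under which $\mathcal S^h\llcorner\mathbb V(x)=c_x\,\mathcal C^h\llcorner\mathbb V(x)$ and $\mathcal C^h(\mathbb V(x)\cap B(0,1))=1$ --- this takes the claimed form $\Theta^h(\phi,x)\,\mathcal C^h\llcorner\mathbb V(x)$ with $\Theta^h(\phi,x)=g(x)c_x$. Since $\mathcal C^h\llcorner\mathbb V(x)$ does not charge the topological boundary $\partial B(0,1)$, weak-$*$ convergence tested against $\mathbf 1_{B(0,1)}$ is legitimate and produces the pointwise density $r^{-h}\phi(B(x,r))\to \Theta^h(\phi,x)$, which is the first assertion. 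The main obstacle is the area-type asymptotic $r^{-h}\mathcal S^h(\Sigma_i\cap B(x,r))\to c_x$: transferring Hausdorff convergence of the blow-up sets into convergence of the $h$-dimensional spherical measures, with matching upper and lower bounds uniform in $r$ and no mass escaping near the boundary sphere, is delicate and will rely on carefully combining the intrinsic Lipschitz parametrization with the flatness information coming from the tangent measure.
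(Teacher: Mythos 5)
There is a genuine gap at the heart of your argument: the step where you ``upgrade'' the local Hausdorff convergence $\delta_{1/r}(x^{-1}\cdot\Sigma_i)\to\mathbb V(x)$ to the measure convergence $r^{-h}T_{x,r}(\mathcal S^h\llcorner\Sigma_i)\rightharpoonup\mathcal S^h\llcorner\mathbb V(x)$, and in particular to $r^{-h}\mathcal S^h(\Sigma_i\cap B(x,r))\to\mathcal S^h(\mathbb V(x)\cap B(0,1))$, is asserted rather than proved, and it is essentially equivalent to the theorem itself: it is precisely the statement that the surface measure of the graph has an $h$-density at $x$ with the right value. Hausdorff convergence of the blow-up sets carries no information about the behaviour of the measures (mass may concentrate on lower-dimensional pieces or escape entirely), and the ``intrinsic Lipschitz (Ahlfors-type) estimates'' you invoke are not available here: the graphs produced by the covering theorem are graphs of maps $\varphi_i:A_i\subseteq\mathbb V_i\to\mathbb L_i$ over \emph{compact, possibly nowhere dense} subsets $A_i$, so $\mathcal S^h\llcorner\Sigma_i$ is not Ahlfors regular (the lower bound fails), and even two-sided Ahlfors bounds would only trap $r^{-h}\mathcal S^h(\Sigma_i\cap B(x,r))$ in an interval, not force convergence to the specific constant $c_x$. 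You acknowledge this as ``the main obstacle,'' but no argument is offered, so the proof is incomplete exactly where the difficulty lies.

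For comparison, the paper circumvents this obstruction by never trying to identify the blow-up of $\mathcal S^h\llcorner\Gamma$ exactly; instead it proves a quantitative two-sided pinching $(1-\mathfrak c(\alpha))^{2h}(1+\mathfrak c(\alpha))^{-h}\leq\Theta^h_*/\Theta^{h,*}\leq 1$ on any $C_{\mathbb V}(\alpha)$-set whose surface measure is $\mathscr P^c_h$-rectifiable. The mechanism is to project onto $\mathbb V$: one first shows that intrinsic Lipschitz graphs with positive lower density have big projections (\cref{prop:proj}), hence that $\Phi_*(\mathcal S^h\llcorner\mathbb V)$ and $\mathcal S^h\llcorner\Gamma$ are mutually absolutely continuous (\cref{prop:mutuallyabs}); then one differentiates the Radon--Nikodym density along two carefully constructed Vitali relations (\cref{prop:vitalirel}, \cref{prop:vitali2PD}) and uses the elementary projection estimates of \cref{lemma:projections} and \cref{lemma:lowerbd} to bound every possible tangent multiple $\lambda$ between $\mathfrak C(\alpha)^h\rho(x)$ and $(1-\mathfrak c(\alpha))^{-h}\rho(x)$. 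Since both $\Theta^h_*$ and $\Theta^{h,*}$ are attained as such $\lambda$'s, their ratio is pinched, and letting $\alpha\to 0$ through coverings by ever flatter graphs (\cref{thm:MainTheorem1} with a suitable choice of $\beta$) gives the density. If you want to keep your more direct route, you would need to actually prove the area-type asymptotic for $\mathcal S^h\llcorner\Sigma_i$, which is the content of the area formula in the companion paper and requires substantially more than the flatness of the tangent; otherwise the pinching strategy is the way to close the argument.
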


A way of reading the previous theorem is the following: we prove that whenever a Radon measure on a Carnot group has strictly positive $h$-lower density and finite $h$-upper density, and at almost every point all the blow-up measures are supported on the same (depending on the point) $h$-dimensional homogeneous complemented subgroup, then the measure has $h$-density. 

We observe here that, as a non-trivial consequence of the results that will be developed in \cite{antonelli2020rectifiable2}, see \cite[Theorem 1.1]{antonelli2020rectifiable2}, we have that whenever $\Gamma\subseteq \mathbb G$ is a Borel set such that $0<\mathcal{S}^h(\Gamma)<+\infty$, and $\mathcal{C}^h\llcorner\Gamma$ is $\mathscr{P}_h$-rectifiable with complemented tangents, then  $\Theta^h(\mathcal{C}^h\llcorner\Gamma,x)=1$ for $\mathcal{C}^h$-almost every $x\in\Gamma$. See \cref{rem:DensityOne}. We remark that the fact that $\mathcal{C}^h\llcorner\Gamma$ has density one is not a straightforward consequence of \cref{coroll:DENSITYIntro}, and it requires additional work, cf. \cite[Proposition 3.9]{antonelli2020rectifiable2}.

Let us remark that the previous \cref{coroll:DENSITYIntro} solves the implication (ii)$\Rightarrow$(i) of the density problem formulated in \cite[page 50]{MarstrandMattila20} in the setting of $\mathscr{P}_h$-rectifiable measures with complemented tangents. In Euclidean spaces the proof of Theorem \ref{coroll:DENSITYIntro} is an almost immediate consequence of the fact that projections on linear spaces are $1$-Lipschitz in conjunction with the area formula. In our context we do not have at our disposal the Lipschitz property of projections and an area formula for $\mathscr{P}_h$-rectifiable measures with complemented tangents is obtained in \cite[Theorem 1.2]{antonelli2020rectifiable2} as a consequence of a more refined study of such measures. So the proof require new ideas. In order to obtain Theorem \ref{coroll:DENSITYIntro} first of all one reduces to the case of the surface measure on an intrinsically Lipschitz graph with very small Lipschitz constant thanks to the structure result \cref{coroll:IdiffMeasureIntroNEW} below. Secondly, one needs to show that the surface measures of the tangents and their push-forward on the graph are mutually absolutely continuous. For this last point to hold it will be crucial on the one hand that a $\mathscr P_h$-rectifiable measure with complemented tangents can be covered almost everyhwere with \textbf{intrinsic graphs}, see the forthcoming \cref{coroll:IdiffMeasureIntroNEW}, and on the other hand that intrinsic Lipschitz graphs have big projections on their bases, see \cref{prop:proj}. Third, one exploits the fact that the density exists for the surface measures on the tangents to infer its existence for the original measure. 

We remark that with the tools developed in the subsequent paper \cite{antonelli2020rectifiable2} and pushing forward the study started in this paper, we shall show an area formula for $\mathscr{P}_h$-rectifiable measures with complemented tangents, see \cite[Theorem 1.2 and Theorem 1.3]{antonelli2020rectifiable2}. 

Other contributions of this paper are structure results for $\mathscr P$-rectifiable measures. Since they will be given in terms of sets that satisfy a cone property, let us clarify which cones we are choosing. For any  $\alpha>0$ and any homogeneous subgroup $\mathbb V$ of $\mathbb G$, the {\em cone} $C_{\mathbb V}(\alpha)$ is the set of points $w\in\mathbb G$ such that $\dist(w,\mathbb V)\leq \alpha\|w\|$, where $\|\cdot\|$ is the homogeneous norm relative to the fixed distance $d$ on $\mathbb G$. Moreover a set $E\subseteq \mathbb G$ is a {\em $C_{\mathbb V}(\alpha)$-set} if $
E\subseteq pC_{\mathbb V}(\alpha)$ for every $p\in E$. We refer the reader to \cref{sub:Cones} for such definitions and some properties of them. We stress that the cones $C_{\mathbb V}(\alpha)$ are used to give the definition of intrinsically Lispchitz graphs and functions, see \cite[Definition 11 and Proposition 3.1]{FranchiSerapioni16}. The first result reads as follows, see \cref{thm:MainTheorem2}.

\begin{teorema}\label{thm:MainTheorem1Intro}
Let $\mathbb G$ be a Carnot group endowed with an arbitrary left-invariant homogeneous distance. Let $\phi$ be a $\mathscr{P}_h$-rectifiable measure on $\mathbb G$. Then $\mathbb G$ can be covered $\phi$-almost everywhere with countably many compact sets with the cone property with arbitrarily small opening. In other words for every $\alpha>0$ we have
$$
\phi\bigg(\mathbb G \setminus \bigcup_{i=1}^{+\infty} \Gamma_i\bigg)=0,
$$
where $\Gamma_i$ are compact $C_{\mathbb V_i}(\alpha)$-sets, where $\mathbb V_i$ are homogeneous subgroups of $\mathbb G$ of Hausdorff dimension $h$.
\end{teorema}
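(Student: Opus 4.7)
The plan is to exploit the unique-tangent hypothesis in the definition of $\mathscr{P}_h$-rectifiability to force the support of $\phi$ to lie in an $\alpha$-cone around the subgroup $x\mathbb V(x)$ at sufficiently small scales, and then to pay a countable price to make $\mathbb V(x)$ depend on a discrete index and the ``sufficiently small'' radius uniform. A final subdivision and inner regularity will produce compact $C_{\mathbb V_j}(\alpha)$-sets.

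\emph{Step 1: pointwise cone containment.} Fix $\alpha>0$. I claim that for $\phi$-a.e. $x$ there exists $r_x>0$ such that
\[
\supp(\phi)\cap B(x,r_x)\subseteq x\,C_{\mathbb V(x)}(\alpha/2).
\]
If this failed on a set of positive $\phi$-measure, I would find sequences $r_n\downarrow 0$ and points $y_n\in\supp(\phi)$ with $d(x,y_n)\leq r_n$ and $\dist(x^{-1}y_n,\mathbb V(x))>(\alpha/2)\|x^{-1}y_n\|$. Rescaling by $s_n:=\|x^{-1}y_n\|$ and using that $\Theta^{h,*}(\phi,x)<+\infty$ together with $\Theta^h_*(\phi,x)>0$, the measures $s_n^{-h}T_{x,s_n}\phi$ are tight with nontrivial subsequential limits; by hypothesis (ii) every such limit is $\lambda\mathcal S^h\llcorner\mathbb V(x)$ for some $\lambda>0$, whose support is exactly $\mathbb V(x)$. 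The rescaled points $\delta_{1/s_n}(x^{-1}y_n)$ lie at unit distance from the origin and at distance at least $\alpha/2$ from $\mathbb V(x)$, and a standard compactness argument combined with the positive lower density (which forces the supports of the rescaled measures to Hausdorff-converge to the support of the limit on compact sets) produces a limit point of the support outside $\mathbb V(x)$, a contradiction.

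\emph{Step 2: discretization of subgroups and scales.} The family of homogeneous $h$-dimensional subgroups of $\mathbb G$, endowed with the Hausdorff distance between their intersections with the closed unit ball, is a compact metric space, and the map $\mathbb V\mapsto C_{\mathbb V}(\beta)\cap\overline{B(0,1)}$ is continuous in this topology. I may therefore choose a countable family $\{\mathbb V_j\}_{j\in\N}$ such that for every homogeneous $h$-subgroup $\mathbb W$ there exists $j$ with $C_{\mathbb W}(\alpha/2)\subseteq C_{\mathbb V_j}(\alpha)$. For each $(j,k)\in\N^2$ set
\[
A_{j,k}:=\bigl\{x\in\mathbb G:\ C_{\mathbb V(x)}(\alpha/2)\subseteq C_{\mathbb V_j}(\alpha)\ \text{and}\ r_x\geq 1/k\bigr\}.
\]
A standard measurable-selection argument makes $x\mapsto \mathbb V(x)$ and $x\mapsto r_x$ Borel, so each $A_{j,k}$ is Borel, and Step 1 gives $\phi(\mathbb G\setminus\bigcup_{j,k}A_{j,k})=0$.

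\emph{Step 3: cutting into compact cone sets.} Each $A_{j,k}$ need not be a cone set because the cone inclusion at $p\in A_{j,k}$ only holds in $B(p,r_p)$. Partition $\mathbb G$ into countably many Borel pieces $\{P_m\}$ of diameter strictly less than $1/k$ (for instance a left-translated dyadic-type grid). For any $p,q\in A_{j,k}\cap P_m$ one has $d(p,q)<1/k\leq r_p$, hence $q\in\supp(\phi)\cap B(p,r_p)\subseteq p\,C_{\mathbb V(p)}(\alpha/2)\subseteq p\,C_{\mathbb V_j}(\alpha)$; so $A_{j,k}\cap P_m$ is already a $C_{\mathbb V_j}(\alpha)$-set. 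By inner regularity of the Radon measure $\phi$, exhaust each such set, up to a $\phi$-null set, by countably many compact subsets, each of which inherits the $C_{\mathbb V_j}(\alpha)$-property from its superset. Re-indexing these compact sets as $\{\Gamma_i\}_{i\in\N}$ produces the required cover.

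\emph{Main obstacle.} The delicate step is the first one: weak$^*$ convergence of the normalized dilated measures alone does not control their supports. The argument needs both clauses of $\mathscr P_h$-rectifiability in an essential way, since the strict positivity of the lower density is what forbids $\supp(\phi)$ from containing stray points that escape to the complement of $x\mathbb V(x)$ faster than they lose mass, while the unique tangent hypothesis identifies the limit support as exactly $\mathbb V(x)$. Carrying this Hausdorff-type support convergence out cleanly in the non-Abelian dilation geometry, and ensuring that the radius $r_x$ obtained is Borel in $x$ so that Step 2 goes through, is where the real work lies.
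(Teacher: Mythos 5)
There is a genuine gap, and it sits exactly where you flagged the ``main obstacle'': Step 1 is false as stated. For a general $\mathscr{P}_h$-rectifiable measure it is \textbf{not} true that $\supp(\phi)\cap B(x,r_x)\subseteq x\,C_{\mathbb V(x)}(\alpha/2)$ for $\phi$-a.e.\ $x$. Take $\mathbb G=\mathbb R^2$, $\phi=\mathcal{H}^1\llcorner(\mathbb R\times\{0\})+\sum_n \mathcal{H}^1\llcorner I_n$, where $I_n$ is a vertical segment of length $4^{-n}$ centred at $(2^{-n},2^{-n})$. This measure is $\mathscr{P}_1$-rectifiable (the extra mass in $B(0,r)$ is $O(r^2)$, so the tangent at $0$ is the horizontal line; at points of $I_n$ the tangent is the vertical line; densities are positive and finite a.e.), yet $\supp(\phi)$ contains the points $(2^{-n},2^{-n})$, which accumulate at $0$ along the diagonal and lie outside $C_{\mathbb V(0)}(\alpha)$ for every $\alpha<1/\sqrt2$. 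The error in your argument is the parenthetical claim that positive lower density ``forces the supports of the rescaled measures to Hausdorff-converge to the support of the limit'': weak convergence only gives that the limit support is contained in the lower limit of the supports; the reverse inclusion (which is the one you need, to keep $\delta_{1/s_n}(x^{-1}y_n)$ from escaping the limit support) requires a lower mass bound $\phi(B(y_n,\rho s_n))\geq c(\rho)\,s_n^h$ that is \emph{uniform in $n$ and valid at the scale $s_n$}. Positivity of $\Theta^h_*(\phi,\cdot)$ at $\phi$-a.e.\ point gives no such bound at the points $y_n$ (which may even lie in the exceptional null set, and in any case their lower-density bound may only kick in at scales far below $s_n$). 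In the example above this is precisely what happens: the segments $I_n$ carry mass $4^{-n}\ll 2^{-n}$ and vanish in the blow-up at $0$.

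The repair is the one the paper implements: replace $\supp(\phi)$ by the sets $E(\vartheta,\gamma)$ of \cref{def:EThetaGamma}, on which the two-sided bound $\vartheta^{-1}r^h\leq\phi(B(y,r))\leq\vartheta r^h$ holds uniformly for all $r<1/\gamma$; these exhaust $\phi$-almost all of $\mathbb G$ by \cref{prop::E}, and the uniform lower bound is exactly what makes the quantitative estimate of \cref{prop1vsinfty} (hence the local cone containment of \cref{prop:cono}) go through. With that substitution your Steps 2 and 3 are essentially sound and close in spirit to the paper's \cref{thm:MainTheorem2}: your discretization of the Grassmannian matches the paper's use of a countable dense family together with \cref{lem:DistanceOfCones}, and your Step 3 (partition into pieces of diameter below the uniformized radius, then inner regularity) is in fact a simpler cutting device than the paper's \cref{prop:RectifiabilityCriterion}, which pays for a more involved tubular-neighbourhood argument in order to cover the \emph{whole} closed set exactly rather than up to a null set; for the a.e.\ statement of the theorem your version suffices, provided $A_{j,k}$ is defined directly via the fixed subgroups $\mathbb V_j$ and the quantified radius $1/k$ so that no measurable selection of $x\mapsto(\mathbb V(x),r_x)$ is actually needed.
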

If we ask that the tangents are \textbf{complemented} subgroups, we can improve the previous result. In particular we can take the $\Gamma_i$'s to be \textbf{intrinsic Lipschitz graphs}, see \cref{thm:MainTheorem1} and \cref{prop:ConeAndGraph}. For the definition of intrinsically Lipschitz function, we refer the reader to \cref{def:iLipfunctions}. Let us remark that the fact that the $\Gamma_i$'s can be taken to be graphs will be crucial for the proof of the existence of the density in \cref{coroll:DENSITYIntro}. Actually, by pushing a little bit further the information about the fact that the tangent measures at $\phi$-almost every $x$ are constant multiples of $\mathcal{S}^h\llcorner\mathbb V(x)$, we can give a structure result within the class of intrinsically differentiable graphs. Roughly speaking we say that the graph of a function between complementary subgroups $\varphi:U\subseteq \mathbb V\to\mathbb L$ is intrinsically differentiable at $a_0\cdot\varphi(a_0)$ if $\mathrm{graph}(\varphi)$ admits a homogeneous subgroup as Hausdorff tangent at $a_0\cdot\varphi(a_0)$, see \cref{defiintrinsicdiffgraph} for details. For the forthcoming theorem, see \cref{coroll:IdiffMeasure}.

\begin{teorema}\label{coroll:IdiffMeasureIntroNEW}
Let $\mathbb G$ be a Carnot group of homogeneous dimension $Q$ endowed with an arbitrary left-invariant homogeneous distance. Let $h\in\{1,\dots,Q\}$, and let $\phi$ be a $\mathscr{P}_h^c$-rectifiable, i.e., a $\mathscr{P}_h$-rectifiable measure with tangents that are complemented almost everywhere. 

Then $\mathbb G$ can be covered $\phi$-almost everywhere with countably many compact graphs that are simultaneously intrinsically Lipschitz with arbitrarily small constant, and intrinsically differentiable almost everywhere. In other words, for every $\alpha>0$, we can write
$$
\phi\bigg(\mathbb G \setminus \bigcup_{i=1}^{+\infty} \Gamma_i\bigg)=0,
$$
where $\Gamma_i=\mathrm{graph}(\varphi_i)$ are compact sets, with $\varphi_i:A_i\subseteq \mathbb V_i\to\mathbb L_i$ being a function between a compact subset $A_i$ of $\mathbb V_i$, which is a homogeneous subgroup of $\mathbb G$ of homogeneous dimension $h$, and $\mathbb L_i$, which is a subgroup complementary to $\mathbb V_i$; in addition $\mathrm{graph}(\varphi_i)$ is a $C_{\mathbb V_i}(\alpha)$-set, and it is an intrinsically differentiable graph at $a\cdot\varphi_i(a)$ for $\mathcal{S}^h\llcorner A_i$-almost every $a\in\mathbb V_i$, see \cref{defiintrinsicdiffgraph}. 
\end{teorema}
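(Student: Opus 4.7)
The plan is to refine the cone-covering from \cref{thm:MainTheorem1Intro} by using the complementarity assumption, realise each piece as an intrinsic Lipschitz graph with small constant via \cref{prop:ConeAndGraph}, and finally extract intrinsic differentiability at $\mathcal{S}^h\llcorner A_i$-almost every base point from the fact that $\phi$ admits, at $\phi$-a.e.\ point, a unique flat tangent.

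Fix $\alpha>0$. First I would apply \cref{thm:MainTheorem1Intro} with a much smaller opening $\alpha'\ll\alpha$, producing compact $C_{\mathbb V_i}(\alpha')$-sets $E_i$ that cover $\phi$-almost all of $\mathbb G$, where each $\mathbb V_i$ is a homogeneous subgroup of dimension $h$. Since $\phi$ is $\mathscr{P}_h^c$-rectifiable, at $\phi$-a.e.\ $x$ the tangent subgroup $\mathbb V(x)$ admits a homogeneous complement $\mathbb L(x)$. The collection of pairs $(\mathbb V,\mathbb L)$ of complementary homogeneous subgroups of $\mathbb G$ lives in a compact, hence separable, space (a subset of a finite-dimensional Grassmannian of subalgebras), so by a Lusin-type refinement I can select a countable dense family $\{(\mathbb V_i,\mathbb L_i)\}$ and further decompose the $E_i$ so that on each refined piece the fixed $\mathbb L_i$ is a complement of $\mathbb V_i$ and the tangent pair $(\mathbb V(x),\mathbb L(x))$ is uniformly as close to $(\mathbb V_i,\mathbb L_i)$ as desired.

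Next, I would invoke \cref{prop:ConeAndGraph}: once the splitting $\mathbb G=\mathbb V_i\cdot\mathbb L_i$ is fixed and $\alpha'$ is small enough, a compact $C_{\mathbb V_i}(\alpha')$-set $E_i$ is exactly the graph of an intrinsic Lipschitz function $\varphi_i\colon A_i\to\mathbb L_i$ over the compact base $A_i:=\pi_{\mathbb V_i}(E_i)\subseteq\mathbb V_i$, with intrinsic Lipschitz constant controlled by $\alpha'$ and thus arbitrarily small in terms of $\alpha$. Setting $\Gamma_i:=E_i=\mathrm{graph}(\varphi_i)$ produces the cone-plus-graph structure required by the statement.

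Finally, for intrinsic differentiability I would argue as follows. At $\phi$-a.e.\ $x\in\Gamma_i$ every element of $\mathrm{Tan}_h(\phi,x)$ is a positive multiple of $\mathcal{S}^h\llcorner\mathbb V(x)$, so the rescaled supports $\delta_{1/r}(x^{-1}\cdot\supp\phi)$ converge locally in Hausdorff distance to $\mathbb V(x)$; combined with the cone inclusion $\Gamma_i\subseteq x\cdot C_{\mathbb V_i}(\alpha')$ for every $x\in\Gamma_i$ and the strict positivity of $\Theta^h_*(\phi,x)$, this upgrades to Hausdorff convergence of $\delta_{1/r}(x^{-1}\cdot\Gamma_i)$ itself to $\mathbb V(x)$, which is precisely the intrinsic differentiability of $\varphi_i$ at $\pi_{\mathbb V_i}(x)$. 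The remaining, and most delicate, step is to pass from $\phi$-a.e.\ on $\Gamma_i$ to $\mathcal{S}^h\llcorner A_i$-a.e.\ on the base: I cannot invoke \cref{coroll:DENSITYIntro}, which is downstream of the present theorem, so the transfer must be carried out by combining \cref{prop:proj} (intrinsic Lipschitz graphs have projections of controlled $\mathcal{S}^h$-mass onto their bases) with the two-sided density bounds of $\phi$, ensuring that $\phi$-null subsets of $\Gamma_i$ project to $\mathcal{S}^h$-null subsets of $A_i$. Packaging this absolute-continuity transfer cleanly, and compatibly with the countable decomposition, is the main technical obstacle of the proof.
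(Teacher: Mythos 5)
Your overall architecture does track the paper's: a cone covering adapted to the splitting constants, the graph structure from \cref{prop:ConeAndGraph}, intrinsic differentiability read off from Hausdorff convergence of blow-ups of $\Gamma_i$, and a projection argument to transfer the almost-everywhere statement to the base $A_i$. But two steps need repair. The first is a quantifier issue in the covering: you fix a uniform opening $\alpha'\ll\alpha$ and apply \cref{thm:MainTheorem1Intro} first, whereas the threshold below which a $C_{\mathbb V}(\cdot)$-set is a graph is $\oldep{ep:Cool}(\mathbb V,\mathbb L)$, which depends on the pair and admits no uniform positive lower bound over $\G_c(h)$. No single $\alpha'$ works for all pieces; one must first stratify the complemented Grassmannian by the size of $\oldep{ep:Cool}$ and then run the covering with openings adapted to each stratum. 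This is precisely why the paper proves the refined covering \cref{thm:MainTheorem1} (with the function $\bold F$ and the sets $\G_c^{\bold F}(h,\ell)$) instead of quoting \cref{thm:MainTheorem2}; your ``Lusin-type refinement'' gestures at this, but as written the order of operations does not produce graphs.

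The more serious gap is in the differentiability step. Weak convergence of $r^{-h}T_{x,r}\phi$ to $\Theta\mathcal{S}^h\llcorner\mathbb V(x)$ does \emph{not} imply local Hausdorff convergence of $\delta_{1/r}(x^{-1}\cdot\supp\phi)$ to $\mathbb V(x)$: supports behave only semicontinuously under weak convergence, and the inclusion ``points of the support near $x$ lie close to $x\mathbb V(x)$'' genuinely needs the uniform two-sided density bounds on $E(\vartheta,\gamma)$ (this is \cref{prop1vsinfty}). Worse, the cone inclusion $\Gamma_i\subseteq xC_{\mathbb V_i}(\alpha')$ constrains $\Gamma_i$ only from the outside, so it cannot ``upgrade'' anything in the direction you actually need, namely that every point of $x\mathbb V(x)$ at scale $r$ is close to a point of $\Gamma_i$ rather than merely to a point of $\supp\phi\setminus\Gamma_i$. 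That direction requires the locality of tangents (\cref{prop:LocalityOfTangent}) to guarantee that $\phi\llcorner\Gamma_i$ itself blows up to $\Theta\mathcal{C}^h\llcorner\mathbb V(x)$, followed by the quantitative non-degeneracy argument of \cref{prop:bil2}; together these constitute \cref{prop:TANGENTSIntro}, which is the technical heart of the proof and which your proposal treats as a soft consequence of tangent uniqueness. By contrast, the final transfer to $\mathcal{S}^h\llcorner A_i$-a.e.\ base points, which you single out as the main obstacle, is comparatively easy: since $P_{\mathbb V_i}$ is a bijection from $\Gamma_i$ onto $A_i$, the exceptional set in $A_i$ is the projection of an $\mathcal{S}^h$-null subset of $\Gamma_i$, and \cref{cor:2.2.19} already shows such projections are null; the big-projection estimate \cref{prop:proj} is not needed for this particular direction.
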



Let us briefly remark that when a Rademacher-type theorem holds, i.e., if an intrinsically Lipschitz function is intrinsically differentiable almost everywhere, the full result in \cref{coroll:IdiffMeasureIntroNEW} would simply be deduced by the analogous result but only requiring a covering with intrinsic Lipschitz graphs. We remark that a Rademacher-type theorem at such level of generality, i.e., between arbitrary complementary subgroups of a Carnot group, is now known to be false, see the counterexample in \cite{JNGV20a}. On the other hand, some positive results in particular cases have been provided in \cite{FSSC11, FMS14, AM20} for intrinsically Lipschitz functions with one-dimensional target in groups in which De Giorgi $C^1_{\mathrm H}$-rectifiability for finite perimeter sets holds, and for functions with normal targets in arbitrary Carnot groups. We stress that very recently in \cite{Vittone20} the author proves the Rademacher theorem at any codimension in the Heisenberg groups $\mathbb H^n$.

We stress that, as a consequence of the result \cite[Theorem 1.1]{antonelli2020rectifiable2}, we get that the $\mathscr{P}_h$-rectifiability of measures of the type $\mathcal{S}^h\llcorner\Gamma$ is equivalent to the fact that $\mathcal{S}^h$-almost every $\Gamma$ is covered by countably many intrinsic differentiable graphs. Thus the negative result of \cite{JNGV20a} gives as a consequence that we cannot substitute \textit{intrinsic differentiable} with \textit{intrinsic Lipschitz} in the latter sentence. Ultimately, the general notion of rectifiability by means of coverings with countably many intrinsic Lipschitz graphs is not equivalent to the infinitesimal notion of rectifiability (namely, the $\mathscr{P}$-rectifiability) that one can give by asking that the tangents are almost everywhere unique (and then, as a consequence, homogeneous subgroups).

Let us briefly comment on the results listed above.
 \cref{coroll:DENSITYIntro} extends the implication in \cite[(iv)$\Rightarrow$(ii) of Theorem 3.15]{MatSerSC} to the setting of $\mathscr{P}_h$-rectifiable measures whose tangents are complemented in arbitrary Carnot groups. Indeed, in \cite[(iv)$\Rightarrow$(ii) of Theorem 3.15]{MatSerSC} the authors prove that if $n+1\leq h\leq 2n$, and $\mathcal{S}^h\llcorner \Gamma$ is a $\mathscr{P}_h$-rectifiable measure with tangents that are vertical subgroups in the Heisenberg group $\mathbb H^n$, then the $h$-density of $\mathcal{S}^h\llcorner \Gamma$ exists almost everywhere and the tangent is unique almost everywhere. The analogous property in $\mathbb H^n$, but with $\mathscr{P}_h$-rectifiable measures with tangents that are horizontal subgroups is obtained in \cite[(iv)$\Rightarrow$(ii) of Theorem 3.14]{MatSerSC}, and in arbitrary homogeneous groups in the recent \cite[(iii)$\Rightarrow$(ii) of Theorem 1.1]{IMM20}. However, in this special horizontal case treated in \cite[Theorem 3.14]{MatSerSC} and \cite[Theorem 1.1]{IMM20} the authors do not assume $\Theta_*^h(\mathcal{S}^h\llcorner\Gamma,x)>0$ since it comes from the existence of an approximate tangent, see \cite[Theorem 3.10]{MatSerSC}, while the authors in \cite{IMM20} are able to overcome this issue by adapting \cite[Lemma 3.3.6]{Federer1996GeometricTheory} in \cite[Theorem 4.4]{IMM20}. We do not address in this paper the question of obtaining the same general results as in \cref{thm:MainTheorem1Intro}, \cref{coroll:DENSITYIntro}, and \cref{coroll:IdiffMeasureIntroNEW} removing the hypothesis on the strictly positive lower density in item (i) of \cref{def:PhRectifiableMeasureINTRO} when the tangent is unique (up to a mutiplicative constant). Neverthless we stress that the results obtained in \cite{MatSerSC, IMM20} are for sets, while our results hold for arbitrary Radon measures.

We finally mention that, as a consequence of the machinery developed in the subsequent paper \cite{antonelli2020rectifiable2}, the covering property with intrinsically differentiable graphs proved in \cref{coroll:IdiffMeasureIntroNEW} actually characterizes the $\mathscr{P}_h$-rectifiability with complemented tangents, see \cite[3. $\Rightarrow$ 1. of Theorem 1.1]{antonelli2020rectifiable2}. 
The previous characterization is in fact obtained through a delicate rectifiability results for intrinsically differentiable graphs, see \cite[Theorem 1.3]{antonelli2020rectifiable2}, and more precisely \cite[Lemma 3.9 and Lemma 3.10]{antonelli2020rectifiable2}. Moreover, as a consequence of \cite[Theorem 1.1]{antonelli2020rectifiable2}, the $\mathscr{P}_h$-rectifiability with complemented tangents is equivalent to asking that Preiss's tangents are complemented homogeneous subgroups without any requirement on the $h$-lower and upper densities. We refer the reader to \cite{antonelli2020rectifiable2} for details.

\vspace{0.3cm}
\textbf{Acknowledgments}: The first author is partially supported by the European Research Council (ERC Starting Grant 713998 GeoMeG `\emph{Geometry of Metric Groups}'). The second author is supported by the Simons Foundation Wave Project, grant 601941, GD. The authors wish tho express their gratitude to the anonymous referee for the careful reading of the paper. 

\medskip

\section{Preliminaries}\label{sec:Prel}
\subsection{Carnot Groups}\label{sub:Carnot}
In this subsection we briefly introduce some notations on Carnot groups that we will extensively use throughout the paper. For a detailed account on Carnot groups we refer to \cite{LD17}.

A Carnot group $\mathbb{G}$ of step $\kappa$ \label{num:step} is a simply connected Lie group whose Lie algebra $\mathfrak g$ admits a stratification $\mathfrak g=V_1\, \oplus \, V_2 \, \oplus \dots \oplus \, V_\kappa$. We say that $V_1\, \oplus \, V_2 \, \oplus \dots \oplus \, V_\kappa$ is a {\em stratification} of $\mathfrak g$ if $\mathfrak g = V_1\, \oplus \, V_2 \, \oplus \dots \oplus \, V_\kappa$,
$$
[V_1,V_i]=V_{i+1}, \quad \text{for any $i=1,\dots,\kappa-1$}, \quad  \quad [V_1,V_\kappa]=\{0\}, \quad \text{and}\,\, V_\kappa\neq\{0\},
$$ 
where $[A,B]:=\mathrm{span}\{[a,b]:a\in A,b\in B\}$. We call $V_1$ the \emph{horizontal layer} of $\mathbb G$. We denote by $n$ the topological dimension of $\mathfrak g$, by $n_j$ the dimension of $V_j$ for every $j=1,\dots,\kappa$.
Furthermore, we define $\pi_i:\mathbb{G}\to V_i$ to be the projection maps on the $i$-th strata. 
We will often shorten the notation to $v_i:=\pi_iv$.

For a Carnot group $\mathbb G$, the exponential map $\exp :\mathfrak g \to \mathbb{G}$ is a global diffeomorphism from $\mathfrak g$ to $\mathbb{G}$.
Hence, if we choose a basis $\{X_1,\dots , X_n\}$ of $\mathfrak g$,  any $p\in \mathbb{G}$ can be written in a unique way as $p=\exp (p_1X_1+\dots +p_nX_n)$. This means that we can identify $p\in \mathbb{G}$ with the $n$-tuple $(p_1,\dots , p_n)\in \R^n$ and the group $\mathbb{G}$ itself with $\R^n$ endowed with the group operation $\cdot$ determined by the Baker-Campbell-Hausdorff formula. From now on, we will always assume that $\mathbb{G}=(\R^n,\cdot)$ and, as a consequence, that the exponential map $\exp$ acts as the identity.

For any $p\in \mathbb{G}$, we define the left translation $\tau _p:\mathbb{G} \to \mathbb{G}$ as
\begin{equation*}
q \mapsto \tau _p q := p\cdot q.
\end{equation*}
As already remarked above, the group operation $\cdot$ is determined by the Campbell-Hausdorff formula, and it has the form (see \cite[Proposition 2.1]{step2})
\begin{equation*}
p\cdot q= p+q+\mathscr{Q}(p,q), \quad \mbox{for all }\, p,q \in  \R^n,
\end{equation*} 
where $\mathscr{Q}=(\mathscr{Q}_1,\dots , \mathscr{Q}_\kappa):\R^n\times \R^n \to V_1\oplus\ldots\oplus V_\kappa$, and the $\mathscr{Q}_i$'s have the following properties. For any $i=1,\ldots \kappa$ and any $p,q\in \mathbb{G}$ we have\label{tran}
\begin{itemize}
    \item[(i)]$\mathscr{Q}_i(\delta_\lambda p,\delta_\lambda q)=\lambda^i\mathscr{Q}_i(p,q)$,
    \item[(ii)] $\mathscr{Q}_i(p,q)=-\mathscr{Q}_i(-q,-p)$,
    \item[(iii)] $\mathscr{Q}_1=0$ and $\mathscr{Q}_i(p,q)=\mathscr{Q}_i(p_1,\ldots,p_{i-1},q_1,\ldots,q_{i-1})$.
\end{itemize}
Thus, we can represent the product $\cdot$ as
\begin{equation}\label{opgr}
p\cdot q= (p_1+q_1,p_2+q_2+\mathscr{Q}_2(p_1,q_1),\dots ,p_\kappa +q_\kappa+\mathscr{Q}_\kappa (p_1,\dots , p_{\kappa-1} ,q_1,\dots ,q_{\kappa-1})). 
\end{equation}

The stratificaton of $\mathfrak{g}$ carries with it a natural family of dilations $\delta_\lambda :\mathfrak{g}\to \mathfrak{g}$, that are Lie algebra automorphisms of $\mathfrak{g}$ and are defined by\label{intrdil}
\begin{equation}
     \delta_\lambda (v_1,\dots , v_\kappa):=(\lambda v_1,\lambda^2 v_2,\dots , \lambda^\kappa v_\kappa), \quad \text{for any $\lambda>0$},
     \nonumber
\end{equation}
where $v_i\in V_i$. The stratification of the Lie algebra $\mathfrak{g}$  naturally induces a gradation on each of its homogeneous Lie sub-algebras $\mathfrak{h}$, i.e., sub-algebras that are $\delta_{\lambda}$-invariant for any $\lambda>0$, that is
\begin{equation}
    \mathfrak{h}=V_1\cap \mathfrak{h}\oplus\ldots\oplus V_\kappa\cap \mathfrak{h}.
    \label{eq:intr1}
\end{equation}
We say that $\mathfrak h=W_1\oplus\dots\oplus W_{\kappa}$ is a {\em gradation} of $\mathfrak h$ if $[W_i,W_j]\subseteq W_{i+j}$ for every $1\leq i,j\leq \kappa$, where we mean that $W_\ell:=\{0\}$ for every $\ell > \kappa$.
Since the exponential map acts as the identity, the Lie algebra automorphisms $\delta_\lambda$ can be read also as group automorphisms of $\mathbb{G}$.

\begin{definizione}[Homogeneous subgroups]\label{homsub}
A subgroup $\mathbb V$ of $\mathbb{G}$ is said to be \emph{homogeneous} if it is a Lie subgroup of $\mathbb{G}$ that is invariant under the dilations $\delta_\lambda$.
\end{definizione}

We recall the following basic terminology: a {\em horizontal subgroup} of a Carnot group $\mathbb G$ is a homogeneous subgroup of it that is contained in $\exp(V_1)$; a {\em Carnot subgroup} $\mathbb W=\exp(\mathfrak h)$ of a Carnot group $\mathbb G$ is a homogeneous subgroup of it such that the first layer $V_1\cap\mathfrak h$ of the grading of $\mathfrak h$ inherited from the stratification of $\mathfrak g$ is the first layer of a stratification of $\mathfrak h$.

Homogeneous Lie subgroups of $\mathbb{G}$ are in bijective correspondence through $\exp$ with the Lie sub-algebras of $\mathfrak{g}$ that are invariant under the dilations $\delta_\lambda$. 
For any Lie algebra $\mathfrak{h}$ with gradation $\mathfrak h= W_1\oplus\ldots\oplus W_{\kappa}$, we define its \emph{homogeneous dimension} as
$$\text{dim}_{\mathrm{hom}}(\mathfrak{h}):=\sum_{i=1}^{\kappa} i\cdot\text{dim}(W_i).$$
Thanks to \eqref{eq:intr1} we infer that, if $\mathfrak{h}$ is a homogeneous Lie sub-algebra of $\mathfrak{g}$, we have $\text{dim}_{\mathrm{hom}}(\mathfrak{h}):=\sum_{i=1}^{\kappa} i\cdot\text{dim}(\mathfrak{h}\cap V_i)$. We introduce now the class of homogeneous and left-invariant distances.

\begin{definizione}[Homogeneous left-invariant distance]
A metric $d:\mathbb{G}\times \mathbb{G}\to \R$ is said to be homogeneous and left invariant if for any $x,y\in \mathbb{G}$ we have
\begin{itemize}
    \item[(i)] $d(\delta_\lambda x,\delta_\lambda y)=\lambda d(x,y)$ for any $\lambda>0$,
    \item[(ii)] $d(\tau_z x,\tau_z y)=d(x,y)$ for any $z\in \mathbb{G}$.
\end{itemize}
\end{definizione}

We remark that two homogeneous left-invariant distances on a Carnot group are always bi-Lipschitz equivalent.
It is well-known that the Hausdorff dimension (for a definition of Hausdorff dimension see for instance \cite[Definition 4.8]{Mattila1995GeometrySpaces}) of a graded Lie group $\mathbb G$ with respect to an arbitrary left-invariant homogeneous distance coincides with the homogeneous dimension of its Lie algebra. For a reference for the latter statement, see \cite[Theorem 4.4]{LDNG19}. \textbf{From now on, if not otherwise stated, $\mathbb G$ will be a fixed Carnot group}. We recall that a \emph{homogeneous norm} $\|\cdot\|$ on $\mathbb G$ is a function $\|\cdot\|:\mathbb G\to [0,+\infty)$ such that $\|\delta_\lambda x\|=\lambda\|x\|$ for every $\lambda>0$ and $x\in\mathbb G$; $\|x\cdot y\|\leq \|x\|+\|y\|$ for every $x,y\in\mathbb G$; and $\|x\|=0$ if and only if $x=0$. We introduce now a distinguished homogeneous norm on $\mathbb G$.

\begin{definizione}[Smooth-box metric]\label{smoothnorm}
For any $g\in \mathbb{G}$, we let
$$\lVert g\rVert:=\max\{\varepsilon_1\lvert g_1\rvert,\varepsilon_2\lvert g_2\rvert^{1/2},\ldots, \varepsilon_{\kappa}\lvert g_\kappa\rvert^{1/{\kappa}}\},$$
where $\varepsilon_1=1$ and $\varepsilon_2,\ldots \varepsilon_{\kappa}$ are suitably small parameters depending only on the group $\mathbb{G}$. For the proof of the fact that we can choose the $\varepsilon_i$'s in such a way that $\lVert\cdot\rVert$ is a homogeneous norm on $\mathbb{G}$ that induces a left-invariant homogeneous distance we refer to \cite[Section 5]{step2}. 
\end{definizione}
Given an arbitrary homogeneous norm $\|\cdot\|$ on $\mathbb G$, the distance $d$ induced by $\|\cdot\|$ is defined as follows
$$
d(x,y):=\lVert x^{-1}\cdot y\rVert.
$$
Vice-versa, given a homogeneous left-invariant distance $d$, it induces a homogeneous norm through the equality $\|x\|:=d(x,e)$ for every $x\in\mathbb G$, where $e$ is the identity element of $\mathbb G$.

Given a homogeneous left-invariant distance $d$ we let $U(x,r):=\{z\in \mathbb{G}:d(x,z)<r\}$ be the open metric ball relative to the distance $d$ centred at $x$ and with radius $r>0$. The closed ball will be denoted with $B(x,r):=\{z\in \mathbb{G}:d(x,z)\leq r\}$. Moreover, for a subset $E\subseteq \mathbb G$ and $r>0$, we denote with $B(E,r):=\{z\in\mathbb G:\dist(z,E)\leq r\}$ the {\em closed $r$-tubular neighborhood of $E$}  and with $U(E,r):=\{z\in\mathbb G:\dist(z,E)< r\}$ the {\em open $r$-tubular neighborhood of $E$}.

The following estimate on the norm of the conjugate will be useful later on.
\begin{lemma}\label{lem:EstimateOnConjugate}
For any homogeneous norm $\|\cdot\|$ and any $\ell>0$, there exists a constant $\newC\label{c:1}(\ell)>1$ such that for every $x,y\in B(0,\ell)$ we have
$$
\|y^{-1}\cdot x\cdot y\| \leq \oldC{c:1}\|x\|^{1/\kappa}.
$$
\end{lemma}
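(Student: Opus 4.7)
Since any two homogeneous left-invariant norms on $\mathbb G$ are bi-Lipschitz equivalent, it suffices to establish the estimate for the smooth-box norm of \cref{smoothnorm}; the final constant will then absorb the equivalence constants. This norm is convenient because it satisfies $|g_j|\leq C\|g\|^j$ for each $j=1,\dots,\kappa$, with $C$ depending only on the group. My plan is to analyze the $i$-th coordinate of $y^{-1}\cdot x\cdot y$ as a polynomial in the coordinates of $x,y$ and exploit a key vanishing property.

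Using \eqref{opgr} together with properties (i)--(iii) of $\mathscr{Q}$, I would first show that $(y^{-1}\cdot x\cdot y)_i = P_i(x,y)$, where $P_i$ is a polynomial depending only on $x_1,\dots,x_i$ and $y_1,\dots,y_{i-1}$: the linear $y_i$-terms cancel between $(y^{-1})_i=-y_i+\text{(lower order)}$ and $(x\cdot y)_i=x_i+y_i+\mathscr{Q}_i(x,y)$, while all remaining contributions come from $\mathscr{Q}_j$-terms with $j\leq i-1$. Simultaneous homogeneity $\mathscr{Q}_j(\delta_\lambda p,\delta_\lambda q)=\lambda^j\mathscr{Q}_j(p,q)$ then forces $P_i(\delta_\lambda x,\delta_\lambda y)=\lambda^i P_i(x,y)$, so every monomial $c\,x^\alpha y^\beta$ of $P_i$ has weighted degree exactly $i$, meaning $\sum_j j\alpha_j+\sum_j j\beta_j=i$, where $x_j,y_j$ are counted with weight $j$. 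The decisive observation is $P_i(0,y)\equiv 0$, since $y^{-1}\cdot 0\cdot y=0$; hence every monomial of $P_i$ carries at least one factor of $x$, and in particular $k:=\sum_j j\alpha_j\geq 1$.

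With this structure in hand, for $\|x\|\leq\min(1,\ell)$ and $\|y\|\leq\ell$, each monomial of $P_i$ is bounded by $C(\ell)\|x\|^{k}$ with $k\geq 1$; the constraint $\|x\|\leq 1$ gives $\|x\|^{k}\leq\|x\|$, so summing the finitely many monomials yields $|P_i(x,y)|\leq C(\ell)\|x\|$. Therefore $|P_i(x,y)|^{1/i}\leq C(\ell)\|x\|^{1/i}\leq C(\ell)\|x\|^{1/\kappa}$, and taking the maximum over $i$ together with the weights $\varepsilon_i$ produces $\|y^{-1}\cdot x\cdot y\|\leq C(\ell)\|x\|^{1/\kappa}$ in this regime. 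For the complementary range $1\leq \|x\|\leq \ell$, the triangle inequality $\|y^{-1}\cdot x\cdot y\|\leq 2\|y\|+\|x\|\leq 3\ell$ combined with $\|x\|^{1/\kappa}\geq 1$ gives $\|y^{-1}\cdot x\cdot y\|\leq 3\ell\cdot\|x\|^{1/\kappa}$. Choosing $\oldC{c:1}(\ell)$ as the maximum over the two regimes concludes the argument.

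The main obstacle I foresee is the second step: extracting the identity $P_i(0,y)\equiv 0$ and the cancellation of $y_i$ from the iterated BCH product \eqref{opgr} requires a bit of bookkeeping. However, once one observes that conjugation is a group automorphism fixing the identity, the vanishing at $x=0$ is automatic, and every other estimate reduces to routine homogeneity-and-degree counting on monomials.
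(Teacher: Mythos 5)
Your argument is correct. Note, though, that the paper does not prove this lemma at all: it disposes of it in one line by citing Lemma 3.12 of Franchi--Serapioni, so what you have written is a self-contained replacement for that citation rather than a variant of an argument in the text. Your route is the standard one for results of this type and all the key points are in place: the reduction to the smooth-box norm by bi-Lipschitz equivalence of homogeneous norms (with $\ell$ enlarged accordingly); the computation from \eqref{opgr} that $(y^{-1}\cdot x\cdot y)_i=x_i+\mathscr{Q}_i(x,y)+\mathscr{Q}_i(-y,x\cdot y)$, so the raw $y_i$ cancels and $P_i$ involves only $x_1,\dots,x_i,y_1,\dots,y_{i-1}$; the weighted homogeneity of degree $i$ coming from the fact that $\delta_\lambda$ is an automorphism; the vanishing $P_i(0,y)\equiv 0$ forcing each monomial to carry an $x$-factor of weighted degree $k\geq 1$, whence $|P_i(x,y)|\leq C(\ell)\|x\|$ for $\|x\|\leq 1$ and thus $\varepsilon_i|P_i|^{1/i}\leq C(\ell)\|x\|^{1/i}\leq C(\ell)\|x\|^{1/\kappa}$; and the trivial treatment of the regime $\|x\|\geq 1$ via the triangle inequality. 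The only gain of the paper's approach is brevity; the gain of yours is that the reader sees exactly where the exponent $1/\kappa$ comes from (the worst case $i=\kappa$, $k=1$, i.e.\ a monomial linear in a first-layer coordinate of $x$ multiplied by $y$-coordinates of total weight $\kappa-1$), which the bare citation hides.
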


\begin{proof}
This follows immediately from \cite[Lemma 3.12]{FranchiSerapioni16}.
\end{proof}

\begin{definizione}[Hausdorff Measures]\label{def:HausdorffMEasure}
Throughout the paper we define the $h$-dimensional {\em spherical Hausdorff measure} relative to a left invariant homogeneous metric $d$ as\label{sphericaldhausmeas}
$$
\mathcal{S}^{h}(A):=\sup_{\delta>0}\inf\bigg\{\sum_{j=1}^\infty  r_j^h:A\subseteq \bigcup_{j=1}^\infty B(x_j,r_j),~r_j\leq\delta\bigg\},
$$
for every $A\subseteq \mathbb G$.
We define the $h$-dimensional {\em Hausdorff measure}\label{hausmeas} relative to $d$ as
$$
\mathcal{H}^h(A):=\sup_{\delta>0}\inf \left\{\sum_{j=1}^{\infty} 2^{-h}(\diam E_j)^h:A \subseteq \bigcup_{j=1}^{\infty} E_j,\, \diam E_j\leq \delta\right\},
$$
for every $A\subseteq \mathbb G$.
We define the $h$-dimensional {\em centered Hausdorff measure} relative to $d$ as\label{centredhausmeas}
$$
\mathcal{C}^{h}(A):=\underset{E\subseteq A}{\sup}\,\,\mathcal{C}_0^h(E),
$$
for every $A\subseteq \mathbb G$, 
where
$$
\mathcal{C}^{h}_0(E):=\sup_{\delta>0}\inf\bigg\{\sum_{j=1}^\infty  r_j^h:E\subseteq \bigcup_{j=1}^\infty B(x_j,r_j),~ x_j\in E,~r_j\leq\delta\bigg\},
$$
for every $E\subseteq \mathbb G$.
We stress that $\mathcal{C}^h$ is an outer measure, and thus it defines a Borel regular measure, see \cite[Proposition 4.1]{EdgarCentered}, and that the measures $\mathcal{S}^h,\mathcal{H}^h,\mathcal{C}^h$ are all equivalent measures, see \cite[Section 2.10.2]{Federer1996GeometricTheory} and \cite[Proposition 4.2]{EdgarCentered}.
\end{definizione}

\begin{definizione}[Hausdorff distance]\label{def:Haus}
Given a left-invariant homogeneous distance $d$ on $\mathbb G$, for any couple of sets $A,B\subseteq \mathbb{G}$, we define the \emph{Hausdorff distance} of $A$ from $B$ as
$$d_{H,\mathbb G}(A,B):=\max\Big\{\sup_{x\in A}\text{dist}(x,B),\sup_{y\in B}\text{dist}(A,y)\Big\},$$
where 
$$
\text{dist}(x,A):=\inf_{y\in A} d(x,y),
$$
for every $x\in\mathbb G$ and $A\subseteq \mathbb G$.
\end{definizione}

\subsection{Densities and tangents of Radon measures}

Throughout the rest of  the paper we will always assume that $\mathbb{G}$ is a fixed Carnot group endowed with an arbitrary left-invariant homogeneous distance $d$. Some of the forthcoming results will be proved in the particular case in which $d$ is the distance induced by the distinguished homogeneous norm defined in \cref{smoothnorm}, and we will stress this when it will be the case. 

The homogeneous, and thus Hausdorff, dimension with respect to $d$ will be denoted with $Q$. Furthermore as discussed in the previous subsection, we will assume without loss of generality that $\mathbb{G}$ coincides with $\R^n$ endowed with the product induced by the Baker-Campbell-Hausdorff formula relative to $\text{Lie}(\mathbb{G})$.

\begin{definizione}[Weak convergence of measures]\label{def:WeakConvergence}
Given a family $\{\phi_i\}_{i\in\N}$ of Radon measures on $\mathbb{G}$ we say that $\phi_i$ weakly converges to a Radon measure $\phi$, and we write $\phi_i\rightharpoonup \phi$, if
$$
\int fd \phi_i \to \int fd\phi, \qquad\text{for any } f\in C_c(\mathbb G).
$$
\end{definizione}

\begin{definizione}[Tangent measures]\label{def:TangentMeasure}
Let $\phi$ be a Radon measure on $\mathbb G$. For any $x\in\mathbb G$ and any $r>0$ we define the measure
$$
T_{x,r}\phi(E):=\phi(x\cdot\delta_r(E)), \qquad\text{for any Borel set }E.
$$
Furthermore, we define $\mathrm{Tan}_{h}(\phi,x)$, the $h$-dimensional tangents to $\phi$ at $x$, to be the collection of the Radon measures $\nu$ for which there is an infinitesimal sequence $\{r_i\}_{i\in\N}$ such that
$$r_i^{-h}T_{x,r_i}\phi\rightharpoonup \nu.$$
\end{definizione}
\begin{osservazione}(Zero as a tangent measure)\label{rem:TangentZero}
We remark that our definition potentially admits the zero measure as a tangent measure, as in \cite{DeLellis2008RectifiableMeasures}, while the definitions in \cite{Preiss1987GeometryDensities} and \cite{MatSerSC} do not.
\end{osservazione}

\begin{definizione}[Lower and upper densities]\label{def:densities}
If $\phi$ is a Radon measure on $\mathbb{G}$, and $h>0$, we define
$$
\Theta_*^{h}(\phi,x):=\liminf_{r\to 0} \frac{\phi(B(x,r))}{r^{h}},\qquad \text{and}\qquad \Theta^{h,*}(\phi,x):=\limsup_{r\to 0} \frac{\phi(B(x,r))}{r^{h}},
$$
and we say that $\Theta_*^{h}(\phi,x)$ and $\Theta^{h,*}(\phi,x)$ are the lower and upper $h$-density of $\phi$ at the point $x\in\mathbb{G}$, respectively. Furthermore, we say that measure $\phi$ has $h$-density if
$$
0<\Theta^h_*(\phi,x)=\Theta^{h,*}(\phi,x)<\infty,\qquad \text{for }\phi\text{-almost any }x\in\mathbb{G}.
$$
\end{definizione}

Lebesgue theorem holds for measures with positive lower density and finite upper density, and thus local properties are stable under restriction to Borel subsets.

\begin{proposizione}\label{prop:Lebesuge}
Suppose $\phi$ is a Radon measure on $\mathbb{G}$ with $0<\Theta^h_*(\phi,x)\leq \Theta^{h,*}(\phi,x)<\infty $ for $\phi$-almost every $x\in \mathbb{G}$. Then, for any Borel set $B\subseteq \mathbb{G}$ and for $\phi$-almost every $x\in B$ we have
$$\Theta^h_*(\phi\llcorner B,x)=\Theta^h_*(\phi,x),\qquad \text{and}\qquad\Theta^{h,*}(\phi\llcorner B,x)=\Theta^{h,*}(\phi,x).$$
\end{proposizione}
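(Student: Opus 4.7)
The plan is to reduce the statement to showing that, for $\phi$-almost every $x\in B$,
\begin{equation*}
\lim_{r\to 0}\frac{\phi(B(x,r)\setminus B)}{r^h}=0.
\end{equation*}
Indeed, writing $\phi\llcorner B(B(x,r))=\phi(B(x,r))-\phi(B(x,r)\setminus B)$ and dividing by $r^h$, the above limit being zero is exactly what is needed to pass the identity through $\liminf_{r\to 0}$ and $\limsup_{r\to 0}$, which yields both equalities in the statement.

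To get this limit, the first step is to observe that the hypothesis $0<\Theta^h_*(\phi,x)\leq\Theta^{h,*}(\phi,x)<+\infty$ implies that $\phi$ is \emph{asymptotically doubling} $\phi$-almost everywhere, since for small $r$
\begin{equation*}
\frac{\phi(B(x,2r))}{\phi(B(x,r))}=\frac{\phi(B(x,2r))/(2r)^h}{\phi(B(x,r))/r^h}\cdot 2^h,
\end{equation*}
and the right-hand side is controlled, in $\limsup$, by $2^h\,\Theta^{h,*}(\phi,x)/\Theta^h_*(\phi,x)<+\infty$. Asymptotic doubling allows one to invoke the standard Vitali covering theorem for Radon measures on metric spaces, and hence a Lebesgue differentiation statement of the form
\begin{equation*}
\lim_{r\to 0}\frac{\phi(B(x,r)\cap B)}{\phi(B(x,r))}=1\qquad\text{for $\phi$-almost every $x\in B$},
\end{equation*}
equivalently $\phi(B(x,r)\setminus B)/\phi(B(x,r))\to 0$ for $\phi$-almost every $x\in B$.

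To conclude, fix a point $x\in B$ at which both the asymptotic doubling, the finiteness of $\Theta^{h,*}(\phi,x)$, and the above Lebesgue point property hold. Then
\begin{equation*}
\frac{\phi(B(x,r)\setminus B)}{r^h}=\frac{\phi(B(x,r)\setminus B)}{\phi(B(x,r))}\cdot\frac{\phi(B(x,r))}{r^h},
\end{equation*}
and since the second factor is bounded above for $r$ sufficiently small (by $\Theta^{h,*}(\phi,x)+1$, say) while the first factor tends to $0$, the product tends to $0$. Substituting into $(\phi\llcorner B)(B(x,r))/r^h=\phi(B(x,r))/r^h-\phi(B(x,r)\setminus B)/r^h$ and taking $\liminf$ and $\limsup$ finishes the proof.

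The only non-elementary ingredient is the Vitali/Lebesgue differentiation statement for asymptotically doubling Radon measures on a Carnot group; this is classical (see e.g.\ Federer or Heinonen), and the deduction of asymptotic doubling from the two-sided density hypothesis is routine, so no real obstacle appears here.
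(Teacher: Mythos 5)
Your proof is correct and follows essentially the same route as the paper, which simply cites the Lebesgue differentiation theorem for asymptotically doubling measures (from Heinonen--Koskela--Shanmugalingam, noting that the two-sided density bounds make $(\mathbb G,d,\phi)$ a Vitali metric measure space); your write-up just makes explicit the deduction of asymptotic doubling and the final algebraic step combining the Lebesgue point property with the finite upper density.
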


\begin{proof}
This is a direct consequence of Lebesgue differentiation Theorem of \cite[page 77]{HeinonenKoskelaShanmugalingam}, that can be applied since $(\mathbb G,d,\phi)$ is a Vitali metric measure space due to \cite[Theorem 3.4.3]{HeinonenKoskelaShanmugalingam}.
\end{proof}

We stress that whenever the $h$-lower density of $\phi$ is stricly positve and the $h$-upper density of $\phi$ is finite $\phi$-almost everywhere, the set $\mathrm{Tan}_h(\phi,x)$ is nonempty for $\phi$-almost every $x\in\mathbb G$, see \cite[Proposition 1.12]{MarstrandMattila20}. The following proposition has been proved in \cite[Proposition 1.13]{MarstrandMattila20}.

\begin{proposizione}[Locality of tangents]\label{prop:LocalityOfTangent}
Let $h>0$, and let $\phi$ be a Radon measure such that for $\phi$-almost every $x\in\mathbb G$ we have
$$
0<\Theta^h_*(\phi,x)\leq \Theta^{h,*}(\phi,x)<\infty.
$$
Then for every $\rho\in L^1(\phi)$ that is nonnegative $\phi$-almost everywhere we have
$\mathrm{Tan}_h(\rho\phi,x)=\rho(x)\mathrm{Tan}_h(\phi,x)$ for $\phi$-almost every $x\in\mathbb G$. More precisely the following holds: for $\phi$-almost every $x\in\mathbb G$ then
\begin{equation}
    \begin{split}
        \text{if $r_i\to 0$ is such that}\quad r_i^{-h}T_{x,r_i}\phi \rightharpoonup \nu\quad \text{then}\quad r_i^{-h}T_{x,r_i}(\rho\phi)\rightharpoonup \rho(x)\nu.  \\
    \end{split}
\end{equation}
\end{proposizione}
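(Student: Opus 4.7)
The plan is to first establish the refined convergence statement ``if $r_i^{-h}T_{x,r_i}\phi\rightharpoonup\nu$ then $r_i^{-h}T_{x,r_i}(\rho\phi)\rightharpoonup\rho(x)\nu$'', because the set-equality $\mathrm{Tan}_h(\rho\phi,x)=\rho(x)\mathrm{Tan}_h(\phi,x)$ follows from it by a vague-precompactness argument: given $\mu\in\mathrm{Tan}_h(\rho\phi,x)$ along $r_i\to 0$, the uniform bound $\phi(B(x,Rr_i))/(Rr_i)^h\leq\Theta^{h,*}(\phi,x)+1$ for $i$ large (plus the fact that $\rho\phi$ has comparable upper density via $\rho\in L^1(\phi)$) makes $\{r_i^{-h}T_{x,r_i}\phi\}$ vaguely precompact, so a subsequence converges to some $\nu$, and the refined statement forces $\mu=\rho(x)\nu$.

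I would work at any $x$ that simultaneously satisfies $0<\Theta^h_*(\phi,x)\leq\Theta^{h,*}(\phi,x)<\infty$ (a $\phi$-full-measure condition by hypothesis) and is a Lebesgue point of $\rho$ with respect to $\phi$, i.e.
\[
\lim_{r\to 0}\frac{1}{\phi(B(x,r))}\int_{B(x,r)}|\rho(y)-\rho(x)|\,d\phi(y)=0,
\]
which also holds $\phi$-a.e.\ by the same Vitali differentiation on $(\mathbb G,d,\phi)$ invoked in \cref{prop:Lebesuge}. Fix such an $x$ and $f\in C_c(\mathbb G)$ with $\supp f\subseteq B(0,R)$. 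Unfolding the definition of $T_{x,r_i}$ and splitting $\rho(y)=\rho(x)+(\rho(y)-\rho(x))$ gives
\[
\int f\,d\bigl(r_i^{-h}T_{x,r_i}(\rho\phi)\bigr)=\rho(x)\int f\,d\bigl(r_i^{-h}T_{x,r_i}\phi\bigr)+E_i,
\]
where $E_i:=r_i^{-h}\int f\bigl(\delta_{1/r_i}(x^{-1}\cdot y)\bigr)(\rho(y)-\rho(x))\,d\phi(y)$. The first summand tends to $\rho(x)\int f\,d\nu$ by the hypothesis on $r_i$, so the whole matter reduces to showing $E_i\to 0$.

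For the error term I would observe that, by left-invariance, $f(\delta_{1/r_i}(x^{-1}\cdot y))=0$ unless $y\in B(x,Rr_i)$, so
\[
|E_i|\leq \|f\|_\infty\,R^h\cdot\frac{\phi(B(x,Rr_i))}{(Rr_i)^h}\cdot\frac{1}{\phi(B(x,Rr_i))}\int_{B(x,Rr_i)}|\rho(y)-\rho(x)|\,d\phi(y).
\]
The middle factor is eventually at most $\Theta^{h,*}(\phi,x)+1<\infty$, while the last factor vanishes as $i\to\infty$ because $x$ is a Lebesgue point of $\rho$, so $E_i\to 0$. The only delicate ingredient is the applicability of the Lebesgue differentiation theorem for $\rho\in L^1(\phi)$; this is precisely the Vitali property of the metric measure space $(\mathbb G,d,\phi)$ already used above, so no new machinery beyond the preliminaries is required. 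Everything else — the split, the support condition on $f$, and the bound via finite upper density — is routine.
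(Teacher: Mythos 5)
Your argument is correct and is essentially the standard proof of locality of tangents: split $\rho(y)=\rho(x)+(\rho(y)-\rho(x))$, control the error by the finite upper $h$-density together with the Lebesgue-point property of $\rho$ (available $\phi$-a.e.\ since $(\mathbb G,d,\phi)$ is a Vitali metric measure space, exactly as invoked for \cref{prop:Lebesuge}), and recover the set equality by weak-* precompactness. The paper does not reprove this statement but defers to \cite[Proposition 1.13]{MarstrandMattila20}, whose proof follows the same route, so nothing further is needed.
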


Let us introduce a useful split of the support of a Radon measure $\phi$ on $\mathbb G$. 
\begin{definizione}\label{def:EThetaGamma}
Let $\phi$ be a Radon measure on $\mathbb G$ that is supported on the compact set $K$. For any $\vartheta,\gamma\in\N$ we define
\begin{equation}
    E(\vartheta,\gamma):=\big\{x\in K:\vartheta^{-1}r^h\leq \phi(B(x,r))\leq \vartheta r^h\text{ for any }0<r<1/\gamma\big\}.
    \label{eq:A1}
\end{equation}
\end{definizione}

Let us stress that \cref{def:EThetaGamma} does not only depend on $\vartheta,\gamma$ but obviously also on $h$. Anyway throughout the proofs of this paper we always assume $h$ to be fixed, and hence we will not stress this dependence in the notation $E(\vartheta,\gamma)$.

\begin{proposizione}\label{prop:cpt}
For any $\vartheta,\gamma\in\N$, the set $E(\vartheta,\gamma)$ defined in \cref{def:EThetaGamma} is compact.
\end{proposizione}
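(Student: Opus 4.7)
The plan is to show that $E(\vartheta,\gamma)$ is closed, since it is contained in the compact set $K$ and closedness in $K$ yields compactness. So take a sequence $\{x_n\}_{n\in\N}\subseteq E(\vartheta,\gamma)$ with $x_n\to x$; by closedness of $K$ we have $x\in K$, and it remains to verify, for every fixed $0<r<1/\gamma$, the sandwich
\begin{equation*}
\vartheta^{-1}r^h \leq \phi(B(x,r)) \leq \vartheta r^h.
\end{equation*}

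The main subtlety is that the map $y\mapsto\phi(B(y,r))$ need not be continuous at $x$, since $\phi$ can put mass on the sphere $\{z:d(x,z)=r\}$. I would therefore bypass continuity by sandwiching with slightly larger or smaller radii at which the condition defining $E(\vartheta,\gamma)$ is still usable. For the upper bound, fix $r'\in(r,1/\gamma)$; by the triangle inequality, for all $n$ large enough that $d(x_n,x)<r'-r$ we have $B(x,r)\subseteq B(x_n,r')$, and since $r'<1/\gamma$ and $x_n\in E(\vartheta,\gamma)$ we get $\phi(B(x,r))\leq\phi(B(x_n,r'))\leq\vartheta (r')^h$. Letting $r'\downarrow r$ yields $\phi(B(x,r))\leq\vartheta r^h$.

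For the lower bound, fix $r'\in(0,r)$; for $n$ large with $d(x_n,x)<r-r'$ we have $B(x_n,r')\subseteq B(x,r)$, whence $\phi(B(x,r))\geq\phi(B(x_n,r'))\geq\vartheta^{-1}(r')^h$. Letting $r'\uparrow r$ yields $\phi(B(x,r))\geq\vartheta^{-1}r^h$. Since $0<r<1/\gamma$ was arbitrary, $x\in E(\vartheta,\gamma)$, which proves closedness and hence compactness.

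The step I expect to be the only point worth caring about is the upper-bound argument: one must choose $r'$ strictly between $r$ and $1/\gamma$ (using that $r<1/\gamma$ is a strict inequality) so that the hypothesis $x_n\in E(\vartheta,\gamma)$ is applicable at radius $r'$. Everything else is routine triangle-inequality manipulation plus taking one-sided limits in $r'$.
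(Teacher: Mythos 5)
Your proof is correct: the two-sided radius perturbation correctly handles the possible discontinuity of $r\mapsto\phi(B(x,r))$ on closed balls, and closedness inside the compact set $K$ gives compactness. The paper itself only cites \cite[Proposition 1.14]{MarstrandMattila20} for this statement, and your argument is precisely the standard one used there.
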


\begin{proof}
This is \cite[Proposition 1.14]{MarstrandMattila20}.
\end{proof}

\begin{proposizione}\label{prop::E}
Assume $\phi$ is a Radon measure supported on the compact set $K$ such that $
0<\Theta^h_*(\phi,x)\leq \Theta^{h,*}(\phi,x)<\infty
$ for $\phi$-almost every $x\in\mathbb G$. Then $\phi(\mathbb{G}\setminus \bigcup_{\vartheta,\gamma\in\N} E(\vartheta,\gamma))=0$.
\end{proposizione}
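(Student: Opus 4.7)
The statement is an easy but important observation: the sets $E(\vartheta,\gamma)$ exhaust the support of $\phi$ up to a $\phi$-null set as soon as the two-sided density bounds hold almost everywhere. The plan is therefore to show the pointwise inclusion
\[
\{x\in K:0<\Theta^h_*(\phi,x)\le \Theta^{h,*}(\phi,x)<+\infty\}\subseteq \bigcup_{\vartheta,\gamma\in\N} E(\vartheta,\gamma),
\]
from which the conclusion is immediate since $\phi$ is supported on $K$ and the complement of the left-hand side in $\mathbb G$ is $\phi$-negligible by hypothesis.

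\textbf{Key step.} Fix a point $x$ in the set on the left-hand side above. Since $\Theta^h_*(\phi,x)>0$ and $\Theta^{h,*}(\phi,x)<+\infty$, we may choose $\vartheta\in\N$ large enough that
\[
\vartheta^{-1} < \Theta^h_*(\phi,x)\qquad\text{and}\qquad \vartheta>\Theta^{h,*}(\phi,x).
\]
Unwinding the definitions of $\liminf$ and $\limsup$ in \cref{def:densities}, there exists some $\eta>0$ such that for every $0<r<\eta$ one has
\[
\vartheta^{-1}\le \frac{\phi(B(x,r))}{r^h}\le \vartheta.
\]
Picking any $\gamma\in\N$ with $1/\gamma\le \eta$, we conclude $x\in E(\vartheta,\gamma)$, which establishes the claimed inclusion.

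\textbf{Conclusion.} Letting $N:=\{x\in\mathbb G: \text{the two-sided density bound fails}\}$, the hypothesis gives $\phi(N)=0$. Since $\phi$ is supported on $K$, and by the above inclusion $K\setminus N\subseteq \bigcup_{\vartheta,\gamma}E(\vartheta,\gamma)$, we obtain
\[
\phi\Big(\mathbb G\setminus \bigcup_{\vartheta,\gamma\in\N} E(\vartheta,\gamma)\Big) \le \phi(N)=0,
\]
as desired. There is no real obstacle here: the whole argument is a direct translation of the pointwise definitions of $\Theta^h_*$ and $\Theta^{h,*}$ into the uniform-in-$r$ control that defines $E(\vartheta,\gamma)$.
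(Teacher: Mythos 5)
Your proof is correct and is essentially the paper's argument: the paper states the same pointwise dichotomy in contrapositive form (a point of $K$ outside every $E(\vartheta,\gamma)$ must have vanishing lower or infinite upper density), while you verify the direct inclusion by explicitly choosing $\vartheta$ and $\gamma$ from the $\liminf$/$\limsup$ definitions. No gaps; the extra detail you supply is exactly what the paper leaves implicit.
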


\begin{proof}
Let $w\in K\setminus \bigcup_{\vartheta,\gamma} E(\vartheta,\gamma)$ and note that this implies that either $\Theta^h_*(\phi,x)=0$ or $\Theta^{h,*}(\phi,x)=\infty$. Since $
0<\Theta^h_*(\phi,x)\leq \Theta^{h,*}(\phi,x)<\infty
$ for $\phi$-almost every $x\in\mathbb G$, this concludes the proof.
\end{proof}

We recall here a useful proposition about the structure of Radon measures.
\begin{proposizione}[{\cite[Proposition 1.17 and Corollary 1.18]{MarstrandMattila20}}]\label{prop:MutuallyEthetaGamma}
Let $\phi$ be a Radon measure supported on a compact set on $\mathbb{G}$ such that $0<\Theta^h_*(\phi,x)\leq \Theta^{h,*}(\phi,x)<\infty $ for $\phi$-almost every $x\in \mathbb{G}$. For every $\vartheta,\gamma\in \mathbb N$ we have that $\phi\llcorner E(\vartheta,\gamma)$ is mutually absolutely continuous with respect to $\mathcal{S}^h\llcorner E(\vartheta,\gamma)$.

\end{proposizione}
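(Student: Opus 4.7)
The plan is to establish two-sided bounds between $\phi \llcorner E(\vartheta,\gamma)$ and $\mathcal{S}^h \llcorner E(\vartheta,\gamma)$ using the density bounds encoded in the definition of $E(\vartheta,\gamma)$. This is a standard argument in the spirit of \cite[Section 6]{Mattila1995GeometrySpaces}, but I must be careful because the definition of $\mathcal{S}^h$ does not require the centers of the covering balls to lie in the set.

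First, I would prove the bound $\phi(A) \leq 2^h \vartheta\, \mathcal{S}^h(A)$ for every Borel $A\subseteq E(\vartheta,\gamma)$. Fix $\delta<1/(2\gamma)$ and an arbitrary covering $A\subseteq \bigcup_j B(x_j,r_j)$ with $r_j\leq\delta$. For every index $j$ with $B(x_j,r_j)\cap A\neq\emptyset$, choose $y_j\in B(x_j,r_j)\cap A\subseteq E(\vartheta,\gamma)$; then $B(x_j,r_j)\subseteq B(y_j,2r_j)$ and $2r_j<1/\gamma$, so by the very definition of $E(\vartheta,\gamma)$ we have $\phi(B(x_j,r_j))\leq \phi(B(y_j,2r_j))\leq \vartheta (2r_j)^h$. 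Summing and passing to the infimum over coverings and $\delta\to 0$ yields the claim. In particular, $\mathcal{S}^h(A)=0$ implies $\phi(A)=0$.

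Next, I would prove that $\phi(A)=0$ implies $\mathcal{S}^h(A)=0$ for every Borel $A\subseteq E(\vartheta,\gamma)$. Fix $\varepsilon>0$ and, using outer regularity of the Radon measure $\phi$, choose an open set $U\supseteq A$ with $\phi(U)<\varepsilon$. Fix $\delta>0$ small. For each $x\in A$ pick a radius $r(x)<\delta$ so small that $B(x,5r(x))\subseteq U$ and $5r(x)<1/\gamma$. The family $\mathcal F=\{B(x,r(x))\}_{x\in A}$ is a fine covering of $A$, so by the $5r$-covering (Vitali) theorem in the doubling space $(\mathbb G,d)$ we extract a countable disjoint subfamily $\{B(x_j,r_j)\}$ with $A\subseteq \bigcup_j B(x_j,5r_j)$. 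Since $x_j\in E(\vartheta,\gamma)$ and $r_j<1/\gamma$, the lower-density inequality gives $r_j^h\leq \vartheta\,\phi(B(x_j,r_j))$. Using disjointness and $B(x_j,r_j)\subseteq U$, we obtain
\begin{equation*}
\sum_j (5r_j)^h \leq 5^h\vartheta \sum_j \phi(B(x_j,r_j)) \leq 5^h \vartheta\, \phi(U) < 5^h\vartheta\,\varepsilon.
\end{equation*}
Since $5r_j<5\delta$, this produces an admissible cover at scale $5\delta$, hence the $5\delta$-approximation of $\mathcal{S}^h(A)$ is bounded by $5^h\vartheta\varepsilon$. Taking $\delta\to 0$ and then $\varepsilon\to 0$ gives $\mathcal{S}^h(A)=0$.

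Combining the two implications, $\phi\llcorner E(\vartheta,\gamma)$ and $\mathcal{S}^h\llcorner E(\vartheta,\gamma)$ are mutually absolutely continuous. The main technical point to be careful about is the first step, where the centers of the balls coming from the spherical Hausdorff cover need not belong to $E(\vartheta,\gamma)$; this is resolved by the doubling trick of replacing $x_j$ by a point $y_j\in A\cap B(x_j,r_j)$ at the cost of a factor $2^h$. For the second step the only nontrivial ingredient is a Vitali-type covering theorem, which is available in Carnot groups because the metric $d$ is doubling. Once these are in place, the proof is essentially a bookkeeping argument.
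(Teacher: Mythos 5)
Your proof is correct. The paper does not actually prove this proposition but delegates it to \cite[Proposition 1.17 and Corollary 1.18]{MarstrandMattila20}; your two-sided density comparison --- the $2^h$ recentering trick to handle covering balls whose centers may fall outside $E(\vartheta,\gamma)$ in one direction, and outer regularity plus the $5r$-covering lemma combined with the lower density bound in the other --- is exactly the standard argument behind the cited result, so there is nothing to flag.
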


\subsection{Intrinsic Grassmannian in Carnot groups}

Let us recall the definition of the Euclidean Grassmannian, along with some of its properties.
\begin{definizione}[Euclidean Grassmannian]\label{def:EuclideanGrassmannian}
Given $k\leq n$ we let $\G(n,k)$ to be the set of the $k$-vector subspaces of $\mathbb R^n$. We endow $\G(n,k)$ with the following distance
$$
d_{\mathrm{eu}}(V_1,V_2):=d_{H,\mathrm{eu}}\left(V_1\cap B_{\mathrm{eu}}(0,1),V_2\cap B_{\mathrm{eu}}(0,1)\right),
$$
where $B_{\mathrm{eu}}(0,1)$ is the (closed) Euclidean unit ball, and $d_{H,\mathrm{eu}}$ is the Hausdorff distance between sets induced by the Euclidean distance on $\R^n$.
\end{definizione}

\begin{osservazione}[Euclidean Grassmannian and convergence]\label{rem:CompactnessEuclideanGrassmannianAndApproximatingVn}
It is well-known that the metric space $(\G(n,k),d_{H,\mathrm{eu}})$ is compact. 
Moreover, the following hold
\begin{enumerate}
    \item[(i)] if $V_n\to V$, then for every $v\in V$ there exist $v_n\in V_n$ such that $v_n\to v$;
    \item[(ii)] if $V_n\to V$ and there is a sequence $v_n\in V_n$ such that $v_n\to v$, then $v\in V$.
\end{enumerate}
The proof of the two items above is left to the reader as an exercise. 
\end{osservazione}

We now give the definition of the intrinsic Grassmannian on Carnot groups and introduce the class of complemented homogeneous subgroups.

\begin{definizione}[Intrinsic Grassmanian on Carnot groups]\label{def:Grassmannian}
For any $1\leq h\leq Q$, we define $\G(h)$ to be the family of homogeneous subgroups $\mathbb V$ of $\mathbb{G}$ that have Hausdorff dimension $h$. 

Let us recall that if $\mathbb{V}$ is a homogeneous subgroup of $\mathbb{G}$, any other homogeneous subgroup $\mathbb L$ such that
$$
\mathbb{V}\cdot\mathbb{L}=\mathbb{G}\qquad \text{and}\qquad \mathbb{V}\cap \mathbb{L}=\{0\}.
$$
is said to be a \emph{complement} of $\mathbb{G}$. We let $\G_c(h)$ to be the subfamily of those $\mathbb V\in\G(h)$ that have a complement and we will refer to $\G_c(h)$ as the $h$-dimensional \emph{complemented} Grassmanian. 
\end{definizione}
Let us introduce the stratification vector of a homogeneous subgroup.
\begin{definizione}[Stratification vector]\label{def:stratification}
Let $h\in\{1,\ldots,Q\}$ and for any $\mathbb{V}\in\G(h)$ we denote with $\mathfrak{s}(\mathbb{V})$ the vector
$$
\mathfrak{s}(\mathbb{V}):=(\dim(V_1\cap\mathbb{V}),\ldots, \dim(V_\kappa\cap \mathbb{V})),
$$
that with abuse of language we call the {\em stratification}, or the {\em stratification vector}, of $\mathbb{V}$. Furthermore, we define
$$
\mathfrak{S}(h):=\{\mathfrak{s}(\mathbb{V})\in\N^\kappa:\mathbb{V}\in \G(h)\}.
$$
We remark that the cardinality of $\mathfrak{S}(h)$ is bounded by $\prod_{i=1}^\kappa (\dim V_i +1)$ for any $h\in\{1,\ldots,Q\}$.
\end{definizione}

We now collect in the following result some topological properties of the Grassmanians introduced above.

\begin{proposizione}[Compactness of the Grassmannian]\label{prop:CompGrassmannian}
For any $1\leq h\leq Q$ the function
$$
d_{\mathbb G}(\mathbb W_1,\mathbb W_2):=d_{H,\mathbb G}(\mathbb W_1\cap B(0,1),\mathbb W_2\cap B(0,1)),
$$
with $\mathbb W_1,\mathbb W_2\in \G(h)$, is a distance on $\G(h)$. Moreover $(\G(h),d_{\mathbb G})$ is a compact metric space. 
\end{proposizione}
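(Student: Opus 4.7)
The plan is first to verify that $d_{\mathbb G}$ is a genuine distance on $\G(h)$, and then to establish sequential compactness by reducing to the Euclidean Grassmannians on each layer $V_i$.

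For the distance properties, symmetry and triangle inequality follow at once from those of the Hausdorff distance $d_{H,\mathbb G}$ induced by $d$. The only non-trivial point is the identity of indiscernibles: if $d_{\mathbb G}(\mathbb W_1,\mathbb W_2)=0$, then $\mathbb W_1\cap B(0,1)=\mathbb W_2\cap B(0,1)$ because both sets are closed (since homogeneous subgroups are closed), and since each $\mathbb W_i$ is $\delta_\lambda$-invariant we recover $\mathbb W_i=\bigcup_{\lambda>0}\delta_\lambda(\mathbb W_i\cap B(0,1))$, giving $\mathbb W_1=\mathbb W_2$.

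For compactness, I would start from an arbitrary sequence $\{\mathbb W_n\}\subseteq\G(h)$. Because the set $\mathfrak S(h)$ of admissible stratification vectors is finite (\cref{def:stratification}), after extraction I may assume $\mathfrak s(\mathbb W_n)=(k_1,\dots,k_\kappa)$ for all $n$, with $\sum_{i=1}^\kappa i\,k_i=h$. For every $i=1,\dots,\kappa$, the intersection $V_i\cap\mathbb W_n$ belongs to the compact Euclidean Grassmannian $\G(n_i,k_i)$, so up to a further subsequence $V_i\cap\mathbb W_n\to V_i^\infty$ in the sense of \cref{rem:CompactnessEuclideanGrassmannianAndApproximatingVn}, with $\dim V_i^\infty=k_i$. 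I then define
\[
\mathbb W:=V_1^\infty\oplus V_2^\infty\oplus\dots\oplus V_\kappa^\infty\subseteq \mathbb G,
\]
which is $\delta_\lambda$-invariant since each $V_i^\infty\subseteq V_i$ and $\delta_\lambda$ acts on $V_i$ as scalar multiplication by $\lambda^i$. The next step is to show that $\mathbb W$ is closed under the group product. Given $x,y\in\mathbb W$, decompose $x=\sum x_i$, $y=\sum y_i$ with $x_i,y_i\in V_i^\infty$ and, invoking item (i) of \cref{rem:CompactnessEuclideanGrassmannianAndApproximatingVn}, pick $x_{i,n},y_{i,n}\in V_i\cap\mathbb W_n$ with $x_{i,n}\to x_i$, $y_{i,n}\to y_i$. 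Then $x_n:=\sum_i x_{i,n}\in\mathbb W_n$ and $y_n:=\sum_i y_{i,n}\in\mathbb W_n$, so $x_n\cdot y_n\in\mathbb W_n$ converges to $x\cdot y$ by continuity of the BCH product. The same decomposition argument combined with item (ii) of \cref{rem:CompactnessEuclideanGrassmannianAndApproximatingVn} shows that any limit of elements $z_n\in\mathbb W_n$ lies in $\mathbb W$: writing $z_n=\sum z_{i,n}$ with $z_{i,n}\in V_i\cap\mathbb W_n$, the projections $z_{i,n}\to z_i$ satisfy $z_i\in V_i^\infty$. Hence $x\cdot y\in\mathbb W$, and together with the obvious presence of the origin and the fact that $\mathbb W_n^{-1}=\mathbb W_n$ passes to the limit, $\mathbb W$ is a homogeneous subgroup. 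Its homogeneous dimension is $\sum i\,k_i=h$, so $\mathbb W\in\G(h)$.

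It remains to verify $d_{\mathbb G}(\mathbb W_n,\mathbb W)\to 0$. For the first Hausdorff half, given $x\in\mathbb W\cap B(0,1)$ with $x\neq 0$, I construct approximants $x_n\in\mathbb W_n$ as above with $x_n\to x$, and then replace them by $\tilde x_n:=\delta_{\|x\|/\|x_n\|}x_n\in\mathbb W_n\cap B(0,1)$ (with $\|\tilde x_n\|=\|x\|\leq 1$), which still converges to $x$ because $\|x\|/\|x_n\|\to 1$; the case $x=0$ is trivial since $0\in\mathbb W_n\cap B(0,1)$. This gives $\sup_{x\in\mathbb W\cap B(0,1)}\dist(x,\mathbb W_n\cap B(0,1))\to 0$, via a standard compactness-contradiction argument based on the compactness of $\mathbb W\cap B(0,1)$. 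For the opposite inclusion, suppose by contradiction there exist $y_n\in\mathbb W_n\cap B(0,1)$ with $\dist(y_n,\mathbb W\cap B(0,1))\geq\alpha>0$; by compactness of $B(0,1)$ extract $y_n\to y$, and apply the decomposition-plus-(ii) argument to conclude $y\in\mathbb W\cap B(0,1)$, contradicting the lower bound. The step I expect to require most care is the verification that $\mathbb W$ is a subgroup — specifically the closure-under-limits claim used both there and in the Hausdorff convergence — because it relies on the interplay between the stratified decomposition, the Euclidean convergence $V_i\cap\mathbb W_n\to V_i^\infty$, and the fact that the stratification vector is locked across the subsequence.
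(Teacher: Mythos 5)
Your proof is correct and follows essentially the same strategy as the paper: fix the stratification vector along a subsequence, use compactness of the Euclidean Grassmannian on each layer $V_i$, define the limit as the direct sum of the layer limits, and verify the subgroup property via the two items of \cref{rem:CompactnessEuclideanGrassmannianAndApproximatingVn}. The only (harmless) deviation is that you establish $d_{\mathbb G}(\mathbb W_n,\mathbb W)\to 0$ directly by a rescaling-plus-compactness argument, whereas the paper first proves Euclidean Hausdorff convergence of the unit-ball intersections and then transfers it to the homogeneous metric via the comparison $d\leq C d_{\mathrm{eu}}^{1/\kappa}$.
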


\begin{proof}
The fact that $d_{\mathbb G}$ is a distance comes from well-known properties of the Hausdorff distance. 
Let us consider a sequence $\{\mathbb W_j\}_{j\in\N}\subseteq \G(h)$, with $\mathbb W_j=W_{j,1}\oplus\dots\oplus W_{j,\kappa}$, where $W_{j,i}:=V_i\cap \mathbb{W}_j$ for any $j\in\N$ and $1\leq i\leq \kappa$. By extracting a (non re-labelled) subsequence we can suppose that there exist $\{k_i\}_{i=1,\dots,\kappa}$ natural numbers such that the topological dimension is $\dim W_{j,i}=k_i$ for all $j\in\mathbb N$, and for all $1\leq i \leq \kappa$. In particular the topological dimension of $\mathbb W_j$ is constant. 
Exploiting the compactness of the Euclidean Grassmannian, see \cref{rem:CompactnessEuclideanGrassmannianAndApproximatingVn}, we get that up to a (non re-labelled) subsequence, 
\begin{equation}\label{eqn:PartialConvergence}
W_{j,i}\to W_i, \quad \mbox{i.e.} \quad d_{\mathrm{eu}}(W_{j,i},W_i)\to0 \quad \text{for any } 1\leq i\leq \kappa,
\end{equation}
where the convergence is meant in the Euclidean Grassmannian $\G(k_i,V_i)$. As a consequence
\begin{equation}\label{eqn:TotalConvergence}
W_j=W_{j,1}\oplus\dots\oplus W_{j,\kappa} \to W=W_1\oplus\dots\oplus W_\kappa, \quad \mbox{i.e.,} \quad d_{H,\mathrm{eu}}(W_j,W)\to 0, 
\end{equation}
where the convergence is meant in the Euclidean Grassmannian $\G(\sum_{i=1}^\kappa k_i,n)$. The previous equality is a consequence of \eqref{eqn:PartialConvergence} and the following observation: if $V$ and $W$ are two orthogonal linear subspaces such that $\mathbb R^n=V\oplus W$, and $A,B$ are vector subspaces of $V$, and $C,D$ are vector subspaces of $W$, then $$d_{\mathrm{eu}}(A\oplus C,B\oplus D)\leq d_{\mathrm{eu}}(A,B)+d_{\mathrm{eu}}(C,D),$$ where the direct sums above are orthogonal too. 
Let us notice that, from \eqref{eqn:TotalConvergence} it follows that
\begin{equation}\label{eqn:ConvergenceToW}
d_{H,\mathrm{eu}}(W_j\cap B(0,1),W\cap B(0,1))\to 0,
\end{equation}
where we stress that $B(0,1)$ is the closed unit ball in the homogeneous left-invariant metric $d$. The proof of \eqref{eqn:ConvergenceToW} can be reached by contradiction exploiting \eqref{eqn:TotalConvergence} and the fact that $B(0,1)$ is compact. We leave the routine details to the reader.

In order to conclude the proof, we need to show that 
\begin{equation}\label{eqn:ConvergenceToW2}
d_{\mathbb G}(W_j\cap B(0,1),W\cap B(0,1))\to 0.
\end{equation}
Indeed, on the compact set $B(0,1)$, one has $d\leq Cd_{\mathrm{eu}}^{1/\kappa}$ for some constant $C>0$, see for instance \cite[Proposition 2.15]{SC16}. This means that for subsets contained in $B(0,1)$ one has $d_{H}\leq Cd_{H,\mathrm{eu}}^{1/s}$. This last inequality with \eqref{eqn:ConvergenceToW} gives \eqref{eqn:ConvergenceToW2}. Finally from \eqref{eqn:ConvergenceToW2} we get, by the very definition of $d_{\mathbb G}$,
$$
d_{\mathbb G}(\mathbb W_j,\mathbb W)\to 0.
$$
If we show that $\mathbb W$ is a homogeneous subgroup of homogeneous dimension $h$ we are done. The homogeneity comes from the fact that $W$ admits a stratification \eqref{eqn:TotalConvergence}, while the homogeneous dimension is fixed because it depends on the dimensions of $W_i$ that are all equal to $k_i$. Let us prove $\mathbb W$ is a subgroup. First of all $\mathbb W$ is inverse-closed, because $\mathbb W=\exp W$, and $W$ is a vector space. Now take $a,b\in \mathbb W$. By the first point of \cref{rem:CompactnessEuclideanGrassmannianAndApproximatingVn} we find $a_n,b_n\in \mathbb W_n$ such that $a_n\to a$, and $b_n\to b$. Then, by continuity of the operation, $a_n\cdot b_n \to a\cdot b$, and $a_n\cdot b_n\in \mathbb W_n$. Then from the second point of \cref{rem:CompactnessEuclideanGrassmannianAndApproximatingVn} we get that $a\cdot b\in \mathbb W$.
\end{proof}

\begin{proposizione}\label{prop:htagliato}
There exists a constant $\hbar_\mathbb{G}>0$, depending only on $\mathbb{G}$, such that if $\mathbb{W},\mathbb{V}\in\G(h)$ and $d_{\mathbb{G}}(\mathbb{V},\mathbb{W})\leq \hbar_{\mathbb{G}}$, then
$\mathfrak{s}(\mathbb{V})=\mathfrak{s}(\mathbb{W})$.
\end{proposizione}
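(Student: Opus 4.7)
The plan is to argue by contradiction, using the finiteness of the set $\mathfrak{S}(h)$ together with the compactness of $\G(h)$ established in \cref{prop:CompGrassmannian}. The fact that $\mathfrak{s}$ takes values in the finite set $\mathfrak{S}(h)$ (of cardinality at most $\prod_i(\dim V_i+1)$, as noted in \cref{def:stratification}) is what allows us to promote ``stratification is locally constant'' to ``stratification is constant on a uniform neighborhood''.

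Suppose no such $\hbar_\mathbb{G}$ exists. Then we can find sequences $\{\mathbb V_j\},\{\mathbb W_j\}\subseteq \G(h)$ with $d_{\mathbb G}(\mathbb V_j,\mathbb W_j)\to 0$ but $\mathfrak s(\mathbb V_j)\ne \mathfrak s(\mathbb W_j)$ for every $j$. Since $\mathfrak S(h)$ is finite, I would pass to a subsequence along which $\mathfrak s(\mathbb V_j)\equiv s$ and $\mathfrak s(\mathbb W_j)\equiv s'$ are both constant with $s\ne s'$. By compactness of $(\G(h),d_{\mathbb G})$ from \cref{prop:CompGrassmannian}, extract a further subsequence so that $\mathbb V_j\to \mathbb V_\infty$ and $\mathbb W_j\to \mathbb W_\infty$ in $d_{\mathbb G}$. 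The triangle inequality forces $\mathbb V_\infty=\mathbb W_\infty$.

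It remains to show that $\mathfrak s(\mathbb V_\infty)=s$ and $\mathfrak s(\mathbb W_\infty)=s'$, which contradicts $\mathbb V_\infty=\mathbb W_\infty$. This is where the main work lies, but it is already essentially contained in the proof of \cref{prop:CompGrassmannian}. Writing $s=(k_1,\dots,k_\kappa)$, the layers $V_i\cap\mathbb V_j$ all have topological dimension $k_i$; by compactness of the Euclidean Grassmannian $\G(k_i,V_i)$ I extract a subsequence so that $V_i\cap\mathbb V_j\to U_i$ with $\dim U_i=k_i$ (Euclidean Grassmannian convergence preserves dimension). The argument in \cref{prop:CompGrassmannian} (based on the local comparison $d\le C\, d_{\mathrm{eu}}^{1/\kappa}$ and $d_{\mathrm{eu}}\le Cd$ on $B(0,1)$) then identifies $\mathbb V_\infty$ with $U_1\oplus\cdots\oplus U_\kappa$, and since $d_{\mathbb G}$-limits are unique this agrees with the original limit. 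Hence $\mathfrak s(\mathbb V_\infty)=(k_1,\dots,k_\kappa)=s$. The same reasoning applied to $\{\mathbb W_j\}$ yields $\mathfrak s(\mathbb W_\infty)=s'$, contradicting $s\ne s'$.

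The main (and essentially only) obstacle is to be careful that $d_{\mathbb G}$-convergence of homogeneous subgroups, combined with constancy of their stratification vectors, really does transfer the stratification to the limit; but this is built into the proof of \cref{prop:CompGrassmannian} and requires no new ideas. Once this transfer is in place, the proof is a clean compactness-plus-pigeonhole argument.
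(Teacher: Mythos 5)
Your proof is correct and follows essentially the same route as the paper's: contradiction, pigeonholing on the finitely many stratification vectors, compactness of $(\G(h),d_{\mathbb G})$ with the observation that the limit inherits the (constant) stratification of the sequence, and uniqueness of limits. The only detail the paper adds is that this argument a priori gives a constant depending on $h$ as well, which is removed by taking the minimum over the finitely many values $h\in\{1,\dots,Q\}$.
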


\begin{proof}
Let us fix $1\leq h\leq Q$. Let us suppose by contradiction that there exist $\mathbb V_i$ and $\mathbb W_i$ in $\G(h)$ such that, for every $i\in\mathbb N$, the stratification of $\mathbb V_i$ is different from $\mathbb W_i$ and such that $d_{\mathbb{G}}(\mathbb V_i,\mathbb W_i)\to 0$. Up to extract two (non re-labelled) subsequences we can assume that the $\mathbb V_i$'s have the same stratification for every $i\in \mathbb N$, as well as the $\mathbb W_i$'s. Then, by compactness, see the proof of \cref{prop:CompGrassmannian}, we can assume up to passing to a (non re-labelled) subsequence that $\mathbb W_i\to \mathbb W$ where $\mathbb W$ has the same stratification of the $\mathbb W_i$'s, and $\mathbb V_i\to \mathbb V$ where $\mathbb V$ has the same stratification of the $\mathbb V_i$'s. Since $d_{\mathbb{G}}(\mathbb V_i,\mathbb W_i)\to 0$ we get that $d_{\mathbb{G}}(\mathbb V,\mathbb W)=0$ and then $\mathbb V=\mathbb W$ but this is a contradiction since they have different stratifications. This proves the existence of a constant $\hbar$ that depends both on $\mathbb{G}$ and $h$. However, taking the minimum over $h$ of such $\hbar$'s, the dependence on $h$ is eliminated. 
\end{proof}

\begin{proposizione}\label{prop:haar}
    Suppose $\mathbb{V}\in\G(h)$ is a homogeneous subgroup of topological dimension $d$. Then $\mathcal{S}^h\llcorner \mathbb{V}$, $\mathcal{H}^h\llcorner\mathbb{V}$, $\mathcal{C}^h\llcorner\mathbb{V}$ and $\mathcal{H}^d_{\mathrm{eu}}\llcorner \mathbb{V}$ are Haar measures of $\mathbb{V}$. Furthermore, any Haar measure $\lambda$ of $\mathbb{V}$ is $h$-homogeneous in the sense that
    $$\lambda(\delta_r(E))=r^h\lambda(E),\qquad\text{for any Borel set }E\subseteq \mathbb{V}.$$
\end{proposizione}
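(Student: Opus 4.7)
The plan is to verify that each of the four measures is a left-invariant Radon measure on $\mathbb{V}$ that is finite and non-zero on compact neighborhoods of the identity, so that each qualifies as a Haar measure of the locally compact Lie group $\mathbb{V}$; the homogeneity statement then follows from uniqueness of Haar measure up to multiplicative constants together with the $h$-homogeneity of $\mathcal{S}^h$ under $\delta_r$.

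First, I would handle the three intrinsic measures $\mathcal{S}^h\llcorner\mathbb{V}$, $\mathcal{H}^h\llcorner\mathbb{V}$, $\mathcal{C}^h\llcorner\mathbb{V}$. For any $p\in\mathbb{V}$, the left translation $\tau_p$ is an isometry of $(\mathbb{G},d)$ and maps $\mathbb{V}$ bijectively onto $\mathbb{V}$ (since $\mathbb{V}$ is a subgroup). Because the defining covers of these measures are preserved under isometries sending balls to balls of the same radius, one obtains $\tau_{p\,*}(\mathcal{S}^h\llcorner\mathbb{V})=\mathcal{S}^h\llcorner\mathbb{V}$ (and similarly for $\mathcal{H}^h\llcorner\mathbb{V}$ and $\mathcal{C}^h\llcorner\mathbb{V}$). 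Finiteness on compact subsets of $\mathbb{V}$ and strict positivity on balls follow from the fact, recalled in \cref{sub:Carnot}, that the Hausdorff dimension of $\mathbb{V}$ with respect to $d$ equals $h$, so that $0<\mathcal{S}^h(\mathbb{V}\cap B(0,1))<+\infty$, and from the equivalence of $\mathcal{S}^h,\mathcal{H}^h,\mathcal{C}^h$.

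Second, I would deal with $\mathcal{H}^d_{\mathrm{eu}}\llcorner\mathbb{V}$. Working in the exponential coordinates, \eqref{opgr} shows that for fixed $p$ the map $q\mapsto p\cdot q$ is polynomial in $q$ and its Jacobian matrix is block-triangular with identity blocks on the diagonal, because $\mathscr{Q}_i(p,q)$ depends only on $q_1,\dots,q_{i-1}$. Hence $\tau_p$ preserves the $n$-dimensional Lebesgue measure on $\mathbb{G}$, and choosing a linear complement to $\log\mathbb{V}$ in $\mathfrak g$ and integrating along fibers, one concludes that $\tau_p$ restricted to $\mathbb{V}$ preserves the $d$-dimensional Euclidean Hausdorff measure on the linear subspace $\log\mathbb{V}\subseteq\mathbb{R}^n$. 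Since $\mathbb{V}$ is a $d$-dimensional embedded submanifold, $\mathcal{H}^d_{\mathrm{eu}}\llcorner\mathbb{V}$ is finite on compact sets and positive on open sets of $\mathbb{V}$.

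Third, once each measure is a Radon, left-invariant, non-trivial measure on the locally compact group $\mathbb{V}$, the classical uniqueness of the Haar measure gives that each of them is a positive multiple of a fixed Haar measure, and in particular each is itself a Haar measure. For the last statement, if $\lambda$ is any Haar measure on $\mathbb{V}$, uniqueness yields $\lambda=c\,\mathcal{S}^h\llcorner\mathbb{V}$ for some $c>0$; by definition of $\mathcal{S}^h$ and the identity $\delta_r B(x,s)=B(\delta_r x,rs)$ one has $\mathcal{S}^h(\delta_r E)=r^h\mathcal{S}^h(E)$ for every Borel $E\subseteq\mathbb{V}$, whence $\lambda(\delta_r E)=r^h\lambda(E)$. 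The only mildly delicate point is the Jacobian computation for $\mathcal{H}^d_{\mathrm{eu}}\llcorner\mathbb{V}$; everything else is a direct consequence of the left-invariance of $d$ and of the homogeneity of $\mathcal{S}^h$ under $\delta_r$.
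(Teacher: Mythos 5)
Your overall strategy coincides with the paper's: left translations are isometries of $(\mathbb G,d)$ preserving $\mathbb V$, so $\mathcal{S}^h\llcorner\mathbb{V}$, $\mathcal{H}^h\llcorner\mathbb{V}$ and $\mathcal{C}^h\llcorner\mathbb{V}$ are left-invariant (and nontrivial, Radon), the Euclidean measure is handled by a Jacobian computation, and the $h$-homogeneity follows from uniqueness of the Haar measure combined with the evident scaling of $\mathcal{S}^h$ under $\delta_r$. The one step I would not accept as written is the passage from ``$\tau_p$ preserves $\mathcal{L}^n$ on $\mathbb G$'' to ``$\tau_p|_{\mathbb V}$ preserves $\mathcal{H}^d_{\mathrm{eu}}\llcorner\mathbb V$'' by \emph{integrating along fibers}: a volume-preserving diffeomorphism that maps a submanifold onto itself need not preserve the induced surface measure (it can stretch tangentially while compressing normally), so the ambient Jacobian being $1$ only gives $J^{\mathrm{tan}}\cdot J^{\perp}=1$, not $J^{\mathrm{tan}}=1$. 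What is actually needed — and what the paper invokes via \cite[Lemma 2.20]{FranchiSerapioni16} — is that the \emph{tangential} Jacobian of $\tau_p$ along $\mathbb V$ equals $1$. This follows from the same structural observation you already made, applied to the restriction rather than to the ambient map: since $\mathbb V$ is homogeneous, $\exp^{-1}(\mathbb V)=\bigoplus_i\big(V_i\cap\exp^{-1}(\mathbb V)\big)$ is graded, the product \eqref{opgr} restricted to $\mathbb V$ has the same triangular form, and hence $\tau_p|_{\mathbb V}:\mathbb V\to\mathbb V$ is unipotent-triangular in a graded basis of $\exp^{-1}(\mathbb V)$, with unit Jacobian. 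With that replacement your argument is complete and is essentially the paper's proof.
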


\begin{proof}
This follows from the fact that the Hausdorff, the  spherical Hasudorff, and the centered Hausdorff measures introduced in \cref{def:Haus} are invariant under left-translations and thus on the one hand they are Haar measures of $\mathbb{V}$. Furthermore, one can show by an explicit computation that the Lebesgue measure $\mathcal{L}^d$ restricted to the vector space $\exp^{-1}(\mathbb{V})$ is a Haar measure. Indeed, this last assertion comes from the fact that for every $v\in\mathbb V$ the map $p\to v\cdot p$ has unitary Jacobian determinant when seen as a map from $\mathbb V$ to $\mathbb V$, see \cite[Lemma 2.20]{FranchiSerapioni16}. Thus since when seen $\mathbb{V}$ as immersed in $\R^n$ we have that the Lebesgue measure of $\mathbb{V}$ coincides with $\mathcal{H}^d_{\mathrm{eu}}\llcorner \mathbb{V}$, we conclude that $\mathcal{H}^d_{\mathrm{eu}}\llcorner \mathbb{V}$ is a Haar measure of $\mathbb{V}$ as well. The last part of the proposition comes from the fact that the property is obvious by definition for the spherical Hausdorff measure, and the fact that all the Haar measures are the same up to a constant.
\end{proof}

We now introduce the projections related to a splitting $\mathbb G=\mathbb V\cdot\mathbb L$ of the group.

\begin{definizione}[Projections related to a splitting]\label{def:Projections}
For any $\mathbb V\in \G_c(h)$ with a homogeneous complement $\mathbb L$, we can find two unique elements $g_{\mathbb V}:=P_\mathbb V g\in \mathbb V$ and $g_{\mathbb L}:=P_{\mathbb L}g\in \mathbb L$ such that
$$
g=P_\mathbb V (g)\cdot P_{\mathbb L}(g)=g_{\mathbb V}\cdot g_{\mathbb L}.
$$
We will refer to $P_{\mathbb V}(g)$ and $P_{\mathbb L}(g)$ as the \emph{splitting projections}, or simply {\em projections}, of $g$ onto $\mathbb V$ and $\mathbb L$, respectively.
\end{definizione}
We recall here below a very useful fact on splitting projections.
\begin{proposizione}\label{prop:InvarianceOfProj}
Let us fix $\mathbb V\in\G_c(h)$ and $\mathbb L$ two complementary homogeneous subgroups of a Carnot group $\mathbb G$. Then, for any $x\in\mathbb{G}$ the map $\Psi:\mathbb{V}\to\mathbb{V}$ defined as $\Psi(z):=P_\mathbb{V}(xz)$ is invertible and it has unitary Jacobian. As a consequence $\mathcal{S}^h(P_{\mathbb V}(\mathcal{E}))=\mathcal{S}^h(P_{\mathbb V}(xP_{\mathbb V}(\mathcal{E})))=\mathcal{S}^h(P_{\mathbb V}(x\mathcal{E}))$ for every $x\in \mathbb G$ and $\mathcal{E}\subseteq \mathbb G$ Borel.
\end{proposizione}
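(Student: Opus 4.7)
The plan is in three steps: invertibility, Jacobian computation, then the measure-theoretic corollary.

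For invertibility, decompose $x = x_{\mathbb{V}} \cdot x_{\mathbb{L}}$ so that for any $z \in \mathbb{V}$, writing $x_{\mathbb{L}} z = P_{\mathbb{V}}(x_{\mathbb{L}} z) \cdot P_{\mathbb{L}}(x_{\mathbb{L}} z)$ yields the explicit formula $\Psi(z) = x_{\mathbb{V}} \cdot P_{\mathbb{V}}(x_{\mathbb{L}} z)$. To invert, given $v \in \mathbb{V}$ I would solve $xz = v \cdot \ell$ with $z \in \mathbb{V}$, $\ell \in \mathbb{L}$: writing $x^{-1} v = P_{\mathbb{V}}(x^{-1} v) \cdot P_{\mathbb{L}}(x^{-1} v)$, the choice $\ell = P_{\mathbb{L}}(x^{-1} v)^{-1} \in \mathbb{L}$ forces $z = P_{\mathbb{V}}(x^{-1} v)$, showing that $\Psi$ is a bijection of $\mathbb{V}$ with inverse $v \mapsto P_{\mathbb{V}}(x^{-1} v)$.

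For the Jacobian, I would first reduce to the case $x \in \mathbb{L}$: by the formula above, $\Psi$ is the composition of left translation $L_{x_{\mathbb{V}}}$ on $\mathbb{V}$, which has unit Jacobian by Lemma~2.20 of \cite{FranchiSerapioni16} (already invoked in \cref{prop:haar}), with the analogous map associated to $x_{\mathbb{L}} \in \mathbb{L}$. Assuming $x \in \mathbb{L}$, I would analyze $L_x : \mathbb{G} \to \mathbb{G}$ in the polynomial coordinates $(v, \ell) \in \mathbb{V} \times \mathbb{L}$ induced by the splitting: a direct computation gives $L_x(v, \ell) = (\Psi(v), \, P_{\mathbb{L}}(xv) \cdot \ell)$. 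Picking a basis of $\mathfrak{g}$ adapted jointly to the stratification and to $\mathbb{V} \oplus \mathbb{L}$ (possible because $V_i = (V_i \cap \mathbb{V}) \oplus (V_i \cap \mathbb{L})$ layerwise), the structure of $\mathscr{Q}_i$ in \eqref{opgr} makes the change-of-coordinates map $(v, \ell) \mapsto v \cdot \ell$ block lower triangular with identity diagonal blocks, hence of unit Jacobian. Since $L_x$ has unit Jacobian on $\mathbb{G}$ (Lebesgue is Haar on the nilpotent group $\mathbb{G}$), it also has unit Jacobian in the $(v, \ell)$ chart; on the other hand its Jacobian in these coordinates is block triangular with diagonal blocks $\partial_v \Psi(v)$ and the differential of left translation by $P_{\mathbb{L}}(xv)$ on $\mathbb{L}$, the latter again of unit determinant. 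Comparing yields $\det \partial_v \Psi(v) = 1$ everywhere.

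By \cref{prop:haar} the measure $\mathcal{S}^h \llcorner \mathbb{V}$ is a Haar measure on $\mathbb{V}$, hence a constant multiple of Lebesgue, so the unit-Jacobian bijection $\Psi$ preserves $\mathcal{S}^h \llcorner \mathbb{V}$; in particular $\mathcal{S}^h(\Psi(A)) = \mathcal{S}^h(A)$ for every Borel $A \subseteq \mathbb{V}$. The measure identities in the statement then follow from the pointwise identity $P_{\mathbb{V}}(x g) = P_{\mathbb{V}}(x P_{\mathbb{V}}(g))$, valid for every $g \in \mathbb{G}$: writing $x P_{\mathbb{V}}(g) = P_{\mathbb{V}}(x P_{\mathbb{V}}(g)) \cdot P_{\mathbb{L}}(x P_{\mathbb{V}}(g))$ and using that $P_{\mathbb{L}}(x P_{\mathbb{V}}(g)) \cdot P_{\mathbb{L}}(g) \in \mathbb{L}$, the $\mathbb{V}$-parts of $xg$ and $x P_{\mathbb{V}}(g)$ coincide. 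This gives $P_{\mathbb{V}}(x \mathcal{E}) = P_{\mathbb{V}}(x P_{\mathbb{V}}(\mathcal{E})) = \Psi(P_{\mathbb{V}}(\mathcal{E}))$, and the $\mathcal{S}^h$-invariance of $\Psi$ closes the chain (note that continuity of $P_{\mathbb{V}}$ makes the relevant sets analytic, hence $\mathcal{S}^h$-measurable). The main obstacle is the Jacobian step, and specifically the lemma that the splitting chart $(v, \ell) \mapsto v \cdot \ell$ preserves Lebesgue: this is where one must exploit that each $\mathscr{Q}_i$ depends only on strictly lower layers, together with the fact that the splitting is compatible with the stratification.
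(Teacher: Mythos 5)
Your proof is correct and follows essentially the same route as the paper: the paper disposes of the first part by citing the proof of Lemma~2.20 in \cite{FranchiSerapioni16}, and that proof is precisely the argument you reconstruct (the explicit inverse $v\mapsto P_{\mathbb V}(x^{-1}v)$, the layerwise-adapted coordinates in which $(v,\ell)\mapsto v\cdot\ell$ is triangular with unit Jacobian, and the comparison with the unimodular left translation $L_x$). The second part is obtained in the paper exactly as you do it, from the identity $P_{\mathbb V}(xy)=P_{\mathbb V}(xP_{\mathbb V}y)$ combined with the first part.
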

\begin{proof}
The first part is a direct consequence of \cite[Proof of Lemma 2.20]{FranchiSerapioni16}. For the second part it is sufficient to use the first part and the fact that for every $x,y\in\mathbb G$ we have $P_{\mathbb V}(xy)=P_{\mathbb V}(xP_{\mathbb V}y)$.
\end{proof}
The following proposition holds for the distance $d$ induced by the norm introduced in \cref{smoothnorm}.
\begin{proposizione}\label{prop:TopDimMetricDim}
Let $\mathbb G$ be a Carnot group endowed with the homogeneous norm $\|\cdot\|$ introduced in \cref{smoothnorm}. Let $\mathbb W\in\G(h)$ be a homogeneous subgroup of Hausdorff dimension $h$ and of topological dimension $d$. Then
\begin{enumerate}
\item[(i)] there exists a constant $\newC\label{c:2}:=\oldC{c:2}(\mathfrak{s}(\mathbb W))$ such that for any $p\in \mathbb{W}$ and any $r>0$ we have
\begin{equation}
    \mathcal{H}_{\mathrm{eu}}^{d}\left(B(p,r)\cap \mathbb W\right)=\oldC{c:2}r^{h},
\end{equation}
\item[(ii)] there exists a constant $\beta(\mathbb W)$ such that $    \mathcal{C}^{h}\llcorner \mathbb W =\beta(\mathbb{W})\mathcal{H}_{\mathrm{eu}}^{d}\llcorner \mathbb W$,
\item[(iii)] $\beta(\mathbb{W})=\mathcal{H}^d_{\mathrm{eu}}\llcorner \mathbb{W}(B(0,1))^{-1}$ and in particular $\beta(\mathbb{W})=\beta(\mathfrak{s}(\mathbb{W}))$.
\end{enumerate}
\end{proposizione}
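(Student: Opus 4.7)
The plan is to handle the three items in order, exploiting that the smooth-box norm of \cref{smoothnorm} makes the unit ball factorise along the stratification. For (i), the condition $\|g\|\leq r$ reads $|g_i|\leq (r/\varepsilon_i)^i$ in each layer, so $B(0,r)$ is a product of Euclidean balls, one in each layer $V_i$. Combined with the grading $\mathbb W=\bigoplus_{i=1}^{\kappa}(V_i\cap\mathbb W)$ coming from \eqref{eq:intr1}, the intersection $B(0,r)\cap\mathbb W$ itself factorises into Euclidean balls in $V_i\cap\mathbb W$ (of dimension $s_i:=\dim(V_i\cap\mathbb W)$), and Fubini gives $\mathcal{H}^d_{\mathrm{eu}}(B(0,r)\cap\mathbb W)=\oldC{c:2}\,r^h$, with $h=\sum_i is_i$ and $\oldC{c:2}$ depending only on $\mathfrak{s}(\mathbb W)$ and on the fixed parameters $\varepsilon_i$. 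For general $p\in\mathbb W$, left-invariance of $d$ gives $B(p,r)=p\cdot B(0,r)$, and since $p\in\mathbb W$ one has $p\cdot B(0,r)\cap\mathbb W=p\cdot(B(0,r)\cap\mathbb W)$; left translation by an element of $\mathbb W$ preserves the Haar measure $\mathcal{H}^d_{\mathrm{eu}}\llcorner\mathbb W$ by \cref{prop:haar}, so the constant does not depend on $p$.

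Item (ii) is then immediate from \cref{prop:haar}: both $\mathcal{C}^h\llcorner\mathbb W$ and $\mathcal{H}^d_{\mathrm{eu}}\llcorner\mathbb W$ are Haar measures of the Lie subgroup $\mathbb W$, so uniqueness of the Haar measure up to a positive multiplicative constant produces some $\beta(\mathbb W)>0$ with the claimed identity.

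For (iii), I set $\nu:=\mathcal{H}^d_{\mathrm{eu}}(B(0,1)\cap\mathbb W)^{-1}\,\mathcal{H}^d_{\mathrm{eu}}\llcorner\mathbb W$. By (i) this normalisation is precisely the one that gives $\nu(B(x,r))=r^h$ \emph{exactly}, for every $x\in\mathbb W$ and every $r>0$; in particular the $h$-density of $\nu$ is identically $1$ on its support $\mathbb W$. The problem then reduces to showing $\mathcal{C}^h\llcorner\mathbb W=\nu$, after which comparison with (ii) yields $\beta(\mathbb W)=\mathcal{H}^d_{\mathrm{eu}}(B(0,1)\cap\mathbb W)^{-1}$ and the dependence on $\mathfrak{s}(\mathbb W)$ follows from (i) at $r=1$. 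The inequality $\mathcal{C}^h\llcorner\mathbb W\geq\nu$ is immediate by countable subadditivity applied to any admissible centred cover: if $E\subseteq\bigcup_j B(x_j,r_j)$ with $x_j\in E$ and $r_j\leq\delta$, then $\sum_j r_j^h=\sum_j\nu(B(x_j,r_j))\geq\nu(E)$.

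The main obstacle is the reverse inequality $\mathcal{C}^h_0(E)\leq\nu(E)$, for which one has to produce almost-efficient centred coverings. This relies on the Vitali property of $(\mathbb G,d,\nu)$, available from \cite[Theorem 3.4.3]{HeinonenKoskelaShanmugalingam}: given $\varepsilon>0$ and an open $U\supseteq E$ with $\nu(U)\leq\nu(E)+\varepsilon$, and any $\delta>0$, one extracts a countable disjoint family of closed balls centred in $E$, of radii $\leq\delta$, contained in $U$, and covering $\nu$-almost all of $E$; then $\sum_j r_j^h=\nu(\bigsqcup_j B(x_j,r_j))\leq\nu(U)\leq\nu(E)+\varepsilon$. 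Sending $\varepsilon\to 0$ and then $\delta\to 0$ gives the bound. Alternatively, the identification $\mathcal{C}^h\llcorner\mathbb W=\nu$ can be read off directly from the density-based characterisation of the centered Hausdorff measure in \cite{EdgarCentered}, since $\nu$ has constant $h$-density equal to $1$ on its support.
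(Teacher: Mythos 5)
Your argument is correct in substance and, for item (iii), takes a genuinely different route from the paper's. For (i) the paper does not unwrap the box ball as a product: it writes $\mathcal{H}^d_{\mathrm{eu}}(B(p,r)\cap\mathbb W)=r^h\,\mathcal{H}^d_{\mathrm{eu}}(B(0,1)\cap\mathbb W)$ using the Haar property and $h$-homogeneity from \cref{prop:haar}, and gets the dependence on $\mathfrak{s}(\mathbb W)$ alone by mapping $\mathbb W$ onto any subgroup with the same stratification via a layerwise orthogonal map, which is simultaneously a Euclidean isometry and preserves the box ball; your Fubini computation is an equivalent, more explicit version of this. The real divergence is in (iii): you normalise $\mathcal{H}^d_{\mathrm{eu}}\llcorner\mathbb W$ to a measure $\nu$ with $\nu(B(x,r))=r^h$ exactly and identify $\mathcal{C}^h\llcorner\mathbb W$ with $\nu$ by a Vitali covering argument, whereas the paper proves directly that $\mathcal{C}^h(\mathbb W\cap B(0,1))=1$, obtaining the inequality $\geq 1$ by essentially your subadditivity argument and the inequality $\leq 1$ by invoking the comparison theorem of \cite[Theorem 2.13 and Remark 2.14]{FSSCArea}, which forces the upper $h$-density of $\mathcal{C}^h\llcorner\mathbb W$ to be at most $1$ almost everywhere, and then converting that density bound into the value $\mathcal{C}^h(\mathbb W\cap B(0,1))$ by homogeneity. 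Both routes work for an arbitrary homogeneous distance (cf.\ \cref{rem:Ch1}), since the only input needed is $\nu(B(x,r))=r^h$, which follows from \cref{prop:haar} without the product structure of the box ball.

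One step of your Vitali argument needs justification. The disjoint subfamily produced by the covering theorem covers only $\nu$-almost all of $E$, whereas $\mathcal{C}^h_0(E)$ requires a covering of \emph{all} of $E$ by balls centred in $E$; you must therefore also cover the leftover set $N:=E\setminus\bigcup_j B(x_j,r_j)$ at negligible cost, i.e.\ show $\mathcal{C}^h_0(N)=0$. This is not circular, but it is an extra input: since $\mathcal{S}^h\llcorner\mathbb W$ and $\mathcal{H}^d_{\mathrm{eu}}\llcorner\mathbb W$ are both Haar measures of $\mathbb W$ (hence constant multiples of one another, \cref{prop:haar}) and $\mathcal{C}^h$ is comparable to $\mathcal{S}^h$ (see \cref{def:HausdorffMEasure}), the $\nu$-null set $N$ is $\mathcal{C}^h$-null, and $\mathcal{C}^h_0\leq\mathcal{C}^h$ together with the subadditivity of $\mathcal{C}^h_0$ closes the gap. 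With this addition (and the routine outer-regularity step for general $E$), your proof of (iii) is complete, and the identity $\beta(\mathbb W)=\mathcal{H}^d_{\mathrm{eu}}\llcorner\mathbb W(B(0,1))^{-1}$ with its dependence on $\mathfrak{s}(\mathbb W)$ follows from (i) and (ii) exactly as you say.
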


\begin{proof}
Thanks to \cref{prop:haar}, we have
$$
\mathcal{H}^d_{\mathrm{eu}}(B(p,r)\cap \mathbb{W})=\mathcal{H}^d_{\mathrm{eu}}(B(0,r)\cap \mathbb{W})=\mathcal{H}^d_{\mathrm{eu}}(\delta_r(B(0,1)\cap \mathbb{W}))=r^h\mathcal{H}^d_{\mathrm{eu}}(B(0,1)\cap \mathbb{W}).$$
Furthermore, if $\mathbb{V}$ is another homogeneous subgroup such that $\mathfrak{s}(\mathbb{W})=\mathfrak{s}(\mathbb{V})$, we can find a linear map $T$ that acts as an orthogonal transformation on each of the $V_i$'s and that maps $\mathbb{W}$ to $\mathbb{V}$. Since we are endowing $\mathbb G$ with the box metric \cref{smoothnorm}, we get that $T(B(0,1)\cap \mathbb W)=B(0,1)\cap\mathbb V$. Since $T$ is an orthogonal transformation itself, it is an isometry of $\R^n$ and this implies that
$$\mathcal{H}^d_{\mathrm{eu}}(B(0,1)\cap \mathbb{W})=\mathcal{H}^d_{\mathrm{eu}}(T(B(0,1)\cap \mathbb{W}))=\mathcal{H}^d_{\mathrm{eu}}(B(0,1)\cap \mathbb{V}).$$

Concerning (ii) thanks to \cref{prop:haar} we have that both $\mathcal{C}^h\llcorner \mathbb{W}$ and $\mathcal{H}^d_{\mathrm{eu}}\llcorner \mathbb{W}$ are Haar measures of $\mathbb{W}$. This implies that there must exist a constant $\beta(\mathbb{W})$ such that $\beta(\mathbb{W})\mathcal{H}^d_{\mathrm{eu}}\llcorner \mathbb{W}=\mathcal{C}^h\llcorner \mathbb{W}$. 

 Finally, in order to prove (iii), we prove the following. For every left-invariant homogeneous distance $d$ on $\mathbb G$ and every homogeneous subgroup $\mathbb W\subseteq\mathbb G$ of Hausdorff dimension $h$, we have that 
 \begin{equation}\label{eqn:Ch1}
 \mathcal{C}^h(\mathbb W\cap B(0,1))=1,
 \end{equation}
 where $\mathcal{C}^h$ is the centered Hausdorff measure relative to the distance $d$ and $B(0,1)$ is the closed ball relative to the distance $d$.
 
 Indeed, let us fix an $\varepsilon>0$, let us take $A\subseteq \mathbb W\cap B(0,1)$ such that 
$\mathcal{C}^h_0(A)\geq \mathcal{C}^h(\mathbb W\cap B(0,1))-\varepsilon$, $\delta>0$ and a covering of $A$ with closed balls $B_i:=\{B(x_i,r_i)\}_{i\in\N}$ centred on $A\subseteq \mathbb{W}$ and with radii $r_i\leq \delta$ such that
\begin{equation}
    \sum_{i\in\N} r_i^h\leq \mathcal{C}^h_0(A)+\varepsilon.
    \nonumber
\end{equation}
This implies that
\begin{equation}
\begin{split}
        &\mathcal{C}^h(B(0,1)\cap \mathbb{W})\big(\mathcal{C}^h(B(0,1)\cap\mathbb W)+\varepsilon\big)\geq \mathcal{C}^h(B(0,1)\cap \mathbb{W})\big(\mathcal{C}^h_0(A)+\varepsilon\big) \\
        &\geq \sum_{i\in\N}\mathcal{C}^h(B(0,1)\cap \mathbb{W})r_i^h=\sum_{i\in\N}\mathcal{C}^h(B(x_i,r_i)\cap \mathbb{W})\geq  \mathcal{C}^h(A) \\
        &\geq \mathcal{C}^h_0(A)\geq \mathcal{C}^h(\mathbb W\cap B(0,1))-\varepsilon,
        \nonumber
\end{split}
\end{equation}
where the first inequality is true since $\mathcal{C}^h(B(0,1)\cap\mathbb W)\geq \mathcal{C}^h(A)\geq \mathcal{C}^h_0(A)$, and the third equality is true since $x_i\in\mathbb W$ and $\mathcal{C}^h\llcorner\mathbb W$ is a Haar measure on $\mathbb W$.
Thanks to the arbitrariness of $\varepsilon$ we finally infer that $\mathcal{C}^h(\mathbb{W}\cap B(0,1))\geq 1$. 

On the other hand, thanks to \cite[item (ii) of Theorem 2.13 and Remark 2.14]{FSSCArea}, we have that, calling $B_t:=\{x\in\mathbb W\cap B(0,1): \Theta^{*,h}(\mathcal{C}^h\llcorner\mathbb W,x)>t\}$ for every $t>0$, we infer that $\mathcal{C}^h(B_t)\geq t\mathcal{C}^h(B_t)$ for every $t>0$. Thus, for every $t>1$ we conclude $\mathcal{C}^h(B_t)=0$ and hence for $\mathcal C^h\llcorner\mathbb W$-almost every $x\in\mathbb W\cap B(0,1)$ we have that $\Theta^{*,h}(\mathcal C^h\llcorner\mathbb W,x)\leq 1$. For one of such $x\in\mathbb W\cap B(0,1)$ we can write
$$
\mathcal{C}^h(B(0,1)\cap \mathbb{W})=\limsup_{r\to 0}\frac{\mathcal{C}^h(B(x,r)\cap \mathbb{W})}{r^h}=\Theta^{*,h}(\mathcal{C}^h\llcorner\mathbb W,x)\leq 1,
$$
where the first equality comes from \cref{prop:haar}. Thus $\mathcal{C}^h(\mathbb W\cap B(0,1))=1$ and this concludes the proof of the first part of (iii) thanks to item (ii). The fact that $\beta(\mathbb W)$ depends only on $\mathfrak{s}(\mathbb{W})$ follows from item (i)
\end{proof}

\begin{osservazione}
The above proposition can be proved whenever the distance is a \emph{multiradial distance}, see \cite[Definition 8.5]{MagnaniTowardArea}.
\end{osservazione}

\begin{osservazione}\label{rem:Ch1}
We stress here for future references that in the proof of item (iii) of \cref{prop:TopDimMetricDim} we proved that whenever $\mathbb G$ is endowed with an arbitrary left-invariant homogeneous distance $d$, then for  every homogeneous subgroup $\mathbb W\subseteq\mathbb G$ of Hausdorff dimension $h$, we have that 
 \begin{equation}
 \mathcal{C}^h(\mathbb W\cap B(0,1))=1.
 \end{equation}
\end{osservazione}

We conclude this subsection with two Propositions.

\begin{proposizione}[Corollary 2.15 of \cite{FranchiSerapioni16}]\label{prop:distvsproj} Let $\|\cdot\|$ be a homogeneous norm on $\mathbb G$ and let $\mathbb{V}$ and $\mathbb{L}$ be two complementary subgroups. Then there exists a constant $\newC\label{C:split}(\mathbb{V},\mathbb{L})$ such that for any $g\in\mathbb{G}$ we have
\begin{equation}\label{eqn:SeriuosProj}
\oldC{C:split}(\mathbb{V},\mathbb{L})\lVert P_\mathbb{L}(g)\rVert\leq \dist(g,\mathbb{V})\leq \lVert P_{\mathbb L} (g)\rVert, \qquad\text{for any } g\in\mathbb G.
\end{equation}
In the following, whenever we write $\oldC{C:split}(\mathbb V,\mathbb L)$, we are choosing the supremum of all the constants such that inequality \eqref{eqn:SeriuosProj} is satisfied.
\end{proposizione}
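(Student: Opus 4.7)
The upper bound is immediate from the definition of projections. Indeed, since $P_{\mathbb V}(g)\in\mathbb V$, we have
\[
\dist(g,\mathbb V)\leq d(g,P_{\mathbb V}(g))=\|P_{\mathbb V}(g)^{-1}\cdot g\|=\|P_{\mathbb V}(g)^{-1}\cdot P_{\mathbb V}(g)\cdot P_{\mathbb L}(g)\|=\|P_{\mathbb L}(g)\|,
\]
which gives the right-hand inequality with constant $1$.

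For the lower bound, the key observation is that $P_{\mathbb L}$ is left-invariant under $\mathbb V$, and commutes with the dilations. For the first property, given $v\in\mathbb V$ and $g=P_{\mathbb V}(g)\cdot P_{\mathbb L}(g)$, the factorization $v\cdot g=(v\cdot P_{\mathbb V}(g))\cdot P_{\mathbb L}(g)$ still has its first factor in $\mathbb V$ (since $\mathbb V$ is a subgroup) and its second in $\mathbb L$, so by uniqueness of the splitting $P_{\mathbb L}(v\cdot g)=P_{\mathbb L}(g)$. For the second property, since $\mathbb V$ and $\mathbb L$ are both homogeneous (hence $\delta_r$-invariant), the factorization $\delta_r(g)=\delta_r(P_{\mathbb V}(g))\cdot\delta_r(P_{\mathbb L}(g))$ yields $P_{\mathbb L}(\delta_r(g))=\delta_r(P_{\mathbb L}(g))$ again by uniqueness.

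Combining these, define
\[
M:=\sup_{x\in\mathbb G,\;\|x\|=1}\|P_{\mathbb L}(x)\|.
\]
The unit sphere $\{x\in\mathbb G:\|x\|=1\}$ is compact, and the map $P_{\mathbb L}:\mathbb G\to\mathbb L$ is continuous (it is the second component of the inverse of the continuous bijection $\mathbb V\times\mathbb L\to\mathbb G$, $(v,\ell)\mapsto v\cdot\ell$, and this inverse is continuous by a standard invariance-of-domain/inverse-function-theorem argument in the finite-dimensional setting). Hence $M<+\infty$. By the homogeneity of $P_{\mathbb L}$, for any nonzero $h\in\mathbb G$,
\[
\|P_{\mathbb L}(h)\|=\|h\|\cdot\|P_{\mathbb L}(\delta_{1/\|h\|}(h))\|\leq M\|h\|.
\]
Applying this to $h=v^{-1}\cdot g$ for an arbitrary $v\in\mathbb V$, and using $P_{\mathbb L}(v^{-1}\cdot g)=P_{\mathbb L}(g)$, we conclude
\[
\|P_{\mathbb L}(g)\|\leq M\|v^{-1}\cdot g\|,
\]
and taking the infimum over $v\in\mathbb V$ yields $\|P_{\mathbb L}(g)\|\leq M\,\dist(g,\mathbb V)$. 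This gives the left-hand inequality with $\oldC{C:split}(\mathbb V,\mathbb L):=M^{-1}$.

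The only conceptually delicate step is the continuity of $P_{\mathbb L}$, i.e., showing that the product map $\mathbb V\times\mathbb L\to\mathbb G$ is a homeomorphism; once that is granted the argument is just homogeneity and compactness. In the setting of a Carnot group with complementary homogeneous subgroups this continuity is standard (and is in fact established in the cited \cite{FranchiSerapioni16}, where one can alternatively extract the constant $\oldC{C:split}(\mathbb V,\mathbb L)$ as the reciprocal of the operator-norm of $P_{\mathbb L}$ on the unit sphere).
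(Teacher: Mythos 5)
Your argument is correct and complete. The paper itself offers no proof of this proposition: it is imported verbatim as Corollary~2.15 of the cited Franchi--Serapioni reference, so there is nothing internal to compare against. What you have written is a self-contained version of the standard argument: the upper bound from $P_{\mathbb V}(g)^{-1}\cdot g=P_{\mathbb L}(g)$, and the lower bound from the two invariances of $P_{\mathbb L}$ (under left translation by $\mathbb V$ and under dilations) combined with compactness of the unit sphere. Both invariance claims are correctly justified by uniqueness of the splitting $g=P_{\mathbb V}(g)\cdot P_{\mathbb L}(g)$, and the reduction $\|P_{\mathbb L}(g)\|=\|P_{\mathbb L}(v^{-1}\cdot g)\|\leq M\|v^{-1}\cdot g\|$ followed by the infimum over $v\in\mathbb V$ is exactly right. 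The one step you flag as delicate, continuity of $P_{\mathbb L}$, is indeed the only nontrivial input; in a Carnot group it can be obtained even more directly than via invariance of domain, since in exponential coordinates the projections associated with a splitting are explicit polynomial maps (this is the content of the computation behind Lemma~2.20 of the same reference, which the paper uses elsewhere). A pedantic remark: in the degenerate case $\mathbb L=\{0\}$ one has $M=0$ and the constant should simply be taken to be any positive number, since both sides of the claimed inequality vanish; this does not affect the substance of your proof.
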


\begin{proposizione}\label{cor:2.2.19}
For any $\mathbb{V}\in \G_c(h)$ with complement $\mathbb L$ there is a constant $ \newC\label{ProjC}(\mathbb{V},\mathbb L)>0$ such that for any $p\in\mathbb{G}$ and any $r>0$ we have
$$\mathcal{S}^{h}\llcorner \mathbb V\big(P_\mathbb{V}(B(p,r))\big)=\oldC{ProjC}(\mathbb{V},\mathbb L)r^{h}.$$
Furthermore, for any Borel set $A\subseteq \mathbb{G}$ for which $\mathcal{S}^{h}(A)<\infty$, we have
\begin{equation}
    \mathcal{S}^{h}\llcorner \mathbb{V}(P_\mathbb{V}(A))\leq 2\oldC{ProjC}(\mathbb{V},\mathbb L)\mathcal{S}^{h}(A).
    \label{eq:n520}
\end{equation}
\end{proposizione}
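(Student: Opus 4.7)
My plan is to reduce $P_{\mathbb V}(B(p,r))$ to $P_{\mathbb V}(B(0,1))$ by successively stripping off the left translation by $p$ and the dilation by $r$. The second assertion of \cref{prop:InvarianceOfProj} directly gives $\mathcal{S}^h\llcorner\mathbb V(P_{\mathbb V}(p\cdot B(0,r)))=\mathcal{S}^h\llcorner\mathbb V(P_{\mathbb V}(B(0,r)))$, which together with the left-invariance $B(p,r)=p\cdot B(0,r)$ disposes of the translation. For the dilation, the crucial observation is that $P_{\mathbb V}$ commutes with $\delta_r$: since both $\mathbb V$ and $\mathbb L$ are $\delta_r$-invariant, applying $\delta_r$ to $x=P_{\mathbb V}(x)\cdot P_{\mathbb L}(x)$ produces factors still in $\mathbb V$ and $\mathbb L$, so by uniqueness of the splitting $P_{\mathbb V}(\delta_r x)=\delta_r P_{\mathbb V}(x)$. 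Combining this with $B(0,r)=\delta_r B(0,1)$ and the $h$-homogeneity of the Haar measure $\mathcal{S}^h\llcorner\mathbb V$ from \cref{prop:haar} yields the desired equality with the constant
$$
\oldC{ProjC}(\mathbb V,\mathbb L):=\mathcal{S}^h\llcorner\mathbb V(P_{\mathbb V}(B(0,1))).
$$

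\textbf{The constant is strictly positive and finite.} Since $P_{\mathbb V}$ restricts to the identity on $\mathbb V$, we have $\mathbb V\cap B(0,1)\subseteq P_{\mathbb V}(B(0,1))$, and \cref{prop:haar} guarantees $\mathcal{S}^h\llcorner\mathbb V(\mathbb V\cap B(0,1))>0$. For finiteness, I would write $P_{\mathbb V}(g)=g\cdot P_{\mathbb L}(g)^{-1}$ and use the triangle inequality together with \cref{prop:distvsproj} to get $\|P_{\mathbb V}(g)\|\leq (1+\oldC{C:split}(\mathbb V,\mathbb L)^{-1})\|g\|$, so that $P_{\mathbb V}(B(0,1))$ is contained in a bounded subset of $\mathbb V$, whose $\mathcal{S}^h$-measure is finite by \cref{prop:haar}.

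\textbf{Approach for the inequality.} The plan here is a routine covering argument based on $\sigma$-subadditivity of the outer measure $\mathcal{S}^h\llcorner\mathbb V$. Given $\varepsilon>0$, since $\mathcal{S}^h(A)<+\infty$, the definition of $\mathcal{S}^h$ in \cref{def:HausdorffMEasure} lets me choose a countable covering $\{B(x_i,r_i)\}_{i\in\N}$ of $A$ with radii as small as desired and $\sum_i r_i^h\leq\mathcal{S}^h(A)+\varepsilon$. From $P_{\mathbb V}(A)\subseteq\bigcup_i P_{\mathbb V}(B(x_i,r_i))$ and the first assertion I then obtain
$$
\mathcal{S}^h\llcorner\mathbb V(P_{\mathbb V}(A))\leq\sum_i\mathcal{S}^h\llcorner\mathbb V(P_{\mathbb V}(B(x_i,r_i)))=\oldC{ProjC}(\mathbb V,\mathbb L)\sum_i r_i^h\leq\oldC{ProjC}(\mathbb V,\mathbb L)\big(\mathcal{S}^h(A)+\varepsilon\big),
$$
and letting $\varepsilon\to 0$ gives the stated bound (in fact even with constant $\oldC{ProjC}(\mathbb V,\mathbb L)$, so the factor $2$ in the statement leaves ample slack). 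I do not foresee any substantive obstacle in this argument; the one step requiring a moment's thought is the commutation $P_{\mathbb V}\circ\delta_r=\delta_r\circ P_{\mathbb V}$, which follows immediately from $\delta_\lambda$-invariance of the two complementary subgroups together with uniqueness of the splitting.
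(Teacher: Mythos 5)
Your proof is correct, and its second half is the same covering argument the paper uses; the only substantive divergence is in the first identity. The paper disposes of the scaling identity $\mathcal{S}^{h}\llcorner \mathbb V\big(P_\mathbb{V}(B(p,r))\big)=\oldC{ProjC}(\mathbb{V},\mathbb L)r^{h}$ by simply citing \cite[Lemma 2.20]{FranchiSerapioni16}, whereas you rederive it from scratch: left-translation invariance of $\mathcal{S}^h\circ P_{\mathbb V}$ via \cref{prop:InvarianceOfProj}, the commutation $P_{\mathbb V}\circ\delta_r=\delta_r\circ P_{\mathbb V}$ (which does follow from $\delta_r$ being an automorphism preserving both $\mathbb V$ and $\mathbb L$, plus uniqueness of the splitting), and the $h$-homogeneity of the Haar measure from \cref{prop:haar}; your positivity and finiteness checks for the constant (via $\mathbb V\cap B(0,1)\subseteq P_{\mathbb V}(B(0,1))$ and the bound $\|P_{\mathbb V}(g)\|\leq(1+\oldC{C:split}(\mathbb V,\mathbb L)^{-1})\|g\|$ from \cref{prop:distvsproj}) are also sound. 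What this buys is a self-contained argument not relying on the external lemma. In the covering step you additionally extract near-optimal coverings with $\sum_i r_i^h\leq\mathcal{S}^h(A)+\varepsilon$ and let $\varepsilon\to0$, obtaining the inequality with constant $\oldC{ProjC}(\mathbb V,\mathbb L)$ rather than $2\oldC{ProjC}(\mathbb V,\mathbb L)$; the paper is content with any covering satisfying $\sum_i r_i^h\leq 2\mathcal{S}^h(A)$, which is where its factor $2$ comes from. Both versions of course imply the stated bound.
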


\begin{proof}
The existence of such $\oldC{ProjC}(\mathbb V,\mathbb L)$ is yielded by \cite[Lemma 2.20]{FranchiSerapioni16}. 
Suppose $\{B(x_i,r_i)\}_{i\in\N}$ is a countable covering of $A$ with closed balls for which $\sum_{i\in\N} r_i^{h}\leq 2\mathcal{S}^{h}(A)$. Then
\begin{equation}
    \mathcal{S}^{h}(P_\mathbb{V}(A))\leq \mathcal{S}^{h}\Big(P_\mathbb{V}\Big(\bigcup_{i\in\N}B(x_i,r_i)\Big)\Big)\leq \oldC{ProjC}(\mathbb{V},\mathbb L)\sum_{i\in\N} r_i^{h}\leq 2 \oldC{ProjC}(\mathbb{V},\mathbb L)\mathcal{S}^{h}(A).
    \nonumber
\end{equation}
\end{proof}

\subsection{Cones over homogeneous subgroups}\label{sub:Cones}
In this subsection, we introduce the intrinsic cone $C_{\mathbb W}(\alpha)$ and the notion of $C_{\mathbb W}(\alpha)$-set, and prove some of their properties. In this subsection $\mathbb G$ will be a fixed Carnot group endowed with an arbitrary homogeneous norm $\|\cdot\|$ that induces a left-invariant homogeneous distance $d$.

\begin{definizione}[Intrinsic cone]\label{def:Cone}
For any  $\alpha>0$ and $\mathbb W\in \G(h)$, we define the cone $C_{\mathbb W}(\alpha)$ as
$$C_\mathbb W(\alpha):=\{w\in\mathbb{G}:\dist(w,\mathbb W)\leq \alpha\|w\|\}.$$
\end{definizione}

\begin{definizione}[$C_{\mathbb W}(\alpha)$-set]\label{def:CvAset}
Given $\mathbb W\in \G(h)$, and $\alpha>0$, we say that a set $E\subseteq \mathbb G$ is a {\em $C_{\mathbb W}(\alpha)$-set} if
$$
E\subseteq p\cdot C_{\mathbb W}(\alpha), \qquad  \text{for any } p\in E. 
$$
\end{definizione}

\begin{lemma}\label{lem:DistanceOfCones}
For any $\mathbb W_1,\mathbb W_2\in \G(h)$, $\varepsilon>0$ and $\alpha>0$ if $d_\mathbb{G}(\mathbb W_1,\mathbb W_2)<\varepsilon/4$, then
$$C_{\mathbb W_1}(\alpha) \subseteq C_{\mathbb W_2}(\alpha+\varepsilon).$$
\end{lemma}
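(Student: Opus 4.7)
The plan is to exploit the homogeneity of $\mathbb{W}_1,\mathbb{W}_2$ together with the fact that $d_{\mathbb G}$ is defined via unit balls, so that a bound on the Hausdorff distance of the unit balls automatically rescales to any radius.

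First I would dispose of the trivial range $\alpha\ge 1$: since $0\in\mathbb{W}$ and $\dist(w,\mathbb W)\le d(w,0)=\|w\|$, one has $C_{\mathbb W}(\alpha)=\mathbb G$ whenever $\alpha\ge 1$, so the inclusion $C_{\mathbb W_1}(\alpha)\subseteq \mathbb G=C_{\mathbb W_2}(1)\subseteq C_{\mathbb W_2}(\alpha+\varepsilon)$ is automatic. Thus I may assume $\alpha<1$.

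Now let $w\in C_{\mathbb W_1}(\alpha)$ with $w\ne 0$. Since $\mathbb W_1$ is a closed homogeneous subgroup, I pick $v_1\in\mathbb W_1$ realising (up to an arbitrarily small error) $d(w,v_1)\le \alpha\|w\|$. Because $\alpha<1$, necessarily $v_1\ne 0$; moreover by the triangle inequality $\|v_1\|\le\|w\|+d(w,v_1)\le(1+\alpha)\|w\|$. I then rescale: $\delta_{1/\|v_1\|}v_1\in\mathbb W_1\cap B(0,1)$, so by the assumption $d_{\mathbb G}(\mathbb W_1,\mathbb W_2)<\varepsilon/4$ there exists $u_2\in\mathbb W_2\cap B(0,1)$ with $d(\delta_{1/\|v_1\|}v_1,u_2)<\varepsilon/4$. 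Setting $v_2:=\delta_{\|v_1\|}u_2\in\mathbb W_2$ and using homogeneity of $d$,
\begin{equation*}
d(v_1,v_2)=\|v_1\|\,d(\delta_{1/\|v_1\|}v_1,u_2)<\frac{\varepsilon}{4}\|v_1\|\le \frac{\varepsilon}{4}(1+\alpha)\|w\|.
\end{equation*}

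The triangle inequality then gives
\begin{equation*}
\dist(w,\mathbb W_2)\le d(w,v_2)\le d(w,v_1)+d(v_1,v_2)<\alpha\|w\|+\tfrac{\varepsilon}{4}(1+\alpha)\|w\|.
\end{equation*}
Since $\alpha<1$, one has $(1+\alpha)/4<1$, so $\frac{\varepsilon}{4}(1+\alpha)<\varepsilon$, and hence $\dist(w,\mathbb W_2)\le(\alpha+\varepsilon)\|w\|$, i.e., $w\in C_{\mathbb W_2}(\alpha+\varepsilon)$, as required.

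There is no real obstacle — the only mild subtlety is the bookkeeping: I need $(1+\alpha)/4\le 1$ to absorb the factor coming from passing through $v_1$, and this is exactly why the trivial case $\alpha\ge 1$ must be handled first (or observed to make the $4$ sufficient). The engine of the argument is the standard fact that homogeneity upgrades the Hausdorff-distance bound between the unit balls of $\mathbb W_1$ and $\mathbb W_2$ to a scale-invariant approximation statement: for every $v_1\in\mathbb W_1\setminus\{0\}$ there is $v_2\in\mathbb W_2$ with $d(v_1,v_2)<\frac{\varepsilon}{4}\|v_1\|$.
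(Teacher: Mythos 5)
Your proof is correct and follows essentially the same route as the paper's: pass from $w$ to a nearby point of $\mathbb W_1$ by the triangle inequality, then use homogeneity to upgrade the unit-ball Hausdorff bound into the estimate $\dist(v_1,\mathbb W_2)<\tfrac{\varepsilon}{4}\cdot O(\|w\|)$. The only difference is bookkeeping: the paper bounds the norm of the nearest point by $2\|w\|$ and works with balls of radius $4$ to absorb the factor $4$, while you use $\|v_1\|\le(1+\alpha)\|w\|$ together with the (correct) observation that the case $\alpha\ge 1$ is trivial.
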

\begin{proof}
We prove that any $z\in C_{\mathbb W_1}(\alpha)$ is contained in the cone $C_{\mathbb W_2}(\alpha+\varepsilon)$. Thanks to the triangle inequality we infer
$$
\text{dist}(z,\mathbb W_2) \leq d(z,b)+\inf_{w\in\mathbb W_2} d (b,w), \qquad \text{for any } b\in\mathbb W_1.
$$
Thus, choosing $b^*\in\mathbb W_1$ in such a way that $d(z,b^*)=\text{dist}(z,\mathbb W_1)$, and evaluating the previous inequality at $b^*$ we get 
\begin{equation}\label{eqn:Estimate1}
\dist(z,\mathbb W_2) \leq \dist(z,\mathbb W_1) +\text{dist}(b^*,\mathbb W_2) \leq \alpha\|z\| + \dist(b^*,\mathbb W_2), 
\end{equation}
where in the second inequality we used $z\in C_{\mathbb W_1}(\alpha)$. 

Let us notice that, given $\mathbb W$ an arbitrary homogeneous subgroup of $\mathbb G$, $p\in \mathbb G$ an arbitrary point such that $p^*\in\mathbb W$ is one of the points at minimum distance from $\mathbb W$ to $p$, then the following inequality holds
\begin{equation}\label{eqn:Estimate2}
\|p^*\|\leq 2\|p\|.
\end{equation}
Indeed,
$$
\|p^*\|-\|p\| \leq \|(p^*)^{-1}\cdot p\| = d(p,\mathbb W)\leq \|p\| \Rightarrow \|p^*\|\leq 2\|p\|.
$$

Now, by homogeneity, since $b^*\in\mathbb W_1$ is the point at minimum distance from $\mathbb W_1$ of $z$, we get that $D_{1/\|z\|}(b^*)$ is the point at minimum distance from $\mathbb W_1$ of $D_{1/\|z\|}(z)$. Thus, since $\|D_{1/\|z\|}(z)\|=1$, from \eqref{eqn:Estimate2} we get that $\|D_{1/\|z\|}(b^*)\|\leq 2$. Finally we obtain
\begin{equation}\label{eqn:Estimate3}
\begin{split}
\text{dist}(b^*,\mathbb W_2)&=\|z\|\text{dist}\big(D_{1/\|z\|}(b^*),\mathbb W_2\big)=\|z\|\text{dist}\big(D_{1/\|z\|}(b^*),\mathbb W_2\cap B(0,4)\big) \leq \\
&\leq \|z\|d_H(\mathbb W_1\cap B(0,4),\mathbb W_2\cap B(0,4)) \\ &=4\|z\|d_H(\mathbb W_1\cap B(0,1),\mathbb W_2\cap B(0,1)) < \varepsilon \|z\|, 
\end{split}
\end{equation}
where the first equality follows from the homogeneity of the distance, and the second is a consequence of the fact that $\|D_{1/\|z\|}(b^*)\|\leq 2$, and thus, from \eqref{eqn:Estimate2}, the point at minimum distance of $D_{1/\|z\|}(b^*)$ from $\mathbb W_2$ has norm bounded above by 4; the third inequality comes from the definition of Hausdorff distance, the fourth equality is true by homogeneity and the last inequality comes from the hypothesis $d_{\mathbb G}(\mathbb W_1,\mathbb W_2)<\varepsilon/4$. Joining \eqref{eqn:Estimate1}, and \eqref{eqn:Estimate3} we get $z\in C_{\mathbb W_2}(\alpha+\varepsilon)$, that was what we wanted. 
\end{proof}

\begin{lemma}\label{lemma:LCapCw=e}
Let $\mathbb V\in \G_c(h)$, and let $\mathbb L$  be a complementary subgroup of $\mathbb V$. There exists $\newep\label{ep:Cool}:=\oldep{ep:Cool}(\mathbb V,\mathbb L)>0$ such that 
$$
\mathbb L\cap C_{\mathbb V}(\oldep{ep:Cool})=\{0\}.
$$
Moreover we can, and will, choose $\oldep{ep:Cool}(\mathbb V,\mathbb L):=\oldC{C:split}(\mathbb V,\mathbb L)/2$.
\end{lemma}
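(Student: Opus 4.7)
The plan is to use Proposition~\ref{prop:distvsproj} (the two-sided estimate between $\dist(g,\mathbb V)$ and $\|P_{\mathbb L}(g)\|$) applied to points of $\mathbb L$, which is exactly the situation where the lower bound is strongest.

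First I would observe that for any $g\in\mathbb L$ one has $P_{\mathbb V}(g)=0$ and $P_{\mathbb L}(g)=g$, so $\|P_{\mathbb L}(g)\|=\|g\|$. Then the left-hand inequality of \eqref{eqn:SeriuosProj} in Proposition~\ref{prop:distvsproj} reads
\[
\oldC{C:split}(\mathbb V,\mathbb L)\,\|g\| \;\leq\; \dist(g,\mathbb V),\qquad \text{for every }g\in\mathbb L.
\]
Next, suppose $g\in\mathbb L\cap C_{\mathbb V}(\varepsilon)$ for some $\varepsilon>0$. By the very definition of the cone $C_{\mathbb V}(\varepsilon)$ (see \cref{def:Cone}) we have $\dist(g,\mathbb V)\leq \varepsilon\|g\|$. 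Combining the two estimates gives
\[
\oldC{C:split}(\mathbb V,\mathbb L)\,\|g\|\;\leq\;\varepsilon\,\|g\|.
\]
Finally I would choose $\varepsilon:=\oldC{C:split}(\mathbb V,\mathbb L)/2$. Then the above inequality becomes $\oldC{C:split}(\mathbb V,\mathbb L)\|g\|\leq \tfrac{1}{2}\oldC{C:split}(\mathbb V,\mathbb L)\|g\|$, which forces $\|g\|=0$ and hence $g=0$. This proves $\mathbb L\cap C_{\mathbb V}(\oldep{ep:Cool})=\{0\}$ with the claimed choice $\oldep{ep:Cool}(\mathbb V,\mathbb L)=\oldC{C:split}(\mathbb V,\mathbb L)/2$.

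There is essentially no obstacle here: the whole argument is a one-line application of the splitting-projection inequality. The only thing to double-check is that the constant $\oldC{C:split}(\mathbb V,\mathbb L)$ is strictly positive (which is guaranteed by \cref{prop:distvsproj}, since $\mathbb V$ and $\mathbb L$ are complementary homogeneous subgroups and the projections are continuous homogeneous maps), so that the choice $\varepsilon=\oldC{C:split}(\mathbb V,\mathbb L)/2$ is indeed strictly positive.
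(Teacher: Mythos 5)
Your proof is correct and is essentially the same argument as the paper's: the paper also combines the lower bound $\oldC{C:split}(\mathbb V,\mathbb L)\|v\|\leq\dist(v,\mathbb V)$ from \cref{prop:distvsproj} with the cone condition $\dist(v,\mathbb V)\leq\oldep{ep:Cool}\|v\|$ and the choice $\oldep{ep:Cool}=\oldC{C:split}(\mathbb V,\mathbb L)/2$ to force $v=0$ (phrased there as a contradiction). The only cosmetic difference is that you make explicit the observation $P_{\mathbb L}(g)=g$ for $g\in\mathbb L$, which the paper leaves implicit.
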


\begin{proof}
We prove that it suffices to take $\oldep{ep:Cool}(\mathbb V,\mathbb L):=\oldC{C:split}(\mathbb V,\mathbb L)/2$. Let us suppose the statement is false. Thus there exists $0\neq v\in\mathbb L\cap C_{\mathbb V}(\oldep{ep:Cool})$. From \cref{prop:distvsproj} and from the very definition of the cone $C_{\mathbb V}(\oldep{ep:Cool})$ we have
$$
\oldC{C:split}(\mathbb V,\mathbb L)\|v\|\leq \dist(v,\mathbb V) \leq \oldep{ep:Cool}\lVert v\rVert= \oldC{C:split}(\mathbb V,\mathbb L)\|v\|/2,
$$
which is a contradiction with the fact that $v\neq 0$.
\end{proof}

We will not use the following proposition in the paper, but it is worth mentioning it.

\begin{proposizione}\label{prop:ComplGrassmannianOpen}
The family of the complemented subgroups $\G_c(h)$ is an open subset of $\G(h)$. 
\end{proposizione}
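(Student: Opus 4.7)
The plan is to show that if $\mathbb V\in\G_c(h)$ has homogeneous complement $\mathbb L$, then the \emph{same} $\mathbb L$ remains a complement for every $\mathbb V'$ in a sufficiently small $d_{\mathbb G}$-neighborhood of $\mathbb V$. In other words, I will argue by contradiction: if this fails, there is a sequence $\mathbb V_n \to \mathbb V$ in $(\G(h),d_{\mathbb G})$ for which $(\mathbb V_n,\mathbb L)$ is not a complementary pair, and I will extract a contradiction with $\mathbb V\cap\mathbb L=\{0\}$.

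First I would invoke \cref{prop:htagliato} to guarantee that, for $n$ large enough, $\mathfrak s(\mathbb V_n)=\mathfrak s(\mathbb V)$; in particular the topological dimension of $\mathbb V_n$ equals that of $\mathbb V$. Combined with the fact that $\mathbb L$ is a complement of $\mathbb V$, this yields $\dim_{\mathrm{top}}\mathbb V_n+\dim_{\mathrm{top}}\mathbb L=\dim_{\mathrm{top}}\mathbb G$ for $n$ large. The standard BCH/stratification argument then tells us that $(\mathbb V_n,\mathbb L)$ is a complementary pair if and only if $\mathbb V_n\cap \mathbb L=\{0\}$, because the product map $(v,l)\mapsto v\cdot l$ from $\mathbb V_n\times\mathbb L$ to $\mathbb G$ is smooth, $\delta_\lambda$-equivariant, and has identity differential at the origin in view of $v\cdot l=v+l+\mathscr{Q}(v,l)$ with $\mathscr Q$ of higher homogeneous order.

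Hence, under our contradiction hypothesis, for $n$ large we can pick $v_n\in (\mathbb V_n\cap\mathbb L)\setminus\{0\}$; by homogeneity (dilating with $\lambda=1/\|v_n\|$) we may assume $\|v_n\|=1$, so $v_n\in \mathbb V_n\cap \mathbb L\cap B(0,1)$. Compactness of $B(0,1)$ delivers a (non-relabelled) subsequence with $v_n\to v$ and $\|v\|=1$. Since $\mathbb L$ is closed, $v\in\mathbb L$. On the other hand, the convergence $\mathbb V_n\to\mathbb V$ in $d_{\mathbb G}$ is precisely Hausdorff convergence of $\mathbb V_n\cap B(0,1)$ to $\mathbb V\cap B(0,1)$; hence any limit of a sequence of points in $\mathbb V_n\cap B(0,1)$ lies in $\mathbb V\cap B(0,1)$, so $v\in\mathbb V$. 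This forces $v\in \mathbb V\cap\mathbb L=\{0\}$, contradicting $\|v\|=1$.

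The main subtlety I anticipate is the clean reduction of complementarity to the algebraic condition $\mathbb V_n\cap\mathbb L=\{0\}$: one needs the dimension count from \cref{prop:htagliato} together with the BCH-based fact that, for homogeneous subgroups whose Lie algebras are in direct sum, the product map is a global diffeomorphism onto $\mathbb G$ (injectivity from the trivial intersection, surjectivity from $\delta_\lambda$-homogeneity and the local diffeomorphism property at the identity). Once this equivalence is in hand, the contradiction argument above, using only compactness of $B(0,1)$ and the definition of $d_{\mathbb G}$, closes the proof.
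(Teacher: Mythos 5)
Your proof is correct, and its skeleton---reduce complementarity of $(\mathbb V',\mathbb L)$ to the trivial intersection $\mathbb V'\cap\mathbb L=\{0\}$ via the dimension count coming from \cref{prop:htagliato}, then establish that trivial intersection for every $\mathbb V'$ near $\mathbb V$---is the same as the paper's; but the execution of both halves differs. For the trivial intersection the paper argues quantitatively: if $d_{\mathbb G}(\mathbb V,\mathbb V')<\varepsilon/4$ then \cref{lem:DistanceOfCones} forces $\mathbb V'\subseteq C_{\mathbb V}(\varepsilon)$, and \cref{lemma:LCapCw=e} gives $\mathbb L\cap C_{\mathbb V}(\varepsilon)=\{0\}$ once $\varepsilon<\oldep{ep:Cool}(\mathbb V,\mathbb L)$; this exhibits an explicit radius for the neighborhood. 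Your compactness-and-contradiction argument (normalize $v_n\in\mathbb V_n\cap\mathbb L$ to the unit sphere using homogeneity, pass to a limit, use closedness of $\mathbb L$ and the stability of limits under Hausdorff convergence as in item (ii) of \cref{rem:CompactnessEuclideanGrassmannianAndApproximatingVn}) is purely qualitative but equally valid. For the passage from $\mathbb V'\cap\mathbb L=\{0\}$ to $\mathbb V'\cdot\mathbb L=\mathbb G$, the paper works layer by layer: equal stratification vectors plus the Grassmann formula give $(\mathbb V'\cap V_i)+(\mathbb L\cap V_i)=V_i$ for each $i$, and the triangular form \eqref{opgr} of the product then yields surjectivity; your route---the product map $(v,l)\mapsto v\cdot l$ is a local diffeomorphism at the origin because $\mathscr Q$ has no linear part, and $\delta_\lambda$-equivariance upgrades a neighborhood of $0$ to all of $\mathbb G$---is precisely the alternative the paper points to in \cite[Lemma 2.7]{JNGV20}. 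Both arguments close the proof; the paper's has the minor advantage of producing the size of the neighborhood explicitly, yours avoids the cone machinery entirely.
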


\begin{proof}
Fix a $\mathbb W\in \G_c(h)$ and let $\mathbb{L}$ be one complementary subgroup of $\mathbb W$ and set $\varepsilon<\min\{\oldep{ep:Cool}(\mathbb{V},\mathbb{L}),\hbar_{\mathbb G}\}$. Then, if $\mathbb{W}^\prime\in\G(h)$ is such that $d_\mathbb{G}(\mathbb{W},\mathbb{W}^\prime)<\varepsilon/4$, \cref{lem:DistanceOfCones} implies that $\mathbb{W}^\prime\subseteq C_{\mathbb{W}}(\varepsilon)$ and in particular $$\mathbb{L}\cap\mathbb{W}^\prime\subseteq \mathbb{L}\cap C_{\mathbb{W}}(\varepsilon)=\{0\}.$$
Moreover, since $\varepsilon<\hbar_\mathbb{G}$ from \cref{prop:htagliato}, we get that $\mathbb W'$ has the same stratification of $\mathbb W$ and thus the same topological dimension. This, jointly with the previous equality and the Grassmann formula, means that $(\mathbb{W}'\cap V_i)+(\mathbb{L}\cap V_i)=V_i$ for every $i=1,\dots,\kappa$. This, jointly with the fact that $\mathbb L\cap\mathbb W'=\{0\}$, implies that $\mathbb L$ and $\mathbb W'$ are complementary subgroups in $\mathbb G$ due to the triangular structure of the product $\cdot$ on $\mathbb G$, see \eqref{opgr}. For an alternative proof of the fact that $\mathbb L$ and $\mathbb W'$ are complementary subgroups,  see also \cite[Lemma 2.7]{JNGV20}.
\end{proof}

The following definition of intrinsically Lipschitz functions is equivalent to the classical one in \cite[Definition 11]{FranchiSerapioni16} because the cones in \cite[Definition 11]{FranchiSerapioni16} and the cones $C_{\mathbb V}(\alpha)$ are equivalent whenever $\mathbb V$ admits a complementary subgroup, see \cite[Proposition 3.1]{FranchiSerapioni16}.
\begin{definizione}[Intrinsically Lipschitz functions]\label{def:iLipfunctions}
Let $\mathbb{W}\in \G_c(h)$ and assume $\mathbb{L}$ is a complement of $\mathbb{W}$ and let $E\subseteq \mathbb{W}$ be a subset of $\mathbb{V}$. Let $\alpha>0$. A function $f:E\to \mathbb{L}$ is said to be an \emph{$\alpha$-intrinsically Lipschitz function} if $\text{graph}(f):=\{v\cdot f(v):v\in E\}$ is a $C_\mathbb{W}(\alpha)$-set. A function $f:E\to \mathbb{L}$ is said to be an \emph{intrinsically Lipschitz function} if there exists $\alpha>0$ such that $f$ is an $\alpha$-intrinsically Lipschitz function.
\end{definizione}

\begin{proposizione}\label{prop:ConeAndGraph}
Let us fix $\mathbb W\in\G_c(h)$ with complement $\mathbb L$. If $\Gamma\subset\mathbb G$ is a $C_{\mathbb W}(\alpha)$-set for some $\alpha\leq \oldep{ep:Cool}(\mathbb W,\mathbb L)$, then the map $P_{\mathbb W}:\Gamma\to\mathbb W$ is injective. As a consequence $\Gamma$ is the intrinsic graph of an intrinsically Lipschitz map defined on $P_{\mathbb W}(\Gamma)$.
\end{proposizione}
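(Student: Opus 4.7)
The plan is to argue by contradiction on injectivity using the structure of the cone together with \cref{lemma:LCapCw=e}. Suppose $p,q\in\Gamma$ satisfy $P_{\mathbb W}(p)=P_{\mathbb W}(q)$. Writing each point via the splitting projections, $p=P_{\mathbb W}(p)\cdot P_{\mathbb L}(p)$ and $q=P_{\mathbb W}(q)\cdot P_{\mathbb L}(q)$, I would compute
\[
p^{-1}\cdot q \;=\; P_{\mathbb L}(p)^{-1}\cdot P_{\mathbb W}(p)^{-1}\cdot P_{\mathbb W}(q)\cdot P_{\mathbb L}(q) \;=\; P_{\mathbb L}(p)^{-1}\cdot P_{\mathbb L}(q),
\]
where the last equality uses $P_{\mathbb W}(p)=P_{\mathbb W}(q)$. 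Since $\mathbb L$ is a subgroup, this element lies in $\mathbb L$.

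The $C_{\mathbb W}(\alpha)$-set property applied at the point $p$ gives $q\in p\cdot C_{\mathbb W}(\alpha)$, i.e.\ $p^{-1}\cdot q\in C_{\mathbb W}(\alpha)$. Combining with the previous paragraph,
\[
p^{-1}\cdot q \;\in\; \mathbb L\cap C_{\mathbb W}(\alpha).
\]
By the trivial monotonicity $C_{\mathbb W}(\alpha)\subseteq C_{\mathbb W}(\oldep{ep:Cool})$ (coming directly from \cref{def:Cone} since $\alpha\leq \oldep{ep:Cool}(\mathbb W,\mathbb L)$), \cref{lemma:LCapCw=e} then forces $p^{-1}\cdot q=0$, hence $p=q$. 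This proves injectivity of $P_{\mathbb W}|_\Gamma$.

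For the second assertion, injectivity lets me define $\varphi\colon P_{\mathbb W}(\Gamma)\to\mathbb L$ by $\varphi(v):=P_{\mathbb L}(p)$, where $p$ is the unique element of $\Gamma$ with $P_{\mathbb W}(p)=v$; then every $p\in\Gamma$ equals $v\cdot\varphi(v)$ with $v=P_{\mathbb W}(p)$, so $\Gamma=\mathrm{graph}(\varphi)$. Since $\Gamma$ is a $C_{\mathbb W}(\alpha)$-set, $\varphi$ is $\alpha$-intrinsically Lipschitz by \cref{def:iLipfunctions}. There is no real obstacle here: the whole argument is an algebraic identity powered by \cref{lemma:LCapCw=e}; the only thing to be careful about is the order of factors when inverting $p$, which is why the cancellation $P_{\mathbb W}(p)^{-1}\cdot P_{\mathbb W}(q)=0$ is placed between $P_{\mathbb L}(p)^{-1}$ and $P_{\mathbb L}(q)$ so that the surviving element really lies in $\mathbb L$.
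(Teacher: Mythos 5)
Your proof is correct and follows essentially the same route as the paper's: you show $p^{-1}\cdot q\in\mathbb L\cap C_{\mathbb W}(\alpha)\subseteq\mathbb L\cap C_{\mathbb W}(\oldep{ep:Cool}(\mathbb W,\mathbb L))$ and invoke \cref{lemma:LCapCw=e} to force $p=q$, then read off the graph map as $P_{\mathbb L}\circ\big((P_{\mathbb W})|_{\Gamma}\big)^{-1}$ and its intrinsic Lipschitz property from \cref{def:iLipfunctions}. The explicit cancellation $P_{\mathbb L}(p)^{-1}\cdot P_{\mathbb W}(p)^{-1}\cdot P_{\mathbb W}(q)\cdot P_{\mathbb L}(q)=P_{\mathbb L}(p)^{-1}\cdot P_{\mathbb L}(q)$ is just a spelled-out version of the paper's observation that $P_{\mathbb W}(p)=P_{\mathbb W}(q)$ implies $p^{-1}\cdot q\in\mathbb L$.
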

\begin{proof}
Suppose by contradiction that $P_{\mathbb W}:\Gamma\to\mathbb W$ is not injective. Then, there exist $p\neq q$ with $p,q\in \Gamma$ such that $P_{\mathbb W}(p)=P_{\mathbb W}(q)$. Thus $p^{-1}\cdot q\in\mathbb L$. Moreover, since $\Gamma$ is a $C_{\mathbb W}(\alpha)$-set, we have that $p^{-1}\cdot q\in C_{\mathbb W}(\alpha)$. Eventually we get
$$
p^{-1}\cdot q\in \mathbb L\cap C_{\mathbb W}(\alpha) \subseteq \mathbb L\cap C_{\mathbb W}(\oldep{ep:Cool}(\mathbb W,\mathbb L)),
$$
where the last inclusion follows since $\alpha\leq\oldep{ep:Cool}(\mathbb W,\mathbb L)$. The above inclusion, jointly with \cref{lemma:LCapCw=e}, gives that $p^{-1}\cdot q=0$ and this is a contradiction.  Concerning the last part of the statement, let us notice that the map $P_{\mathbb L}\circ\big((P_{\mathbb W})_{|_{\Gamma}}\big)^{-1}$ is well-defined from $P_{\mathbb W}(\Gamma)$ to $\mathbb L$ and its intrinsic graph is $\Gamma$ by definition. Moreover, since $\Gamma$ is a $C_{\mathbb W}(\alpha)$-set, the latter map is intrinsically Lipschitz by \cref{def:iLipfunctions}.
\end{proof}

The following two lemmata will play a fundamental role in the proof that $\mathscr{P}^{c}_h$-rectifiable measures have $h$-density.

\begin{lemma}\label{lemma:projections}
Let $\mathbb V\in \G_c(h)$ and $\mathbb L$ be one of its complementary subgroups. For any $0<\alpha<\oldC{C:split}(\mathbb{V},\mathbb{L})/2$, let 
\begin{equation}\label{eqn:calpha}
\mathfrak{c}(\alpha):=\alpha/(\oldC{C:split}(\mathbb{V},\mathbb{L})-\alpha).
\end{equation}
Then we have
\begin{equation}
    B(0,1)\cap \mathbb{V}\subseteq P_\mathbb{V}(B(0,1)\cap C_\mathbb{V}(\alpha))\subseteq B(0,1/(1-\mathfrak{c}(\alpha)))\cap \mathbb{V}.\label{eq:incl}
\end{equation}
\end{lemma}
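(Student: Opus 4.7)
The statement contains two inclusions, and my plan is to handle them separately, with the first being essentially immediate and the second requiring a short algebraic manipulation of the projection estimate in \cref{prop:distvsproj}.

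For the left inclusion, I would observe that any $v\in B(0,1)\cap\mathbb V$ lies in $\mathbb V$, so $\dist(v,\mathbb V)=0\leq \alpha\|v\|$ and hence $v\in C_{\mathbb V}(\alpha)$; since moreover $P_{\mathbb V}(v)=v$, it belongs to $P_{\mathbb V}(B(0,1)\cap C_{\mathbb V}(\alpha))$.

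For the right inclusion, let $w\in B(0,1)\cap C_{\mathbb V}(\alpha)$ and split $w=P_{\mathbb V}(w)\cdot P_{\mathbb L}(w)$. The triangle inequality for the homogeneous norm gives
\[
\|w\|\leq \|P_{\mathbb V}(w)\|+\|P_{\mathbb L}(w)\|, \qquad \|P_{\mathbb V}(w)\|\leq \|w\|+\|P_{\mathbb L}(w)\|.
\]
From \cref{prop:distvsproj} together with the cone condition $\dist(w,\mathbb V)\leq\alpha\|w\|$ I would conclude
\[
\oldC{C:split}(\mathbb V,\mathbb L)\,\|P_{\mathbb L}(w)\|\leq\alpha\|w\|\leq \alpha\bigl(\|P_{\mathbb V}(w)\|+\|P_{\mathbb L}(w)\|\bigr),
\]
whence, after rearranging (and using $\alpha<\oldC{C:split}(\mathbb V,\mathbb L)/2<\oldC{C:split}(\mathbb V,\mathbb L)$ to make the coefficient positive),
\[
\|P_{\mathbb L}(w)\|\leq \frac{\alpha}{\oldC{C:split}(\mathbb V,\mathbb L)-\alpha}\,\|P_{\mathbb V}(w)\|=\mathfrak c(\alpha)\,\|P_{\mathbb V}(w)\|.
\]
Plugging this back into the second triangle inequality yields $\|P_{\mathbb V}(w)\|\leq 1+\mathfrak c(\alpha)\|P_{\mathbb V}(w)\|$, and since $\alpha<\oldC{C:split}(\mathbb V,\mathbb L)/2$ forces $\mathfrak c(\alpha)<1$, the claimed bound $\|P_{\mathbb V}(w)\|\leq 1/(1-\mathfrak c(\alpha))$ follows.

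There is no real obstacle here: the only mildly delicate point is to \emph{absorb} $\|P_{\mathbb L}(w)\|$ on the right-hand side of the estimate rather than bounding it directly by $(\alpha/\oldC{C:split})\|w\|$, which would yield the weaker constant $1+\alpha/\oldC{C:split}$ instead of $1/(1-\mathfrak c(\alpha))$. The fact that the hypothesis is exactly $\alpha<\oldC{C:split}/2$ (so that $\mathfrak c(\alpha)<1$) is precisely what makes the absorption argument work.
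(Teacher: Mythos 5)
Your proof is correct and follows essentially the same route as the paper's: both derive $\|P_{\mathbb L}(w)\|\leq\mathfrak c(\alpha)\|P_{\mathbb V}(w)\|$ from \cref{prop:distvsproj} combined with the cone condition and subadditivity of the norm, and then absorb the $\mathbb L$-component to get $(1-\mathfrak c(\alpha))\|P_{\mathbb V}(w)\|\leq\|w\|\leq 1$. The only cosmetic difference is that the paper uses the reverse triangle inequality $\|P_{\mathbb V}(w)\cdot P_{\mathbb L}(w)\|\geq\|P_{\mathbb V}(w)\|-\|P_{\mathbb L}(w)\|$ where you write $\|P_{\mathbb V}(w)\|\leq\|w\|+\|P_{\mathbb L}(w)\|$, which is the same estimate rearranged.
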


\begin{proof}
The first inclusion comes directly from the definition of projections and cones. Concerning the second, if $v\in B(0,1)\cap C_\mathbb{V}(\alpha)$, thanks to \cref{prop:distvsproj} we have
\begin{equation}\label{eqn:Estimatecalpha}
\oldC{C:split}(\mathbb{V},\mathbb{L})\lVert P_\mathbb{L}(v)\rVert\leq \dist(v,\mathbb{V})\leq \alpha\lVert v\rVert\leq \alpha(\lVert P_\mathbb{L}(v)\rVert+\lVert P_\mathbb{V}(v)\rVert).
\end{equation}
This implies in particular that
$\lVert P_\mathbb{L}(v)\rVert\leq \mathfrak{c}(\alpha)\lVert P_\mathbb{V}(v)\rVert$ and thus
$$1\geq \lVert P_{\mathbb{V}}(v)P_\mathbb{L}(v)\rVert\geq\lVert P_{\mathbb{V}}(v)\rVert-\lVert P_\mathbb{L}(v)\rVert\geq (1-\mathfrak{c}(\alpha))\lVert P_\mathbb{V}(v)\rVert.$$
This concludes the proof of the lemma.
\end{proof}

\begin{lemma}\label{lemma:lowerbd}
Let $\mathbb V\in \G_c(h)$ and $\mathbb L$ be one of its complementary subgroups.
Suppose $\Gamma$ is a $C_{\mathbb V}(\alpha)$-set with $\alpha<\oldC{C:split}(\mathbb{V},\mathbb{L})/2$, and let
\begin{equation}\label{eqn:Calpha}
\mathfrak{C}(\alpha):=\frac{1-\mathfrak{c}(\alpha)}{1+\mathfrak{c}(\alpha)},
\end{equation} 
where $\mathfrak c(\alpha)$ is defined in \eqref{eqn:calpha}. Then
$$
\mathcal{S}^h(P_\mathbb{V}(B(x,r)\cap \Gamma))\geq \mathcal{S}^h\Big(P_\mathbb{V}\big(B(x,\mathfrak{C}(\alpha)r)\cap xC_\mathbb{V}(\alpha)\big)\cap P_\mathbb{V}(\Gamma)\Big), \qquad \text{for any $x\in\Gamma$}.
$$
The same inequality above holds if we substitute $\mathcal{S}^h$ with any other Haar measure on $\mathbb V$, see \cref{prop:haar}, because all of them are equal up to a constant.
\end{lemma}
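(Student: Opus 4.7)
The plan is to prove the lemma by establishing the set-theoretic inclusion
$$P_{\mathbb V}(B(x,\mathfrak{C}(\alpha)r)\cap xC_{\mathbb V}(\alpha)) \cap P_{\mathbb V}(\Gamma) \subseteq P_{\mathbb V}(B(x,r)\cap\Gamma),$$
from which the measure inequality follows immediately by monotonicity of $\mathcal{S}^h\llcorner\mathbb V$. Since $\alpha<\oldC{C:split}(\mathbb V,\mathbb L)/2=\oldep{ep:Cool}(\mathbb V,\mathbb L)$, \cref{prop:ConeAndGraph} guarantees that $P_{\mathbb V}$ is injective on $\Gamma$, so the element of $\Gamma$ projecting to a prescribed $v\in P_{\mathbb V}(\Gamma)$ is uniquely determined, which is the key structural fact to exploit.

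To verify the inclusion, fix $v$ in the left-hand side. Then there exist $y\in B(x,\mathfrak{C}(\alpha)r)\cap xC_{\mathbb V}(\alpha)$ with $P_{\mathbb V}(y)=v$ and $g\in\Gamma$ with $P_{\mathbb V}(g)=v$; the goal is to show $d(x,g)\leq r$. Since $x\in\Gamma$ and $\Gamma$ is a $C_{\mathbb V}(\alpha)$-set, one has $x^{-1}g\in C_{\mathbb V}(\alpha)$, while $x^{-1}y\in C_{\mathbb V}(\alpha)$ by assumption. Writing $g=v\cdot P_{\mathbb L}(g)$ and $y=v\cdot P_{\mathbb L}(y)$ gives $y^{-1}g=P_{\mathbb L}(y)^{-1}\cdot P_{\mathbb L}(g)\in\mathbb L$, hence $x^{-1}g=(x^{-1}y)\cdot(y^{-1}g)$ with right factor in $\mathbb L$, so
$$P_{\mathbb V}(x^{-1}g)=P_{\mathbb V}(x^{-1}y).$$

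The heart of the argument is then a direct pinching estimate. The computation already carried out in \cref{lemma:projections} shows that $\|P_{\mathbb L}(w)\|\leq \mathfrak{c}(\alpha)\|P_{\mathbb V}(w)\|$ for every $w\in C_{\mathbb V}(\alpha)$. Applying this to $w=x^{-1}g$ and using the triangle inequality yields
$$\|x^{-1}g\|\leq \|P_{\mathbb V}(x^{-1}g)\|+\|P_{\mathbb L}(x^{-1}g)\|\leq (1+\mathfrak{c}(\alpha))\|P_{\mathbb V}(x^{-1}g)\|,$$
while applying it to $w=x^{-1}y$ and using the reverse triangle inequality gives
$$\|x^{-1}y\|\geq \|P_{\mathbb V}(x^{-1}y)\|-\|P_{\mathbb L}(x^{-1}y)\|\geq (1-\mathfrak{c}(\alpha))\|P_{\mathbb V}(x^{-1}y)\|.$$
Combining these with the identity $P_{\mathbb V}(x^{-1}g)=P_{\mathbb V}(x^{-1}y)$ produces $\|x^{-1}g\|\leq \mathfrak{C}(\alpha)^{-1}\|x^{-1}y\|\leq r$, so $g\in B(x,r)\cap\Gamma$ and $v=P_{\mathbb V}(g)$ lies in the right-hand side.

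The main (and quite mild) obstacle is recognizing the correct way to transport the distance bound available for $y$ to the point $g\in\Gamma$ sharing the same $\mathbb V$-projection: once one observes that the cone condition forces both $x^{-1}g$ and $x^{-1}y$ to have their $\mathbb L$-components controlled by their common $\mathbb V$-component, the two norms become comparable with the explicit ratio $\mathfrak{C}(\alpha)$. The final assertion about arbitrary Haar measures on $\mathbb V$ is then automatic, since by \cref{prop:haar} any two such measures differ by a multiplicative constant.
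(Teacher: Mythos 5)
Your argument is correct, and it rests on the same core estimate as the paper's proof (the pinching $\|P_{\mathbb L}(w)\|\leq \mathfrak{c}(\alpha)\|P_{\mathbb V}(w)\|$ for $w\in C_{\mathbb V}(\alpha)$, applied to two points sharing a $\mathbb V$-projection), but you organize it differently in a way that genuinely streamlines the proof. The paper first replaces $\mathcal{S}^h(P_{\mathbb V}(B(x,r)\cap\Gamma))$ by $\mathcal{S}^h(P_{\mathbb V}(B(0,r)\cap x^{-1}\Gamma))$, proves the inclusion $P_{\mathbb V}(B(0,\mathfrak{C}(\alpha)r)\cap C_{\mathbb V}(\alpha))\cap P_{\mathbb V}(x^{-1}\Gamma)\subseteq P_{\mathbb V}(B(0,r)\cap x^{-1}\Gamma)$ at the origin, and then translates back; both translation steps require \cref{prop:InvarianceOfProj}, i.e.\ that $z\mapsto P_{\mathbb V}(xz)$ preserves $\mathcal{S}^h\llcorner\mathbb V$. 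You instead prove the set inclusion directly at $x$, transporting the distance bound from $y\in B(x,\mathfrak{C}(\alpha)r)\cap xC_{\mathbb V}(\alpha)$ to the point $g\in\Gamma$ with the same projection via the identity $P_{\mathbb V}(x^{-1}g)=P_{\mathbb V}(x^{-1}y)$ (which follows from $y^{-1}g\in\mathbb L$), so the measure inequality becomes pure monotonicity of $\mathcal{S}^h$ and no invariance lemma is needed. Two minor remarks: the injectivity of $P_{\mathbb V}$ on $\Gamma$ that you flag as ``the key structural fact'' is not actually used --- any $g\in\Gamma$ with $P_{\mathbb V}(g)=v$ works in your argument; and you should note (as is implicit in both proofs) that $\alpha<\oldC{C:split}(\mathbb V,\mathbb L)/2$ guarantees $\mathfrak{c}(\alpha)<1$, so that the lower bound $\|x^{-1}y\|\geq(1-\mathfrak{c}(\alpha))\|P_{\mathbb V}(x^{-1}y)\|$ is nontrivial and $\mathfrak{C}(\alpha)>0$.
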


\begin{proof}
First of all, let us note that we have
\begin{equation}
    \mathcal{S}^h\big(P_\mathbb{V}(B(x,r)\cap \Gamma)\big)=\mathcal{S}^h\Big(P_\mathbb{V}\big(B(0,r)\cap x^{-1}\Gamma\big)\Big),
    \label{eq:bd3}
\end{equation}
where the last equality is true since $\mathcal{S}^h(P_{\mathbb V}(\mathcal{E}))=\mathcal{S}^h(P_{\mathbb V}(x^{-1}\mathcal{E}))$ for any Borel $\mathcal{E}\subseteq\mathbb G$, see \cref{prop:InvarianceOfProj}.
We wish now to prove the following inclusion
\begin{equation}
     P_\mathbb{V}\big(B(0,\mathfrak{C}(\alpha)r)\cap C_\mathbb{V}(\alpha)\big)\cap P_\mathbb{V}(x^{-1}\Gamma)\subseteq  P_\mathbb{V}(B(0,r)\cap x^{-1}\Gamma).
     \label{eq:bd4}
\end{equation}
Indeed, fix an element $y$ of $P_\mathbb{V}(B(0,
\mathfrak{C}(\alpha)r)\cap C_{\mathbb V}(\alpha))\cap P_\mathbb{V}(x^{-1}\Gamma)$. Thanks to our choice of $y$ there are a $w_1\in x^{-1}\Gamma$ and a $w_2\in B(0,\mathfrak{C}(\alpha)r)\cap C_\mathbb{V}(\alpha)$ such that $$P_\mathbb{V}(w_1)=y=P_\mathbb{V}(w_2).$$
Furthermore, since $\Gamma$ is a $C_\mathbb{V}(\alpha)$-set, we infer that $w_1\in C_\mathbb{V}(\alpha)$ and thus with the same computations as in \eqref{eqn:Estimatecalpha}
we obtain that $\lVert P_\mathbb{L}(w_1)\rVert\leq \mathfrak{c}(\alpha)\lVert P_\mathbb{V}(w_1)\rVert$ and thus
\begin{equation}
  \lVert w_1\rVert\leq (1+\mathfrak{c}(\alpha))\lVert P_{\mathbb V} w_1\rVert\leq (1+\mathfrak{c}(\alpha))\lVert y\rVert.
   \label{eq:bd1}
\end{equation}
 Furthermore, since by assumption $w_2\in B(0,\mathfrak{C}(\alpha)r)\cap C_\mathbb{V}(\alpha)$, \cref{lemma:projections} yields
 \begin{equation}
     \lVert y\rVert=\lVert P_\mathbb{V}(w_2)\rVert \leq \mathfrak{C}(\alpha)r/(1-\mathfrak{c}(\alpha))=r/(1+\mathfrak{c}(\alpha)).
     \label{eq:bd2}
 \end{equation}
The bounds \eqref{eq:bd1} and \eqref{eq:bd2} together imply that
$\lVert w_1\rVert\leq r$, and thus $w_1\in B(0,r)\cap x^{-1}\Gamma$ and this concludes the proof of the inclusion \eqref{eq:bd4}.
Finally \eqref{eq:bd3}, \eqref{eq:bd4} imply
\begin{equation}\label{eqn:IlLemmaèfinito}
\mathcal{S}^h(P_\mathbb{V}(B(x,r)\cap \Gamma))\geq \mathcal{S}^h\Big(P_\mathbb{V}(B(0,\mathfrak{C}(\alpha)r)\cap C_\mathbb{V}(\alpha))\cap P_\mathbb{V}(x^{-1}\Gamma)\Big).
\end{equation}
Furthermore, for any Borel subset $\mathcal{E}$ of $\mathbb{G}$ we have $P_\mathbb{V}(x\mathcal{E})=P_\mathbb{V}(xP_\mathbb{V}(\mathcal{E}))$, since for every $g\in\mathcal{E}$ we have the following simple equality $P_{\mathbb V}(xg)=P_{\mathbb V}(xP_{\mathbb V}g)$. Therefore, by using the latter observation and \cref{prop:InvarianceOfProj}, we get, denoting with $\Psi$ the map $\Psi(v)=P_{\mathbb V}(x^{-1}v)$ for every $v\in\mathbb V$, that
\begin{equation}\label{eqn:Lemmafinito2}
    \begin{split}
        \mathcal{S}^h\Big(P_\mathbb{V}\big(B(0,&\mathfrak{C}(\alpha)r)\cap C_\mathbb{V}(\alpha)\big)\cap P_\mathbb{V}\big(x^{-1}\Gamma\big)\Big)\\
        &=\mathcal{S}^h\Big(P_\mathbb{V}\big(x^{-1}P_\mathbb{V}(B(x,\mathfrak{C}(\alpha) r)\cap xC_\mathbb{V}(\alpha))\big)\cap P_\mathbb{V}\big(x^{-1}P_\mathbb{V}(\Gamma)\big)\Big)\\
        &=\mathcal{S}^h\Big(\Psi\big(P_\mathbb{V}(B(x,\mathfrak{C}(\alpha) r)\cap xC_\mathbb{V}(\alpha))\big)\cap \Psi\big(P_\mathbb{V}(\Gamma)\big)\Big)\\
        &=\mathcal{S}^h\Big(P_\mathbb{V}(B(x,\mathfrak{C}(\alpha) r)\cap xC_\mathbb{V}(\alpha))\cap P_\mathbb{V}(\Gamma)\Big).
    \end{split}
\end{equation}
 Joining together \eqref{eqn:IlLemmaèfinito} and \eqref{eqn:Lemmafinito2} gives the sought conclusion.
\end{proof}

\subsection{Rectifiable measures in Carnot groups}
In what follows we are going to define the class of $h$-flat measures on a Carnot group and then we will give proper definitions of rectifiable measures on Carnot groups. Again we recall that throughout this subsection $\mathbb G$ will be a fixed Carnot group endowed with an arbitrary left-invariant homogeneous distance.

\begin{definizione}[Flat measures]\label{flatmeasures}
For any $h\in\{1,\ldots,Q\}$ we let $\mathfrak{M}(h)$ to be the {\em family of flat $h$-dimensional measures} in $\mathbb{G}$, i.e.
$$\mathfrak{M}(h):=\{\lambda\mathcal{S}^h\llcorner \mathbb W:\text{ for some }\lambda> 0 \text{ and }\mathbb W\in\G(h)\}.$$ 
Furthermore, if $G$ is a subset of the $h$-dimensional Grassmanian $\G(h)$, we let $\mathfrak{M}(h,G)$ to be the set \begin{equation}
    \mathfrak{M}(h,G):=\{\lambda\mathcal{S}^h\llcorner \mathbb{W}:\text{ for some }\lambda> 0\text{ and }\mathbb{W}\in G\}.
    \label{Gotico(h)}
\end{equation}
We stress that in the previous definitions we can use any of the Haar measures on $\mathbb W$, see \cref{prop:haar}, since they are the same up to a constant.
\end{definizione}

\begin{definizione}[$\mathscr{P}_h$ and $\mathscr{P}_h^*$-rectifiable measures]\label{def:PhRectifiableMeasure}
Let $h\in\{1,\ldots,Q\}$. A Radon measure $\phi$ on $\mathbb G$ is said to be a $\mathscr{P}_h$-rectifiable measure if for $\phi$-almost every $x\in \mathbb{G}$ we have
\begin{itemize}
    \item[(i)]$0<\Theta^h_*(\phi,x)\leq\Theta^{h,*}(\phi,x)<+\infty$,
    \item[(\hypertarget{due}{ii})]there exists a $\mathbb{V}(x)\in\G(h)$ such that $\mathrm{Tan}_h(\phi,x) \subseteq \{\lambda\mathcal{S}^h\llcorner \mathbb V(x):\lambda\geq 0\}$.
\end{itemize}
Furthermore, we say that $\phi$ is $\mathscr{P}_h^*$-rectifiable if (\hyperlink{due}{ii})  is replaced with the weaker
\begin{itemize}
    \item[(ii)*] $\mathrm{Tan}_h(\phi,x) \subseteq \{\lambda\mathcal{S}^h\llcorner \mathbb V:\lambda\geq 0\,\,\text{and}\,\,\mathbb V\in \G(h)\}$.
\end{itemize}
\end{definizione}
\begin{osservazione}(About $\lambda=0$ in \cref{def:PhRectifiableMeasure})\label{rem:AboutLambda=0}
It is readily noticed that, since in \cref{def:PhRectifiableMeasure} we are asking $\Theta^h_*(\phi,x)>0$ for $\phi$-almost every $x$, we can not have the zero measure as a tangent measure. As a consequence, a posteriori, we have that in item (ii) and item (ii)* above we can restrict to $\lambda>0$. We will tacitly work in this restriction from now on.

On the contrary, if we only know that for $\phi$-almost every $x\in\mathbb G$ we have 
\begin{equation}\label{eqn:YAYA}
\Theta^{h,*}(\phi,x)<+\infty, \qquad \text{and} \qquad  \mathrm{Tan}_h(\phi,x)\subseteq \{\lambda\mathcal{S}^h\llcorner\mathbb V(x):\lambda>0\},
\end{equation}
for some $\mathbb V(x)\in \G(h)$, hence $\Theta^h_*(\phi,x)>0$ for $\phi$-almost every $x\in\mathbb G$, and the same property holds with the item (ii)* above. Indeed, if at some $x$ for which \eqref{eqn:YAYA} holds we have $\Theta^h_*(\phi,x)=0$, then there exists $r_i\to 0$ such that $r_i^{-h}\phi(B(x,r_i))=0$. Since $\Theta^{h,*}(\phi,x)<+\infty$, up to subsequences (see \cite[Theorem 1.60]{AFP00}), we have $r_i^{-h}T_{x,r_i}\phi\to \lambda\mathcal{S}^h\llcorner\mathbb V(x)$, for some $\lambda>0$. Hence, by applying \cite[Proposition 1.62(b)]{AFP00} we conclude that $r_i^{-h}T_{x,r_i}\phi(B(0,1))\to \lambda\mathcal{S}^h\llcorner\mathbb V(x)(B(0,1))>0$, that is a contradiction.
\end{osservazione}

Throughout the paper it will be often convenient to restrict our attention to the subclass of $\mathscr{P}_h$- and $\mathscr{P}_h^*$-rectifiable measures, given by the measures that have complemented tangents. More precisely we give the following definition.

\begin{definizione}[$\mathscr{P}_h^c$-rectifiable measures]\label{def:SubclassesPh}
Let $h\in\{1,\ldots,Q\}$. In the following we denote by $\mathscr{P}^{c}_h$ the family of those $\mathscr{P}_h$-rectifiable measures such that for $\phi$-almost every $x\in \mathbb{G}$ we have
$$\Tan_h(\phi,x)\subseteq \mathfrak{M}(h,\G_c(h)).$$
\end{definizione}

\begin{osservazione}
As explained in the introduction, the bridge between the notion of $\mathscr{P}_h^c$-rectifiable measures and the other notions of rectifiability in the Heisenberg groups $\mathbb H^n$ is nowadays very well understood after the results in \cite[Theorem 3.14 and Theorem 3.15]{MatSerSC} and \cite{Vittone20}. Let us now discuss some examples of flat rectifiable measures in a different setting, i.e., in the Engel group, which we denote by $\mathbb E$.

The Engel group $\mathbb E$ is the Carnot group whose Lie algebra $\mathfrak e$ admits a basis $\{X_1,X_2,X_3,X_4\}$ such that $[X_1,X_2]=X_3$, and $[X_1,X,3]=X_4$. Hence it is a step-3 Carnot group of topological dimension 4 and homogeneous dimension 7 where $V_1=\mathrm{span}\{X_1,X_2\}$, $V_2=\mathrm{span}\{X_3\}$, and $V_3=\mathrm{span}\{X_4)$. In exponential coordinates associated to the basis $(X_1,X_2,X_3,X_4)$ the law product can be explicitly written as in \cite[Example 2.6]{FranchiSerapioni16}. As explicitly computed in \cite[Example 2.6]{FranchiSerapioni16}, in $\mathbb E$ we have two families of homogeneous complementary subgroups. The first family is given by $\mathbb M_{\alpha,\beta}\cdot\mathbb N_{\gamma,\delta}$ where $\alpha,\beta,\gamma,\delta\in\mathbb R$ satisfy $\alpha\delta-\beta\gamma\neq 0$, and
\[
\mathbb M_{\alpha,\beta}:=\{(\alpha t,\beta t,0,0):t\in\mathbb R\}, \qquad \mathbb N_{\gamma,\delta}:=\{(\gamma t,\delta t,x_3,x_4): t,x_3,x_4\in\mathbb R\}.
\]
In this case notice that, by homogeneity, $\mathcal{S}^1\llcorner \mathbb M_{\alpha,\beta}$ and $\mathcal{S}^6\llcorner \mathbb N_{\gamma,\delta}$ are both $\mathscr{P}_h^c$-rectifiable measures. Notice, moreover, that $\mathbb N_{\gamma,\delta}$ is normal and it is also a $C^1_{\mathrm{H}}$-hypersurface, thus being rectifiable in the sense of \cref{def:GG'Rect}. The same does not hold for $\mathbb M_{\alpha,\beta}$ since it is not normal, compare with \cref{oss:LackOfGenerality}.

The second family of homogeneous subgroups is given by $\mathbb K\cdot \mathbb H_{\alpha,\beta}$, where $\alpha,\beta\in\mathbb R$ satisfy $\alpha+\beta\neq 0$, and 
\[
\mathbb K:=\{(x_1,-x_1,x_3,0):x_1,x_3\in\mathbb R\}, \qquad \mathbb H_{\alpha,\beta}:=\{(\alpha t,\beta t, 0,x_4):t,x_4\in\mathbb R\}.
\]
Notice that $\mathcal{S}^3\llcorner \mathbb K$ and $\mathcal{S}^4\llcorner \mathbb H_{\alpha,\beta}$ are both $\mathscr{P}_h^c$-rectifiable measures. Nevertheless, since both $\mathbb K$ and $\mathbb H_{\alpha,\beta}$ are not normal, none of them can be a $C^1_{\mathrm H}(\mathbb E;\mathbb G')$ for any Carnot group $\mathbb G'$ because otherwise the tangent, which coincides everywhere with the same subgroup by homogeneity, would be normal, compare with \cref{oss:LackOfGenerality}.

\end{osservazione}

\begin{proposizione}\label{prop:density}
Let $h\in\{1,\ldots,Q\}$ and assume $\phi$ is a Radon measure on $\mathbb G$.
If $\{r_i\}_{i\in\N}$ is an infinitesimal sequence such that $r_i^{-h}T_{x,r_i}\phi\rightharpoonup \lambda \mathcal{C}^h\llcorner \mathbb{V}$ for some $\lambda>0$ and $\mathbb{V}\in\G(h)$ then
$$\lim_{i\to\infty} \phi(B(x,r_i))/r_i^h=\lambda.$$
\end{proposizione}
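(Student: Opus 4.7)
The plan is to reduce the statement to the standard convergence theorem for Radon measures under weak convergence: if $\mu_i\rightharpoonup \mu$ and $K$ is a bounded Borel set with $\mu(\partial K)=0$, then $\mu_i(K)\to \mu(K)$ (see e.g.\ \cite[Proposition 1.62]{AFP00}). I will apply this with $\mu_i := r_i^{-h}T_{x,r_i}\phi$, $\mu := \lambda\mathcal{C}^h\llcorner \mathbb V$, and $K := B(0,1)$ (the closed unit ball).

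First I note the simple identity
\[
\mu_i(B(0,1)) \;=\; r_i^{-h}T_{x,r_i}\phi(B(0,1)) \;=\; r_i^{-h}\phi\bigl(x\cdot\delta_{r_i}(B(0,1))\bigr) \;=\; \phi(B(x,r_i))/r_i^{h},
\]
which follows from the left-invariance and homogeneity of $d$ together with \cref{def:TangentMeasure}. Moreover, by \cref{rem:Ch1} we have $\mathcal{C}^h(\mathbb V\cap B(0,1))=1$, so $\mu(B(0,1))=\lambda$; thus once we verify the boundary condition $\mu(\partial B(0,1))=0$, we obtain exactly the desired limit $\phi(B(x,r_i))/r_i^{h}\to \lambda$.

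The only non-trivial step is therefore showing
\[
\mathcal{C}^h\bigl(\mathbb V\cap \partial B(0,1)\bigr) \;=\; 0.
\]
The idea is a scaling/disjointness argument. By \cref{prop:haar}, the measure $\mathcal{C}^h\llcorner\mathbb V$ is $h$-homogeneous under $\delta_r$, and since $\partial B(0,r)=\delta_r(\partial B(0,1))$ by the homogeneity of $d$, we obtain
\[
\mathcal{C}^h\bigl(\mathbb V\cap \partial B(0,r)\bigr) \;=\; r^{h}\,\mathcal{C}^h\bigl(\mathbb V\cap \partial B(0,1)\bigr),\qquad \text{for every } r>0.
\]
The family $\{\mathbb V\cap \partial B(0,r)\}_{r\in(0,2)}$ is an uncountable collection of pairwise disjoint Borel subsets of $\mathbb V\cap B(0,2)$, which has finite $\mathcal{C}^h$-measure by \cref{prop:TopDimMetricDim} (or by rescaling \cref{rem:Ch1}). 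By countable additivity, at most countably many of these sets can have positive measure, so there exists some $r_\ast\in(0,2)$ with $\mathcal{C}^h(\mathbb V\cap\partial B(0,r_\ast))=0$. The displayed scaling identity then forces $\mathcal{C}^h(\mathbb V\cap\partial B(0,1))=0$, completing the argument.

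I expect no serious obstacle: the main (and only) subtlety is the boundary-measure zero condition, which is handled cleanly by the homogeneity/disjointness trick above. The rest is a mechanical unpacking of the definitions of $T_{x,r}\phi$ and weak convergence against the indicator of a compactly supported continuity set of the limit measure.
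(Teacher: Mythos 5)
Your proposal is correct and follows essentially the same route as the paper: both identify $r_i^{-h}T_{x,r_i}\phi(B(0,1))=\phi(B(x,r_i))/r_i^h$, use $\mathcal{C}^h(\mathbb V\cap B(0,1))=1$ from \cref{rem:Ch1}, and invoke \cite[Proposition 1.62(b)]{AFP00} on the continuity set $B(0,1)$. The only difference is that the paper outsources the key boundary condition $\mathcal{C}^h\llcorner\mathbb V(\partial B(0,1))=0$ to a citation of \cite[Lemma 3.5]{JNGV20}, whereas you prove it yourself; your scaling-plus-disjointness argument (uncountably many pairwise disjoint spheres $\mathbb V\cap\partial B(0,r)$ inside a set of finite measure, combined with the $h$-homogeneity of the Haar measure $\mathcal{C}^h\llcorner\mathbb V$ from \cref{prop:haar}) is correct and makes the proof self-contained, since $\partial B(0,r)\subseteq\{z:\|z\|=r\}$ guarantees the disjointness you need.
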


\begin{proof}
Since $\mathcal{C}^h\llcorner \mathbb{V}(x)(\partial B(0,1))=0$, see e.g., \cite[Lemma 3.5]{JNGV20}, thanks to \cref{rem:Ch1} and to \cite[Proposition 1.62(b)]{AFP00} we have
$$\lambda=\lambda \mathcal{C}^h\llcorner\mathbb{V}(x)(B(0,1))=\lim_{i\to\infty}\frac{T_{x,r_i}\phi(B(0,1))}{r_i^h}=\lim_{i\to\infty}\frac{\phi(B(x,r_i))}{r_i^h},$$
and this concludes the proof.
\end{proof}

The above proposition has the following immediate consequence.

\begin{corollario}\label{cordenstang}
Let $h\in\{1,\ldots,Q\}$ and assume $\phi$ is a $\mathscr{P}^*_h$-rectifiable. Then for $\phi$-almost every $x\in\mathbb{G}$ we have
$$\Tan_h(\phi,x)\subseteq \{\lambda\mathcal{C}^h\llcorner\mathbb{W}:\lambda\in[\Theta^h_*(\phi,x),\Theta^{h,*}(\phi,x)]\text{ and }\mathbb{W}\in\G(h)\}.$$
\end{corollario}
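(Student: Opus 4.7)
The plan is a short two-step argument that leverages Proposition \ref{prop:density} together with the fact that all Haar measures on a homogeneous subgroup $\mathbb{V}$ are mutually proportional.

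First I fix a full-$\phi$-measure set of points $x$ for which both conditions in the definition of $\mathscr{P}_h^*$-rectifiability hold, namely $0<\Theta^h_*(\phi,x)\leq\Theta^{h,*}(\phi,x)<\infty$ and $\Tan_h(\phi,x)\subseteq\{\lambda\mathcal{S}^h\llcorner\mathbb V:\lambda\geq 0,\;\mathbb V\in\G(h)\}$. Pick any $\nu\in\Tan_h(\phi,x)$, so by definition there exists an infinitesimal sequence $\{r_i\}_{i\in\N}$ with $r_i^{-h}T_{x,r_i}\phi\rightharpoonup \nu$, and write $\nu=\lambda\mathcal{S}^h\llcorner\mathbb V$. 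By Proposition \ref{prop:haar} both $\mathcal{S}^h\llcorner\mathbb V$ and $\mathcal{C}^h\llcorner\mathbb V$ are Haar measures on $\mathbb V$, so there is a positive constant $c=c(\mathbb V)$ with $\mathcal{S}^h\llcorner\mathbb V=c\,\mathcal{C}^h\llcorner\mathbb V$. Setting $\mu:=\lambda c$, we have $\nu=\mu\,\mathcal{C}^h\llcorner\mathbb V$, which already gives the structural form required by the statement.

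Next I identify the constant $\mu$. By Remark \ref{rem:AboutLambda=0}, the strict positivity of $\Theta^h_*(\phi,x)$ rules out $\nu=0$, so $\mu>0$. Applying Proposition \ref{prop:density} to the convergence $r_i^{-h}T_{x,r_i}\phi\rightharpoonup \mu\,\mathcal{C}^h\llcorner\mathbb V$ (the applicability of that proposition was already verified there using $\mathcal{C}^h\llcorner\mathbb V(\partial B(0,1))=0$ via \cite[Lemma 3.5]{JNGV20} and Remark \ref{rem:Ch1}), we obtain
\[
\mu=\lim_{i\to\infty}\frac{\phi(B(x,r_i))}{r_i^h}.
\]

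Finally, since this is the limit along a specific infinitesimal sequence of the quantities $\phi(B(x,r))/r^h$, it must lie between the $\liminf$ and the $\limsup$ of the same quantities as $r\to 0$. By the very definition of lower and upper $h$-density in Definition \ref{def:densities}, this means $\mu\in[\Theta^h_*(\phi,x),\Theta^{h,*}(\phi,x)]$. Since $\nu\in\Tan_h(\phi,x)$ was arbitrary, the desired inclusion follows. No genuine obstacle appears: the whole argument is essentially a packaging of Proposition \ref{prop:density}, plus the elementary observation that Haar measures on $\mathbb V$ are unique up to a scalar, so the $\mathcal{S}^h$-normalization in Definition \ref{def:PhRectifiableMeasure} and the $\mathcal{C}^h$-normalization in the statement differ only by a harmless positive factor which gets absorbed into $\lambda$.
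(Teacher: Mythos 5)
Your proof is correct and is essentially the paper's argument: the paper states the corollary as an immediate consequence of Proposition \ref{prop:density}, and your write-up simply spells out that deduction (rewriting $\lambda\mathcal{S}^h\llcorner\mathbb V$ as $\mu\,\mathcal{C}^h\llcorner\mathbb V$ via \cref{prop:haar}, excluding $\mu=0$ via \cref{rem:AboutLambda=0}, and sandwiching the resulting limit between the lower and upper densities). Nothing is missing.
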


We introduce now a way to estimate how far two measures are.

\begin{definizione}\label{def:Fk}
Given $\phi$ and $\psi$ two Radon measures on $\mathbb G$, and given $K\subseteq \mathbb G$ a compact set, we define 
\begin{equation}
    F_K(\phi,\psi):= \sup\left\{\left|\int fd\phi - \int fd\psi\right|:f\in \mathrm{Lip}_1^+(K)\right\}.
    \label{eq:F}
\end{equation}
We also write $F_{x,r}$ for $F_{B(x,r)}$.
\end{definizione}

\begin{osservazione}\label{rem:ScalinfFxr}
With few computations that we omit, it is easy to see that $F_{x,r}(\phi,\psi)=rF_{0,1}(T_{x,r}\phi,T_{x,r}\psi)$. Furthermore, $F_K$ enjoys the triangle inequality, indeed if $\phi_1,\phi_2,\phi_3$ are Radon measures and $f\in\lip(K)$, then
\begin{equation*}
\begin{split}
\Big\lvert\int fd\phi_1-\int fd\phi_2\Big\rvert&\leq \Big\lvert\int fd\phi_1-\int fd\phi_3\Big\rvert+\Big\lvert\int fd\phi_3-\int fd\phi_2\Big\rvert\\ &\leq F_K(\phi_1,\phi_2)+F_K(\phi_2,\phi_3).
\end{split}
\end{equation*}
The arbitrariness of $f$ concludes that $F_K(\phi_1,\phi_2)\leq F_K(\phi_1,\phi_3)+F_K(\phi_3,\phi_2)$.
\end{osservazione}

The proof of the following criterion is contained in  \cite[Proposition 1.10]{MarstrandMattila20} and we omit the proof.
\begin{proposizione}\label{prop:WeakConvergenceAndFk}
Let $\{\mu_i\}$ be a sequence of Radon measures on $\mathbb G$. Let $\mu$ be a Radon measure on $\mathbb G$. The following are equivalent
\begin{enumerate}
    \item $\mu_i\rightharpoonup \mu$;
    \item $F_K(\mu_i,\mu)\to 0$, for every $K\subseteq \mathbb G$ compact. 
\end{enumerate}
\end{proposizione}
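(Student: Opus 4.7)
The plan is to prove the two implications separately, both of which rest on standard compactness/approximation arguments once one observes that the class $\mathrm{Lip}_1^+(K)$ of $1$-Lipschitz, non-negative functions supported in $K$ is automatically uniformly bounded on $K$ (for instance by $\diam(K)$, since any such $f$ must vanish outside $K$ and satisfies $f(x)=|f(x)-f(y)|\leq d(x,y)$ for $y\notin K$).

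For the implication (2)$\Rightarrow$(1), the first step is to show that the hypothesis $F_K(\mu_i,\mu)\to 0$ for every compact $K$ automatically gives uniform local mass bounds: given $K$ compact, the function $\chi(x):=\max(0,1-\dist(x,K))$ lies in $\mathrm{Lip}_1^+(\overline{B(K,1)})$, so by hypothesis $\int\chi\,d\mu_i\to\int\chi\,d\mu$, which controls $\mu_i(K)\leq\int\chi\,d\mu_i$. Then, given any $f\in C_c(\mathbb G)$ with support contained in a compact $K$, I would split $f=f^+-f^-$ and uniformly approximate each nonnegative piece by a Lipschitz, nonnegative $g$ supported in $K$ (using, for instance, the metric McShane/inf-convolution construction $g(x):=\sup_{y\in\mathbb G}(f^\pm(y)-L\,d(x,y))^+$ for $L$ large enough). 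Since $g/L\in\mathrm{Lip}_1^+(K)$, the hypothesis gives $|\int g\,d\mu_i-\int g\,d\mu|\leq L F_K(\mu_i,\mu)\to 0$, and combining with the uniform approximation error $\|g-f^\pm\|_\infty<\varepsilon$ (which is harmless thanks to the mass bound just established) yields $\int f\,d\mu_i\to\int f\,d\mu$.

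For the implication (1)$\Rightarrow$(2), I would argue by contradiction. Suppose $\mu_i\rightharpoonup\mu$ but there is a compact $K$ with $F_K(\mu_i,\mu)\not\to 0$. Then, up to subsequences, there exist $\varepsilon_0>0$ and $f_i\in\mathrm{Lip}_1^+(K)$ with $|\int f_i\,d\mu_i-\int f_i\,d\mu|\geq \varepsilon_0$. By Arzelà--Ascoli applied on $K$ (equicontinuity from the $1$-Lipschitz condition, uniform boundedness from the observation above), $\{f_i\}$ is precompact in $C(K)$, so up to a further subsequence $f_i\to f$ uniformly, with $f\in\mathrm{Lip}_1^+(K)\subseteq C_c(\mathbb G)$. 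The weak convergence $\mu_i\rightharpoonup \mu$ supplies a compactly supported cut-off bounding $\sup_i\mu_i(K)<+\infty$, so that
\[
\Bigl|\!\int\! f_i\,d\mu_i-\!\int\! f\,d\mu_i\Bigr|\leq \|f_i-f\|_\infty\mu_i(K)\to 0,
\]
and analogously with $\mu$ in place of $\mu_i$. Combined with $\int f\,d\mu_i\to\int f\,d\mu$ (from weak convergence applied to $f\in C_c(\mathbb G)$), this forces $|\int f_i\,d\mu_i-\int f_i\,d\mu|\to 0$, a contradiction.

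The only subtle step, beyond bookkeeping, is the uniform Lipschitz approximation of a generic $f\in C_c(\mathbb G)$ in direction (2)$\Rightarrow$(1), since we cannot convolve with a mollifier in an abstract Carnot group; the metric inf-convolution construction replaces this and produces Lipschitz approximants of $f^\pm$ whose support remains inside a slightly enlarged compact set, which is exactly what is needed to invoke the hypothesis on an appropriate $K$.
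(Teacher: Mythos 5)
Your proof is correct. The paper itself omits the argument, deferring to \cite[Proposition 1.10]{MarstrandMattila20}, and what you wrote is precisely the standard proof of that criterion: sup-convolution to reduce $C_c$ test functions to rescaled elements of $\mathrm{Lip}_1^+(K)$ (together with the uniform local mass bound extracted from a Lipschitz cut-off) for one implication, and equicontinuity plus Arzel\`a--Ascoli on the uniformly bounded class $\mathrm{Lip}_1^+(K)$ for the converse.
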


Now we are going to define a functional that in some sense tells how far is a measure from being flat around a point $x\in\mathbb G$ and at a certain scale $r>0$.

\begin{definizione}\label{def:metr}
For any $x\in\mathbb{G}$, any $h\in\{1,\ldots,Q\}$ and any $r>0$ we define the functional:
\begin{equation}\label{eqn:dxr}
    d_{x,r}(\phi,\mathfrak{M}(h)):=\inf_{\substack{\Theta>0,\\ \mathbb V\in \G(h)}} \frac{F_{x,r}(\phi,\Theta \mathcal{S}^{h}\llcorner x\mathbb V)}{r^{h+1}}. 
\end{equation}
Furthermore, if $G$ is a subset of the $h$-dimensional Grassmanian $\G(h)$, we also define
$$ d_{x,r}(\phi,\mathfrak{M}(h,G)):=\inf_{\substack{\Theta>0,\\ \mathbb V\in G }} \frac{F_{x,r}(\phi,\Theta \mathcal{S}^{h}\llcorner x\mathbb V)}{r^{h+1}}.$$

\end{definizione}
\begin{osservazione}\label{rem:dxrContinuous}
It is a routine computation to prove that, whenever $h\in \mathbb N$ and $r>0$ are fixed, the function $x\mapsto d_{x,r}(\phi,\mathfrak M(h,G))$ is a continuous function. The proof can be reached as in \cite[Item (ii) of Proposition 2.2]{MarstrandMattila20}. Moreover, from the invariance property in \cref{rem:ScalinfFxr} and \cref{prop:haar}, if in \eqref{eqn:dxr} we use the measure $\mathcal{C}^h\llcorner x\mathbb V$ we obtain the same functional.
\end{osservazione}

\begin{proposizione}\label{prop:TanAndDxr}
Let $\phi$ be a Radon measure on $\mathbb G$ satisfying item (i) in \cref{def:PhRectifiableMeasure}. Further, let $G$ be a subfamily of $\G(h)$ and let $\mathfrak{M}(h,G)$ be the set defined in \eqref{Gotico(h)}. If for $\phi$-almost every $x\in \mathbb G$ we have $\mathrm{Tan}_h(\phi,x)\subseteq \mathfrak{M}(h,G)$, then for $\phi$-almost every $x\in\mathbb G$ we have
$$
\lim_{r\to 0} d_{x,r}(\phi,\mathfrak{M}(h,G))=0.$$
\end{proposizione}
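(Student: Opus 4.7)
The strategy is a standard contradiction plus compactness argument, combining the scaling property of $F_{x,r}$ with the sequential compactness of rescaled measures afforded by the upper density bound.

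The plan is as follows. Suppose for contradiction that the conclusion fails. Then there exists a Borel set $A\subseteq\mathbb G$ with $\phi(A)>0$ on which both hypotheses hold but $\limsup_{r\to 0} d_{x,r}(\phi,\mathfrak M(h,G))>0$. Fix such an $x\in A$; there exist $\varepsilon>0$ and an infinitesimal sequence $r_i\downarrow 0$ with
\begin{equation*}
d_{x,r_i}(\phi,\mathfrak M(h,G))\geq \varepsilon \qquad \text{for every } i\in\mathbb N.
\end{equation*}

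Next I would extract a tangent along this sequence. The upper density bound $\Theta^{h,*}(\phi,x)<+\infty$ guarantees that for every $R>0$ the masses $r_i^{-h}T_{x,r_i}\phi(B(0,R)) = r_i^{-h}\phi(B(x,r_iR))$ are uniformly bounded in $i$. By standard weak compactness of Radon measures we can extract a (non relabelled) subsequence such that $r_i^{-h}T_{x,r_i}\phi\rightharpoonup \nu$ for some Radon measure $\nu$, and by definition $\nu\in\mathrm{Tan}_h(\phi,x)$. By our hypothesis $\nu\in\mathfrak M(h,G)$, so $\nu=\lambda\mathcal S^h\llcorner\mathbb V$ for some $\lambda\geq 0$ and some $\mathbb V\in G$. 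The positive lower density together with \cref{rem:AboutLambda=0} excludes $\lambda=0$; in fact, using \cite[Proposition 1.62(b)]{AFP00} and the fact that $\mathcal S^h\llcorner\mathbb V(\partial B(0,1))=0$, one sees that $\lambda\mathcal S^h\llcorner\mathbb V(B(0,1))=\lim_i r_i^{-h}\phi(B(x,r_i))\geq \Theta^h_*(\phi,x)>0$, so $\lambda>0$.

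Now I use $\nu$ as a competitor in the infimum defining $d_{x,r_i}$. Choosing $\Theta=\lambda$ and $\mathbb V$ as above, and exploiting the scaling identity from \cref{rem:ScalinfFxr} together with the fact $T_{x,r_i}(\lambda\mathcal S^h\llcorner x\mathbb V)=\lambda r_i^h\,\mathcal S^h\llcorner\mathbb V$ (which follows from the left-invariance and $h$-homogeneity of $\mathcal S^h\llcorner\mathbb V$, see \cref{prop:haar}), I obtain
\begin{equation*}
d_{x,r_i}(\phi,\mathfrak M(h,G))\leq \frac{F_{x,r_i}(\phi,\lambda\mathcal S^h\llcorner x\mathbb V)}{r_i^{h+1}} = F_{0,1}\bigl(r_i^{-h}T_{x,r_i}\phi,\ \lambda\mathcal S^h\llcorner\mathbb V\bigr).
\end{equation*}
By \cref{prop:WeakConvergenceAndFk} the right-hand side tends to $0$ along the chosen subsequence, contradicting $d_{x,r_i}(\phi,\mathfrak M(h,G))\geq \varepsilon$.

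The only delicate point I anticipate is verifying that the tangent extracted from $r_i^{-h}T_{x,r_i}\phi$ is genuinely nonzero, so that it provides a legitimate element of $\mathfrak M(h,G)$ and not merely the zero measure (which is not an element of $\mathfrak M(h,G)$ as defined with $\lambda>0$); this is handled by the positive lower density hypothesis in item (i) exactly as recalled in \cref{rem:AboutLambda=0}. Everything else reduces to the scaling and homogeneity identities for $F$ and $T_{x,r}$ together with the equivalence between weak convergence and convergence in $F_K$.
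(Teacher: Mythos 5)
Your proposal is correct and follows essentially the same route as the paper's proof: a contradiction argument, extraction of a weak subsequential limit in $\mathfrak M(h,G)$ using the density bounds in item (i) (via \cite[Proposition 1.62(b)]{AFP00}), and then using that tangent as a competitor in the infimum defining $d_{x,r_i}$ together with the scaling identity of \cref{rem:ScalinfFxr} and \cref{prop:WeakConvergenceAndFk}. Your extra care in checking $\lambda>0$ is a welcome (if implicit in the paper) detail, but the argument is the same.
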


\begin{proof}
Let us fix $x\in\mathbb G$ a point for which $\mathrm{Tan}_h(\phi,x)\subseteq \mathfrak{M}(h,G)$ and let us assume by contradiction that there exist $r_i\to 0$ such that, for some $\varepsilon>0$ we have
\begin{equation}\label{eqn:CONT}
d_{x,r_i}(\phi,\mathfrak{M}(h,G)) > \varepsilon.
\end{equation}
Since $\phi$ satisfies item (i) in \cref{def:PhRectifiableMeasure}, we can use \cite[Proposition 1.62(b)]{AFP00} and then, up to subsequences, there are $\Theta^*>0$ and $\mathbb V^*\in G$ such that 
\begin{equation}\label{eqn:CONV}
r_i^{-h}T_{x,r_i}\phi \rightharpoonup \Theta^*\mathcal{S}^h\llcorner \mathbb V^*.
\end{equation}
Thus,
$$
d_{x,r_i}(\phi,\mathfrak{M}(h,G)) =d_{0,1}(r_i^{-h}T_{x,r_i}\phi,\mathfrak{M}(h,G)) \leq F_{0,1}(r_i^{-h}T_{x,r_i}\phi,\Theta^*\mathcal{S}^h\llcorner \mathbb V^*)\to 0,
$$
where the first equality follows from the first part of \cref{rem:ScalinfFxr}, and the last convergence follows from \eqref{eqn:CONV}, and \cref{prop:WeakConvergenceAndFk}. This is in contradiction with \eqref{eqn:CONT}.
\end{proof}

\section{Structure of $\mathscr{P}_h$-rectifiable measures}\label{sec:Structure}

\textbf{In what follows we let $\mathbb G$ be a Carnot group of homogeneous dimension $Q$ and we fix $1\leq h\leq Q$. We endow $\mathbb G$ with a fixed homogeneous left-invariant distance. We also assume that $\phi$ is a fixed Radon measure on $\mathbb G$ and we suppose that it is supported on a compact set $K$. Moreover we fix $\vartheta,\gamma\in\mathbb N$ and we freely use the notation $E(\vartheta,\gamma)$ introduced in \cref{def:EThetaGamma}.}
\medskip

In this section we prove \cref{thm:MainTheorem1Intro} and an important step toward the proof of \cref{coroll:IdiffMeasureIntroNEW}, see the statements in \cref{thm:MainTheorem1} and \cref{thm:MainTheorem2}, respectively. 

The first step in order to prove \cref{thm:MainTheorem1Intro} is to observe the following general property, that can be made quantitative at arbitrary points $x\in E(\vartheta,\gamma)$: if the measure $\mathcal{S}^h\llcorner x\mathbb V$, with $\mathbb V\in \G(h)$, is sufficiently near to $\phi$ in a precise Measure Theoretic sense at the scale $r$ around $x$, then in some ball of center $x$ and with radius comparable with $r$, the points in the set $E(\vartheta,\gamma)$ are not too distant from $x\mathbb V$. Roughly speaking, if we denote with $F_{x,r}$ the functional that measures the distance between measures on the ball $B(x,r)$, see \cref{def:Fk}, we prove that the following implication holds 
\begin{equation}\label{eqn:Implication1Intro}
\begin{split}
     &\text{if there exist a $\Theta,\delta>0$ such that $F_{x,r}(\phi,\Theta\mathcal{S}^h\llcorner x\mathbb V)\leq \delta r^{h+1}$,}\\
 &\text{then $E(\vartheta,\gamma)\cap B(x,r)\subseteq B(x\mathbb V, \omega(\delta)r)$ where $\omega$ is continuous and $\omega(0)=0$.}
\end{split}
\end{equation}
For the precise statement of \eqref{eqn:Implication1Intro}, see \cref{prop1vsinfty}. Let us remark that when $\phi$ is a $\mathscr{P}_h$-rectifiable measure, then for $\phi$-almost every $x\in\mathbb G$ the bound on $F_{x,r}$ in the premise of \eqref{eqn:Implication1Intro} is satisfied with $\mathbb V(x)\in\G(h)$, for arbitrarily small $\delta>0$ whenever $r<r_0(x,\delta)$. Thus for $\mathscr{P}_h$-rectifiable measures we deduce that the estimate in the conclusion of \eqref{eqn:Implication1Intro} holds for arbitrarily small $\delta$, and with $r<r_0(x,\delta)$. This latter estimate easily implies, by a very general geometric argument, that $E(\vartheta,\gamma)\cap B(x,r) \subseteq xC_{\mathbb V(x)}(\alpha)$ for arbitrarily small $\alpha$ and for all $r<r_0(x,\alpha)$. For the latter assertion we refer the reader to \cref{prop:cono}. The proof of \cref{thm:MainTheorem1Intro} is thus concluded by joining together the previous observations and by the general cone-rectifiability criterion in \cref{prop:RectifiabilityCriterion}.

\vspace{0.3cm}
There is a difference between the Euclidean case and the Carnot case that we discuss now. In the Euclidean case it is easy to see that whenever we are given a vector subspace $V$, an arbitrary $C_{V}(\alpha)$-set, with $\alpha$ sufficiently small, is actually the graph of a (Lipschitz) map $f:A\subseteq V\to V^\perp$. The main reason behind this latter statement is the following: we have a canonical choice of a complementary subgroup $V^\perp$ of $V$, and moreover $V^\perp\cap C_{V}(\alpha)=\{0\}$ for $\alpha$ small enough. Already in the first Heisenberg group $\mathbb H^1$ if we take the vertical line $\mathbb V_{\mathbb H^1}$, we notice that there is no choice of a complementary subgroup of $\mathbb V_{\mathbb H^1}$ in $\mathbb H^1$. One could try to bypass this problem by defining properly some {\em coset projections} that would play the role of the projection over a splitting, see \cref{def:Projections}. This will be the topic of further investigations.

Nevertheless, if we work in an arbitrary Carnot group $\mathbb G$ and one of its homogeneous subgroups $\mathbb V$ admits a complementary subgroup $\mathbb L$ we already proved that there exists a constant $\oldep{ep:Cool}:=\oldep{ep:Cool}(\mathbb V,\mathbb L)$ such that every $C_{\mathbb V}(\oldep{ep:Cool})$-set is the intrinsic graph of a function $f:A\subseteq\mathbb V\to\mathbb L$. This last statement is precisely the analogous of the Euclidean property that we discussed above, see \cref{prop:ConeAndGraph}. As a consequence, in order to start to prove \cref{coroll:IdiffMeasureIntroNEW} we follow the path of the proof of \cref{thm:MainTheorem1Intro}, which we discussed above, but we have to pay attention to one technical detail. We have to split the subset of the Grassmannian $\G(h)$ made by the homogeneous subgroups $\mathbb V$ that admit at least one complementary subgroup $\mathbb L$ into countable subsets according to the value of $\oldep{ep:Cool}(\mathbb V,\mathbb L)$. Then we have to write the proof of \cref{thm:MainTheorem1Intro} by paying attention to the fact that we want to control the opening of the final $C_{\mathbb V_i}(\alpha_i)$-sets with $\alpha_i<\oldep{ep:Cool}(\mathbb V_i,\mathbb L_i)$. This is what we do in \cref{thm:MainTheorem1}: we prove a refinement of \cref{thm:MainTheorem2} in which we further ask that the opening of the cones is controlled above also by some a priori defined function $\bold F(\mathbb V,\mathbb L)$.

\begin{definizione}\label{def:Pixr}
Let us fix $x\in \mathbb G$, $r>0$ and $\phi$ a Radon measure on $\mathbb G$.
We define $\Pi_{\delta}(x,r)$ to be the subset of planes $\mathbb V\in\G(h)$ for which there exists $\Theta>0$ such that
\begin{equation}\label{eqn:Previous}
F_{x,r}(\phi, \Theta \mathcal{S}^{h}\llcorner x\mathbb V)\leq 2\delta r^{h+1}.
\end{equation}
\end{definizione}

\begin{definizione}\label{eqn:DefinitionOfDeltaG}
For any $\vartheta\in\mathbb N$ we define $\delta_\mathbb{G}=\delta_{\mathbb G}(h,\vartheta):=\vartheta^{-1}2^{-(4h+5)}$.
\end{definizione}

 In the following proposition we prove that if $\phi$ is sufficiently $d_{x,r}$-near to $\mathfrak{M}(h)$, see \cref{def:metr} for the definition of $d_{x,r}$, then $E(\vartheta,\gamma)$ is at a controlled distance from a plane $\mathbb V$.

\begin{proposizione}\label{prop1vsinfty}
Let $x\in E(\vartheta,\gamma)$,
fix $\delta<\delta_\mathbb{G}$, where $\delta_{\mathbb G}$ is defined in \cref{eqn:DefinitionOfDeltaG}, and set $0<r<1/\gamma$. Then for every $\mathbb V\in \Pi_{\delta}(x,r)$, see \cref{def:Pixr}, we have
\begin{equation}
\sup_{w\in E(\vartheta,\gamma)\cap B(x,r/4)} \frac{\dist\big(w,x\mathbb V\big)}{r}\leq2^{1+1/(h+1)}\vartheta^{1/(h+1)}\delta^{1/(h+1)}=: \newC\label{C:b1}(\vartheta,h)\delta^{1/(h+1)}.
\end{equation}
\end{proposizione}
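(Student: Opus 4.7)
The plan is to reduce to showing, for an arbitrary $w\in E(\vartheta,\gamma)\cap B(x,r/4)$, that the distance $t:=\dist(w,x\mathbb V)$ satisfies the claimed bound by exhibiting a single cleverly chosen $1$-Lipschitz test function that witnesses $F_{x,r}(\phi,\Theta \mathcal S^h\llcorner x\mathbb V)$ from below. The natural candidate is $f(y):=\max\{0,\,t-d(y,w)\}$, which is $1$-Lipschitz, nonnegative, and supported in $B(w,t)$. Before using it I would record the preliminary observation that $t\le r/4$: since $\mathbb V$ is a subgroup we have $0\in\mathbb V$, hence $x\in x\mathbb V$, and $t\le d(w,x)\le r/4$. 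This immediately implies $\supp f\subseteq B(w,t)\subseteq B(x,r/2)\subseteq B(x,r)$, so $f\in \mathrm{Lip}_1^+(B(x,r))$.

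Next, I would lower-bound $\int f\,d\phi$ using that $w\in E(\vartheta,\gamma)$. Restricting to the ball $B(w,t/2)$, where $f\ge t/2$, I get
\[
\int f\,d\phi \;\ge\; \frac{t}{2}\,\phi\!\left(B(w,t/2)\right)\;\ge\;\frac{t}{2}\cdot\vartheta^{-1}\Big(\frac{t}{2}\Big)^{\!h}\;=\;\vartheta^{-1}\Big(\frac{t}{2}\Big)^{\!h+1},
\]
where the density estimate from \cref{def:EThetaGamma} applies since $t/2\le r/8<1/\gamma$. For the flat measure the computation is even simpler: every $y\in x\mathbb V$ satisfies $d(y,w)\ge t$ by definition of $t$, so $f\equiv 0$ on $x\mathbb V$ and $\int f\,d(\Theta\mathcal S^h\llcorner x\mathbb V)=0$.

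Combining these with the hypothesis $\mathbb V\in\Pi_\delta(x,r)$ yields
\[
\vartheta^{-1}\Big(\frac{t}{2}\Big)^{\!h+1}\;\le\;\left|\int f\,d\phi-\int f\,d(\Theta\mathcal S^h\llcorner x\mathbb V)\right|\;\le\;F_{x,r}(\phi,\Theta\mathcal S^h\llcorner x\mathbb V)\;\le\;2\delta r^{h+1},
\]
and taking $(h+1)$-th roots gives exactly $t\le 2^{1+1/(h+1)}\vartheta^{1/(h+1)}\delta^{1/(h+1)}\,r$. Since $w$ was arbitrary in $E(\vartheta,\gamma)\cap B(x,r/4)$, the supremum inequality follows. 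There is no real obstacle here beyond choosing the correct test function: the symmetry of choosing the support radius equal to $t$ simultaneously maximizes the lower bound against $\phi$ (via the lower density inequality on $E(\vartheta,\gamma)$) while killing the integral against the flat measure, and this is precisely what makes the constant come out sharp. The smallness assumption $\delta<\delta_{\mathbb G}$ is not needed for the estimate itself; it only plays a role downstream in converting the distance bound into a cone-containment statement.
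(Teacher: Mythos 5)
Your proof is correct and follows essentially the same strategy as the paper's: test a nonnegative $1$-Lipschitz function vanishing on $x\mathbb V$ against $F_{x,r}(\phi,\Theta\mathcal{S}^h\llcorner x\mathbb V)$ and lower-bound the $\phi$-integral via the lower density estimate on $E(\vartheta,\gamma)$; the paper uses the single function $\min\{\dist(\cdot,U(x,r)^c),\dist(\cdot,x\mathbb V)\}$ and then localizes to a small ball around the furthest point, whereas you use a bump centred at each $w$, which is equivalent and produces the same constant. Your remark that $\delta<\delta_{\mathbb G}$ is not needed for the distance estimate itself is also accurate — the paper only invokes it in a preliminary case analysis (ruling out $D\ge r/8$) that your argument, using $t\le d(w,x)\le r/4$, shows to be avoidable.
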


\begin{proof}
Let $\mathbb V$ be any element of $\Pi_{\delta}(x,r)$ and suppose $\Theta>0$ is such that
\begin{equation*}
\bigg\lvert \int f d\phi-\Theta\int fd\mathcal{S}^{h}\llcorner x\mathbb V\bigg\rvert\leq 2\delta r^{h+1},\text{ for any }f\in\lip(B(x,r)).
\end{equation*}
Since the function $g(w):=\min\{\dist(w,U(x,r)^c),\dist(w,x\mathbb V)\}$ belongs to \\ $\lip(B(x,r))$, we deduce that
\begin{equation*}
\begin{split}
2\delta r^{h+1}&\geq\int g(w)d\phi(w)-\Theta\int g(w)d\mathcal{S}^{h}\llcorner x\mathbb V\\&=\int g(w)d\phi(w)\geq \int_{B(x,r/2)}\min\{r/2,\dist(w,x\mathbb V)\}d\phi(w).
\end{split}
\end{equation*}
Suppose that $y$ is a point in $B(x,r/4)\cap E(\vartheta,\gamma)$ furthest from $x\mathbb V$ and let $D:=\dist(y,x\mathbb V)$.
If $D\geq r/8$, this would imply that
\begin{equation}
\begin{split}
 2\delta r^{h+1}&\geq\int_{B(x,r/2)}\min\{r/2,\dist(w,x\mathbb V)\}d\phi(w)\\&\geq \int_{B(y,r/16)}\min\{r/2,\dist(w,x\mathbb V)\}d\phi(w)
 \geq\frac{r}{16}\phi(B(y,r/16))\geq \frac{r^{h+1}}{\vartheta 16^{h+1}},
    \nonumber
\end{split}
\end{equation}
where the last inequality follows from the definition of $E(\vartheta,\gamma)$ and the fact that $0<r<1/\gamma$. The previous inequality would imply $\delta\geq \vartheta^{-1}2^{-(4h+5)}$, which is not possible since $\delta<\delta_{\mathbb G}=\vartheta^{-1}2^{-(4h+5)}$, see \cref{eqn:DefinitionOfDeltaG}.
This implies that $D\leq r/8$ and as a consequence, we have
\begin{equation}
\begin{split}
     2\delta r^{h+1}&\geq \int_{B(x,r/2)}\min\{r/2,\dist(w,x\mathbb V)\}d\phi(w) \\&\geq \int_{B(y,D/2)}\min\{r/2,\dist(w,x\mathbb V)\}d\phi(w)
     \geq \frac{D\phi(B(y,D/2))}{2}\geq \vartheta^{-1}\left(\frac{D}{2}\right)^{h+1},
\end{split}
    \label{eq:2023}
\end{equation}
where the second inequality comes from the fact that $B(y,D/2)\subseteq B(x,r/2)$. This implies thanks to \eqref{eq:2023}, that
$$\sup_{w\in E(\vartheta,\gamma)\cap B(x,r/4)}\frac{\dist(w,x\mathbb V)}{r}\leq\frac{D}{r}\leq 2^{1+1/(h+1)}\vartheta^{1/(h+1)}\delta^{1/(h+1)}=\oldC{C:b1}(\vartheta,h)\delta^{1/(h+1)}.$$
\end{proof}
\begin{osservazione}\label{rem:prop1vsinftyANCHEC}
Notice that a priori $\Pi_{\delta}(x,r)$ in the statement of \cref{prop1vsinfty} may be empty. Nevertheless it is easy to notice, by using the definitions, that if $d_{x,r}(\phi,\mathfrak{M})\leq \delta$ then $\Pi_{\delta}(x,r)$ is nonempty. 
\end{osservazione}

In the following proposition we show that if we are at a point $x\in E(\vartheta,\gamma)$ for which the $h$-tangents are flat, then locally around $x$ the set $E(\vartheta,\gamma)$ enjoys an appropriate cone property with arbitrarily small opening.
\begin{proposizione}\label{prop:cono}
For any $\alpha>0$ and any $x\in E(\vartheta,\gamma)$ for which $\mathrm{Tan}_h(\phi,x)\subseteq \{\lambda \mathcal{S}^h\llcorner \mathbb V(x):\lambda>0\}$ for some $\mathbb V(x)\in\G(h)$, there exists a $\rho(\alpha,x)>0$ such that whenever $0<r<\rho$ we have
$$
E(\vartheta,\gamma)\cap B(x,r)\subseteq xC_{\mathbb V(x)}(\alpha).
$$

\end{proposizione}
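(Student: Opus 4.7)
The plan is to combine \cref{prop:TanAndDxr} with \cref{prop1vsinfty} and then rescale to convert an absolute ``flat approximation'' bound into a genuine cone inclusion. The three conceptual steps are: (i) use the tangent hypothesis at $x$ to place $\mathbb V(x)$ itself inside $\Pi_{\delta}(x,r)$ for every small enough $r$; (ii) invoke \cref{prop1vsinfty} at scales $r$ small enough to control $\dist(\,\cdot\,,x\mathbb V(x))$ on $E(\vartheta,\gamma)\cap B(x,r/4)$ linearly in $r$; (iii) for each individual $w$, choose a scale comparable to $d(x,w)$ so that the linear-in-$r$ bound becomes a linear-in-$d(x,w)$ bound, which is precisely the cone condition.

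More concretely, fix $\alpha>0$ and choose $\delta=\delta(\alpha,\vartheta,h)>0$ so small that $\delta<\delta_{\mathbb G}$ and $4\oldC{C:b1}(\vartheta,h)\delta^{1/(h+1)}\leq\alpha$. Since $x\in E(\vartheta,\gamma)$ implies $\Theta^{h,*}(\phi,x)\leq\vartheta<\infty$, the proof of \cref{prop:TanAndDxr} applies verbatim at the single point $x$: along any sequence $r_i\to 0$ on which $d_{x,r_i}(\phi,\mathfrak{M}(h,\{\mathbb V(x)\}))$ stayed bounded away from $0$, the finite upper density would allow us to extract, via \cite[Proposition 1.62(b)]{AFP00}, a weakly convergent subsequence $r_i^{-h}T_{x,r_i}\phi\rightharpoonup\nu$ with $\nu$ not of the form $\lambda\mathcal{S}^h\llcorner\mathbb V(x)$, contradicting the hypothesis. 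Hence $d_{x,r}(\phi,\mathfrak{M}(h,\{\mathbb V(x)\}))\to 0$ as $r\to 0$, so by the very definition of the infimum and of $\Pi_\delta(x,r)$ there exists $r_0=r_0(\delta,x)>0$ with $\mathbb V(x)\in\Pi_{\delta}(x,r)$ for all $0<r<r_0$.

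I would then set $\rho:=\tfrac{1}{4}\min(r_0,1/\gamma)$ and verify the cone inclusion pointwise. Pick any $w\in E(\vartheta,\gamma)\cap B(x,\rho)$: the case $w=x$ is trivial since $0\in C_{\mathbb V(x)}(\alpha)$, so assume $s:=d(x,w)>0$ and set $\tilde r:=4s\in(0,\min(r_0,1/\gamma))$. Since $\mathbb V(x)\in\Pi_{\delta}(x,\tilde r)$ and $w\in B(x,\tilde r/4)\cap E(\vartheta,\gamma)$, \cref{prop1vsinfty} yields
\[
\dist(w,x\mathbb V(x))\leq \oldC{C:b1}(\vartheta,h)\,\delta^{1/(h+1)}\,\tilde r \;=\; 4\,\oldC{C:b1}(\vartheta,h)\,\delta^{1/(h+1)}\, s \;\leq\; \alpha\, d(x,w).
\]
By left invariance of $d$ this reads $\dist(x^{-1}w,\mathbb V(x))\leq\alpha\|x^{-1}w\|$, i.e.\ $x^{-1}w\in C_{\mathbb V(x)}(\alpha)$, hence $w\in xC_{\mathbb V(x)}(\alpha)$.

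The only mild subtlety is the scale-matching in step (iii): \cref{prop1vsinfty} delivers a distance bound proportional to the \emph{ambient} scale $\tilde r$, which is larger than $d(x,w)$ for most $w\in B(x,\rho)$; a single choice of scale cannot produce a cone inclusion uniformly over the ball, so one must tailor $\tilde r$ to each $w$, which is why the final radius $\rho$ must be chosen to be a quarter of $\min(r_0,1/\gamma)$ rather than $\min(r_0,1/\gamma)$ itself. No other step presents a real difficulty.
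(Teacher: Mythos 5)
Your proof is correct and follows essentially the same strategy as the paper's: apply \cref{prop:TanAndDxr} pointwise at $x$ to get $\mathbb V(x)\in\Pi_\delta(x,r)$ for all small $r$, feed this into \cref{prop1vsinfty}, and then convert the scale-proportional distance bound into a cone inclusion by matching the scale to $d(x,w)$ for each point $w$. The only cosmetic difference is that you take $\tilde r=4\,d(x,w)$ directly where the paper selects a dyadic scale $r_0 2^{-k+3}$ (hence its constant $8$ versus your $4$); both are equivalent.
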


\begin{proof}

By using \cref{prop:TanAndDxr}, we conclude that
$$
\lim_{r\to 0} \inf_{\Theta>0}\frac{ F_{x,r}(\phi, \Theta\mathcal{S}^h\llcorner x\mathbb V(x))}{r^{h+1}}=0.
$$
From the previous equality it follows that for every $\varepsilon>0$ there exists $1/\gamma>r_0(\varepsilon)>0$ such that 
\begin{equation}\label{eqn:VeryImportant}
\inf_{\Theta>0} F_{x,r}(\phi,\Theta\mathcal{S}^h\llcorner x\mathbb V(x)) \leq \varepsilon r^{h+1}, \qquad \text{whenever}\,\, 0<r\leq r_0(\varepsilon).
\end{equation}
Now we aim at proving that, for $\varepsilon>0$ small enough, $E(\vartheta,\gamma)\cap B(x,r_0(\varepsilon)/4)\subseteq xC_{\mathbb V(x)}(\alpha)$. In order to prove this we notice that \eqref{eqn:VeryImportant} and \cref{prop1vsinfty} imply that, for $\varepsilon>0$ sufficiently small, the following inequality holds
\begin{equation}\label{eqn:Veryimportant2}
\sup_{p\in E(\vartheta,\gamma)\cap B(x,r/4)} \dist(p,x\mathbb V(x)) \leq \oldC{C:b1}(h,\vartheta)\varepsilon^{1/(h+1)}r, \qquad \text{whenever}\,\, 0<r\leq r_0(\varepsilon).
\end{equation}
Indeed, from \eqref{eqn:VeryImportant} it follows that $\mathbb V(x)\in \Pi_{\varepsilon}(x,r)$ for every $0<r\leq r_0$, see \cref{def:Pixr}; so that it suffices to choose $\varepsilon<\delta_{\mathbb G}=\vartheta^{-1}2^{-(4h+5)}$, see \cref{eqn:DefinitionOfDeltaG}, in order to apply \cref{prop1vsinfty} and conclude \eqref{eqn:Veryimportant2}.

Now let us take $\varepsilon<\delta_{\mathbb G}$ so small  that the inequality $8\oldC{C:b1}(h,\vartheta)\varepsilon^{1/(h+1)}<\alpha$ holds. We finally prove $E(\vartheta,\gamma)\cap B(x,r_0(\varepsilon)/4)\subseteq xC_{\mathbb V(x)}(\alpha)$. Indeed, let $p\in E(\vartheta,\gamma)\cap B(x,r_0(\varepsilon)/4)$, and $k\geq 3$ be such that $r_02^{-k}< \|x^{-1}\cdot p\|\leq r_02^{-k+1}$. Since $p\in E(\vartheta,\gamma)\cap B(x,(r_02^{-k+3})/4)$, from \eqref{eqn:Veryimportant2} we get
$$
d(p,x\mathbb V(x))\leq \oldC{C:b1}(h,\vartheta)\varepsilon^{1/(h+1)}r_02^{-k+3} \leq 8\oldC{C:b1}(h,\vartheta)\varepsilon^{1/(h+1)}\|x^{-1}\cdot p\|\leq \alpha \|x^{-1}\cdot p\|,
$$
thus showing the claim.
\end{proof}


We now prove a cone-type rectifiability criterion that will be useful in combination with the previous results in order to split the support of a $\mathscr{P}_h$ or a $\mathscr{P}_h^c$-rectifiable measures with sets that have the cone property. 
\begin{proposizione}[Cone-rectifiability criterion]\label{prop:RectifiabilityCriterion}
Suppose that $E$ is a closed subset of $\mathbb G$ for which there exists a countable family $\mathscr{F}\subseteq \G(h)$ and a function $\alpha:\mathscr{F}\to (0,1)$ such that for every $x\in E$ there exist $\rho(x)>0$, and $\mathbb{V}(x)\in\mathscr{F}$ for which
\begin{equation}\label{eqn:HypothesisCvAlpha}
B(x,r)\cap E \subseteq xC_{\mathbb{V}(x)}(\alpha(\mathbb{V}(x))),
\end{equation}
whenever $0<r<\rho(x)$. Then, there are countably many compact $C_{\mathbb{V}_i}(3\beta_i)$-sets $\Gamma_i$ such that $\mathbb V_i\in\mathscr{F}$, and $\alpha(\mathbb V_i)<\beta_i<2\alpha(\mathbb{V}_i)$ for which
\begin{equation}
E= \bigcup_{i\in\N} \Gamma_i.
\label{eqn:FiniteUNion}
\end{equation}
\end{proposizione}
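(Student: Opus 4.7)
The plan is to stratify $E$ according to the hypothesis, using the countability of $\mathscr{F}$, and then to partition each stratum into pieces of sufficiently small diameter so that the \emph{pointwise} cone condition at $x$ automatically yields a \emph{set-theoretic} cone condition on the piece. First, for each $x\in E$ I make a choice and fix some $\mathbb{V}(x)\in\mathscr{F}$ and $\rho(x)>0$ satisfying \eqref{eqn:HypothesisCvAlpha}. I enumerate $\mathscr{F}=\{\mathbb{W}_k\}_{k\in\mathbb{N}}$ and for each $k$ pick any $\beta_k\in(\alpha(\mathbb{W}_k),2\alpha(\mathbb{W}_k))$. For $k,n,m\in\mathbb{N}$ I set
\[
E_{k,n,m}:=\{x\in E\cap B(0,m):\mathbb{V}(x)=\mathbb{W}_k,\ \rho(x)>1/n\},
\]
so that $E=\bigcup_{k,n,m}E_{k,n,m}$ by construction. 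By separability of $\mathbb{G}$, each $E_{k,n,m}$ is covered by a countable family of balls $B(q_{k,n,m,j},1/(3n))$, and I define
\[
\Gamma_{k,n,m,j}:=\overline{E_{k,n,m}\cap B(q_{k,n,m,j},1/(3n))},
\]
which is closed and bounded and hence compact, and the countable family $\{\Gamma_{k,n,m,j}\}$ still covers $E$.

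The second step is to check the cone property. For any two points $x,y$ of $E_{k,n,m}\cap B(q_{k,n,m,j},1/(3n))$, the triangle inequality gives $d(x,y)<2/(3n)<1/n<\rho(x)$, so by \eqref{eqn:HypothesisCvAlpha} we obtain $y\in B(x,\rho(x))\cap E\subseteq xC_{\mathbb{W}_k}(\alpha(\mathbb{W}_k))$, i.e.\ $x^{-1}y\in C_{\mathbb{W}_k}(\alpha(\mathbb{W}_k))$. Since the cone $C_{\mathbb{W}_k}(\alpha(\mathbb{W}_k))$ is closed and the Carnot product and inversion are continuous, this inclusion passes to the closure: for any $p,q\in\Gamma_{k,n,m,j}$ we get $p^{-1}q\in C_{\mathbb{W}_k}(\alpha(\mathbb{W}_k))\subseteq C_{\mathbb{W}_k}(3\beta_k)$, the last inclusion holding because $\alpha(\mathbb{W}_k)<3\beta_k$. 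Therefore each $\Gamma_{k,n,m,j}$ is a $C_{\mathbb{W}_k}(3\beta_k)$-set. Re-indexing the countable family $\{\Gamma_{k,n,m,j}:k,n,m,j\in\mathbb{N}\}$ as $\{\Gamma_i\}_{i\in\mathbb{N}}$, with associated $\mathbb{V}_i:=\mathbb{W}_{k(i)}$ and $\beta_i:=\beta_{k(i)}$, produces the decomposition \eqref{eqn:FiniteUNion}.

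The only delicate point is that hypothesis \eqref{eqn:HypothesisCvAlpha} is a pointwise statement whose domain of validity $\rho(x)$ may vary wildly with $x\in E$; the stratification $\rho(x)>1/n$ provides the uniform lower bound required to compare distinct points inside one piece, and the closedness of $C_{\mathbb{W}_k}(\alpha(\mathbb{W}_k))$ together with the continuity of the group operations allows the cone relation to survive the passage to the closure needed to ensure compactness. The factor $3$ in the stated opening $3\beta_i$ is in fact not tight: the argument produces a $C_{\mathbb{V}_i}(\alpha(\mathbb{V}_i))$-set, which is automatically a $C_{\mathbb{V}_i}(3\beta_i)$-set since $3\beta_i>3\alpha(\mathbb{V}_i)>\alpha(\mathbb{V}_i)$, so the conclusion follows with plenty of room to spare.
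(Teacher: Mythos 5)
Your proof is correct, and it reaches the conclusion by a genuinely simpler route than the paper's in the covering step. The stratification into the sets $E_{k,n,m}$ (fixing the plane, a uniform lower bound $1/n$ on $\rho$, and a bounded region) coincides with the paper's decomposition into the sets $G(i,j,k)$. The difference is in what comes next: the paper covers each stratum by closed tubular neighbourhoods $S(\ell)$ of the cosets $q_\ell\mathbb{V}$, whose width is calibrated through the conjugation estimate of \cref{lem:EstimateOnConjugate}; since two points of $S(\ell)\cap G(i,j,k)$ may be far apart, the cone relation for distant pairs must be extracted from the thinness of the slab rather than from the hypothesis, and this is exactly where the loss from $\alpha(\mathbb{V}_i)$ to $3\beta_i$ and the exponents $\kappa$ in the definition of $S(\ell)$ originate. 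You instead chop each stratum into pieces of diameter at most $2/(3n)<1/n$, so that every pair of points lies within the range of validity of the pointwise hypothesis \eqref{eqn:HypothesisCvAlpha} at either endpoint; the pairwise relation $x^{-1}\cdot y\in C_{\mathbb{V}_i}(\alpha(\mathbb{V}_i))$ then follows directly, survives the passage to the closure because the cone is closed and the group operations are continuous, and the closure stays inside $E$ because $E$ is closed. This yields compact $C_{\mathbb{V}_i}(\alpha(\mathbb{V}_i))$-sets, which is stronger than the stated $C_{\mathbb{V}_i}(3\beta_i)$ property, as you note. What you give up is only the size of the pieces: the paper's slabs can be large in the $\mathbb{V}$-direction, while your pieces have diameter $O(1/n)$; since the proposition is only invoked to produce a countable cover by compact cone-sets of small opening (cf.\ the proof of \cref{thm:MainTheorem1}), this makes no difference downstream.
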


\begin{proof}
Let us split $E$ in the following way. Let $G(i,j,k)$ be the subset of those $x\in E\cap B(0,k)$ for which
\begin{equation}
B(x,r)\cap E \subseteq xC_{\mathbb V_i}(\alpha(\mathbb{V}_i)),
\nonumber
\end{equation}
for any $0<r<1/j$. Then, from the hypothesis, it follows $E=\cup_{i,j,k \in\mathbb N} G(i,j,k)$. Since $E$ is closed, it is not difficult to see that $G(i,j,k)$ is closed too. Let us fix $i,j,k\in\N$, some $\beta_i<1$ with $\alpha(\mathbb{V}_i)<\beta_i<2\alpha(\mathbb V_i)$, and let us prove that $G(i,j,k)$ can be covered with countably many compact $C_{\mathbb{V}_i}(3\beta_i)$-sets. Since $i,j,k\in\N$ are fixed from now on we assume without loss of generality that $G(i,j,k)=E$ so that we can drop the indeces.

Let us take $\{q_\ell\}$ a dense subset of $E$, and let us define the closed tubular neighbourhood of $q_\ell\mathbb{V}$
\begin{equation}\label{eqn:DefinitionSlijk}
S(\ell):=B(q_\ell \mathbb{V},2^{-\kappa}j^{-\kappa}\oldC{c:1}(14k)^{-\kappa}\beta^\kappa),
\end{equation}
where we recall that $\kappa$ is the step of the group, and where $\oldC{c:1}$ is defined in \eqref{lem:EstimateOnConjugate}. 
We will now prove that $S(\ell)\cap E$ is a  $C_{\mathbb V}(3\beta)$-set, or equivalently that for any $p\in S(\ell)\cap E$ we have
\begin{equation}\label{eqn:CONE}
S(\ell)\cap E\subseteq p\cdot C_{\mathbb V}(3\beta).
\end{equation}
If $q\in S(\ell)\cap E\cap B(p,1/(2j))$, the inclusion \eqref{eqn:CONE} holds thanks to our assumptions on $E$. 
If on the other hand $q\in S(\ell)\cap E\setminus B(p,1/(3j))$, let $p^*,q^*\in \mathbb V$ be such that $d(p,q_\ell\mathbb V)=\|(p^*)^{-1} q_\ell^{-1} p\|$, and $d(q,q_\ell \mathbb V)=\|(q^*)^{-1} q_{\ell}^{-1} q\|$.  Let us prove that $\lVert q^*\rVert\leq 4k$ and $\lVert p^*\rVert\leq 4k$. This is due to the fact that
$$
\lVert q^*\rVert-\lVert q_\ell\rVert-\lVert q\rVert\leq\lVert (q^*)^{-1} q_{\ell}^{-1}q\rVert=d(q,q_\ell\mathbb{V})\leq 1,
$$
where the last inequality follows from the definition of $S(\ell)$, see \eqref{eqn:DefinitionSlijk}. From the previous inequality it follows that  $\lVert q^*\rVert\leq 2k+1$, since $q,q_\ell\in B(0,k)$. A similar computation proves the bound for $\lVert p^*\rVert$ and this implies that
$$
\|p^{-1}\cdot q_\ell\cdot p^*\|+\|(p^*)^{-1}\cdot q^*\| \leq \|p^{-1}\|+\|q_\ell\|+2\|p^*\|+\|q^*\|\leq 14k.
$$
The application of \cref{lem:EstimateOnConjugate} and the fact that $(q^*)^{-1}p^*$ and $p^{-1} q_\ell  p^*$ are in $B(0,14k)$, due to the previous inequality, imply that
\begin{equation}\label{eqn:LongComputation}
\begin{split}
d(p^{-1} q, \mathbb V) &\leq \|(q^*)^{-1} p^*\cdot p^{-1}q\| = \|(q^*)^{-1}p^*\cdot p^{-1}\cdot q_\ell  p^* (p^*)^{-1} q^*  (q^*)^{-1} q_\ell^{-1}\cdot q\| \\
&\leq \|(q^*)^{-1}p^*\cdot p^{-1} q_\ell  p^* \cdot(p^*)^{-1} q^*\| + \|(q^*)^{-1} q_\ell^{-1} q\| \\
&\leq \oldC{c:1}(14k)\|p^{-1} q_\ell p^*\|^{1/\kappa}+d(q,q_\ell\mathbb V) \\&= \oldC{c:1}(14k)d(p,q_\ell\mathbb V)^{1/\kappa}+d(q,q_\ell\mathbb V).
\end{split}
\end{equation}
Finally, thanks to \eqref{eqn:DefinitionSlijk} and \eqref{eqn:LongComputation} we infer
$$d(p^{-1}q,\mathbb{V})\leq \frac{\oldC{c:1}(14k)+1}{2j\oldC{c:1}(14k)} \beta\leq \beta j^{-1}\leq 3\beta \lVert p^{-1}q\rVert,$$
thus showing \eqref{eqn:CONE} in the remaining case.
In conclusion we have proved that for any $i,j,k,\ell\in\N$, the sets $G(i,j,k)\cap S(\ell)$ are $C_{\mathbb{V}_i}(3\beta_i)$-sets. This concludes the proof since
$$
E\subseteq \bigcup_{i,j,k,\ell\in\N} G(i,j,k)\cap S(\ell),
$$
and on the other hand every $G(i,j,k)\cap S(\ell)$ is a bounded and closed, thus compact, $C_{\mathbb V_i}(3\beta_i)$-set. The fact that the sets $G(i,j,k)\cap S(\ell)$ are contained in $E$ follows by definition, thus concluding the proof of the equality. 
\end{proof}

In the following, with the symbol $\mathrm{Sub}(h)$, we denote the subset of $\G_c(h)\times\G_c(Q-h)$ defined by
\begin{equation}\label{eqn:defBoldG}
\{(\mathbb V,\mathbb L): \mathbb V \in \G_c(h)\,\,\text{and}\,\,\mathbb L\,\,\text{is a homogeneous subgroup complementary to $\mathbb V$}\},
\end{equation}
 we fix a function $\bold F:\mathrm{Sub}(h)\to (0,1)$, and for every 
$\ell\in\N$ with $\ell\geq 2$ let us define
$$
\G_c^{\bold F}(h,\ell):=\{\mathbb V\in \G_c(h):\exists\, \mathbb L\,\, \text{complement of}\,\, \mathbb V
 \,\,\text{s.t.}\,\, 1/\ell <\bold F(\mathbb V,\mathbb L)\leq 1/(\ell-1)\}.
$$ 
Observe that \cref{prop:CompGrassmannian} implies that $\G_c^{\bold F}(h,\ell)$ is separable for any $\ell\in\N$, since $\G_c^{\bold F}(h,\ell)\subseteq \G(h)$ and $(\G(h),d_{\mathbb G})$ is a compact metric space, see \cref{prop:CompGrassmannian}. Let 
 \begin{equation}\label{eqn:Dl}
 \mathscr{D}_\ell:=\{\mathbb V_{i,\ell}\}_{i\in\N},
 \end{equation} 
 be a countable dense subset of $\G_c^{\bold F}(h,\ell)$ and
 \begin{equation}\label{def:Ll}
     \text{for all $i\in\N$, choose a complement $\mathbb L_{i,\ell}$ of $\mathbb V_{i,\ell}$ s.t. $1/\ell <\bold F(\mathbb V_{i,\ell},\mathbb L_{i,\ell})\leq 1/(\ell-1)$}.
 \end{equation}
 
 

 Let us now prove the following theorem which will be of fundamental importance to prove \cref{coroll:IdiffMeasureIntroNEW}.
\begin{teorema}\label{thm:MainTheorem1}
Let $\bold F:\mathrm{Sub}(h)\to (0,1)$ be a function, where $\mathrm{Sub}(h)$ is defined in \eqref{eqn:defBoldG}, and for every $\ell\in \mathbb N$ define $\mathscr{D}_\ell$ as in \eqref{eqn:Dl}, set $\mathscr F:=\{\mathbb V_{i,\ell}\}_{i,\ell\in\mathbb N}$, 
and choose $\mathbb L_{i,\ell}$ as in \eqref{def:Ll}. Furthermore, let $\beta:\mathbb N\to (0,1)$ and define $\beta(\mathbb V_{i,\ell}):=\beta (\ell)$ for every $i,\ell\in \mathbb N$. For the ease of notation we rename $\mathscr{F}:=\{\mathbb V_k\}_{k\in\mathbb N}$. Then the following holds. 

Let $\phi$ be a $\mathscr{P}_h^c$-rectifiable measure. There are countably many compact sets $\Gamma_k$ that are $C_{\mathbb V_k}(\min\{\bold F(\mathbb{V}_k,\mathbb L_k),\beta(\mathbb V_k)\})$-sets for some $\mathbb V_k\in \mathscr{F}$, and such that
$$\phi\Big(\mathbb{G}\setminus \bigcup_{k=1}^{+\infty}\Gamma_k\Big)=0.$$
\end{teorema}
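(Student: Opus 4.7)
The plan is to run the strategy of \cref{thm:MainTheorem2} (which is the statement \cref{thm:MainTheorem1Intro} without the complementarity refinement), taking care that the plane around which each cone is formed must be drawn from the pre-assigned countable family $\mathscr F$ and that the cone opening must fit below the prescribed bounds $\bold F(\mathbb V_k,\mathbb L_k)$ and $\beta(\mathbb V_k)$. The three ingredients are: the decomposition of $\phi$ through the compact sets $E(\vartheta,\gamma)$ (\cref{prop::E}), the local flat-cone property at $\phi$-a.e.\ point (\cref{prop:cono}), and the cone-rectifiability criterion (\cref{prop:RectifiabilityCriterion}) applied with the countable family $\mathscr F$.

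Define on $\mathscr F$ the target opening
\[
\alpha(\mathbb V_k):=\tfrac{1}{6}\min\bigl\{\bold F(\mathbb V_k,\mathbb L_k),\,\beta(\mathbb V_k)\bigr\}\in(0,1),
\]
so that $6\alpha(\mathbb V_k)\leq\min\{\bold F(\mathbb V_k,\mathbb L_k),\beta(\mathbb V_k)\}$. The crucial property of this choice is that within each slice $\mathscr D_\ell\subseteq\mathscr F$ one has the uniform lower bound $\alpha|_{\mathscr D_\ell}\geq \alpha_\ell:=\min\{1/\ell,\beta(\ell)\}/6>0$, since $\bold F(\mathbb V_{i,\ell},\mathbb L_{i,\ell})>1/\ell$ and $\beta(\mathbb V_{i,\ell})=\beta(\ell)$ by construction. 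Let $A$ be a Borel $\phi$-full set on which the tangent property in \cref{def:SubclassesPh} holds. By inner regularity of the Radon measure $\phi$, for each $\vartheta,\gamma,n\in\mathbb N$ I pick a compact set $K_{\vartheta,\gamma,n}\subseteq A\cap E(\vartheta,\gamma)$ with $\phi(A\cap E(\vartheta,\gamma)\setminus K_{\vartheta,\gamma,n})\to 0$ as $n\to\infty$. In view of \cref{prop::E}, the countable family $\{K_{\vartheta,\gamma,n}\}_{\vartheta,\gamma,n}$ exhausts $\phi$; it is therefore enough to cover each such compact $K$ by countably many compact $C_{\mathbb V_k}(3\beta_k)$-sets with $\alpha(\mathbb V_k)<\beta_k<2\alpha(\mathbb V_k)$, as the resulting opening satisfies $3\beta_k<6\alpha(\mathbb V_k)\leq\min\{\bold F(\mathbb V_k,\mathbb L_k),\beta(\mathbb V_k)\}$.

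To check the hypothesis of \cref{prop:RectifiabilityCriterion} on such a $K$, fix $x\in K$: since $x\in A$, its tangent is $\lambda\mathcal S^h\llcorner\mathbb V(x)$ for some $\mathbb V(x)\in\G_c(h)$ admitting a complement $\mathbb L(x)$. Let $\ell=\ell(x)$ be the unique integer with $1/\ell<\bold F(\mathbb V(x),\mathbb L(x))\leq 1/(\ell-1)$, so that $\mathbb V(x)\in\G_c^{\bold F}(h,\ell)$. By density of $\mathscr D_\ell$ in $\G_c^{\bold F}(h,\ell)$ I choose $\mathbb V_{k(x)}\in\mathscr D_\ell$ with
\[
d_{\mathbb G}\bigl(\mathbb V(x),\mathbb V_{k(x)}\bigr)<\alpha_\ell/8\leq\alpha(\mathbb V_{k(x)})/8.
\]
Then \cref{prop:cono}, applied at $x$ with opening $\alpha(\mathbb V_{k(x)})/2$, produces $\rho(x)>0$ such that $E(\vartheta,\gamma)\cap B(x,r)\subseteq x\,C_{\mathbb V(x)}(\alpha(\mathbb V_{k(x)})/2)$ for all $0<r<\rho(x)$. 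Invoking \cref{lem:DistanceOfCones} with $\varepsilon=\alpha(\mathbb V_{k(x)})/2$ upgrades this to
\[
K\cap B(x,r)\subseteq E(\vartheta,\gamma)\cap B(x,r)\subseteq x\,C_{\mathbb V_{k(x)}}\bigl(\alpha(\mathbb V_{k(x)})\bigr),\qquad 0<r<\rho(x),
\]
so the hypothesis of \cref{prop:RectifiabilityCriterion} holds on the closed set $K$ with the countable family $\mathscr F$ and the function $\alpha$. The criterion delivers the desired countable covering of $K$ by compact $C_{\mathbb V_k}(3\beta_k)$-sets, and unioning over $\vartheta,\gamma,n$ concludes the proof.

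The main obstacle is ensuring that, in approximating the tangent $\mathbb V(x)$ by an element of the countable family $\mathscr F$, one can control the new cone opening uniformly from below; without such a control the inclusion provided by \cref{lem:DistanceOfCones} would require an approximation threshold depending on $x$ that might not be realized in $\mathscr F$. This is precisely what the pre-imposed stratification $\G_c^{\bold F}(h,\ell)$ achieves, by producing on each slice $\mathscr D_\ell$ a strictly positive lower bound $\alpha_\ell$ for $\alpha$; once this is in place, the rest of the argument is the routine adaptation of the proof of \cref{thm:MainTheorem2}.
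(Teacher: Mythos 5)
Your argument is correct and follows essentially the same route as the paper's proof: the key mechanism — approximating the tangent $\mathbb V(x)$ by an element of the fixed countable family while keeping the cone opening bounded below uniformly on each slice $\mathscr D_\ell$ (using $\bold F(\mathbb V_{i,\ell},\mathbb L_{i,\ell})>1/\ell$ and the fact that $\beta$ depends only on $\ell$), then combining \cref{prop:cono}, \cref{lem:DistanceOfCones} and the cone-rectifiability criterion — is exactly the one used there. The only cosmetic differences are that the paper reduces to compact support at the outset (which you should state explicitly, since $E(\vartheta,\gamma)$ and \cref{prop::E} presuppose it), partitions $E(\vartheta,\gamma)$ into the closed sets $F_\ell(i,j)$ rather than invoking inner regularity, and applies \cref{prop:RectifiabilityCriterion} to each piece with a singleton family instead of all of $\mathscr F$ at once.
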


\begin{proof}
Let us notice that without loss of generality, by restricting the measure on balls with integer radius, we can suppose that $\phi$ has a compact support. Fix $\vartheta,\gamma\in\N$ and let $E(\vartheta,\gamma)$ be the set introduced in \cref{def:EThetaGamma} with respect to $\phi$. Furthermore, for any $\ell,i,j\in\N$, we let 
\begin{equation}\label{eqn:DefinitionFlij}
\begin{split}
F_{\ell}(i,j):=\{x\in E(\vartheta,\gamma): B(x,r)\cap E(\vartheta,\gamma)\subseteq xC_{\mathbb{V}_{i,\ell}}(6^{-1}\min\{\bold F(\mathbb{V}_{i,\ell},\mathbb L_{i,\ell}),\beta(\mathbb V_{i,\ell}) \text{for any}\,\, 0<r<1/j
\}.
\end{split}
\end{equation}
It is not hard to prove, since $E(\vartheta,\gamma)$ is compact, see \cref{prop:cpt}, that for every $\ell,i,j$ the sets $F_{\ell}(i,j)$ are compact. We claim that
\begin{equation}
    \phi\Big(E(\vartheta,\gamma)\setminus\bigcup_{\ell,i,j\in\N} F_\ell(i,j)\Big)=0.
    \label{eq:covering1}
\end{equation}
Indeed, let $w\in E(\vartheta,\gamma)$ be such that
$\mathrm{Tan}_h(\phi,w)\subseteq \{\lambda \mathcal{S}^{h}\llcorner \mathbb V(w):\lambda>0\}$
for some $\mathbb V(w)\in \G_c(h)$; this can be done for $\phi$-almost every point $w$ in $E(\vartheta,\gamma)$ since $\phi$ is $\mathscr{P}_h^c$-rectifiable. Let $\ell(w)\in\N$ be the smallest natural number for which there exists $\mathbb L$ complementary to $\mathbb V(w)$ with  $1/\ell(w)<\bold F(\mathbb{V}(w),\mathbb L)\leq 1/(\ell(w)-1)$. Then by definition we have $\mathbb V(w)\in \G_c^{\bold F}(h,\ell(w))$. By density of the family $\mathscr{D}_{\ell(w)}$ in $\G_c^{\bold F}(h,\ell(w))$ there exists a plane $\mathbb V_{i,\ell(w)}\in\mathscr{D}_{\ell(w)}$ such that
$$
d_{\mathbb G}(\mathbb{V}_{i,\ell(w)},\mathbb{V}(w))<30^{-1}\min\{1/\ell(w),\beta(\mathbb V_{i,\ell(w)})\};
$$
for this last observation to hold it is important that $\beta$ only depends on $\ell(w)$, as it is by construction.
The previous inequality, jointly with \cref{lem:DistanceOfCones}, imply that
\begin{equation}\label{eq:split1}
\begin{split}
    C_{\mathbb{V}(w)}(30^{-1}\min\{1/\ell(w),\beta(\mathbb V_{i,\ell(w)})\}) &\subseteq C_{\mathbb V_{i,\ell(w)}}(6^{-1}\min\{1/\ell(w),\beta(\mathbb V_{i,\ell(w)})\})  \\
    &\subseteq C_{\mathbb V_{i,\ell(w)}}(6^{-1}\min\{\bold F(\mathbb V_{i,\ell(w)},\mathbb L_{i,\ell(w)}),\beta(\mathbb V_{i,\ell(w)})\}), \\
    \end{split}
\end{equation}
where the last inclusion follows from the fact that by definition of the family $\mathscr{D}_{\ell(w)}$ it holds $\bold F(\mathbb V_{i,\ell(w)},\mathbb L_{i,\ell(w)})>1/\ell(w)$. Thanks to \cref{prop:cono} we can find a $\rho(w)>0$ such that for any $0<r<\rho(w)$ we have
\begin{equation}
     B(w,r)\cap E(\vartheta,\gamma)\subseteq  wC_{\mathbb{V}(w)}(30^{-1}\min\{1/\ell(w),\beta(\mathbb V_{i,\ell(w)})\}).
     \label{eq:split2}
\end{equation}
In particular, putting together \eqref{eq:split1} and \eqref{eq:split2} we infer that for $\phi$-almost every $w\in E(\vartheta,\gamma)$ there are an $i=i(w)>0$, an $\ell(w)\in\N$ and a $\rho(w)>0$ such that whenever $0<r<\rho(w)$ we have
$$
B(w,r)\cap E(\vartheta,\gamma)\subseteq wC_{\mathbb{V}_{i,\ell(w)}}(6^{-1}\min\{\bold F(\mathbb{V}_{i,\ell(w)},\mathbb L_{i,\ell(w)}),\beta(\mathbb V_{i,\ell(w)})\}).
$$
This concludes the proof of \eqref{eq:covering1}. 

Now, if we fix $\ell,i,j\in \mathbb N$, we can apply \cref{prop:RectifiabilityCriterion} to the set $F_{\ell}(i,j)$. It suffices to take the family $\mathscr{F}$ in the statement of \cref{prop:RectifiabilityCriterion} to be the singleton $\{\mathbb V_{i,\ell}\}$ and the function $\alpha$ in the statement of \cref{prop:RectifiabilityCriterion} to be $\alpha(\mathbb V_{i,\ell}):=6^{-1}\min\{\bold F(\mathbb V_{i,\ell},\mathbb L_{i,\ell}),\beta(\mathbb V_{i,\ell})\}$. As a consequence we can write each $F_{\ell}(i,j)$ as the union of countably many compact $C_{\mathbb V_{i,\ell}}(\min\{\bold F(\mathbb V_{i,\ell},\mathbb L_{i,\ell}),\beta(\mathbb V_{i,\ell})\})$-sets. Thus the same holds $\phi$-almost everywhere for $E(\vartheta,\gamma)$, allowing $i,\ell$ to vary in $\mathbb N$, since \eqref{eq:covering1} holds. Finally, we have $$
\phi(\mathbb G\setminus \cup_{\vartheta,\gamma\in\mathbb N}E(\vartheta,\gamma))=0,
$$
due to \cref{prop::E}. 

Thus we can cover $\phi$-almost all of $\mathbb G$ with compact $C_{\mathbb V_{i,\ell}}(\min\{\bold F(\mathbb V_{i,\ell},\mathbb L_{i,\ell}),\beta(\mathbb V_{i,\ell})\})$-sets for $i,\ell$ that vary in $\mathbb N$, concluding the proof of the proposition.
\end{proof}
The following theorem is a more detailed version of \cref{thm:MainTheorem1Intro}.
\begin{teorema}\label{thm:MainTheorem2}
There exists a countable subfamily $\mathscr{F}:=\{\mathbb V_k\}_{k\in\mathbb N}$ of $\G(h)$ such that the following holds. Let $\phi$ be a $\mathscr{P}_h$-rectifiable measure. For any $0<\beta<1$ there are countably many compact sets $\Gamma_k$ that are $C_{\mathbb V_k}(\beta)$-sets for some $\mathbb V_k\in \mathscr{F}$, and such that
$$\phi\Big(\mathbb{G}\setminus \bigcup_{k=1}^{+\infty}\Gamma_k\Big)=0.$$
\end{teorema}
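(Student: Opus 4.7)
The plan is to follow almost verbatim the strategy used for the more delicate \cref{thm:MainTheorem1}, with the simplification that, in the absence of any complementation requirement, the universal family $\mathscr{F}$ can be chosen once and for all as a countable dense subset of the compact metric space $(\G(h), d_{\mathbb{G}})$, whose existence is guaranteed by \cref{prop:CompGrassmannian}. This $\mathscr{F} = \{\mathbb{V}_k\}_{k \in \N}$ depends neither on $\phi$ nor on $\beta$.

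First, by restricting $\phi$ to concentric closed balls of integer radius, I reduce to the case where $\phi$ is compactly supported. I then fix $\vartheta, \gamma \in \N$, recall that $E(\vartheta, \gamma)$ is compact by \cref{prop:cpt}, and use \cref{prop::E} to reduce the statement to covering each $E(\vartheta, \gamma)$ up to a $\phi$-null set by countably many compact $C_{\mathbb{V}_k}(\beta)$-sets. For $i, j \in \N$ I set
\begin{equation*}
F(i,j) := \{x \in E(\vartheta, \gamma) : B(x, r) \cap E(\vartheta, \gamma) \subseteq x \cdot C_{\mathbb{V}_i}(\beta/6) \text{ for every } 0 < r < 1/j\};
\end{equation*}
a short limit argument, exploiting that $C_{\mathbb{V}_i}(\beta/6)$ is closed and $E(\vartheta, \gamma)$ is compact, shows that each $F(i,j)$ is compact.

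Next I prove that $\phi\bigl(E(\vartheta, \gamma) \setminus \bigcup_{i,j \in \N} F(i,j)\bigr) = 0$. For $\phi$-almost every $w \in E(\vartheta, \gamma)$, since $\phi$ is $\mathscr{P}_h$-rectifiable, $\mathrm{Tan}_h(\phi, w) \subseteq \{\lambda \mathcal{S}^h \llcorner \mathbb{V}(w) : \lambda > 0\}$ for some $\mathbb{V}(w) \in \G(h)$. By density I pick $\mathbb{V}_i \in \mathscr{F}$ with $d_{\mathbb{G}}(\mathbb{V}_i, \mathbb{V}(w)) < \beta/120$; then \cref{lem:DistanceOfCones} gives $C_{\mathbb{V}(w)}(\beta/30) \subseteq C_{\mathbb{V}_i}(\beta/6)$. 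Applying \cref{prop:cono} at $w$ with $\alpha = \beta/30$ yields $\rho(w) > 0$ such that $B(w, r) \cap E(\vartheta, \gamma) \subseteq w \cdot C_{\mathbb{V}(w)}(\beta/30) \subseteq w \cdot C_{\mathbb{V}_i}(\beta/6)$ for all $0 < r < \rho(w)$, so $w \in F(i, j)$ for any integer $j \geq \lceil \rho(w)^{-1} \rceil$.

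Finally, I feed each $F(i,j)$ into the cone-rectifiability criterion \cref{prop:RectifiabilityCriterion}, using the singleton family $\{\mathbb{V}_i\} \subseteq \G(h)$ and the constant function $\alpha(\mathbb{V}_i) := \beta/6$. The criterion produces countably many compact $C_{\mathbb{V}_i}(3\beta_i)$-sets covering $F(i,j)$, with $\beta_i \in (\beta/6, \beta/3)$; in particular $3\beta_i < \beta$, so each such set is automatically a $C_{\mathbb{V}_i}(\beta)$-set. A countable union over $\vartheta, \gamma, i, j \in \N$ gives the conclusion. I do not expect any genuine obstacle here: the entire analytic content is already packaged in \cref{prop:cono}, which converts the flat-tangent hypothesis into a quantitative cone inclusion at small scales, while the compactness of $(\G(h), d_{\mathbb{G}})$ is precisely what allows $\mathscr{F}$ to be fixed uniformly in $\phi$ and $\beta$.
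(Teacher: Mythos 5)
Your proposal is correct and is essentially the paper's own argument: the paper proves \cref{thm:MainTheorem2} by exactly this reduction, taking $\mathscr F$ to be a countable dense subset of the compact space $(\G(h),d_{\mathbb G})$ and repeating the proof of \cref{thm:MainTheorem1} without the parameter $\ell$, via \cref{prop::E}, \cref{prop:cono}, \cref{lem:DistanceOfCones} and \cref{prop:RectifiabilityCriterion}. Your bookkeeping of the cone openings ($\beta/120$, $\beta/30$, $\beta/6$, $3\beta_i<\beta$) is consistent with those lemmata, so there is nothing to add.
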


\begin{proof}
The proof is similar to the one of \cref{thm:MainTheorem1}. It suffices to choose, as a family $\mathscr{F}$, an arbitrary countable dense subset of $\G(h)$ and then one can argue as in the proof of \cref{thm:MainTheorem1} without the technical effort of introducing the parameter $\ell$. We skip the deatils.
\end{proof}


\section{Bounds for the densities of $\mathcal{S}^h$ on $C_\mathbb{V}(\alpha)$-sets}\label{sec:Density}
\textbf{Throughout this subsection we assume that $\mathbb{V}\in\G_c(h)$ and that $\mathbb{V}\cdot \mathbb{L}=\mathbb{G}$. In this chapter whenever we deal with $C_{\mathbb V}(\alpha)$-sets we are always assuming that $\alpha<\oldep{ep:Cool}(\mathbb V,\mathbb L)$, where $\oldep{ep:Cool}$ is defined in \cref{lemma:LCapCw=e}}.
\medskip

This section is devoted to the proof of \cref{coroll:DENSITYIntro}, that is obtained through three different steps. Let $\Gamma$ be a compact $C_{\mathbb V}(\oldep{ep:Cool}(\mathbb V,\mathbb L))$ set, and recall that by \cref{prop:ConeAndGraph} we can write $\Gamma=\mathrm{graph}(\varphi)$ with $\varphi:P_{\mathbb V}(\Gamma)\to \mathbb L$. Let us denote $\Phi(v):=v\cdot\varphi(v)$ for every $v\in P_{\mathbb V}(\Gamma)$. 

We first show that \textbf{if we assume that $\Theta_*^h(\mathcal{S}^h\llcorner \Gamma,x)>0$ at $\mathcal{S}^h\llcorner \Gamma$-almost every point $x$}, then the push-forward measure $(\Phi)_*(\mathcal S^h\llcorner\mathbb V)$ is mutually absolutely continuous with respect to $\mathcal{S}^h\llcorner\Gamma$, see \cref{prop:mutuallyabs}.  In other words we are proving that whenever an intrinsically Lipschitz graph over a subset of an $h$-dimensional subgroup has strictly positive lower density almost everywhere, then the push-forward of the measure $\mathcal{S}^h$ on the subgroup by means of the graph map is mutually absolutely continuous with respect to the measure $\mathcal{S}^h$ on the graph. We stress that we do not address the issue of removing the hypothesis on the strict positivity of the lower density in \cref{prop:mutuallyabs} as it is out of the aims of this paper. We remark that in the Euclidean case the analogous statement holds true without this assumption: this is true because in the Euclidean case every Lipschitz graph over a subset of a vector subspace of dimension $h$ has stricitly positive lower $h$-density almost everywhere. We also stress that every intrinsically Lipschitz graph over a open subset of a $h$-dimensional homogeneous subgroups has strictly positive lower $h$-density almost everywhere, see \cite[Theorem 3.9]{FranchiSerapioni16}. 

As a second step in order to obtain the proof of \cref{coroll:DENSITYIntro} we prove the following statement that can be made quantitative: if $\mathbb V\in \G_c(h)$, $\Gamma$ is a compact $C_{\mathbb V}(\alpha)$-set with $\alpha$ sufficiently small, and $\mathcal{S}^h\llcorner \Gamma$ is a $\mathscr P_h$-rectifiable measure with complemented tangents, which we called $\mathscr{P}_h^c$-rectifiable, then we can give an explicit lower bound of the ratio of the lower and upper $h$-densities of $\mathcal{S}^h\llcorner\Gamma$. We refer the reader to \cref{prop:dens1} for a more precise statement and the proof of the following proposition.
\begin{proposizione}[Bounds on the ratio of the densities]\label{prop:dens1INTRO}
Let $\mathbb V$ be in $\G_c(h)$. There exists $C:=C(\mathbb V)$ such that the following holds. Suppose $\Gamma$ is a compact $C_\mathbb{V}(\alpha)$-set with $\alpha<C(\mathbb V)$ and such that $\mathcal{S}^h\llcorner \Gamma$ is a $\mathscr{P}_h^c$-rectifiable measure. Then there exists a continuous function $\omega:=\omega(\mathbb V)$ of $\alpha$, with $\omega(0)=0$, such that for $\mathcal{S}^h$-almost every $x\in \Gamma$ we have
\begin{equation}
    1-\omega(\alpha)\leq \frac{\Theta^h_*(\mathcal{S}^h\llcorner \Gamma,x)}{\Theta^{h,*}(\mathcal{S}^h\llcorner \Gamma,x)}\leq 1.
    \label{eq:dens1INTRO}
\end{equation}
\end{proposizione}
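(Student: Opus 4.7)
My plan is to pin down the ratio $\Theta^h_*/\Theta^{h,*}$ by using the $\mathscr{P}_h^c$-rectifiability to single out the tangent subgroup at almost every point, and then comparing the measures of concentric balls at nearby scales via the projection estimate \cref{lemma:lowerbd}. Throughout I will work at a $\phi$-typical point $x\in\Gamma$, where $\phi := \mathcal{S}^h\llcorner\Gamma$.

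First, I would extract the tangent structure. By \cref{cordenstang}, every element of $\mathrm{Tan}_h(\phi,x)$ is of the form $\lambda\mathcal{C}^h\llcorner\mathbb{V}(x)$ with $\lambda\in[\Theta^h_*(\phi,x),\Theta^{h,*}(\phi,x)]$ and $\mathbb{V}(x)\in\G_c(h)$ unique (this uses $\mathscr{P}_h^c$-rectifiability). Since $\Gamma\subseteq xC_{\mathbb V}(\alpha)$ and the cone is dilation invariant, the supports of all the blow-ups $r^{-h}T_{x,r}\phi$ lie in $C_{\mathbb V}(\alpha)$, so $\mathbb V(x)\subseteq C_{\mathbb V}(\alpha)$; combined with \cref{prop:CompGrassmannian} this forces a bound $d_{\mathbb G}(\mathbb V(x),\mathbb V)\leq\omega_1(\alpha)$ for a continuous $\omega_1$ vanishing at $0$. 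Moreover, by \cref{prop:density} and \cref{rem:Ch1}, along any sequence $r_i\to 0$ for which $r_i^{-h}T_{x,r_i}\phi\rightharpoonup\lambda\mathcal{C}^h\llcorner\mathbb V(x)$ one has $\phi(B(x,\rho r_i))/(\rho r_i)^h\to\lambda$ for every $\rho>0$.

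Second, I would fix an infinitesimal sequence $R_i\to 0$ with $R_i^{-h}T_{x,R_i}\phi\rightharpoonup \Theta^{h,*}(\phi,x)\mathcal{C}^h\llcorner\mathbb V(x)$, so that $\phi(B(x,\mathfrak{C}(\alpha)R_i))/(\mathfrak{C}(\alpha)R_i)^h\to\Theta^{h,*}(\phi,x)$. Applying \cref{lemma:lowerbd} at scale $r=R_i$ yields
\begin{equation*}
\mathcal{S}^h\big(P_{\mathbb V}(\Gamma\cap B(x,R_i))\big)\geq \mathcal{S}^h\big(P_{\mathbb V}(B(x,\mathfrak{C}(\alpha)R_i)\cap xC_{\mathbb V}(\alpha))\cap P_{\mathbb V}(\Gamma)\big),
\end{equation*}
and since $\Gamma\subseteq xC_{\mathbb V}(\alpha)$, the right-hand side dominates $\mathcal{S}^h(P_{\mathbb V}(\Gamma\cap B(x,\mathfrak{C}(\alpha)R_i)))$. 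Invoking \cref{cor:2.2.19} on the left (to go from projection back to the set) and combining with \cref{lemma:projections} and \cref{prop:InvarianceOfProj} (to identify the projection of the cone-ball with a ball in $\mathbb V$ up to a factor $(1-\mathfrak c(\alpha))^{-h}$), I get a chain of inequalities that relate $\phi(B(x,R_i))$ and $\phi(B(x,\mathfrak{C}(\alpha)R_i))$ quantitatively, with a multiplicative error that tends to $1$ as $\alpha\to 0$.

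Third, I would pass to the limit. Dividing the comparison $\phi(B(x,R_i))\gtrsim (1-\omega(\alpha))\mathfrak C(\alpha)^{-h}\mathcal{S}^h\llcorner\mathbb V\big(B(P_{\mathbb V}(x),\mathfrak C(\alpha)R_i)\cap P_{\mathbb V}(\Gamma)\big)$ by $R_i^h$, sending $i\to\infty$, and combining with the blow-up computation $\phi(B(x,\mathfrak C(\alpha)R_i))/(\mathfrak C(\alpha)R_i)^h\to\Theta^{h,*}$, yields
\begin{equation*}
\Theta^h_*(\phi,x)\geq\liminf_{i\to\infty}\frac{\phi(B(x,R_i))}{R_i^h}\geq (1-\omega(\alpha))\,\Theta^{h,*}(\phi,x),
\end{equation*}
where $\omega(\alpha)$ packages $\mathfrak c(\alpha)$, $\mathfrak C(\alpha)$ and the splitting constants $\oldC{C:split}(\mathbb V,\mathbb L)$, $\oldC{ProjC}(\mathbb V,\mathbb L)$ into a continuous function vanishing at $0$. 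The upper bound $\Theta^h_*(\phi,x)/\Theta^{h,*}(\phi,x)\leq 1$ is trivial.

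The main obstacle is the second step: \cref{lemma:lowerbd} only gives a lower bound on the projection, not directly on $\phi(B(x,\mathfrak C(\alpha)R_i))$, so one has to know that the base measure $(P_\mathbb V)_*(\phi)$ is comparable to $\mathcal{S}^h\llcorner P_\mathbb V(\Gamma)$ in a quantitative way near $P_\mathbb V(x)$; this is precisely where the strict positivity of $\Theta^h_*$ inherent in the $\mathscr{P}_h^c$-rectifiability hypothesis, together with the mutual absolute continuity established in \cref{prop:mutuallyabs}, is crucial to turn the projection inequality of \cref{lemma:lowerbd} into an inequality on $\phi$ itself with constants that collapse to equality as $\alpha\to 0$.
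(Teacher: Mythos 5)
Your proposal assembles several of the right ingredients (\cref{lemma:projections}, \cref{lemma:lowerbd}, \cref{prop:mutuallyabs}, \cref{cordenstang}), but the argument does not close, for two distinct reasons. The first is structural: you fix one sequence $R_i$ along which the blow-ups converge to $\Theta^{h,*}(\phi,x)\,\mathcal{C}^h\llcorner\mathbb V(x)$ and then try to compare $\phi(B(x,R_i))$ with $\phi(B(x,\mathfrak C(\alpha)R_i))$. But along any blow-up sequence the two normalized ratios $\phi(B(x,R_i))/R_i^h$ and $\phi(B(x,\mathfrak C(\alpha)R_i))/(\mathfrak C(\alpha)R_i)^h$ converge to the \emph{same} limit $\lambda$ (apply \cref{prop:density} to the dilated sequence), so a two-scale comparison along a single sequence only yields $\lambda\geq(1-\omega(\alpha))\lambda$, which carries no information about the ratio $\Theta^h_*/\Theta^{h,*}$. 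Moreover the first inequality in your final display, $\Theta^h_*(\phi,x)\geq\liminf_i\phi(B(x,R_i))/R_i^h$, goes the wrong way: the lower density is the $\liminf$ over \emph{all} radii, hence is $\leq$ the $\liminf$ along any particular subsequence, and along your chosen sequence that $\liminf$ equals $\Theta^{h,*}$. The paper's proof (\cref{prop:dens1}) escapes this trap by anchoring \emph{every} subsequential tangent multiple $\lambda\in[\Theta^h_*,\Theta^{h,*}]$ to a single scale-independent quantity, namely the Radon--Nikodym density $\rho(x)$ of $\mathcal S^h\llcorner\Gamma$ with respect to $\Phi_*\mathcal C^h\llcorner\mathbb V$, proving $\mathfrak C(\alpha)^h\leq\lambda/\rho(x)\leq(1-\mathfrak c(\alpha))^{-h}$; applying this with $\lambda=\Theta^h_*$ and $\lambda=\Theta^{h,*}$ and taking the quotient makes $\rho(x)$ cancel and gives \eqref{eq:dens1INTRO}.

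The second gap is the step you yourself flag as "the main obstacle" and then dispose of too quickly. Mutual absolute continuity plus the projection estimates you cite cannot produce constants that "collapse to equality as $\alpha\to 0$": \cref{cor:2.2.19} relates the measure of a projection to the measure of the set only up to the fixed factor $2\oldC{ProjC}(\mathbb V,\mathbb L)$, and the big-projection bound of \cref{prop:proj} likewise carries a constant $\oldC{C:projji}$ that does not tend to $1$ with $\alpha$. What actually makes the comparison asymptotically exact is Lebesgue differentiation of the Radon--Nikodym density $\rho$ along the sets $P_{\mathbb V}(B(\Phi(z),r)\cap\Gamma)$ appearing in \cref{lemma:lowerbd} --- and these are \emph{not} metric balls in $\mathbb V$, so the standard differentiation theorem does not apply. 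Establishing that this covering, and the companion one needed to evaluate the limit in \eqref{eq:limi}, are $\mathcal S^h\llcorner P_{\mathbb V}(\Gamma)$-Vitali relations in the sense of Federer is precisely the content of \cref{prop:vitalirel} and \cref{prop:vitali2PD}, which in turn rest on \cref{prop:proj}; this machinery is the technical core of the proof and is entirely absent from your outline.
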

The previous result is obtained through a blow-up analysis and a careful use of the mutually absolute continuity property that we discussed above, and which is contained in \cref{prop:mutuallyabs}. We stress that in order to differentiate in the proof of \cref{prop:dens1INTRO}, we need to use proper $\mathcal{S}^h\llcorner P_{\mathbb V}(\Gamma)$ and $\mathcal{S}^h\llcorner \mathbb V$-Vitali relations, see \cref{prop:vitalirel}, and \cref{prop:vitali2PD}, respectively. 

As a last step of the proof of \cref{coroll:DENSITYIntro} we first use the result in \cref{prop:dens1INTRO} in order to prove that \cref{coroll:DENSITYIntro} holds true for measures of the type $\mathcal{S}^h\llcorner\Gamma$, see \cref{thm:ExistenceOfDensitySets}. Then we conclude the proof for arbitrary measures by reducing ouserlves to the sets $E(\vartheta,\gamma)$, see \cref{coroll:DENSITY}. The last part about the convergence in \cref{coroll:DENSITYIntro} readily comes from the first part and \cref{prop:density}.

We start this chapter with some lemmata.
\begin{lemma}\label{lemma:tec.cono}
There exists an $A:= A(\mathbb V,\mathbb L)>1$ such that for any $w\in B(0,1/5A)$, any $y\in \partial B(0,1)\cap C_{\mathbb{V}}(\oldep{ep:Cool}(\mathbb{V},\mathbb{L}))$ and any $z\in B(y,1/5A)$, we have
$w^{-1}z\not\in \mathbb{L}$.
\end{lemma}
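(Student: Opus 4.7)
The plan is to argue by contradiction: assume $\ell:=w^{-1}z\in\mathbb L$ and show that, provided $A$ is large enough, this forces $\ell$ to be simultaneously too close to $\mathbb V$ (because $z$ is close to the cone point $y\in C_{\mathbb V}(\oldep{ep:Cool})$ and $w$ is small, so the left translation by $w^{-1}$ barely disturbs $z$) and too far from $\mathbb V$ (because any nonzero element of $\mathbb L$ is quantitatively separated from $\mathbb V$ by \cref{prop:distvsproj}). The key leverage is that $\oldep{ep:Cool}=\oldC{C:split}(\mathbb V,\mathbb L)/2$, so there is a factor of $2$ of room between the cone opening of $y$ and the lower bound coming from the splitting projection.

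First I would control the size of $\ell$. A direct application of the triangle inequality gives $\|\ell\|=\|w^{-1}z\|\geq \|z\|-\|w\|\geq (1-1/5A)-1/5A=1-2/5A$, so $\ell\neq 0$ as soon as $A>1$, and in fact $\|\ell\|$ is close to $1$. Since $\ell\in\mathbb L$, we have $P_{\mathbb L}(\ell)=\ell$, so \cref{prop:distvsproj} gives the lower bound
\[
\dist(\ell,\mathbb V)\geq \oldC{C:split}(\mathbb V,\mathbb L)\|\ell\|=2\oldep{ep:Cool}(\mathbb V,\mathbb L)(1-2/5A).
\]

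Next I would produce a matching upper bound for $\dist(\ell,\mathbb V)$ using a natural test point on $\mathbb V$. Let $y^{\ast}\in\mathbb V$ realize $\dist(y,\mathbb V)$; since $y\in C_{\mathbb V}(\oldep{ep:Cool})\cap\partial B(0,1)$ we have $d(y,y^{\ast})\leq\oldep{ep:Cool}$, and by \eqref{eqn:Estimate2} also $\|y^{\ast}\|\leq 2$. Writing $(y^{\ast})^{-1}w^{-1}z=\bigl[(y^{\ast})^{-1}w^{-1}y^{\ast}\bigr]\cdot\bigl[(y^{\ast})^{-1}z\bigr]$ and applying the triangle inequality gives
\[
\dist(\ell,\mathbb V)\leq \|(y^{\ast})^{-1}w^{-1}y^{\ast}\|+d(y^{\ast},z)\leq \|(y^{\ast})^{-1}w^{-1}y^{\ast}\|+d(y^{\ast},y)+d(y,z).
\]
The last two summands are bounded by $\oldep{ep:Cool}+1/5A$, while the conjugation estimate \cref{lem:EstimateOnConjugate} applied with $\ell=2$ controls the first summand by $\oldC{c:1}(2)\|w\|^{1/\kappa}\leq \oldC{c:1}(2)(1/5A)^{1/\kappa}$.

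Combining the two bounds yields
\[
2\oldep{ep:Cool}(\mathbb V,\mathbb L)(1-2/5A)\leq \oldep{ep:Cool}(\mathbb V,\mathbb L)+\tfrac{1}{5A}+\oldC{c:1}(2)\bigl(\tfrac{1}{5A}\bigr)^{1/\kappa},
\]
i.e., $\oldep{ep:Cool}(\mathbb V,\mathbb L)(1-4/5A)\leq 1/(5A)+\oldC{c:1}(2)(1/5A)^{1/\kappa}$. As $A\to\infty$ the left-hand side tends to $\oldep{ep:Cool}(\mathbb V,\mathbb L)>0$ while the right-hand side tends to $0$, so the inequality fails for any $A=A(\mathbb V,\mathbb L)$ chosen sufficiently large, which is the desired contradiction. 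The only subtle point is to make sure the constants entering the conjugation estimate only depend on $(\mathbb V,\mathbb L)$, which is guaranteed because $y^{\ast}$ and $w$ lie in a fixed ball $B(0,2)$ independent of the configuration.
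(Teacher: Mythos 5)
Your proof is correct, but it takes a genuinely different route from the paper's. The paper argues by pure compactness: assuming sequences $w_n\to 0$, $y_n\in\partial B(0,1)\cap C_{\mathbb V}(\oldep{ep:Cool})$, $z_n\in B(y_n,1/n)$ with $w_n^{-1}z_n\in\mathbb L$, it passes to a limit $y\in\partial B(0,1)$ which must lie in both the closed set $\mathbb L$ and the closed cone $C_{\mathbb V}(\oldep{ep:Cool})$, contradicting \cref{lemma:LCapCw=e}. You instead give a direct quantitative argument, trading softness for effectiveness: you pin down $\|\ell\|\approx 1$, invoke \cref{prop:distvsproj} to get the lower bound $\dist(\ell,\mathbb V)\geq 2\oldep{ep:Cool}(1-2/5A)$, and match it against an upper bound built from the test point $y^\ast$, the cone condition on $y$, and the conjugation estimate \cref{lem:EstimateOnConjugate} (which the paper's proof of this lemma never needs). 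Your factorization $(y^\ast)^{-1}w^{-1}z=[(y^\ast)^{-1}w^{-1}y^\ast]\cdot[(y^\ast)^{-1}z]$ correctly handles the fact that left multiplication by a small element is not a small perturbation in the group metric --- the informal phrase ``barely disturbs $z$'' in your plan is loose, but the actual computation is sound since $w^{-1}$ and $y^\ast$ both lie in $B(0,2)$ so the conjugation constant depends only on the group. The payoff of your approach is an explicit, in-principle computable $A(\mathbb V,\mathbb L)$ in terms of $\oldC{c:1}(2)$, $\kappa$ and $\oldep{ep:Cool}(\mathbb V,\mathbb L)$, whereas the paper's compactness argument is shorter but yields no quantitative control on $A$; both ultimately rest on the same transversality input, namely the quantitative separation of $\mathbb L$ from $\mathbb V$ in \cref{prop:distvsproj}.
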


\begin{proof}
By contradiction let us assume that we can find sequences $\{w_n\}$, $\{y_n\}\subseteq \partial B(0,1)\cap C_{\mathbb{V}}(\oldep{ep:Cool})$ and $z_n\in B(y_n,1/n)$ such that $w_n$ converges to $0$ and $w_n^{-1}z_n\in\mathbb{L}$. By compactness without loss of generality we can assume that the sequence $y_n$ converges to some $y\in\partial B(0,1) \cap C_\mathbb{V}(\oldep{ep:Cool})$. Furthermore, by construction we also have that $z_n$ must converge to $y$. This implies that $w_n^{-1}z_n$ converges to $y$ and since by hypothesis $w_n^{-1}z_n\in\mathbb L$, thanks to the fact that $\mathbb{L}$ is closed we infer that $y\in\mathbb{L}$. This however is a contradiction since $y$ has unit norm and at the same time we should have $y\in  C_{\mathbb{V}}(\oldep{ep:Cool})\cap \mathbb{L}=\{0\}$ by \cref{lemma:LCapCw=e}.
\end{proof}

\begin{proposizione}\label{prop:palleseparate}
Let $\alpha<\oldep{ep:Cool}(\mathbb V,\mathbb L)$ and suppose $\Gamma$ is a compact $C_\mathbb{V}(\alpha)$-set. For any $x\in \Gamma$ let $\rho(x)$ to be the biggest number satisfying the following condition. For any $y\in B(x,\rho(x))\cap \Gamma$ we have
    $$P_\mathbb{V}(B(x,r)) \cap P_\mathbb{V}(B(y,s))=\emptyset\,\, \text{ for any }r,s<d(x,y)/5A,$$
where $A=A(\mathbb V,\mathbb L)$ is the constant yielded by \cref{lemma:tec.cono}. Then, the function $x\mapsto \rho(x)$ is positive everywhere on $\Gamma$ and upper semicontinuous.  
\end{proposizione}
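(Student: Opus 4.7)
The plan is to reduce the positivity and upper semicontinuity of $\rho$ to a scale-invariant contradiction with \cref{lemma:tec.cono}, via a blow-up argument. The main claim I will establish is the following: for every $x \in \Gamma$ and every $y \in \Gamma \setminus \{x\}$, the projections $P_\mathbb V(B(x,r))$ and $P_\mathbb V(B(y,s))$ are disjoint whenever $r, s < d(x,y)/(5A)$. This is stronger than what is literally required, since it says that the defining condition for $\rho(x)$ holds for \emph{every} $y \in \Gamma \setminus \{x\}$, not merely for $y$ sufficiently close to $x$.

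First I would prove this main claim by contradiction. If it fails, there exist $p \in B(x,r)$ and $q \in B(y,s)$ with $P_\mathbb V(p) = P_\mathbb V(q)$, which is equivalent to $p^{-1}\cdot q \in \mathbb L$. Setting $t := d(x,y) > 0$, I introduce the normalized points $\tilde y := \delta_{1/t}(x^{-1}\cdot y)$, $\tilde p := \delta_{1/t}(x^{-1}\cdot p)$ and $\tilde q := \delta_{1/t}(x^{-1}\cdot q)$. Using that $\delta_{1/t}$ is a group automorphism and that the norm is $1$-homogeneous, one gets $\|\tilde y\| = 1$, $\|\tilde p\| = d(x,p)/t < 1/(5A)$ and $\|\tilde y^{-1}\cdot \tilde q\| = d(y,q)/t < 1/(5A)$. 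Moreover $\tilde p^{-1}\cdot \tilde q = \delta_{1/t}(p^{-1}\cdot q) \in \mathbb L$ because $\mathbb L$ is a homogeneous subgroup, and $\tilde y \in C_\mathbb V(\alpha) \subseteq C_\mathbb V(\oldep{ep:Cool})$ since $\Gamma$ is a $C_\mathbb V(\alpha)$-set (so $x^{-1}\cdot y \in C_\mathbb V(\alpha)$), cones are $\delta$-invariant, and $\alpha < \oldep{ep:Cool}$ by standing assumption. This configuration contradicts \cref{lemma:tec.cono} applied with $w = \tilde p$, $y = \tilde y$ and $z = \tilde q$.

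The positivity of $\rho$ is then immediate: since the defining condition is satisfied for every $y \in \Gamma \setminus \{x\}$, the supremum $\rho(x)$ can be taken arbitrarily large (for instance equal to $\diam(\Gamma)$), and in particular $\rho(x) > 0$. As for upper semicontinuity, since by the main claim the function $\rho$ is effectively constant on $\Gamma$, the USC property is automatic. For a stability argument that does not rely on this constancy, one may instead argue as follows: if $x_n \to x$ in $\Gamma$ with $\rho(x_n) \geq c$, then for every $y \in B(x,c) \cap \Gamma$ with $y \neq x$ one has eventually $y \in B(x_n,c) \cap \Gamma$, and the projection-disjointness condition for the pair $(x_n,y)$ passes to the limit by continuity of $P_\mathbb V$ and closedness of $\Gamma$, yielding $\rho(x) \geq c$.

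The principal technical point lies in the blow-up step, where one must carefully track the interaction between left translation by $x^{-1}$, the dilation $\delta_{1/t}$, the splitting projection $P_\mathbb V$, and the homogeneous subgroups $\mathbb V$ and $\mathbb L$. Homogeneity of the distance, the automorphism property of $\delta_{1/t}$, and the $\delta$-invariance of $\mathbb L$ and of the cones $C_\mathbb V(\cdot)$ are precisely what make the normalized triple $(\tilde p,\tilde y,\tilde q)$ fit into the hypotheses of \cref{lemma:tec.cono} verbatim; once those hypotheses are verified the contradiction is immediate.
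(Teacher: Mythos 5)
Your proof is correct, and it takes a genuinely more direct route than the paper's. The paper performs the same normalization (left-translate by $x^{-1}$, dilate by $1/d(x,y_i)$), but only along a sequence $y_i\to x$ witnessing the failure of positivity: it extracts convergent subsequences of the three normalized points and applies \cref{lemma:tec.cono} to the \emph{limit} configuration, which yields only the local statement $\rho(x)>0$, after which a separate quantitative argument is needed for upper semicontinuity. You observe instead that \cref{lemma:tec.cono} is already a uniform statement about \emph{all} triples $(w,y,z)$ with $w\in B(0,1/5A)$, $y\in\partial B(0,1)\cap C_{\mathbb V}(\oldep{ep:Cool}(\mathbb V,\mathbb L))$ and $z\in B(y,1/5A)$, so after normalizing by $t=d(x,y)$ the triple $(\tilde p,\tilde y,\tilde q)$ violates it directly, with no compactness or subsequence extraction. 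All the ingredients check out: $P_{\mathbb V}(p)=P_{\mathbb V}(q)$ is equivalent to $p^{-1}\cdot q\in\mathbb L$ by uniqueness of the splitting, the dilation is a group automorphism preserving the homogeneous subgroup $\mathbb L$ and the cones, and $C_{\mathbb V}(\alpha)\subseteq C_{\mathbb V}(\oldep{ep:Cool}(\mathbb V,\mathbb L))$ since $\alpha<\oldep{ep:Cool}(\mathbb V,\mathbb L)$. The payoff is the strictly stronger conclusion that the disjointness holds for \emph{every} $y\in\Gamma\setminus\{x\}$, i.e.\ the defining condition for $\rho(x)$ holds with no restriction on the radius, which makes both positivity and upper semicontinuity immediate. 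The only blemish is in your fallback USC argument, which you do not actually need: a point $y$ with $d(x,y)=c$ exactly need not satisfy $d(x_n,y)\le c\le\rho(x_n)$, so the limiting step would require the same small margin $\tau$ that the paper introduces; but this is moot given your main claim.
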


\begin{proof}
Let $x\in \Gamma$ and suppose by contradiction that there is a sequence of points $\{y_i\}_{i\in\N}\subseteq \Gamma$ converging to $x$ and
\begin{equation}
    P_{\mathbb{V}}(B(x,r_i))\cap P_{\mathbb{V}}(B(y_i,s_i))\neq \emptyset,
    \label{eq:intersectionconi}
\end{equation}
for some $r_i,s_i<d(x,y_i)/5A$. We note that \eqref{eq:intersectionconi} is equivalent to assuming that there are $z_i\in B(x,r_i)$ and $w_i\in B(y_i,s_i)$ such that
\begin{equation}
    P_{\mathbb{V}}(w_i)=P_{\mathbb{V}}(z_i).
    \label{eq:1}
\end{equation}
Identity \eqref{eq:1} implies in particular that for any $i\in\N$ we have $w_i^{-1}z_i\in \mathbb{L}$ and let us denote $\rho_i:=d(x,y_i)$.
Thanks to the assumptions on $y_i,z_i$ and $w_i$ we have that
\begin{itemize}
\item[(1)] $d(0,\delta_{1/\rho_i}(x^{-1}y_i))=1$ and thus we can assume without loss of generality that there exists a $y\in \partial B(0,1)$ such that
$$\lim_{i\to \infty}\delta_{1/\rho_i}(x^{-1}y_i)=y,$$
 \item[(2)]$d(0,\delta_{1/\rho_i}(x^{-1}z_i))\leq 1/5A$ and thus up to passing to a non-relabelled subsequence we can assume that there exists a $z\in B(0,1/5A)$ such that
 $$\lim_{i\to\infty} \delta_{1/\rho_i}(x^{-1}z_i)=z,$$
    \item[(3)] $d(\delta_{1/\rho_i}(x^{-1}y_i),\delta_{1/\rho_i}(x^{-1}w_i))\leq 1/5A$ and thus, up to passing to a non re-labelled subsequence, we can suppose that there exists a $w\in B(y,1/5A)$ such that
    $$\lim_{i\to\infty} \delta_{1/\rho_i}(x^{-1}w_i)=w.$$
\end{itemize}

Since $\Gamma$ is supposed to be a $C_\mathbb{V}(\alpha)$-set, we have that for any $i\in\N$ the point $x^{-1}y_i$ is contained in the cone $C_\mathbb{V}(\alpha)$ and, since $\mathbb{C}_\mathbb{V}(\alpha)$ is closed, we infer that $y\in C_\mathbb{V}(\alpha)$. Since we assumed $\alpha<\oldep{ep:Cool}(\mathbb{V},\mathbb{L})$, we have $y\in\partial B(0,1)\cap C_\mathbb{V}(\oldep{ep:Cool}(\mathbb{V},\mathbb{L}))$.
Since $\delta_{1/\rho_i}(x^{-1}z_i)$ and $\delta_{1/\rho_i}(x^{-1}w_i)$ converge to $z$ and $w$, respectively, we have
$$
\lim_{i\to\infty}\delta_{1/\rho_i}(w_i^{-1}z_i)=\lim_{i\to\infty}\delta_{1/\rho_i}(w_i^{-1}x)\delta_{1/\rho_i}(x^{-1}z_i)=w^{-1}z.
$$
Furthermore since $w_i^{-1}z_i\in \mathbb{L}$ for any $i\in\N$, we infer that $w^{-1}z\in\mathbb{L}$ since $\mathbb L$ is closed. Applying \cref{lemma:tec.cono} to $y,z,w$ we see that the fact that $w^{-1}z\in\mathbb{L}$, $z\in B(0,1/5A)$ and $w\in B(y,1/5A)$ results in a contradiction. This concludes the proof of the first part of the proposition.

In order to show that $\rho$ is upper semicontinuous we fix an $x\in \Gamma$ and we assume by contradiction that there exists a sequence $\{x_i\}_{i\in\N}\subseteq \Gamma$ converging to $x$ such that
\begin{equation}
    \limsup_{i\to\infty} \rho(x_i)>(1+\tau)\rho(x),
    \label{eq:absupsc}
\end{equation}
for some $\tau>0$. Fix an $y\in B(x,(1+\tau/2)\rho(x))\cap\Gamma$ and assume $s,r<d(x,y)/5A$. Thus, thanks to \eqref{eq:absupsc} and the fact that the $x_i$ converge to $x$, we infer that there exists a $i_0\in\mathbb{N}$ such that, up to non re-labelled subsequences, for any $i\geq i_0$ we have $\rho(x_i)>(1+\tau)\rho(x)$, $d(x_i,x)<\tau\rho(x)/4$ and $s,r+d(x_i,x)<d(x_i,y)/5A$. Therefore, for any $i\geq i_0$ we have
\begin{equation}
    y\in B(x_i,(1+3\tau/4)\rho(x))\subseteq B(x_i,\rho(x_i)),\qquad\text{and}\qquad s,r+d(x_i,x)<d(x_i,y)/5A.\nonumber
\end{equation}
This however, thanks to the definition of $\rho(x_i)$, implies that
$$P_\mathbb{V}(B(x,r))\cap P_\mathbb{V}(B(y,s))\subseteq P_\mathbb{V}(B(x_i,r+d(x_i,x)))\cap P_\mathbb{V}(B(y,s))=\emptyset. $$
Summing up, we have proved that for any $y\in B(x,(1+\tau/2)\rho(x))\cap\Gamma$ whenever $r,s<d(x,y)/5A$ we have
$$P_\mathbb{V}(B(x,r))\cap P_\mathbb{V}(B(y,s))=\emptyset,$$
and this contradicts the maximality of $\rho(x)$. This concludes the proof.

\end{proof}

\begin{corollario}\label{cor:pervitali}
Let us fix $\alpha<\oldep{ep:Cool}(\mathbb V,\mathbb L)$ and suppose that $\Gamma$ is a compact $C_\mathbb{V}(\alpha)$-set. Let us fix $x\in \Gamma$ and choose $\rho(x)>0$ as in the statement of \cref{prop:palleseparate}. Then there is a $0<r(x)<1/2$ such that the following holds
\begin{equation}
\begin{split}
    &\text{if $0<r<r(x)$ and $y\in \Gamma$ are such that}\,\, P_\mathbb{V}(B(x,2r)) \cap P_\mathbb{V}(B(y,10r))\neq\emptyset,\,\,\\ &\text{then $y\in B(x,\rho(x))$ and $d(x,y)\leq 50Ar$},
\end{split}
\end{equation}
where $A=A(\mathbb V,\mathbb L)$ is the constant yielded by \cref{lemma:tec.cono}.
\end{corollario}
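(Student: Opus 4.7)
The plan is to establish the corollary by splitting the conclusion into two implications, both driven by \cref{prop:palleseparate} and the cone property of $\Gamma$. The first, easier half is that once I know $y\in B(x,\rho(x))\cap \Gamma$, the bound $d(x,y)\leq 50Ar$ follows immediately: \cref{prop:palleseparate} asserts that $P_\mathbb{V}(B(x,r_0))\cap P_\mathbb{V}(B(y,s_0))=\emptyset$ whenever $r_0, s_0 < d(x,y)/5A$, so if $P_\mathbb{V}(B(x,2r))\cap P_\mathbb{V}(B(y,10r))\neq\emptyset$ we must have $\max\{2r,10r\}=10r\geq d(x,y)/5A$, that is, $d(x,y)\leq 50Ar$.

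The heart of the matter, and the step I expect to be the main obstacle, is choosing $r(x)$ so small that the overlap condition $P_\mathbb{V}(B(x,2r))\cap P_\mathbb{V}(B(y,10r))\neq\emptyset$ forces $y\in B(x,\rho(x))$ to begin with. I plan to obtain this via a contradiction argument based on compactness of $\Gamma$ together with the cone property. Assume no such $r(x)$ exists; then one can select sequences $r_i\downarrow 0$ and $y_i\in\Gamma\setminus B(x,\rho(x))$ together with points $p_i\in B(x,2r_i)$ and $q_i\in B(y_i,10r_i)$ satisfying $\ell_i:=p_i^{-1}q_i\in\mathbb{L}$. The triangle inequality gives $\|\ell_i\|=d(p_i,q_i)\leq 12r_i+d(x,y_i)\leq 12r_i+\diam(\Gamma)$, so the sequence $\{\ell_i\}$ is bounded in the closed subgroup $\mathbb{L}$; by compactness of $\Gamma$ I may extract a subsequence along which $y_i\to y_\infty\in\Gamma$ with $d(x,y_\infty)\geq\rho(x)$, and $\ell_i\to\ell_\infty\in\mathbb{L}$. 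Writing $x^{-1}y_i=(x^{-1}p_i)\cdot \ell_i\cdot(q_i^{-1}y_i)$ and letting $i\to\infty$ (the first and third factors tend to $0$ by continuity of the product) yields $x^{-1}y_\infty=\ell_\infty$.

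To close the contradiction I will invoke the cone property: since $x, y_\infty\in\Gamma$ and $\Gamma$ is a $C_\mathbb{V}(\alpha)$-set, $\ell_\infty=x^{-1}y_\infty\in C_\mathbb{V}(\alpha)\subseteq C_\mathbb{V}(\oldep{ep:Cool}(\mathbb{V},\mathbb{L}))$, where the inclusion uses the standing assumption $\alpha<\oldep{ep:Cool}(\mathbb{V},\mathbb{L})$. Combining with $\ell_\infty\in\mathbb{L}$ and \cref{lemma:LCapCw=e}, which gives $\mathbb{L}\cap C_\mathbb{V}(\oldep{ep:Cool}(\mathbb{V},\mathbb{L}))=\{0\}$, I conclude $\ell_\infty=0$, hence $y_\infty=x$; this contradicts $d(x,y_\infty)\geq\rho(x)>0$ and yields the existence of the desired $r(x)\in(0,1/2)$. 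The upper semicontinuity of $\rho$ established in \cref{prop:palleseparate} is not needed for the present statement since everything happens at the fixed point $x$; it will presumably intervene when the corollary is deployed to build a Vitali-type covering, as the label \texttt{cor:pervitali} already hints.
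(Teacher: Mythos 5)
Your proof is correct, and its second half --- deriving $d(x,y)\leq 50Ar$ from $y\in B(x,\rho(x))$ by observing that \cref{prop:palleseparate} forbids $10r<d(x,y)/5A$ --- is exactly the paper's argument. For the main half the paper takes a slightly different, two-step route: it first shows by a compactness/contradiction argument that there is a constant $\widetilde\alpha(\alpha,x)>0$ such that $d(P_{\mathbb V}(x),P_{\mathbb V}(y))\geq\widetilde\alpha$ whenever $y\in\Gamma$ and $d(x,y)\geq\rho(x)$ (the contradiction again resting on $\mathbb L\cap C_{\mathbb V}(\oldep{ep:Cool}(\mathbb V,\mathbb L))=\{0\}$ from \cref{lemma:LCapCw=e}), and then uses the uniform continuity of $P_{\mathbb V}$ on the tubular neighbourhood $B(\Gamma,1)$ to choose $r(x)$ so that $P_{\mathbb V}(B(z,10r))\subseteq B(P_{\mathbb V}(z),\widetilde\alpha/10)$ for $z\in\Gamma$ and $r<r(x)$, which forces the projected balls to be disjoint. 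Your version runs the contradiction directly on the points $p_i\in B(x,2r_i)$ and $q_i\in B(y_i,10r_i)$ witnessing the overlap and passes to the limit in the group; this is a bit more economical, since it dispenses with the intermediate separation constant and with the uniform continuity of the projection, at the price of checking that $\ell_i=p_i^{-1}q_i\in\mathbb L$ stays bounded (which you do via $\|\ell_i\|\leq 12r_i+\diam\Gamma$ and the closedness of $\mathbb L$). Both arguments ultimately reduce to the same two ingredients, compactness of $\Gamma$ and \cref{lemma:LCapCw=e}, so they are interchangeable; just remember to cap the $r(x)$ produced by your contradiction argument by $1/2$, as the statement requires.
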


\begin{proof}
Let us first prove that there exists $\widetilde\alpha:=\widetilde\alpha(\alpha,x)$ such that whenever $y\in\Gamma$ is such that $d(x,y)\geq \rho(x)$ then $d(P_{\mathbb V}(x),P_{\mathbb V}(y))\geq \widetilde\alpha$. Indeed if it is not the case, we have a sequence $\{y_i\}_{i\in\mathbb N}\subseteq \Gamma$ such that $d(x,y_i)\geq \rho (x)$ for every $i\in\N$ and $d(P_{\mathbb V}(x),P_{\mathbb V}(y_i))\to 0$ as $i\to +\infty$. Since $\Gamma$ is compact we can suppose, up to passing to a non re-labelled subsequence, that $y_i\to y\in \Gamma$. Moreover since $d(x,y_i)\geq \rho(x)$ and $d(P_{\mathbb V}(x),P_{\mathbb V}(y_i))\to 0$ we conclude that $d(x,y)\geq \rho(x)$, and hence $x\neq y$, and moreover $P_{\mathbb V}(x)=P_{\mathbb V}(y)$. Then $y^{-1}\cdot x\in \mathbb L\cap C_{\mathbb V}(\alpha)$ that is a contradiction with \cref{lemma:LCapCw=e} because $y\neq x$ and $\alpha<\oldep{ep:Cool}$.

Since $P_\mathbb{V}$ is uniformly continuous on the closed tubular neighborhood $B(\Gamma,1)$, there exists a $r(x)>0$ depending on $\widetilde\alpha=\widetilde\alpha(\alpha,x)$ such that for any $y\in \Gamma$ and any $r<r(x)$, we have
\begin{equation}\label{eqn:INFER}
        P_\mathbb{V}(B(y,10r))\subseteq B(P_\mathbb{V}(y),\widetilde\alpha/10).
\end{equation}
Let us show the first part of the statement. It is sufficient to prove that if $r<r(x)$ and $y\in\Gamma$ is such that $d(x,y)\geq \rho(x)$, then $P_\mathbb{V}(B(x,2r)) \cap P_\mathbb{V}(B(y,10r))=\emptyset$. Indeed if $d(x,y)\geq \rho(x)$ then $d(P_{\mathbb V}(x),P_{\mathbb V}(y))\geq \widetilde\alpha$. Moreover, from \eqref{eqn:INFER} we deduce that $P_{\mathbb V}(B(x,10r))\subseteq B(P_{\mathbb V}(x),\widetilde\alpha/10)$ and $P_{\mathbb V}(B(y,10r))\subseteq B(P_{\mathbb V}(y),\widetilde\alpha/10)$. Since $d(P_{\mathbb V}(x),P_{\mathbb V}(y))\geq \widetilde\alpha$ we conclude that $B(P_{\mathbb V}(x),\widetilde\alpha/10)\cap B(P_{\mathbb V}(y),\widetilde\alpha/10)=\emptyset$ and then also $P_{\mathbb V}(B(x,10r))\cap P_{\mathbb V}(B(y,10r))=\emptyset$, from which the sought conclusion follows. 
In order to prove $d(x,y)\leq 50Ar$, once we have $y\in B(x,\rho(x))$, the conclusion follows thanks to \cref{prop:palleseparate}.
\end{proof}

\begin{lemma}\label{lemmaVitali}
Fix some $N\in\N$ and assume that $\mathscr{F}$ is a family of closed balls of $\mathbb{G}$ with uniformly bounded radii. Then we can find a countable disjoint subfamily $\mathscr{G}$ of $\mathscr{F}$ such that
\begin{itemize}
    \item[(i)] if $B,B^\prime \in\mathscr{G}$ then $5^NB$ and $5^N B^\prime$ are disjoint,
    \item[(ii)] $\bigcup_{B\in\mathscr{F}}B\subseteq \bigcup_{B\in\mathscr{G}} 5^{N+1}B$.
\end{itemize}
\end{lemma}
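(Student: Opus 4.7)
The plan is to reduce this to the classical $5r$-covering (Vitali) lemma in metric spaces, which holds in any metric space and in particular applies to $(\mathbb{G},d)$ with our fixed left-invariant homogeneous distance. Recall that the standard statement asserts: given any family $\mathscr{F}_0$ of closed balls with uniformly bounded radii in a metric space, there exists a countable pairwise disjoint subfamily $\mathscr{G}_0 \subseteq \mathscr{F}_0$ such that $\bigcup_{B \in \mathscr{F}_0} B \subseteq \bigcup_{B \in \mathscr{G}_0} 5B$. The proof of this fact is by a greedy selection procedure on balls sorted by radius (splitting into generations where radii lie in $(2^{-k-1}M, 2^{-k}M]$ for $M$ the supremum of the radii, then selecting maximal disjoint subfamilies within each generation), and it carries over verbatim to our setting since $d$ is a genuine metric and the dilation $5B(x,r) = B(x,5r)$ behaves as in the Euclidean case thanks to the homogeneity of $d$.

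The key trick is to apply this classical result not to $\mathscr{F}$ directly, but to the enlarged family
\[
\mathscr{F}' := \{5^N B : B \in \mathscr{F}\}.
\]
Note that $\mathscr{F}'$ still consists of closed balls with uniformly bounded radii, namely bounded by $5^N$ times the uniform bound on the radii of $\mathscr{F}$. Applying the classical Vitali $5r$-lemma to $\mathscr{F}'$ produces a countable pairwise disjoint subfamily $\mathscr{G}' \subseteq \mathscr{F}'$ with
\[
\bigcup_{B' \in \mathscr{F}'} B' \subseteq \bigcup_{B' \in \mathscr{G}'} 5 B'.
\]
Now write $\mathscr{G}' = \{5^N B : B \in \mathscr{G}\}$ for a uniquely determined $\mathscr{G} \subseteq \mathscr{F}$ (the correspondence $B \mapsto 5^N B$ is injective, since the center and radius are recoverable from $5^N B$).

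The two claimed properties then follow immediately. For (i), the disjointness of $\mathscr{G}'$ says that $5^N B \cap 5^N B' = \emptyset$ whenever $B, B' \in \mathscr{G}$ are distinct; in particular, since $B \subseteq 5^N B$, the subfamily $\mathscr{G}$ itself is disjoint. For (ii), using $B \subseteq 5^N B$ we compute
\[
\bigcup_{B \in \mathscr{F}} B \;\subseteq\; \bigcup_{B \in \mathscr{F}} 5^N B \;=\; \bigcup_{B' \in \mathscr{F}'} B' \;\subseteq\; \bigcup_{B' \in \mathscr{G}'} 5 B' \;=\; \bigcup_{B \in \mathscr{G}} 5^{N+1} B,
\]
which is exactly the stated covering inclusion. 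There is no real obstacle here: the only point to check is the validity of the classical $5r$-lemma in a general metric space with uniformly bounded radii, and this is entirely standard.
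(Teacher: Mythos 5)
Your proof is correct, but it takes a genuinely different route from the paper's. The paper argues by induction on $N$: the base case $N=0$ is the classical $5r$-covering lemma, and in the inductive step one takes the subfamily $\mathscr{G}_k$ produced for $N=k$, applies the classical lemma again to the enlarged family $\{5^{k+1}B : B\in\mathscr{G}_k\}$, and keeps only those $B\in\mathscr{G}_k$ whose enlargements survive this second selection. Your argument instead applies the $5r$-lemma exactly once, to the pre-enlarged family $\{5^N B : B\in\mathscr{F}\}$, and pulls the selection back; this is shorter, avoids the iterated bookkeeping, and yields both (i) and (ii) in one line each. The only point worth phrasing carefully is the claim that the center and radius are ``recoverable'' from $5^N B$: in a general metric space a closed ball, viewed as a point set, need not determine its center and radius uniquely, so one should either treat $\mathscr{F}$ as an indexed family of center--radius pairs (which is the standard convention here and what the paper implicitly does) or simply choose one preimage $B\in\mathscr{F}$ for each selected element of $\mathscr{G}'$; in either case the disjointness and covering properties are unaffected and the proof goes through.
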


\begin{proof}
If $N=0$, there is nothing to prove, since it is the classical $5$-Vitali's covering Lemma. 

Let us assume by inductive hypothesis that the claim holds for $N=k$ and let us prove that it holds for $k+1$. 
Let $\mathscr{G}_k$ be the family of balls satisfying (i) and (ii) for $N=k$, and apply the $5$-Vitali's covering Lemma to the family of balls $\widetilde{\mathscr{F}}:=\{ 5^{k+1}B: B\in \mathscr{G}_k\}$. We obtain a countable subfamily $\widetilde{\mathscr{G}}$ of $\widetilde{\mathscr{F}}$ such that if $5^{k+1}B,5^{k+1}B^\prime\in\widetilde{\mathscr{G}}$ then $5^{k+1}B$ and $5^{k+1}B^\prime$ are disjoint and that satisfies $\bigcup_{B\in\widetilde{\mathscr{F}}} B\subseteq \bigcup_{B\in\widetilde{\mathscr{G}}} 5B$. Therefore, if we define
$$\mathscr{G}_{k+1}:=\{B\in\mathscr{G}_{k}:5^{k+1}B\in \widetilde{\mathscr{G}}\},$$
point (i) directly follows and thanks to the inductive hypothesis we have 
$$\bigcup_{B\in \mathscr{F}} B\subseteq \bigcup_{B\in \mathscr{G}_k} 5^{k+1} B\subseteq \bigcup_{B\in \mathscr{G}_{k+1}}5^{k+2} B,$$
proving the second point of the statement.
\end{proof}

\begin{proposizione}\label{prop:proj}
Let $\alpha<\oldep{ep:Cool}(\mathbb V,\mathbb L)$ and suppose $\Gamma$ is a compact $C_\mathbb{V}(\alpha)$-set of finite $\mathcal{S}^h$-measure such that
$$\Theta^h_*(\mathcal{S}^h\llcorner \Gamma,x)>0,$$
for $\mathcal{S}^h$-almost every $x\in \Gamma$. Then, there exists a constant $\newC\label{C:projji}>0$ depending on $\mathbb{V}$, $\mathbb{L}$, and the left-invariant homogeneous distance on $\mathbb G$, such that for $\mathcal{S}^h$-almost every $x\in \Gamma$ there exists an $R:=R(x)>0$ such that for any $0<\ell\leq R$ we have
\begin{equation}
    \mathcal{S}^h(P_{\mathbb{V}}(\Gamma\cap B(x,\ell)))\geq\oldC{C:projji}\Theta^{h}_*(\mathcal{S}^h\llcorner \Gamma,x)^2\ell^h.
    \label{eq:bigproj}
\end{equation}
\end{proposizione}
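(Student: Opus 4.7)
The strategy will be a Vitali-type covering argument combined with the separation-of-projections property from \cref{cor:pervitali} and the local lower bound on projections from \cref{lemma:lowerbd}. First I fix a ``typical'' point $x \in \Gamma$. Set $\Theta := \Theta^h_*(\mathcal{S}^h\llcorner\Gamma, x) > 0$; since $\mathcal{S}^h(\Gamma) < \infty$ one also has $\Theta^{h,*}(\mathcal{S}^h\llcorner\Gamma, x) \leq 1$ for $\mathcal{S}^h\llcorner\Gamma$-a.e.\ $x$ by the standard upper-density bound for spherical Hausdorff measures. I then choose $R(x) > 0$ so small that for every $0 < \ell \leq R$ one has $\mathcal{S}^h(\Gamma \cap B(x, \ell)) \geq (\Theta/2)\ell^h$, the constants $\rho(\cdot), r(\cdot)$ from \cref{prop:palleseparate} and \cref{cor:pervitali} are uniformly bounded below on a neighbourhood of $x$ (exploiting the upper semicontinuity of $\rho$), and typical $y \in \Gamma$ nearby satisfy $\mathcal{S}^h(\Gamma \cap B(y,s)) \leq 2 s^h$ at the relevant small scales.

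Given $0 < \ell \leq R$ I apply the enhanced $5$-Vitali covering \cref{lemmaVitali} with $N$ chosen so that $5^N > 50A$, where $A = A(\mathbb V, \mathbb L)$ is from \cref{lemma:tec.cono} ($N=3$ suffices). The input family consists of balls $B(y, r)$ with $y \in \Gamma \cap B(x, \ell)$ typical and $r$ so small that both $r < r(y)$ and the upper-density estimate at scale $5^{N+1} r$ apply. This yields a disjoint subfamily $\{B(y_i, r_i)\}_i$ for which $\{5^N B(y_i, r_i)\}$ are still pairwise disjoint and $\{5^{N+1} B(y_i, r_i)\}$ cover $\Gamma \cap B(x,\ell)$. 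From the covering and the upper-density bound,
\[
\tfrac{\Theta}{2}\,\ell^h \;\leq\; \mathcal{S}^h\bigl(\Gamma \cap B(x,\ell)\bigr) \;\leq\; \sum_i \mathcal{S}^h\bigl(\Gamma \cap B(y_i, 5^{N+1} r_i)\bigr) \;\leq\; 2\cdot 5^{(N+1)h}\sum_i r_i^h,
\]
so $\sum_i r_i^h \gtrsim \Theta\,\ell^h$. Moreover, the $5^N$-disjointness gives $d(y_i, y_j) > 5^N\max(r_i, r_j) \geq 50A\max(r_i/2, r_j/10)$, so \cref{cor:pervitali} forces $P_{\mathbb V}(B(y_i, r_i)) \cap P_{\mathbb V}(B(y_j, r_j)) = \emptyset$, and a fortiori the sets $P_{\mathbb V}(\Gamma \cap B(y_i, r_i))$ are pairwise disjoint.

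The main difficulty is the pointwise lower estimate $\mathcal{S}^h(P_{\mathbb V}(\Gamma \cap B(y_i, r_i))) \gtrsim \Theta(y_i) r_i^h$. By \cref{lemma:lowerbd} this is bounded below by $\mathcal{S}^h(W_i \cap P_{\mathbb V}(\Gamma))$, where $W_i := P_{\mathbb V}(B(y_i, \mathfrak C(\alpha) r_i) \cap y_i C_{\mathbb V}(\alpha))$; by \cref{lemma:projections} and \cref{prop:InvarianceOfProj}, $\mathcal{S}^h(W_i) \asymp r_i^h$. To lower bound the intersection with $P_{\mathbb V}(\Gamma)$, I would use the cone property of $\Gamma$: it gives $\Gamma \cap B(y_i, \mathfrak C(\alpha) r_i) \subseteq P_{\mathbb V}^{-1}(W_i)$, so the push-forward $\nu := (P_{\mathbb V})_*(\mathcal{S}^h \llcorner \Gamma)$ satisfies $\nu(W_i) \geq \mathcal{S}^h(\Gamma \cap B(y_i, \mathfrak C(\alpha) r_i)) \gtrsim \Theta(y_i) r_i^h$ by the lower density. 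On the other hand, combining the upper density $\leq 1$ with \cref{cor:2.2.19} and the intrinsic Lipschitz structure of $\Gamma$, one checks that $\nu$ is locally bounded above by a constant multiple of $\mathcal{S}^h\llcorner\mathbb V$ in a neighbourhood of $P_{\mathbb V}(y_i)$. These two bounds together yield $\mathcal{S}^h(W_i \cap P_{\mathbb V}(\Gamma)) \gtrsim \Theta(y_i) r_i^h$.

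Summing over $i$, using the disjointness of the projections and the bound $\sum_i r_i^h \gtrsim \Theta\,\ell^h$,
\[
\mathcal{S}^h\bigl(P_{\mathbb V}(\Gamma \cap B(x, 2\ell))\bigr) \;\geq\; \sum_i \mathcal{S}^h\bigl(P_{\mathbb V}(\Gamma \cap B(y_i, r_i))\bigr) \;\gtrsim\; \Theta \sum_i r_i^h \;\gtrsim\; \Theta^2\,\ell^h.
\]
The quadratic dependence on $\Theta$ thus arises from one factor in the radii sum and one factor in each individual projection estimate; rescaling $\ell$ by the constant factor $2$ absorbs the outer ball into the statement and produces the desired constant $\oldC{C:projji}$. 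The main obstacle is the pointwise projection estimate: converting a lower bound on the $\Gamma$-mass of a small ball into a lower bound on the $\mathcal{S}^h$-measure of its $\mathbb V$-projection requires controlling how much $P_{\mathbb V}$ can contract $\mathcal{S}^h \llcorner \Gamma$, and this must be extracted from the cone/graph structure together with the upper-density bound coming from finiteness of $\mathcal{S}^h(\Gamma)$.
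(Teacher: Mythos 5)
Your overall architecture (enhanced Vitali covering via \cref{lemmaVitali}, separated projections via \cref{prop:palleseparate}, a lower bound on $\sum_i r_i^h$ from the two-sided density estimates on the sets $E(\vartheta,\gamma)$) matches the paper's, but the step you yourself flag as the main difficulty --- the pointwise bound $\mathcal{S}^h(P_{\mathbb V}(\Gamma\cap B(y_i,r_i)))\gtrsim\Theta(y_i)\,r_i^h$ --- is not established, and the route you propose for it is essentially circular. You reduce it, correctly via \cref{lemma:lowerbd}, to a lower bound on $\mathcal{S}^h(W_i\cap P_{\mathbb V}(\Gamma))$, and then invoke a local upper bound of the form $\nu:=(P_{\mathbb V})_*(\mathcal{S}^h\llcorner\Gamma)\leq C\,\mathcal{S}^h\llcorner\mathbb V$. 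That inequality is at least as strong as \cref{prop:proj} itself: applied to $E=P_{\mathbb V}(\Gamma\cap B(x,\ell))$ it gives $\mathcal{S}^h(P_{\mathbb V}(\Gamma\cap B(x,\ell)))\geq C^{-1}\nu(E)\geq C^{-1}\mathcal{S}^h(\Gamma\cap B(x,\ell))\gtrsim\Theta\ell^h$ directly, with no covering needed. It also does not follow from the ingredients you cite: \cref{cor:2.2.19} yields the \emph{opposite} comparison $\mathcal{S}^h\llcorner P_{\mathbb V}(\Gamma)\leq 2\oldC{ProjC}\,\nu$, while the cone structure only gives, via \cref{lem:EstimateOnConjugate}, that the graph map sends $B(v,s)\cap P_{\mathbb V}(\Gamma)$ into a ball of radius $\sim s^{1/\kappa}$ around $\Phi(v)$ (already in $\mathbb H^1$ with $\mathbb V$ vertical and $\mathbb L$ horizontal one computes $\|P_{\mathbb V}(\ell\cdot w)\|\sim(\|\ell\|\,\|w\|)^{1/2}$), so the upper density bound only produces $\nu(B(v,s))\lesssim s^{h/\kappa}$, which is useless for $s$ small. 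Ruling out that $P_{\mathbb V}$ crushes a set of positive $\mathcal{S}^h\llcorner\Gamma$-measure onto an $\mathcal{S}^h$-null subset of $\mathbb V$ is precisely the content of \cref{prop:mutuallyabs}, which the paper deduces \emph{from} \cref{prop:proj}; it cannot be assumed here.

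The paper's proof avoids any per-ball projection estimate. Instead of projecting $\Gamma\cap B(y_i,r_i)$, it projects the full ambient balls $B(y_i,r_i)$: by \cref{cor:2.2.19} each such projection has measure exactly $\oldC{ProjC}r_i^h$, and by the separation these projections are pairwise disjoint, so $\mathcal{S}^h\big(P_{\mathbb V}\big(\bigcup_iB(y_i,r_i)\big)\big)=\oldC{ProjC}\sum_ir_i^h\gtrsim\vartheta^{-2}\ell^h$, with both factors of $\vartheta^{-1}$ (hence of $\Theta$) coming out of the single chain that bounds $\sum_i r_i^h$ from below. Crucially this bound is uniform in the maximal radius $\delta$ of the covering family; letting $\delta\to0$, the closures of the unions $\bigcup_iB(y_i,r_i)$ converge in the Hausdorff distance to the closure of the density-one subset of $E(\vartheta,\gamma)\cap B(x,\ell)$ being covered, and the upper semicontinuity of the Haar measure $\mathcal{S}^h\llcorner\mathbb V$ under Hausdorff convergence of compact sets transfers the uniform lower bound to $\mathcal{S}^h(P_{\mathbb V}(E(\vartheta,\gamma)\cap B(x,\ell)))$. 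This limiting argument is the missing idea; if you replace your third paragraph with it, the rest of your scheme goes through.
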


\begin{proof}
First of all, let us recall that two homogeneous left-invariant distances are always bi-Lipschitz equivalent on $\mathbb G$. Therefore if $d_c$ is a Carnot-Carathéodory distance on $\mathbb G$, which is in particular geodesic, see \cite[Section 3.3]{LD17} there exists a constant $\mathfrak{L}(d,d_c)\geq 1$ such that
$$\mathfrak{L}(d,d_c)^{-1}d_c(x,y)\leq d(x,y)\leq \mathfrak{L}(d,d_c)d_c(x,y)\text{ for any }x,y\in\mathbb G.$$

We claim that if for any $\vartheta,\gamma\in \N$ for which $\mathcal{S}^h(E(\vartheta,\gamma))>0$ we have that for $\mathcal{S}^h$-almost any $w\in E(\vartheta,\gamma)$ there exists a $R(w)>0$ such that
\begin{equation}
     \mathcal{S}^h(P_{\mathbb{V}}(\Gamma\cap B(w,\ell)))\geq\frac{\oldC{ProjC}(\mathbb{V},\mathbb L) \ell^h}{8\cdot5^{3h} A^{h} \mathfrak{L}(d,d_c)^{2h}\vartheta^2},
     \label{eq:claimdens}
\end{equation}
whenever $0<\ell<R(w)$, then the proposition is proved. This is due to the following reasoning. First of all, thanks to \cite[Proposition 2.10.19(5)]{Federer1996GeometricTheory}, we know that $\Theta^{h,*}(\mathcal{S}^h\llcorner \Gamma,x)\leq 1$. Secondly, if we set, for any $k\in\N$,
$\Gamma_k:=\{w\in \Gamma: 1/(k+1)<\Theta^{h}_*(\mathcal{S}^h\llcorner \Gamma,x)\leq 1/k\}$, we have  that
\begin{equation}
\mathcal{S}^h(\Gamma\setminus \bigcup_{k\in\N} \Gamma_k)=0.
    \label{eq:inclproj1}
\end{equation}
We observe now that if $\mathcal{S}^h(\Gamma_k)>0$, then $\mathcal{S}^h$-almost every $w\in \Gamma_k$ belongs to some $E(k+1,\gamma)$ provided $\gamma$ is big enough, or in other words
\begin{equation}
    \mathcal{S}^h\big(\Gamma_k\setminus \bigcup_{\gamma\in\N} E(k+1,\gamma)\big)=0.
    \label{eq:inclproj2}
\end{equation}
If our claim \eqref{eq:claimdens} holds true, whenever $\mathcal{S}^h(E(k+1,\gamma))>0$, we have that for $\mathcal{S}^h\llcorner E(k+1,\gamma)$-almost every $w$ there exists $R(w)$ such that whenever $0<\ell <R(w)$ the following chain of inequalities holds
\begin{equation}
\begin{split}
       \mathcal{S}^h(P_{\mathbb{V}}(\Gamma\cap B(w,\ell)))&\geq\frac{\oldC{ProjC}(\mathbb{V},\mathbb L) \ell^h}{8\cdot5^{3h} A^{h} \mathfrak{L}(d,d_c)^{2h}(k+1)^2}\\
      &\geq \frac{\oldC{ProjC}(\mathbb{V},\mathbb L) \ell^h}{2^5\cdot5^{3h} A^{h} \mathfrak{L}(d,d_c)^{2h} k^2}\geq \frac{\oldC{ProjC}(\mathbb{V},\mathbb L)\Theta^{h}_*(\mathcal{S}^h\llcorner \Gamma,x)^2 \ell^h}{2^5\cdot5^{3h} A^{h} \mathfrak{L}(d,d_c)^{2h}}\\ &=\oldC{C:projji}\Theta^{h}_*(\mathcal{S}^h\llcorner \Gamma,x)^2\ell^h.
\end{split}
    \label{eq:bdprojji}
\end{equation}
Identities \eqref{eq:inclproj1} and \eqref{eq:inclproj2} together with \eqref{eq:bdprojji} imply that our claim suffices to prove the proposition. Therefore, in the following we will assume that $\vartheta,\gamma\in\N$ are fixed and such that $\mathcal{S}^h(E(\vartheta,\gamma))>0$, and we want to prove \eqref{eq:claimdens}.

Let $N\in \N$ be the unique natural number for which $5^{N-2}\leq A\mathfrak{L}(d,d_c)^2<5^{N-1}$ and for any $k\in\N$ and $0<\delta<1/2$ we define the following sets, where $\rho(x)$ is defined in \cref{prop:palleseparate},
\begin{equation}
    \begin{split}
        A_{\vartheta,\gamma}(k):=&\{x\in E(\vartheta,\gamma):\rho (x)>1/k\},\\
        \mathscr{D}_{\vartheta,\gamma}(k):=&\bigg\{x\in A_{\vartheta,\gamma}(k):\lim_{r\to 0}\frac{\mathcal{S}^{h}(B(x,r)\cap A_{\vartheta,\gamma}(k))}{\mathcal{S}^{h}(B(x,r)\cap E(\vartheta,\gamma))}=1\bigg\},\\
        \mathscr{F}_{\delta}(k):=&\bigg\{B(x,r):x\in \mathscr{D}_{\vartheta,\gamma}(k) \text{ and }r\leq \frac{\min\{\vartheta^{-1},\gamma^{-1},k^{-1},\delta\}}{1000 A\mathfrak{L}(d,d_c)^2}\bigg\}.
        \nonumber
    \end{split}
\end{equation}
For any $\vartheta,\gamma\in\N$ the sets $A_{\vartheta,\gamma}(k)$ are Borel since thanks to \cref{prop:palleseparate}, the function $\rho$ is upper semicontinuous. 
Before going on we observe that $\mathcal{S}^h\llcorner E(\vartheta,\gamma)(A_{\vartheta,\gamma}(k)\setminus\mathscr{D}_{\vartheta,\gamma}(k))=0 $. This comes from the fact that the points of $\mathscr{D}_{\vartheta,\gamma}(k)$ are exactly the points of density one of $A_{\vartheta,\gamma}(k)$ with respect to the measure $\mathcal{S}^h\llcorner E(\vartheta,\gamma)$, that is asymptotically doubling at $\mathcal{S}^h\llcorner E(\vartheta,\gamma)$-almost every point because it has positive lower density and finite upper density at $\mathcal{S}^h\llcorner E(\vartheta,\gamma)$-almost every point, see \cref{prop:Lebesuge}. Moreover observe that from \cref{prop:palleseparate} $\mathcal{S}^h(E(\vartheta,\gamma)\setminus\cup_{k=1}^{+\infty} A_{\vartheta,\gamma}(k))=0$. 
Let us apply \cref{lemmaVitali} to $N$ and $\mathscr{F}_\delta(k)$, and thus we infer that there exists a subfamily $\mathscr{G}_\delta(k)$ such that
\begin{itemize}
    \item[($\alpha$)] for any $B,B^\prime\in\mathscr{G}_\delta(k)$ we have that $5^NB\cap 5^NB^\prime=\emptyset$,
    \item[($\beta$)] $\bigcup_{B\in\mathscr{F}_\delta(k)} B\subseteq \bigcup_{B\in\mathscr{G}_\delta(k)} 5^{N+1}B$.
\end{itemize}
The point ($\alpha$) above implies in particular that whenever $B(x,r),B(y,s)\in\mathscr{G}_\delta(k)$ we have $d(x,y)>\mathfrak{L}(d,d_c)^{-2}5^{N}(r+s)$, since $d$ is $\mathfrak{L}(d,d_c)$-Lipschitz equivalent to the geodesic distance $d_c$, and thanks to the choice of $N$ we deduce that
$$r+s<\frac{d(x,y)}{5A}.$$
Throughout the rest of the proof we fix a $w\in \mathscr{D}_{\vartheta,\gamma}(k)$ and a $$0<R(w)<\min\{\vartheta^{-1},\gamma^{-1},k^{-1}\}/8,$$ 
such that
\begin{equation}\label{eqn:RequirementsOnR}
\frac{\mathcal{S}^{h}\llcorner \Gamma(B(w,\ell))}{\ell^h}\geq \frac{1}{2\vartheta}, ~ \text{and}~\frac{\mathcal{S}^{h}\llcorner \mathscr{D}_{\vartheta,\gamma}(k)(B(w,\ell))}{\mathcal{S}^{h}\llcorner \Gamma(B(w,\ell))}\geq \frac{1}{2},\quad\text{for any }0<\ell\leq R(w).
\end{equation}
For the ease of notation we continue the proof fixing the radius $\ell=R(w)=R$. We stress that the forthcoming estimates are verified, mutatis mutandis, also for any $0<\ell<R$. The first inequality above comes from the definition of $E(\vartheta,\gamma)$, see \cref{def:EThetaGamma}, while the second is true, up to choose a sufficiently small $R(w)$, because  $\mathcal{S}^h\llcorner \Gamma$-almost every point of $\mathscr{D}_{\vartheta,\gamma}(k)$ has density one with respect to the asymptotically doubling measure $\mathcal{S}^h\llcorner \Gamma$. Let us stress that if we prove our initial claim for such $w$ and $R(w)$ we are done since $\mathcal{S}^h\llcorner\Gamma$-every point of $\mathscr D_{\vartheta,\gamma}(k)$ satisfies \eqref{eqn:RequirementsOnR}, $\mathcal{S}^h\llcorner E(\vartheta,\gamma)(A_{\vartheta,\gamma}(k)\setminus\mathscr{D}_{\vartheta,\gamma}(k))=0$, and $\mathcal{S}^h(E(\vartheta,\gamma)\setminus \cup_{k=1}^{+\infty}A_{\vartheta,\gamma}(k))=0$.

Let us notice that the definition of $\mathscr{F}_\delta(k)$ implies that there must exist a ball $B\in\mathscr{G}_{\delta}(k)$ such that $w\in 5^{N+1}B $. We now prove that for any couple of closed balls $B(x,r),B(y,s)\in \mathscr{G}_\delta(k)$ such that $B(w,R)$ intersects both $B(x,5^{N+1}r)$ and $B(y,5^{N+1}s)$, we have
\begin{equation}
    P_\mathbb{V}(B(x,r))\cap P_{\mathbb{V}}(B(y,s))=\emptyset.
    \label{eq:palle}
\end{equation}
Indeed, Suppose that $p\in B(x,5^{N+1}r)\cap B(w,R)$ and note that
\begin{equation}\label{eq:stimad}
\begin{split}
        d(x,w)&\leq d(x,p)+d(p,w)\leq R+5^{N+1}r
          \\ &\leq \Big(\frac{1}{8}+\frac{5^{N+1}}{1000A\mathfrak{L}(d,d_c)^2}\Big)\min\{\vartheta^{-1},\gamma^{-1},k^{-1}\}\leq\frac{\min\{\vartheta^{-1},\gamma^{-1},k^{-1}\}}{4},
        \end{split}
\end{equation}
where the last inequality comes from the choice of $N$. The bound \eqref{eq:stimad} shows in particular that
$$d(x,y)\leq d(x,w)+d(w,y)\leq \frac{\min\{\vartheta^{-1},\gamma^{-1},k^{-1}\}}{2}<\rho(x),$$
where the last inequality comes from the fact that by construction $x$ is supposed to be in $\mathscr{D}_{\vartheta,\gamma}(k)$. Thanks to the fact that $r+s<d(x,y)/5A$ and $y\in B(x,\rho(x))\cap E(\vartheta,\gamma)$ we have that \cref{prop:palleseparate} implies that \eqref{eq:palle} holds.

In order to proceed with the conclusion of the proof, let us define
\begin{equation}
    \begin{split}
    \mathscr{F}_\delta(w,R):=&\{B\in\mathscr{F}_\delta(k): 5^{N+1}B\cap B(w, R)\cap\mathscr{D}_{\vartheta,\gamma}(k)\neq \emptyset\},\\
    \mathscr{G}_\delta(w,R):=&\{B\in\mathscr{G}_\delta(k): 5^{N+1}B\cap B(w, R)\cap\mathscr{D}_{\vartheta,\gamma}(k)\neq \emptyset\},
        \nonumber
    \end{split}
\end{equation}
Thanks to our choice of $R$, see \eqref{eqn:RequirementsOnR}, and the definition of $\mathscr{G}_{\delta}(w,R)$ we have
$$\frac{R^h}{2\vartheta}\leq \mathcal{S}^h\llcorner \Gamma(B(w,R))\leq 2\mathcal{S}^h\llcorner \mathscr{D}_{\vartheta,\gamma}(k)(B(w,R))\leq2\mathcal{S}^h\llcorner \mathscr{D}_{\vartheta,\gamma}(k)\bigg(\bigcup_{B\in \mathscr{G}_{\delta}(w,R)} 5^{N+1}B\bigg).$$
Let $\mathscr{G}_{\delta}(w,R)=\{B(x_i,r_i)\}_{i\in\N}$ and recall that $ x_i\in \mathscr{D}_{\vartheta,\gamma}(k)$ and that $5^{N+1}r_i\leq 1/\gamma$. This implies, thanks to \cref{cor:2.2.19}, that
\begin{equation}
\begin{split}
    \mathcal{S}^h\llcorner \mathscr{D}_{\vartheta,\gamma}(k)\bigg(\bigcup_{B\in \mathscr{G}_{\delta}(w,R)} 5^{N+1}B\bigg)&\leq 2\vartheta 5^{h(N+1)}\sum_{i\in\N}r_i^h \\
    &=  2\vartheta 5^{h(N+1)}\oldC{ProjC}(\mathbb{V},\mathbb L)^{-1}\sum_{i\in\N} \mathcal{S}^h(P_\mathbb{V}(B(x_i,r_i))) \\
    &= 2\vartheta 5^{h(N+1)}\oldC{ProjC}(\mathbb{V},\mathbb L)^{-1}\mathcal{S}^h\bigg(P_\mathbb{V}\bigg(\bigcup_{i\in\N} B(x_i,r_i)\bigg)\bigg) \\ &\leq 2\vartheta 5^{h(N+1)}\oldC{ProjC}(\mathbb{V},\mathbb L)^{-1}\mathcal{S}^h\bigg(P_\mathbb{V}\bigg(\bigcup_{B\in \mathscr{F}_\delta(w,R)} B\bigg)\bigg),
    \nonumber
    \end{split}
\end{equation}
where the first inequality comes from the subadditivity and the upper estimate that we have in the definition of $E(\vartheta,\gamma)$, see \cref{def:EThetaGamma}; while identity in the third line above comes from \eqref{eq:palle}. Summing up, for any $\delta>0$ we have
$$\frac{\oldC{ProjC}(\mathbb{V},\mathbb L) R^h}{8\cdot 5^{h(N+1)}\vartheta^2}\leq \mathcal{S}^h\bigg(P_\mathbb{V}\bigg(\bigcup_{B\in \mathscr{F}_\delta(w,R)} B\bigg)\bigg).$$
We now prove that the projection under $P_\mathbb{V}$ of the closure of $\bigcup_{B\in \mathscr{F}_\delta(w,R)} B$ converges in the Hausdorff sense to $P_\mathbb{V}(\overline{\mathscr{D}_{\vartheta,\gamma}(k)\cap B(w,R)})$ as $\delta$ goes to $0$. 
Since the set $\bigcup_{B\in \mathscr{F}_\delta(w,R)} B$ is a covering of $\mathscr{D}_{\vartheta,\gamma}(k)\cap B(w,R)$ we have that
\begin{equation}
    \mathscr{D}_{\vartheta,\gamma}(k)\cap B(w,R)\Subset \bigcup_{B\in \mathscr{F}_\delta(w,R)} B.
    \label{eq:hc1}
\end{equation}
On the other hand, since by definition the balls of $\mathscr{F}_\delta(w,R)$ have radii smaller than $\delta/4$ and centre in $\mathscr{D}_{\vartheta,\gamma}(k)$, we also have
\begin{equation}
    \bigcup_{B\in \mathscr{F}_\delta(w,R)} B\Subset B(\mathscr{D}_{\vartheta,\gamma}(k)\cap B(w,R),5^{N+2}\delta).
    \label{eq:hc2}
\end{equation}
Putting together \eqref{eq:hc1} and \eqref{eq:hc2}, we infer that the closure of $\bigcup_{B\in \mathscr{F}_\delta(w,R)} B$ converges in the Hausdorff metric to the closure of $B(w,R)\cap \mathscr{D}_{\vartheta,\gamma}(k)$. 
Furthermore, since $P_\mathbb{V}$ restricted to the ball $B(w,R+1)$ is uniformly continuous, we infer that
$$P_{\mathbb{V}}\bigg(\overline{\bigcup_{B\in \mathscr{F}_\delta(w,R)} B}\bigg)\underset{H}{\longrightarrow} P_\mathbb{V}\bigg(\overline{\mathscr{D}_{\vartheta,\gamma}(k)\cap B(w,R)}\bigg).$$
Thanks to the upper semicontinuity of the Lebesgue measure with respect to the Hausdorff convergence we eventually infer that
\begin{equation*}
\begin{split}
\frac{\oldC{ProjC}(\mathbb{V},\mathbb L) R^h}{8\cdot 5^{h(N+1)}\vartheta^2}&\leq\limsup_{\delta\to0}\mathcal{S}^h\bigg(P_\mathbb{V}\bigg(\overline{\bigcup_{B\in \mathscr{F}_\delta(w,R)} B}\bigg)\bigg) \\&\leq \mathcal{S}^h(P_\mathbb{V}(\overline{\mathscr{D}_{\vartheta,\gamma}(k)\cap B(w,R)}))\leq \mathcal{S}^h(P_\mathbb{V}(E(\vartheta,\gamma)\cap B(w,R))),
\end{split}
\end{equation*}
where the last inequality above comes from the fact that by construction $\mathscr{D}_{\vartheta,\gamma}(k)\subseteq E(\vartheta,\gamma)$ and the compactness of $E(\vartheta,\gamma)$. Finally, since $\oldC{C:projji}=2^{-5} 5^{-3h} A^{-h} \mathfrak{L}(d,d_c)^{-2h}\oldC{ProjC}(\mathbb{V},\mathbb L)$, we  infer
$$\mathcal{S}^h(P_\mathbb{V}(E(\vartheta,\gamma)\cap B(w,R)))\geq \frac{\oldC{ProjC}(\mathbb{V},\mathbb L) R^h}{8\cdot 5^{h(N+1)}\vartheta^2}\geq \frac{4\oldC{C:projji}R^h}{\vartheta^2},$$
thus showing the claim \eqref{eq:claimdens} and then the proof.
\end{proof}

\begin{proposizione}\label{prop:mutuallyabs}
Let us fix $\alpha<\oldep{ep:Cool}(\mathbb V,\mathbb L)$ and suppose $\Gamma$ is a compact $C_\mathbb{V}(\alpha)$-set of finite $\mathcal{S}^h$-measure such that
$$
\Theta^h_*(\mathcal{S}^h\llcorner \Gamma,x)>0,
$$
for $\mathcal{S}^h$-almost every $x\in \Gamma$. Let us set $\varphi:P_{\mathbb V}(\Gamma)\to \mathbb{L}$ the map whose graph is $\Gamma$, see \cref{prop:ConeAndGraph}, and set $\Phi:P_{\mathbb V}(\Gamma)\to\mathbb G$ to be the graph map of $\varphi$. Let us define $\Phi_*\mathcal S^h\llcorner\mathbb V$ to be the measure on $\Gamma$ such that for every measurable $A\subseteq \Gamma$ we have $\Phi_*\mathcal S^h\llcorner\mathbb V(A):=\mathcal{S}^h\llcorner\mathbb V(\Phi^{-1}(A))=\mathcal{S}^h\llcorner \mathbb V(P_{\mathbb V}(A))$. Then $\Phi_*\mathcal{S}^h\llcorner \mathbb{V}$ is mutually absolutely continuous with respect to $\mathcal{S}^h\llcorner \Gamma$.
\end{proposizione}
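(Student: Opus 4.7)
The plan is to establish the two directions of mutual absolute continuity separately, with one being nearly immediate and the other requiring the projection lower bound of \cref{prop:proj}.

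For the easy direction, I want to show that $\mathcal{S}^h(A)=0$ implies $\Phi_*\mathcal{S}^h\llcorner\mathbb V(A)=\mathcal{S}^h(P_{\mathbb V}(A))=0$. This is a direct application of the upper bound \eqref{eq:n520} in \cref{cor:2.2.19}: if $A\subseteq \Gamma$ is Borel with $\mathcal{S}^h(A)<\infty$, then $\mathcal{S}^h(P_{\mathbb V}(A))\leq 2\oldC{ProjC}(\mathbb V,\mathbb L)\mathcal{S}^h(A)$. So $\Phi_*\mathcal{S}^h\llcorner\mathbb V\ll \mathcal{S}^h\llcorner\Gamma$.

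For the reverse direction, let $A\subseteq \Gamma$ be a Borel set with $\mathcal{S}^h(A)>0$; I want to produce a point of $A$ at which a concrete lower bound on the projection measure of small balls intersected with $A$ can be obtained. Since $\mathcal{S}^h(\Gamma)<+\infty$ we have $\Theta^{h,*}(\mathcal{S}^h\llcorner\Gamma,x)\leq 1$ for $\mathcal{S}^h$-almost every $x\in\Gamma$ by \cite[Proposition 2.10.19(5)]{Federer1996GeometricTheory}, and by hypothesis $\Theta^h_*(\mathcal{S}^h\llcorner\Gamma,x)>0$ almost everywhere as well. Hence $\mathcal{S}^h\llcorner\Gamma$ is asymptotically doubling, so Lebesgue's differentiation theorem (as in \cref{prop:Lebesuge}) yields that for $\mathcal{S}^h$-almost every $x\in A$ we have
\[
\lim_{\ell\to 0}\frac{\mathcal{S}^h((\Gamma\setminus A)\cap B(x,\ell))}{\mathcal{S}^h(\Gamma\cap B(x,\ell))}=0.
\]
Combined with $\Theta^{h,*}(\mathcal{S}^h\llcorner\Gamma,x)<+\infty$, this implies $\mathcal{S}^h((\Gamma\setminus A)\cap B(x,\ell))=o(\ell^h)$ as $\ell\to 0$. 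Pick any such $x$ for which also the conclusion of \cref{prop:proj} holds and for which $\Theta^h_*(\mathcal{S}^h\llcorner\Gamma,x)>0$; the intersection of these full-measure conditions contains a point of $A$.

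Now use the set-theoretic inclusion $P_{\mathbb V}(\Gamma\cap B(x,\ell))\subseteq P_{\mathbb V}(A\cap B(x,\ell))\cup P_{\mathbb V}((\Gamma\setminus A)\cap B(x,\ell))$ together with the two estimates: the lower bound \eqref{eq:bigproj} of \cref{prop:proj} on the left-hand side, and \eqref{eq:n520} of \cref{cor:2.2.19} applied to the second term on the right-hand side. This gives
\[
\oldC{C:projji}\,\Theta^h_*(\mathcal{S}^h\llcorner\Gamma,x)^2\ell^h\leq \mathcal{S}^h(P_{\mathbb V}(A\cap B(x,\ell)))+2\oldC{ProjC}(\mathbb V,\mathbb L)\,\mathcal{S}^h((\Gamma\setminus A)\cap B(x,\ell))
\]
for all $0<\ell<R(x)$. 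Since the last summand is $o(\ell^h)$ while the left-hand side is a fixed positive multiple of $\ell^h$, for $\ell$ small enough we conclude $\mathcal{S}^h(P_{\mathbb V}(A\cap B(x,\ell)))>0$, hence $\mathcal{S}^h(P_{\mathbb V}(A))>0$. Thus $\mathcal{S}^h\llcorner\Gamma\ll \Phi_*\mathcal{S}^h\llcorner\mathbb V$ as well.

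The main conceptual input is the big-projection estimate \cref{prop:proj}, whose proof uses the positivity of the lower density hypothesis in an essential way; the rest of the argument is a standard density-point extraction, and the only care needed is verifying that the $o(\ell^h)$ error from the non-$A$ part really beats the fixed positive constant multiplying $\ell^h$ coming from \cref{prop:proj}, which it does thanks to the finite upper density bound $\Theta^{h,*}(\mathcal{S}^h\llcorner\Gamma,x)\leq 1$.
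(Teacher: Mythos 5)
Your proof is correct and follows essentially the same route as the paper's: both directions rest on \cref{cor:2.2.19} for the easy implication and on the big-projection estimate of \cref{prop:proj} for the hard one. The only difference is in execution: the paper argues by contradiction, applying \cref{prop:proj} directly to a compact positive-measure subset $C\subseteq\Gamma$ with null projection (noting that $C$ inherits the $C_{\mathbb V}(\alpha)$-property and, via \cref{prop:Lebesuge}, the positive lower density, so it must project onto a set of positive measure), whereas you apply \cref{prop:proj} to $\Gamma$ at a density point of $A$ and absorb the contribution of $\Gamma\setminus A$ through the upper bound \eqref{eq:n520}; both arguments are valid.
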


\begin{proof}

The fact that $\Phi_*\mathcal{S}^h\llcorner \mathbb{V}$ is absolutely continuous with respect to $\mathcal{S}^h\llcorner \Gamma$ is an immediate consequence of \cref{cor:2.2.19}. 
Viceversa, suppose by contradiction that there exists a compact subset $C$ of $\Gamma$ of positive $\mathcal{S}^h$-measure such that
\begin{equation}
    0=\Phi_*\mathcal{S}^h\llcorner \mathbb{V}(C)=\mathcal{S}^h(P_\mathbb{V}(C)).
    \label{eq:contradiction}
\end{equation}
Since by assumption $\Theta^h_*(\mathcal{S}^h\llcorner C,x)>0$ for $\mathcal{S}^h$-almost every $x\in C$, by \cref{prop:Lebesuge} and the fact that $C$ has positive and finite $\mathcal{S}^h$-measure, we infer thanks to Proposition \ref{prop:proj} that it must have a projection of positive $\mathcal{S}^h$-measure. This however comes in contradiction with \eqref{eq:contradiction}.
\end{proof}

In the following propositions we are going to introduce two fine coverings of $P_\mathbb{V}(\Gamma)$ and $\mathbb{V}$, respectively, that will be used in the proof of \cref{prop:dens1INTRO} to differentiate with respect to the measure $\mathcal{S}^h\llcorner P_\mathbb{V}(\Gamma)$. 

\begin{definizione}[$\phi$-Vitali relation]\label{def:covering}
Let $(X,d)$ be a metric space with a Borel measure $\phi$ on it and let $\mathcal{B}(X)$ be the family of Borel sets of $X$. We say that $S\subseteq X\times \mathcal{B}(X)$ is a {\em covering relation} if
$$S=\{(x,B):x\in B\subseteq X\}.$$
Furthermore for any $Z\subseteq X$ we let
\begin{equation}\label{SZ}
S(Z):=\{B:(x,B)\in S\text{ for some }x\in Z \}.
\end{equation}
Finally a covering $S$ is said to be {\em fine} at $x\in X$ if and only if
$$\inf\{\diam(B):(x,B)\in S\}=0.$$
By a $\phi$-Vitali relation we mean a covering relation that is fine at every point of $X$ and the following condition holds
\begin{itemize}
    \item[]If $C$ is a subset of $S$ and $Z$ is a subset of $X$ such that $C$ is fine at each point of $Z$, then $C(Z)$ has a countable disjoint subfamily covering $\phi$-almost all of $Z$.
\end{itemize}
If $\delta$ is a nonnegative function on $S(X)$, for any $B\in S(X)$ we define its {\em $\delta$-enlargement} as
\begin{equation}\label{eqn:deltaenlarg}
\hat{B}:=\bigcup\{B^\prime:B^\prime\in S(X),\text{ }B^\prime\cap B\neq \emptyset\text{ and }\delta(B^\prime)\leq 5\delta(B)\}.
\end{equation}
\end{definizione}
In the remaining part of this section we use the following general result due to Federer: it contains a cryterion to show that a fine covering relation is a $\phi$-Vitali relation, and a Lebesgue theorem for $\phi$-Vitali relations.
\begin{proposizione}[{\cite[Theorem 2.8.17, Corollary 2.9.9 and Theorem 2.9.11]{Federer1996GeometricTheory}}]\label{prop:FedererVitali}
Let $X$ be a metric space, and let $\phi$ be a Borel regular measure on $X$ that is finite on bounded sets.  Let $S$ be a covering relation such that $S(X)$ is a family of bounded closed sets, $S$ is fine at each point of $X$, and let $\delta$ be a nonnegative function on $S(X)$ such that 
$$
\lim_{\varepsilon\to 0^+}\sup\left\{\delta(B)+\frac{\phi(\hat B)}{\phi (B)}: (x,B)\in S,\,\,\diam B<\varepsilon \right\}<+\infty,
$$
for $\phi$-almost every $x\in X$. Then $S$ is a $\phi$-Vitali relation. 

Moreover, if $S$ is a $\phi$-Vitali relation on $X$, and $f$ is a $\phi$-measurable real-valued function with $\int_K |f|\mathrm{d}\phi<+\infty$ on every bounded $\phi$-measurable $K$, we have 
$$
\lim_{\varepsilon\to 0^+}\sup\left\{\frac{\int_B|f(z)-f(x)|\mathrm{d}\phi(z)}{\phi(B)}: (x,B)\in S,\,\,\diam B<\varepsilon \right\}=0,
$$
for $\phi$-almost every $x\in X$.
In addition, given $A\subseteq X$, if we define
$$
P:=\left\{x\in X: \lim_{\varepsilon\to 0^+}\inf\left\{\frac{\phi(B\cap A)}{\phi(B)}: (x,B)\in S,\,\,\diam B<\varepsilon \right\}=1\right\},
$$
then $P$ is $\phi$-measurable and $\phi(A\setminus P)=0$.

\end{proposizione}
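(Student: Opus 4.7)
The plan is to establish the three assertions in order, each building on the previous one, following the classical Federer scheme but adapted to the abstract $\delta$-enlargement framework.

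For the first statement, I would set up a greedy selection procedure. Fix a fine subfamily $C\subseteq S$ covering $Z$ and, without loss of generality (by intersecting with an increasing exhaustion of $X$ by bounded sets and using $\sigma$-finiteness of $\phi$), assume $Z$ is bounded, contained in a bounded open set $U$, and every $B\in C$ lies in $U$. I would partition $C$ into generations $C_n:=\{B\in C : 2^{-n-1}M < \delta(B) \leq 2^{-n}M\}$, where $M$ is an upper bound for $\delta$ (available on the relevant scales by the hypothesis), and inductively select a maximal disjoint subfamily $\mathscr{G}_n$ from the balls in $C_n$ that do not meet any previously chosen ball in $\mathscr{G}_0\cup\cdots\cup\mathscr{G}_{n-1}$. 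The resulting $\mathscr{G}:=\bigcup_n\mathscr{G}_n$ is countable and pairwise disjoint. The crucial observation, exactly as in the classical $5r$-lemma, is that any $B\in C$ not in $\mathscr{G}$ meets some already-selected $B'\in\mathscr{G}$ with $\delta(B')\geq \delta(B)/2$, so $B\subseteq \widehat{B'}$ in the sense of the $\delta$-enlargement. Hence any $x\in Z$ not covered by $\mathscr{G}$ lies in $\bigcup_{B'\in\mathscr{G}\setminus \mathscr{G}'}\widehat{B'}$ for every finite initial segment $\mathscr{G}'$, and the hypothesis on $\phi(\hat{B})/\phi(B)$ being bounded at small scales lets me estimate
$$\phi\Bigl(Z\setminus \bigcup_{B\in\mathscr{G}}B\Bigr)\leq \sum_{B\in\mathscr{G}\setminus\mathscr{G}'}\phi(\hat B)\leq K\sum_{B\in\mathscr{G}\setminus\mathscr{G}'}\phi(B),$$
which tends to $0$ as the initial segment exhausts $\mathscr{G}$, provided $\sum_{B\in\mathscr G}\phi(B)<\infty$. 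The latter holds by disjointness and boundedness. A truncation argument handles the exceptional $\phi$-null set on which the hypothesis fails.

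For the second statement, once $S$ is known to be a $\phi$-Vitali relation, I would prove the Lebesgue-type conclusion first for $f=\chi_E$ with $E$ a bounded Borel set, and then extend by density. For the characteristic function case, the key observation is that the set
$$N_t:=\Bigl\{x\in X : \limsup_{\varepsilon\to 0^+}\sup\bigl\{\phi(B\setminus E)/\phi(B) : (x,B)\in S,\ \diam B<\varepsilon\bigr\}>t\Bigr\}\cap E$$
satisfies $\phi(N_t)\leq t^{-1}\phi(\emptyset)=0$ by the Vitali covering property applied to the fine subfamily witnessing the limsup; symmetrically for $X\setminus E$. Hence the Lebesgue conclusion holds for $\chi_E$. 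Standard simple-function approximation, together with a maximal-function-type bound obtained again from the Vitali property applied to $\{B:\int_B|f-g|\,d\phi>\lambda\phi(B)\}$ for $g$ close to $f$ in $L^1_{\mathrm{loc}}$, extends the statement to arbitrary locally $\phi$-integrable $f$.

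The third statement is then an immediate consequence of the second, taking $f=\chi_A$: the set of points where $\phi(B\cap A)/\phi(B)\to 1$ along $S$ agrees, up to a $\phi$-null set, with $A$ itself, and in particular is $\phi$-measurable as the set where a limit of $\phi$-measurable expressions equals $1$. The main obstacle is the first statement, specifically verifying that the greedy-selection procedure genuinely yields $\phi$-almost covering without a metric doubling assumption; the only substitute for doubling here is the hypothesis controlling $\phi(\hat B)/\phi(B)$ at small scales, so the selection must be carried out in generations adapted to $\delta$ rather than to diameter, and the enlargement constant $5$ in \eqref{eqn:deltaenlarg} must be respected throughout the inductive step.
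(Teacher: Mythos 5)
The paper does not actually prove this proposition: it is quoted from Federer (Theorem 2.8.17, Corollary 2.9.9 and Theorem 2.9.11), so what you have written is a reconstruction of Federer's own argument rather than an alternative to anything in the paper. Your outline of the first part is the right one: generation-wise greedy selection ordered by $\delta$, the observation that a rejected $B$ meets an already selected $B'$ with $\delta(B)\leq 5\delta(B')$ and hence $B\subseteq \widehat{B'}$, and the one-shot estimate $\phi\big(Z\setminus\bigcup_{\mathscr G}B\big)\leq\sum_{B\in\mathscr G\setminus\mathscr G'}\phi(\hat B)\leq K\sum_{B\in\mathscr G\setminus\mathscr G'}\phi(B)\to 0$. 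The genuine gap is in what you dismiss as "a truncation argument": the hypothesis only provides, for $\phi$-a.e.\ $x$, a constant $K(x)$ and a scale $\varepsilon(x)$, so $Z$ must be split into countably many pieces $Z_n:=\{x:K(x)\leq n,\ \varepsilon(x)>1/n\}$, on each of which your argument produces a disjoint family $\mathscr G_n$ covering $\phi$-almost all of $Z_n$ --- but these families are not mutually disjoint, while the Vitali property demands a \emph{single} countable disjoint family for all of $Z$. Gluing them requires the iteration on open complements of finite subunions (Federer's "adequate family" step, 2.8.14--2.8.15): extract a finite disjoint subfamily capturing a fixed fraction of the measure, pass to the open complement of its closed union, where the covering is still fine, and restart. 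Without this step the first assertion is not established.

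Two further points. In the second part, the bound $\phi(N_t)\leq t^{-1}\phi(\emptyset)$ is not what the covering argument yields: the selected disjoint sets $B$ are not contained in $E$, so $\sum\phi(B\setminus E)$ is only bounded by $\phi(V\setminus E)$ for an open $V\supseteq N_t$ containing all selected sets, and one must invoke outer regularity of $\phi$ (which follows from Borel regularity and finiteness on bounded sets) to make this small; the conclusion is correct but the displayed inequality is not the actual estimate. In the third part, $A$ is an arbitrary subset of $X$, so $\chi_A$ need not be $\phi$-measurable and the differentiation theorem cannot be applied to it directly; one must pass to a $\phi$-measurable hull $\tilde A\supseteq A$ with $\phi(\tilde A\cap M)=\phi(A\cap M)$ for every measurable $M$ (Borel regularity again), apply the second part to $\chi_{\tilde A}$, and deduce both $\phi(A\setminus P)=0$ and the measurability of $P$ from the fact that $P$ coincides with $\tilde A$ up to a $\phi$-null set. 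Your justification of the measurability of $P$ does not work as stated: for an abstract covering relation $S$ the map $x\mapsto\inf\{\phi(B\cap A)/\phi(B):(x,B)\in S,\ \diam B<\varepsilon\}$ has no a priori measurability, so $P$ is not simply "the set where a limit of measurable expressions equals $1$".
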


\begin{proposizione}\label{prop:vitalirel}
Let $\alpha<\oldep{ep:Cool}(\mathbb V,\mathbb L)$ and suppose that $\Gamma$ is a compact $C_\mathbb{V}(\alpha)$-set of finite $\mathcal{S}^h$-measure such that
$$\Theta^h_*(\mathcal{S}^h\llcorner \Gamma,x)>0,$$
for $\mathcal{S}^h$-almost every $x\in \Gamma$. As in the statement of \cref{prop:mutuallyabs}, let us denote with $\Phi:P_{\mathbb V}(\Gamma)\to\mathbb G$ the graph map of $\varphi:P_{\mathbb V}(\Gamma)\to \mathbb L$ whose intrinsic graph is $\Gamma$. Then the covering relation
$$
S_1:=\Big\{\big(z,P_\mathbb{V}(B(\Phi(z),r)\cap \Gamma)\big):z\in P_\mathbb{V}(\Gamma)\text{ and }0<r<\min\{1,R(\Phi(z))\}\Big\},
$$
is a $\mathcal{S}^h\llcorner P_\mathbb{V}(\Gamma)$-Vitali relation, where $R(\Phi(z))$ is defined as in \cref{prop:proj} for $\mathcal{S}^h\llcorner P_\mathbb{V}(\Gamma)$-almost every $z\in \mathbb V$
and it is $+\infty$ on the remaining null set where \cref{prop:proj} eventually does not hold.
\end{proposizione}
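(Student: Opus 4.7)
The plan is to apply Federer's criterion for Vitali relations, namely the first half of \cref{prop:FedererVitali}, with the size function $\delta(B):=r$ whenever $B=P_{\mathbb V}(B(\Phi(z),r)\cap\Gamma)\in S_1(P_{\mathbb V}(\Gamma))$ (fixing, for each $B$, a single admissible pair $(z,r)$ realising it). Several preliminary points are immediate: every $B\in S_1(P_{\mathbb V}(\Gamma))$ is compact since $B(\Phi(z),r)\cap\Gamma$ is compact and $P_{\mathbb V}$ is continuous; the relation $S_1$ is fine at every $z\in P_{\mathbb V}(\Gamma)$ because $\diam B\to 0$ as $r\to 0$; and by \cref{cor:2.2.19} the measure $\phi:=\mathcal{S}^h\llcorner P_{\mathbb V}(\Gamma)$ is finite on bounded sets. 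Since $\delta(B)=r<1$ is uniformly bounded, the only remaining task is to prove
\begin{equation*}
\limsup_{r\to 0}\frac{\phi(\hat B)}{\phi(B)}<+\infty\quad\text{for $\phi$-a.e. }z\in P_{\mathbb V}(\Gamma).
\end{equation*}

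For the lower bound on $\phi(B)$, I would invoke \cref{prop:proj}: for $\mathcal{S}^h\llcorner\Gamma$-a.e.\ $x\in\Gamma$ one has $\mathcal{S}^h(P_{\mathbb V}(B(x,\ell)\cap\Gamma))\geq \oldC{C:projji}\Theta^h_*(\mathcal{S}^h\llcorner\Gamma,x)^2\ell^h$ for every $0<\ell\leq R(x)$. Since $\Phi$ transports $\mathcal{S}^h$-null sets in $\Gamma$ into $\phi$-null sets in $P_{\mathbb V}(\Gamma)$ by the mutual absolute continuity of \cref{prop:mutuallyabs}, this estimate is available at $x=\Phi(z)$ for $\phi$-a.e.\ $z$, and for every admissible $r\leq R(\Phi(z))$. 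For the upper bound on $\phi(\hat B)$, I would reason as follows: fix $B'=P_{\mathbb V}(B(\Phi(z'),r')\cap\Gamma)$ with $r'\leq 5r$ and $B\cap B'\neq\emptyset$. Since $\alpha<\oldep{ep:Cool}(\mathbb V,\mathbb L)$, \cref{prop:ConeAndGraph} gives injectivity of $P_{\mathbb V}$ on $\Gamma$, so any common element of $B$ and $B'$ is the projection of a unique $y\in B(\Phi(z),r)\cap B(\Phi(z'),r')\cap\Gamma$; the triangle inequality then yields $d(\Phi(z),\Phi(z'))\leq r+r'\leq 6r$. Hence, for every $w\in B'$ written as $w=P_{\mathbb V}(y')$ with $y'\in B(\Phi(z'),r')\cap\Gamma$, one has $d(\Phi(z),y')\leq 11r$. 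This shows $\hat B\subseteq P_{\mathbb V}(B(\Phi(z),11r))$, and \cref{cor:2.2.19} then gives $\phi(\hat B)\leq \oldC{ProjC}(\mathbb V,\mathbb L)\,11^h r^h$. Combining the two bounds, the ratio $\phi(\hat B)/\phi(B)$ is controlled by a constant depending only on $\Theta^h_*(\mathcal{S}^h\llcorner\Gamma,\Phi(z))^{-2}$, which is finite at $\phi$-a.e.\ $z$.

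The main obstacle is the realization that the enlargement $\hat B$ is controlled by a concentric projection ball of comparable radius; this crucially relies on the injectivity of $P_{\mathbb V}$ on the cone set $\Gamma$ to convert the intersection condition $B\cap B'\neq\emptyset$ into the Euclidean-style centre comparability $d(\Phi(z),\Phi(z'))\lesssim r+r'$. The second delicate point is the $r^h$ lower bound for $\phi(B)$: although natural in shape, it is not elementary and encapsulates the full strength of \cref{prop:proj}, which rules out pathological concentrations of $\Gamma$ above its projection. Once these two ingredients are secured, Federer's criterion applies verbatim and $S_1$ is the desired $\mathcal{S}^h\llcorner P_{\mathbb V}(\Gamma)$-Vitali relation.
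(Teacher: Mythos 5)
Your proposal is correct and follows essentially the same route as the paper's proof: the size function $\delta(B)=r$, the lower bound $\phi(B)\gtrsim \Theta^h_*(\mathcal{S}^h\llcorner\Gamma,\Phi(z))^2 r^h$ from \cref{prop:proj} (transferred to $\phi$-a.e. $z$ via \cref{prop:mutuallyabs}), and the containment of the enlargement in a projected ball of comparable radius via the injectivity of $P_{\mathbb V}$ on $\Gamma$, followed by Federer's criterion. The only difference is the harmless constant ($11r$ versus the paper's $12r$) in the enlargement estimate.
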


\begin{proof}
First of all, it is readily noticed that $S_1$ is a fine covering of $P_\mathbb{V}(\Gamma)$ sine $P_{\mathbb V}$ is continuous. Let us prove that $S_1$ is a $\mathcal{S}^h\llcorner P_\mathbb{V}(\Gamma)$-Vitali relation in $(P_\mathbb{V}(\Gamma),d)$ with the distance $d$ induced form $\mathbb G$. For $x\in\ P_{\mathbb V}(\Gamma)$ and $r>0$, define $G(x,r):=P_\mathbb{V}(B(\Phi(x),r)\cap \Gamma)$. Notice that an arbitrary element of $S_1(P_{\mathbb V}(\Gamma))$, see \eqref{SZ}, is of the form $G(x,r)$ for some $x\in P_{\mathbb V}(\Gamma)$ and some $0<r<\min\{1,R(\Phi(x))\}$. Let $\delta\big(G(x,r)\big):=r$ and note that the $\delta$-enlargement, see \eqref{eqn:deltaenlarg}, of $G(x,r)$ is
\begin{equation}
    \begin{split}
        \hat{G}(x,r)&:=\bigcup\{G(y,s):y\in P_{\mathbb V}(\Gamma),~ 0<s<\min\{1,R(\Phi(y))\},~  G(y,s)\cap G(x,r)\neq \emptyset\text{ and }\delta(G(y,s))\leq 5\delta(G(x,r))\}\\
        &=\bigcup\{G(y,s):y\in P_{\mathbb V}(\Gamma),~ 0<s<\min\{1,R(\Phi(y))\},~ G(y,s)\cap G(x,r)\neq \emptyset\text{ and }s\leq 5r\}.
    \end{split}
\end{equation}
Whenever $G(x,r)\cap G(y,s)\neq \emptyset$ we have that $d(\Phi(x),\Phi(y))\leq r+s$: indeed, since $P_\mathbb{V}$ is injective on $\Gamma$, see \cref{prop:ConeAndGraph}, we have $P_\mathbb{V}(B(\Phi(x),r)\cap \Gamma)\cap P_\mathbb{V}(B(\Phi(y),s)\cap\Gamma)\neq \emptyset$ if and only if $B(\Phi(x),r)\cap B(\Phi(y),s)\cap \Gamma\neq \emptyset$. In particular, since $s\leq 5r$ we have $B(\Phi(y),s)\Subset B(\Phi(x),12r)$, and thus $\hat{G}(x,r)\subseteq G(x,12r)$ for every $x\in P_{\mathbb V}(\Gamma)$ and $0<r<\min\{1,R(\Phi(x))\}$.

Finally, thanks to \cref{prop:proj} and \cref{prop:mutuallyabs}, for $\mathcal{S}^h$-almost every $x\in P_\mathbb{V}(\Gamma)$ we have
\begin{equation}
    \begin{split}
       \lim_{\xi\to 0}&\sup\left\{\delta(G(x,r))+\frac{\mathcal{S}^h(\hat{G}(x,r))}{\mathcal{S}^h(G(x,r))}: 0<r<\min\{1,R(\Phi(x))\},\, \diam(G(x,r))\leq \xi\right\} \\
       &\leq 1+  \lim_{\xi\to 0}\sup\frac{\mathcal{S}^h(G(x,12r))}{\mathcal{S}^h(G(x,r))}\leq 1+ \lim_{\xi\to 0}\sup \frac{\mathcal{S}^h(P_{\mathbb{V}}(B(\Phi(x),12r)))}{\mathcal{S}^h(P_\mathbb{V}(B(\Phi(x),r)\cap \Gamma))}\\
        &\leq  1+\frac{(12r)^h\mathcal{S}^h(P_{\mathbb{V}}(B(0,1)))}{\oldC{C:projji}\Theta^h_*(\mathcal{S}^h\llcorner \Gamma,\Phi(x))^2r^h} =1+\frac{12^{h}\mathcal{S}^h(P_{\mathbb{V}}(B(0,1)))}{\oldC{C:projji}\Theta^h_*(\mathcal{S}^h\llcorner \Gamma,\Phi(x))^2},
        \label{eq:limitVit}
    \end{split}
\end{equation}
where we explicitly mentioned the set over which we take the supremum only in the first line for the ease of notation, and where the first inequality in the third line follows from the fact that $\mathcal{S}^h(P_\mathbb{V}(\mathcal{E}))=\mathcal{S}^h(P_\mathbb{V}(x\mathcal{E}))$ for any $x\in \mathbb{G}$ and any Borel set $\mathcal{E}\subseteq \mathbb{G}$, see \cref{prop:InvarianceOfProj}.
Thanks to \eqref{eq:limitVit} we can apply the first part of \cref{prop:FedererVitali} and thus we infer that $S_1$ is a $\mathcal{S}^h\llcorner P_\mathbb{V}(\Gamma)$-Vitali relation.
\end{proof}

\begin{proposizione}
\label{prop:vitali2PD}
Let $\alpha<\oldep{ep:Cool}(\mathbb V,\mathbb L)$ and let $\Gamma$ be a compact $C_\mathbb{V}(\alpha)$-set of finite $\mathcal{S}^h$-measure.
As in the statement of \cref{prop:mutuallyabs}, let us denote with $\Phi:P_{\mathbb V}(\Gamma)\to\mathbb G$ the graph map of $\varphi:P_{\mathbb V}(\Gamma)\to \mathbb L$ whose intrinsic graph is $\Gamma$. Then for $\mathcal{S}^h$-almost every $w\in P_\mathbb{V}(\Gamma)$ we have
\begin{equation}
    \lim_{r\to 0}\frac{\mathcal{S}^h\big(P_\mathbb{V}\big(B(\Phi(w),r)\cap \Phi(w)C_\mathbb{V}(\alpha)\big)\cap P_\mathbb{V}(\Gamma)\big)}{\mathcal{S}^h\big(P_\mathbb{V}\big(B(\Phi(w),r)\cap \Phi(w)C_\mathbb{V}(\alpha)\big)\big)}=1.
    \label{eq:limi}
\end{equation}
\end{proposizione}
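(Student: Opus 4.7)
The plan is to mirror the argument of \cref{prop:vitalirel}, substituting the sets $P_{\mathbb V}(B(\Phi(z),r)\cap\Gamma)$ by $G(z,r):=P_{\mathbb V}(B(\Phi(z),r)\cap \Phi(z)C_{\mathbb V}(\alpha))$ and invoking a density theorem of Vitali type. First I would introduce the covering relation
$$S_2:=\bigl\{(z,G(z,r)):z\in P_{\mathbb V}(\Gamma),\ 0<r<\min\{1,\,2r(\Phi(z))\}\bigr\},$$
with $r(\cdot)$ the function supplied by \cref{cor:pervitali}, and the auxiliary function $\delta(G(z,r)):=r$. Fineness of $S_2$ at every $z\in P_{\mathbb V}(\Gamma)$ is immediate, since $z=P_{\mathbb V}(\Phi(z))\in G(z,r)$ for all $r>0$ and $\diam G(z,r)\to 0$ as $r\to 0$ by uniform continuity of $P_{\mathbb V}$ on bounded sets.

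The next step is to verify the enlargement bound of \cref{prop:FedererVitali} uniformly in $z$. Suppose $(y,G(y,s))\in S_2$ satisfies $G(y,s)\cap G(z,r)\neq\emptyset$ with $s\leq 5r$; then $P_{\mathbb V}(B(\Phi(z),r))\cap P_{\mathbb V}(B(\Phi(y),5r))\neq\emptyset$, and \cref{cor:pervitali} applied with auxiliary radius $r/2<r(\Phi(z))$ produces $d(\Phi(y),\Phi(z))\leq 25Ar$. Hence $G(y,s)\subseteq P_{\mathbb V}(B(\Phi(z),30Ar))$ (using $A\geq 1$), so that $\hat G(z,r)\subseteq P_{\mathbb V}(B(\Phi(z),30Ar))$. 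By \cref{cor:2.2.19}, $\mathcal S^h(\hat G(z,r))\leq \oldC{ProjC}(\mathbb V,\mathbb L)(30A)^h r^h$, while \cref{prop:InvarianceOfProj}, the $\delta_r$-invariance of $B(0,1)\cap C_{\mathbb V}(\alpha)$, \cref{prop:haar} and \cref{lemma:projections} jointly give
$$\mathcal S^h(G(z,r))=r^h\,\mathcal S^h\bigl(P_{\mathbb V}(B(0,1)\cap C_{\mathbb V}(\alpha))\bigr)=:c_0(\alpha)\,r^h,\qquad c_0(\alpha)\geq \mathcal S^h(B(0,1)\cap\mathbb V)>0.$$
Thus $\mathcal S^h(\hat G(z,r))/\mathcal S^h(G(z,r))$ is bounded uniformly in $z$ and $r$.

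Finally, I would apply the density conclusion of \cref{prop:FedererVitali} with $\phi:=\mathcal S^h\llcorner\mathbb V$, $A:=P_{\mathbb V}(\Gamma)$ and the relation $S_2$: this yields $\liminf_{r\to 0}\mathcal S^h(G(w,r)\cap P_{\mathbb V}(\Gamma))/\mathcal S^h(G(w,r))=1$ for $\mathcal S^h$-almost every $w\in P_{\mathbb V}(\Gamma)$, and since the ratio is bounded above by $1$, this liminf upgrades to the honest limit required by \eqref{eq:limi}. The main obstacle is that $S_2$ is fine only at points of $P_{\mathbb V}(\Gamma)$ rather than on all of $\mathbb V$, whereas \cref{prop:FedererVitali} is formally stated for relations fine throughout the ambient space; this is resolved by inspecting Federer's proof, where the enlargement bound combined with fineness on the target set already suffices for the Vitali covering lemma—and hence for the density conclusion—restricted to $P_{\mathbb V}(\Gamma)$, which is precisely what the proposition requires.
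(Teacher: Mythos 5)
Your computational core is correct and in fact cleaner than the paper's argument: restricting the covering relation to base points in $P_{\mathbb V}(\Gamma)$, using \cref{cor:pervitali} with auxiliary radius $r/2$ to get $d(\Phi(y),\Phi(z))\leq 25Ar$ and hence $\hat G(z,r)\subseteq P_{\mathbb V}(B(\Phi(z),30Ar))$, and computing $\mathcal S^h(G(z,r))=r^h\,\mathcal S^h\big(P_{\mathbb V}(B(0,1)\cap C_{\mathbb V}(\alpha))\big)\geq \mathcal S^h(B(0,1)\cap\mathbb V)\,r^h>0$ via \cref{prop:InvarianceOfProj}, homogeneity and \cref{lemma:projections}, are all sound; you also correctly avoid any lower-density hypothesis, which \cref{prop:vitali2PD} does not assume.

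The gap is the final appeal to \cref{prop:FedererVitali}. That statement requires the covering relation to be fine at \emph{every} point of the space carrying the measure, which here must be $X=\mathbb V$ with $\phi=\mathcal S^h\llcorner\mathbb V$ (you cannot take $X=P_{\mathbb V}(\Gamma)$, since the denominator in \eqref{eq:limi} is the full $\mathcal S^h$-measure of $G(w,r)$ in $\mathbb V$, not of $G(w,r)\cap P_{\mathbb V}(\Gamma)$), and your $S_2$ is fine only on $P_{\mathbb V}(\Gamma)$. The closing sentence, that fineness on the target set "already suffices", is precisely the non-trivial point: as written it is an unverified modification of a cited black box, and making it rigorous means re-deriving both the restricted Vitali covering lemma and the density theorem for subfamilies fine only at points of $Z\subseteq P_{\mathbb V}(\Gamma)$. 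The paper avoids this by making the covering fine on all of $\mathbb V$: at each $w\in\mathbb V\setminus P_{\mathbb V}(\Gamma)$ it adds the balls $B(w,r)\cap\mathbb V$ with $r<\rho(w)/2$, where $\rho(w)$ is calibrated against an $r^{1/\kappa}$-tubular neighbourhood of $\Gamma$. That calibration is not cosmetic: since $P_{\mathbb V}$ is only $(1/\kappa)$-H\"older, adding unrestricted small balls off $P_{\mathbb V}(\Gamma)$ would let the enlargement of $G(w,r)$ at $w\in P_{\mathbb V}(\Gamma)$ contain an $O(r)$-neighbourhood of $P_{\mathbb V}(B(\Phi(w),r))$ inside $\mathbb V$, whose measure need not be $O(r^h)$; the choice of $\rho$ forces such extra balls to have radius at most of order $r^\kappa$, and the conjugation estimate of \cref{lem:EstimateOnConjugate} then absorbs the resulting $3r^\kappa$-neighbourhood into $P_{\mathbb V}(B(\Phi(w),C(\Gamma)r))$. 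Either fix (your restricted re-proof of Federer's statements, carried out in detail, or the paper's extension of the covering) is required; as it stands the proposal delegates the hardest step to "inspection".
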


\begin{proof}
For any $w\in\mathbb{V}\setminus P_\mathbb{V}(\Gamma)$ we let
$$\rho(w):=\inf\{r\geq 0:B(w,r)\cap P_\mathbb{V}(B(\Gamma,r^{1/\kappa}))\neq \emptyset\}.$$
It is immediate to see that $\rho(w)\leq \dist(w,P_{\mathbb V}(\Gamma))$ and that $\rho(w)=0$ if and only if $w\in P_\mathbb{V}(\Gamma)$. Throughout the rest of the proof we let $S$ be the fine covering of $\mathbb{V}$ given by the couples $(w,G(w,r))$ for which
\begin{itemize}
    \item[($\alpha$)] if $w\in \mathbb{V}\setminus P_\mathbb{V}(\Gamma)$ then $r\in(0,\min\{\rho(w)/2,1\})$ and     $G(w,r):=B(w,r)\cap \mathbb{V}$,
    \item[($\beta$)] if $w\in P_\mathbb{V}(\Gamma)$ then $r\in(0,1)$ and     $G(w,r):=P_\mathbb{V}(B(\Phi(w),r)\cap \Phi(w)C_\mathbb{V}(\alpha))$.
\end{itemize}
Furthermore, for any $w\in\mathbb{V}$ we define the function $\delta$ on $S(\mathbb V)$, see \eqref{SZ}, as
\begin{equation}
    \delta\big(G(w,r)\big):=r.
\end{equation}
If we prove that $S$ is a $\mathcal{S}^h\llcorner\mathbb{V}$-Vitali relation, the second part of \cref{prop:FedererVitali} directly implies that \eqref{eq:limi} holds. If for $\mathcal{S}^h$-almost every $w\in \mathbb{V}$  we prove that
\begin{equation}
\begin{split}
\lim_{\xi\to 0}\underset{(w,G(w,r))\in S,\,\diam(G(w,r))\leq\xi}{\sup}\left\{\delta\big(G(w,r)\big)+\frac{\mathcal{S}^h(\hat{G}(w,r))}{\mathcal{S}^h(G(w,r))}\right\} \leq 1+\lim_{\xi\to 0}\sup\frac{\mathcal{S}^h(\hat{G}(w,r))}{\mathcal{S}^h(G(w,r))}<\infty,    
\label{eq:vitalilimit}
\end{split}
\end{equation}
where we explicitly mentioned the set over which we take the supremum only the first time for the ease of notation, and where $\hat{G}(w,r)$ is the $\delta$-enlargement of $G(w,r)$, see \eqref{eqn:deltaenlarg}; thus,
thanks to the first part of \cref{prop:FedererVitali} we would immediately infer that $S$ is a $\mathcal{S}^h\llcorner \mathbb{V}$-Vitali relation. In order to prove that \eqref{eq:vitalilimit} holds, we need to get a better understanding of the geometric structure of the $\delta$-enlargement of $G(w,r)$. 

If $w\in \mathbb{V}\setminus P_\mathbb{V}(\Gamma)$, we note that there must exist an $0<r(w)<\min\{\rho(w)/2,1\}$ such that for any $0<r<r(w)$ we have
$$B(w,r)\cap P_\mathbb{V}(B(\Gamma,5r))= \emptyset.$$
Indeed, if this is not the case there would exist a sequence $r_i\downarrow 0$ and a sequence $\{z_i\}_{i\in\N}$ such that
$$z_i\in B(w,r_i)\cap P_\mathbb{V}(B(\Gamma,5r_i)).$$
Since $P_\mathbb{V}(\Gamma)$ is compact and $P_\mathbb{V}$ is continuous on the closed tubular neighborhood $B(\Gamma,1)$, up to passing to a non re-labelled subsequence we have that the $z_i$'s converge to some $z\in P_\mathbb{V}(\Gamma)$ and on the other hand by construction the $z_i$'s converge to $w$ which is not contained in $P_\mathbb{V}(\Gamma)$, and this is a contradiction. This implies that if $0<r<r(w)$, we have
\begin{equation}
    \begin{split}
\hat{G}(w,r)&= \bigcup \{G(y,s):\, y\in\mathbb V,\, s>0,\,  {}(y,G(y,s))\in S,\text{ } G(y,s)\cap G(w,r)\neq \emptyset,\text{ and }s\leq 5r\}\\
&\subseteq \bigcup\{B(y,s)\cap\mathbb V: B(y,s)\cap B(w,r)\cap\mathbb V\neq \emptyset\text{ and }s\leq 5r\}\subseteq B(w,11r )\cap\mathbb V,
\label{eq:vit1}
    \end{split}
\end{equation}
where in the inclusion we are using the fact that if $y$ were in $\mathbb P_{\mathbb V}(\Gamma)$, and $s\leq 5r$, then $G(y,s)\subseteq P_{\mathbb V}(B(\Gamma,s))\subseteq P_{\mathbb V}(B(\Gamma,5r))$ which would be in contradiction with $G(y,s)\cap G(w,r)\neq \emptyset$, since we chose $0<r<r(w)$. Summing up, if $w\in \mathbb{V}\setminus P_\mathbb{V}(\Gamma)$ the bound \eqref{eq:vitalilimit} immediately follows thanks to \eqref{eq:vit1} and the homogeneity of $\mathcal{S}^h$.

If on the other hand $w\in P_\mathbb{V}(\Gamma)$ the situation is more complicated. If $y\in\mathbb{V}\setminus P_\mathbb{V}(\Gamma)$ and $s\leq 5r$ are such that
\begin{equation}
    G(y,s)\cap P_\mathbb{V}(B(\Phi(w),r)) = B(y,s)\cap P_\mathbb{V}(B(\Phi(w),r))\neq \emptyset,
\label{eq:intervit}
\end{equation}
since by construction of the covering $S$ we also assumed that $0<s<\rho(y)/2$, we infer that we must have $r\geq s^{1/\kappa}$ for \eqref{eq:intervit} to be satisfied. This allows us to infer that, for every $w\in P_{\mathbb V}(\Gamma)$ and $0<r<1$, we have
\begin{equation}
\begin{split}
    \hat{G}(w,r)&= \bigcup \{G(y,s): y\in\mathbb V,\, s>0,\, (y,G(y,s))\in S,\text{ }  G(y,s)\cap G(w,r)\neq \emptyset,\text{ and }s\leq 5r\} \\
    &\subseteq \bigcup \{P_\mathbb{V}(B(\Phi(y),s)): y\in P_\mathbb{V}(\Gamma),\text{ }  P_\mathbb{V}(B(\Phi(y),s))\cap P_\mathbb{V}(B(\Phi(w),r))\neq \emptyset,\text{ and }s\leq 5r\}
    \text{}\cup \\ &\text{}\cup\bigcup \{B(y,s)\cap \mathbb{V}: \text{ } y\in \mathbb{V}\setminus P_\mathbb{V}(\Gamma),\text{ } B(y,s)\cap P_\mathbb{V}(B(\Phi(w),r))\neq \emptyset, \text{ and }s\leq \min\{5r,\rho(y)/2\}\} \\
    &\subseteq \bigcup \{P_\mathbb{V}(B(\Phi(y),s)): y\in P_\mathbb{V}(\Gamma),\text{ }  P_\mathbb{V}(B(\Phi(y),s))\cap P_\mathbb{V}(B(\Phi(w),r))\neq \emptyset,\text{ and }s\leq 5r\}\cup \\&\text{} \cup\left(B(P_\mathbb{V}(B(\Phi(w),r)),3r^\kappa)\cap \mathbb V\right),
    \label{eqn:MonsterInc}
\end{split}
\end{equation}
where in the last inclusion we are using the observation right after \eqref{eq:intervit} according to which $s\leq r^\kappa$. 
We now study independently each of the two terms of the union of the last two lines above. Let us first note that if $w,y\in P_\mathbb{V}(\Gamma)$, $s\leq 5r$ and
$$P_\mathbb{V}(B(\Phi(y),s))\cap P_\mathbb{V}(B(\Phi(w),r))\neq \emptyset,$$
then $P_\mathbb{V}(B(\Phi(y),10r))\cap P_\mathbb{V}(B(\Phi(w),2r))\neq \emptyset$. This observation and \cref{cor:pervitali} imply that if $0<r<r(w)$ is sufficiently small we have $d(\Phi(w),\Phi(y))\leq 50Ar$, where the constant $A=A(\mathbb V,\mathbb L)$ is yielded by \cref{lemma:tec.cono}. In particular we deduce that for every $0<r<r(w)$ sufficiently small
\begin{equation*}
\begin{split}
\bigcup \{P_\mathbb{V}(B(\Phi(y),s)): y\in P_\mathbb{V}(\Gamma),\text{ } P_\mathbb{V}(B(\Phi(y),s))\cap P_\mathbb{V}(B(\Phi(w),r))\neq \emptyset,\text{ and }s\leq 5r\}\subseteq P_\mathbb{V}(B(\Phi(w),50(A+1)r)).
\end{split}
\end{equation*}
In order to study the term in the last line of \eqref{eqn:MonsterInc}, we prove the following claim: for every $0<r<1$, every $z\in P_\mathbb{V}(B(\Phi(w),r))$, and every $\Delta\in B(0,3r^\kappa)\cap \mathbb V$ we have $z\Delta\in P_\mathbb{V}(B(\Phi(w), C(\Gamma)r))$, where $C(\Gamma)$ is a constant depending only on $\Gamma$. Indeed, since $\Gamma$ is compact and $P_{\mathbb L}$ is continuous, there exists a constant $K':=K'(\Gamma)$ such that whenever $0<r<1$, and $z\in P_\mathbb{V}(B(\Phi(w),r))$, there exsits an $\ell\in\mathbb{L}$ such that $z\ell\in B(\Phi(w),r)$ and $\|\ell\|\leq K'$. Thus there exists a constant $K:=K(\Gamma)>0$ such that whenever $0<r<1$, $z\in P_\mathbb{V}(B(\Phi(w),r))$, and $\Delta\in B(0,3r^\kappa)\cap \mathbb V$, there exists $\ell\in\mathbb L$ with $z\ell\in B(\Phi(w),r)$ and $\|\Delta\|+\|\ell\|\leq K$. Thus we can estimate
$$d(\Phi(w),z\Delta \ell)\leq d(\Phi(w),z\ell)+d(z\ell,z\Delta \ell)\leq r+\oldC{c:1}(K)\|\Delta\|^{1/\kappa}\leq C(\Gamma)r,
$$
where the second inequality in the last equation comes from \cref{lem:EstimateOnConjugate}. Thus $z\Delta\in P_{\mathbb V}(B(\Phi(w),C(\Gamma)r))$, and the claim is proved.  Summing up, we have proved that whenever $w\in P_{\mathbb V}(\Gamma)$ and $0<r<r(w)$ is sufficiently small we have
$$\hat{G}(w,r)\subseteq P_\mathbb{V}(B(\Phi(w),50(A+1)r)) \cup P_\mathbb{V}(B(\Phi(w), C(\Gamma)r)),$$
and thus \eqref{eq:vitalilimit} immediately follows by the homogeneity of $\mathcal{S}^h\llcorner \mathbb V$ and the fact that $\mathcal{S}^h(P_{\mathbb V}(x\mathcal{E}))=\mathcal{S}^h(P_{\mathbb V}(\mathcal{E}))$ for every $x\in\mathbb G$ and $\mathcal{E}$ a Borel subset of $\mathbb G$, see \cref{prop:InvarianceOfProj}. This concludes the proof of the proposition.
\end{proof}

We prove below a more precise version of \cref{prop:dens1INTRO}.
\begin{proposizione}\label{prop:dens1}
Let us fix $\alpha< \oldep{ep:Cool}(\mathbb V,\mathbb L)$. Suppose $\Gamma$ is a compact $C_\mathbb{V}(\alpha)$-set such that $\mathcal{S}^h\llcorner \Gamma$ is $\mathscr{P}^c_h$-rectifiable. For $\mathcal{S}^h$-almost every $x\in \Gamma$ we have
\begin{equation}
    (1-\mathfrak{c}(\alpha))^{2h}(1+\mathfrak{c}(\alpha))^{-h}\leq \frac{\Theta^h_*(\mathcal{S}^h\llcorner \Gamma,x)}{\Theta^{h,*}(\mathcal{S}^h\llcorner \Gamma,x)}\leq 1,
    \label{eq:dens1}
\end{equation}
where $\mathfrak c(\alpha)$ is defined in \cref{lemma:projections}.
\end{proposizione}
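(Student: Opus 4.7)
The upper bound in \eqref{eq:dens1} is immediate from $\Theta^h_*\leq \Theta^{h,*}$. For the lower bound, my plan is to exploit the graph structure of $\Gamma$ provided by \cref{prop:ConeAndGraph}: writing $\Phi(v)=v\cdot\varphi(v)$ for the intrinsic graph map of $\varphi:P_\mathbb V(\Gamma)\to\mathbb L$, I will fix a point $x=\Phi(z)\in \Gamma$ where simultaneously $\Theta^h_*(\mathcal S^h\llcorner\Gamma,x)>0$ (true $\mathcal S^h\llcorner\Gamma$-a.e.\ thanks to $\mathscr{P}_h^c$-rectifiability), the second part of \cref{prop:FedererVitali} applies along the $\mathcal S^h\llcorner P_\mathbb V(\Gamma)$-Vitali relation $S_1$ provided by \cref{prop:vitalirel}, and the conclusion of \cref{prop:vitali2PD} holds at $z$. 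By \cref{prop:mutuallyabs} the measures $(P_\mathbb V)_*(\mathcal S^h\llcorner\Gamma)$ and $\mathcal S^h\llcorner P_\mathbb V(\Gamma)$ are mutually absolutely continuous on $\mathbb V$, hence their Radon--Nikodym derivative is positive and finite at $\mathcal S^h\llcorner\Gamma$-a.e.\ $x$; differentiating this derivative along $S_1$ by means of \cref{prop:FedererVitali} yields the existence of the honest limit
$$
\rho(x):=\lim_{r\to 0^+}\frac{\mathcal S^h(\Gamma\cap B(x,r))}{\mathcal S^h(P_\mathbb V(\Gamma\cap B(x,r)))}\in (0,+\infty).
$$

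Because this is a genuine limit and not merely a liminf/limsup, I can factor $\Theta^h_*(\mathcal S^h\llcorner\Gamma,x)=\rho(x)\liminf_{r\to 0} r^{-h}\mathcal S^h(P_\mathbb V(\Gamma\cap B(x,r)))$ and $\Theta^{h,*}(\mathcal S^h\llcorner\Gamma,x)=\rho(x)\limsup_{r\to 0} r^{-h}\mathcal S^h(P_\mathbb V(\Gamma\cap B(x,r)))$. Thus the ratio to be bounded is the ratio of the liminf over the limsup of $r^{-h}\mathcal S^h(P_\mathbb V(\Gamma\cap B(x,r)))$, which I will control by bounding each separately. For the denominator I will use that $\Gamma$ is a $C_\mathbb V(\alpha)$-set, giving $\Gamma\cap B(x,r)\subseteq xC_\mathbb V(\alpha)\cap B(x,r)$, then the invariance \cref{prop:InvarianceOfProj} together with homogeneity and the upper inclusion in \cref{lemma:projections} to conclude that $r^{-h}\mathcal S^h(P_\mathbb V(\Gamma\cap B(x,r)))\leq (1-\mathfrak c(\alpha))^{-h}\mathcal S^h(B(0,1)\cap\mathbb V)$. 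For the numerator I will chain \cref{lemma:lowerbd}, which lower bounds $\mathcal S^h(P_\mathbb V(\Gamma\cap B(x,r)))$ by $\mathcal S^h(P_\mathbb V(B(x,\mathfrak C(\alpha)r)\cap xC_\mathbb V(\alpha))\cap P_\mathbb V(\Gamma))$, with \cref{prop:vitali2PD}, which shows this last quantity is asymptotic to $\mathcal S^h(P_\mathbb V(B(x,\mathfrak C(\alpha)r)\cap xC_\mathbb V(\alpha)))$, and then again with \cref{prop:InvarianceOfProj}, homogeneity, and the lower inclusion in \cref{lemma:projections} to bound the latter from below by $(\mathfrak C(\alpha)r)^h\mathcal S^h(B(0,1)\cap\mathbb V)$.

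Combining the two estimates immediately produces
$$
\frac{\Theta^h_*(\mathcal S^h\llcorner\Gamma,x)}{\Theta^{h,*}(\mathcal S^h\llcorner\Gamma,x)}\geq \mathfrak C(\alpha)^h(1-\mathfrak c(\alpha))^h=\frac{(1-\mathfrak c(\alpha))^{2h}}{(1+\mathfrak c(\alpha))^h},
$$
which is exactly \eqref{eq:dens1}. The delicate point I expect to have to justify carefully is not the bookkeeping of the inclusions between projections of balls and cones, but the production of the actual limit $\rho(x)$: only the existence of this pointwise limit makes the factorization of $\Theta^h_*$ and $\Theta^{h,*}$ legitimate, so that the common factor $\rho(x)$ cancels in the quotient. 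This is precisely the step where the strict positivity of the lower density coming from $\mathscr{P}_h^c$-rectifiability is essential, since it is needed both to invoke \cref{prop:mutuallyabs} and to produce the $\mathcal S^h\llcorner P_\mathbb V(\Gamma)$-Vitali relation $S_1$ through \cref{prop:vitalirel}; without these two ingredients only asymptotic estimates would be available, and the argument would not yield the clean ratio bound \eqref{eq:dens1}.
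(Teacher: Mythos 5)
Your proposal is correct and follows essentially the same route as the paper's proof: mutual absolute continuity from \cref{prop:mutuallyabs}, Radon--Nikodym, differentiation along the Vitali relations of \cref{prop:vitalirel} and \cref{prop:vitali2PD} via \cref{prop:FedererVitali}, and the projection estimates of \cref{lemma:projections} and \cref{lemma:lowerbd}. The only (harmless) difference is organizational: the paper extracts sequences along which $r_i^{-h}T_{x,r_i}\mathcal S^h\llcorner\Gamma$ converges to $\lambda\,\mathcal C^h\llcorner\mathbb V(x)$ and compares $\lambda/\rho(x)$ with the projected measure, whereas you factor the density quotient directly through the genuine limit $\rho(x)$ of $\mathcal S^h(\Gamma\cap B(x,r))/\mathcal S^h(P_{\mathbb V}(\Gamma\cap B(x,r)))$ — the two bookkeepings are equivalent and both hinge on the same differentiation step.
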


\begin{proof}
Let us preliminarly observe that since $\mathcal{S}^h\llcorner\mathbb V$ and $\mathcal{C}^h\llcorner\mathbb V$ are both Haar measures on $\mathbb V$, they coincide up to a constant. Since for $\mathcal{S}^h$-almost every $x\in \Gamma$ we have $\Theta^{h,*}(\mathcal{S}^h\llcorner \Gamma,x)>0$, the upper bound is trivial.
Let us proceed with the lower bound. Thanks to \cref{prop:mutuallyabs} and the Radon-Nikodym Theorem, see \cite[page 82]{HeinonenKoskelaShanmugalingam}, there exists $\rho\in L^1(\Phi_*\mathcal{C}^h\llcorner \mathbb{V})$ such that
\begin{itemize}
    \item[(i)]$\rho(x)>0$ for  $\Phi_*\mathcal{C}^h\llcorner \mathbb{V}$-almost every $x\in\Gamma$,
    \item[(ii)] $\mathcal{S}^h\llcorner \Gamma=\rho\Phi_*\mathcal{C}^h\llcorner \mathbb{V}$.
\end{itemize}
We stress that the following reasoning holds for $\mathcal{S}^h\llcorner\Gamma$-almost every $x\in\Gamma$. Let $\{r_i\}_{i\in\N}$ be an infinitesimal sequence such that $r_i^{-h}T_{x,r_i}\mathcal{S}^h\llcorner \Gamma\rightharpoonup \lambda \mathcal{C}^h\llcorner \mathbb{V}(x)$ for some $\lambda>0$. First of all, we immediately see that \cref{cordenstang} implies that $\lambda\in [\Theta_*^h(\mathcal{S}^h\llcorner \Gamma,x),\Theta^{h,*}(\mathcal{S}^h\llcorner \Gamma,x)]$ and that
\begin{equation}
\begin{split}
    1&=\lim_{i\to\infty}\frac{\mathcal{S}^h\llcorner \Gamma(B(x,r_i))}{\mathcal{S}^h\llcorner \Gamma(B(x,r_i))}=\lim_{i\to\infty}\frac{\int_{P_\mathbb{V}(B(x,r_i)\cap \Gamma)}\rho(\Phi(y))d\mathcal{C}^h\llcorner \mathbb{V}(y)}{\mathcal{S}^h\llcorner \Gamma(B(x,r_i))}\\ &=\frac{\rho(x)}{\lambda}\lim_{i\to\infty}\frac{\mathcal{C}^h\llcorner \mathbb{V}(P_\mathbb{V}(B(x,r_i)\cap \Gamma))}{r_i^h},
    \nonumber
    \end{split}
\end{equation}
where the last identity comes from \cref{prop:vitalirel}, that allows us to differentiate by using the second part of \cref{prop:FedererVitali}, and \cref{prop:density}.
Thanks to \cref{lemma:projections}, \cref{rem:Ch1}, and the fact that $\Gamma$ is a $C_\mathbb{V}(\alpha)$-set, we have
\begin{equation}
\begin{split}
    \frac{\lambda}{\rho(x)}&\leq\lim_{i\to\infty}\frac{\mathcal{C}^h\llcorner \mathbb{V}(P_\mathbb{V}(B(x,r_i)\cap xC_\mathbb{V}(\alpha)))}{r_i^h}= \mathcal{C}^h(P_\mathbb{V}(B(0,1)\cap C_\mathbb{V}(\alpha))) \\&\leq \frac{\mathcal{C}^h\llcorner \mathbb{V}(B(0,1))}{(1-\mathfrak{c}(\alpha))^{h}}=(1-\mathfrak{c}(\alpha))^{-h},
    \label{eq:bdabove}
    \end{split}
\end{equation}
where in the second equality we are using the homogeneity of $\mathcal{C}^h$ and the fact that $\mathcal{C}^h(P_{\mathbb V}(x\mathcal{E}))=\mathcal{C}^h(P_{\mathbb V}(\mathcal{E}))$ for every $x\in\mathbb G$ and $\mathcal{E}$ a Borel subset of $\mathbb G$, see \cref{prop:InvarianceOfProj}.
On the other hand, thanks to \cref{lemma:lowerbd} we have
\begin{equation}
\begin{split}
    \frac{\lambda}{\rho(x)}&=\lim_{i\to\infty}\frac{\mathcal{C}^h\llcorner \mathbb{V}(P_\mathbb{V}(B(x,r_i)\cap \Gamma))}{r_i^h}\\ &\geq\lim_{i\to\infty}\frac{\mathcal{C}^h\big(P_\mathbb{V}\big(B(x,\mathfrak{C}(\alpha)r_i)\cap xC_\mathbb{V}(\alpha)\big)\cap P_\mathbb{V}(\Gamma)\big)}{\mathcal{C}^h\big(P_\mathbb{V}\big(B(x,\mathfrak{C}(\alpha)r_i)\cap xC_\mathbb{V}(\alpha)\big)\big)}\frac{\mathcal{C}^h\big(P_\mathbb{V}\big(B(x,\mathfrak{C}(\alpha)r_i)\cap xC_\mathbb{V}(\alpha)\big)\big)}{r_i^h}\\
     &=\mathfrak{C}(\alpha)^h\mathcal{C}^h\big(P_\mathbb{V}(B(0,1)\cap C_\mathbb{V}(\alpha))\big)\geq\mathfrak{C}(\alpha)^h,
\end{split}
    \label{eq:bdbelow}
\end{equation}
where the first identity in the last line comes from \cref{prop:vitali2PD} and the last inequality from \cref{lemma:projections}, \cref{rem:Ch1}, and $\mathfrak C(\alpha)$ is defined in \eqref{eqn:Calpha}. Putting together \eqref{eq:bdabove} and \eqref{eq:bdbelow}, we have
\begin{equation}
    \frac{(1-\mathfrak{c}(\alpha))^h}{(1+\mathfrak{c}(\alpha))^h}\leq \frac{\lambda}{\rho(x)}\leq\frac{1}{ (1-\mathfrak{c}(\alpha))^{h}}.
    \label{eq:bdbilateral}
\end{equation}
Thanks  to the definition of $\Theta^{h}_*(\mathcal{S}^h\llcorner \Gamma,x)$ and $\Theta^{h,*}(\mathcal{S}^h\llcorner \Gamma,x)$ we can find two sequences $\{r_i\}_{i\in\N}$ and $\{s_i\}_{i\in\N}$ such that
$$\Theta^{h}_*(\mathcal{S}^h\llcorner \Gamma,x)=\lim_{i\to\infty}\frac{\mathcal{S}^h\llcorner\Gamma(B(x,r_i))}{r_i^h},\qquad\text{and}\qquad\Theta^{h,*}(\mathcal{S}^h\llcorner \Gamma,x)=\lim_{i\to\infty}\frac{\mathcal{S}^h\llcorner\Gamma(B(x,s_i))}{s_i^h},$$
and without loss of generality, taking \cref{prop:density} into account, we can assume that $$r_i^{-h}T_{x,r_i}\mathcal{S}^h\llcorner \Gamma\rightharpoonup \Theta^{h}_*(\mathcal{S}^h\llcorner \Gamma,x) \mathcal{C}^h\llcorner \mathbb V(x),\quad , s_i^{-h}T_{x,s_i}\mathcal{S}^h\llcorner \Gamma\rightharpoonup \Theta^{h,*}(\mathcal{S}^h\llcorner \Gamma,x)\mathcal{C}^h\llcorner \mathbb V(x).$$
The bounds \eqref{eq:bdbilateral} imply therefore that
\begin{equation}
\begin{split}
    \frac{(1-\mathfrak{c}(\alpha))^h}{(1+\mathfrak{c}(\alpha))^h}\leq \frac{\Theta^h_*(\mathcal{S}^h\llcorner\Gamma,x)}{\rho(x)}\leq \frac{1}{(1-\mathfrak{c}(\alpha))^h},\\
\frac{(1-\mathfrak{c}(\alpha))^h}{(1+\mathfrak{c}(\alpha))^h}\leq \frac{\Theta^{h,*}(\mathcal{S}^h\llcorner\Gamma,x)}{\rho(x)}\leq \frac{1}{(1-\mathfrak{c}(\alpha))^h}.
\end{split}
    \label{eq:bdfinale}
\end{equation}
Finally the bounds in \eqref{eq:bdfinale} yield
$$(1-\mathfrak{c}(\alpha))^{2h}(1+\mathfrak{c}(\alpha))^{-h}\leq \frac{\Theta^h_*(\mathcal{S}^h\llcorner\Gamma,x)}{\Theta^{h,*}(\mathcal{S}^h\llcorner\Gamma,x)}\leq 1,$$
and this concludes the proof.
\end{proof}

We prove now the existence of density of $\mathscr{P}_h^c$-rectifiable measures, see \cref{coroll:DENSITYIntro}. We first prove an algebraic lemma, then we prove the existence of the density for measures of the type $\mathcal{S}^h\llcorner \Gamma$, and then we conclude with the proof of the existence of the density for arbitrary $\mathscr{P}_h^c$-rectifiable measures.
\begin{lemma}\label{lem:LemmaContoso}
Let us fix $0<\varepsilon<1$ a real number, $\ell,h\in \mathbb N$, and let $f$ be the function defined as follows
$$
f\colon \{(\alpha,C)\in(0,+\infty)^2:\alpha<C\}\to (0,+\infty), \qquad f(\alpha,C):=\frac{\alpha}{C-\alpha}.
$$
Then, there exists $\widetilde\alpha:=\widetilde\alpha(\varepsilon,\ell,h)>0$ such that the following implication holds
$$
\text{if $0<\alpha\leq \widetilde\alpha$ and $C>1/\ell$, then $\alpha<C$ and  $(1-f(\alpha,C))^{2h}(1+f(\alpha,C))^{-h}\geq 1-\varepsilon$}.
$$
\end{lemma}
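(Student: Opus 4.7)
The plan is to exploit monotonicity of $f$ in both variables together with a continuity argument at $0$ for the auxiliary function $g(t) := (1-t)^{2h}(1+t)^{-h}$.

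First, I would reduce to a worst-case analysis. The function $f(\alpha,C) = \alpha/(C-\alpha)$ is strictly increasing in $\alpha$ on $(0,C)$ and strictly decreasing in $C$ on $(\alpha,+\infty)$, so once we restrict to $\alpha \leq \widetilde\alpha < 1/\ell$, any $C > 1/\ell$ automatically satisfies $\alpha < C$ and
\[
f(\alpha,C) \;\leq\; f(\widetilde\alpha, 1/\ell) \;=\; \frac{\widetilde\alpha}{1/\ell - \widetilde\alpha}.
\]
Hence it is enough to make the right-hand side of this inequality as small as we like, uniformly in $C > 1/\ell$, by choosing $\widetilde\alpha$ small.

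Second, I would handle the outer estimate via continuity of $g$. Since $g(0) = 1$ and $g$ is continuous on a neighborhood of $0$, there exists $t_0 = t_0(\varepsilon, h) \in (0,1)$ such that $g(t) \geq 1 - \varepsilon$ for every $t \in [0,t_0]$. Combining this with the first step, it suffices to choose $\widetilde\alpha$ so that $\widetilde\alpha/(1/\ell - \widetilde\alpha) \leq t_0$, which by direct algebra holds whenever
\[
\widetilde\alpha \;\leq\; \frac{t_0}{\ell(1 + t_0)}.
\]

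Therefore the final choice is $\widetilde\alpha(\varepsilon,\ell,h) := \min\bigl\{1/(2\ell),\, t_0/(\ell(1+t_0))\bigr\}$, which is strictly positive and depends only on $\varepsilon$, $\ell$, $h$. For $0 < \alpha \leq \widetilde\alpha$ and $C > 1/\ell$, the monotonicity step gives $0 \leq f(\alpha,C) \leq t_0$, and then the continuity step gives $g(f(\alpha,C)) \geq 1 - \varepsilon$, which is exactly the required inequality. There is no real obstacle here; the lemma is essentially a quantitative uniform continuity statement, and the only mild care needed is to keep the bound on $f$ uniform in $C$, which is ensured automatically by the monotonicity in $C$.
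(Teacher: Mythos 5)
Your proposal is correct and follows essentially the same route as the paper: you pick a threshold $t_0(\varepsilon,h)$ from continuity of $t\mapsto(1-t)^{2h}(1+t)^{-h}$ at $0$ (the paper's $\widetilde\varepsilon$) and then choose $\widetilde\alpha=t_0/(\ell(1+t_0))$, which is exactly the paper's choice; the extra $\min$ with $1/(2\ell)$ is harmless but redundant since $t_0/(\ell(1+t_0))<1/\ell$ automatically.
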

\begin{proof}
Let us choose $0<\widetilde\varepsilon:=\widetilde\varepsilon(\varepsilon,h)<1$ such that
$$
(1-x)^{2h}(1+x)^{-h} \geq 1-\varepsilon, \qquad \text{for all $0\leq x\leq \widetilde\varepsilon$}.
$$
Let us show that the sought constant $\widetilde\alpha(\varepsilon,\ell,h)$ is $\widetilde\alpha:=\widetilde\varepsilon/(\ell(1+\widetilde\varepsilon))$. Indeed, if $\alpha\leq\widetilde\alpha$ and $C>1/\ell$ we infer that $\alpha<C$ and
$$
\alpha\leq \frac{\widetilde\varepsilon}{\ell(1+\widetilde\varepsilon)} \leq \frac{C\widetilde\varepsilon}{1+\widetilde\varepsilon}, \quad \text{and then}\quad f(\alpha,C)=\frac{\alpha}{C-\alpha}\leq \widetilde\varepsilon.
$$
This implies that if $\alpha\leq \widetilde\alpha$ and $C>1/\ell$, then
$$
(1-f(\alpha,C))^{2h}(1+f(\alpha,C))^{-h} \geq 1-\varepsilon,
$$
where the last inequality above comes from the choice of $\widetilde\varepsilon$. This concludes the proof.
\end{proof}
\begin{teorema}\label{thm:ExistenceOfDensitySets} 
Let $\Gamma$ be a compact subset of $\mathbb G$ such that 
$\mathcal{S}^h\llcorner\Gamma$ is a $\mathscr{P}^c_h$-rectifiable measure. Then 
$$
0<\Theta^h_*(\mathcal{S}^h\llcorner\Gamma,x)=\Theta^{*,h}(\mathcal{S}^h\llcorner\Gamma,x)<+\infty, \quad \text{for $\mathcal{S}^h\llcorner\Gamma$-almost every $x\in\mathbb G$.}
$$
\end{teorema}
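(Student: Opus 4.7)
The plan is to combine the structure theorem \cref{thm:MainTheorem1} with the quantitative density bound \cref{prop:dens1} and the scaling control in \cref{lem:LemmaContoso}. The $\mathscr{P}_h^c$-hypothesis already gives $0<\Theta^h_*(\phi,x)\leq\Theta^{h,*}(\phi,x)<+\infty$ $\phi$-a.e. for $\phi := \mathcal{S}^h\llcorner\Gamma$, so the task reduces to showing that the ratio $\Theta^h_*/\Theta^{h,*}$ equals $1$ almost everywhere. To achieve this, I will cover $\phi$-a.e.\ of $\mathbb G$ by countably many compact $C_{\mathbb V_k}(\alpha_k)$-sets $\Gamma_k$ whose openings $\alpha_k$ are small enough, relative to the complement constants $\oldC{C:split}(\mathbb V_k,\mathbb L_k)$, that \cref{prop:dens1} gives ratio $\geq 1-\varepsilon$ on each piece; then locality of densities will transfer this bound back to $\phi$.

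Fix $\varepsilon>0$. First I will choose the input data for \cref{thm:MainTheorem1}. Define $\mathbf{F}:\mathrm{Sub}(h)\to(0,1)$ by $\mathbf{F}(\mathbb V,\mathbb L):=\min\{\oldep{ep:Cool}(\mathbb V,\mathbb L),\tfrac12\}=\min\{\oldC{C:split}(\mathbb V,\mathbb L)/2,\tfrac12\}$. This guarantees two things: first, the cone openings we end up with will automatically be strictly less than $\oldep{ep:Cool}(\mathbb V_k,\mathbb L_k)$, which is what \cref{prop:dens1} needs; second, every $\mathbb V_{i,\ell}\in\mathscr{D}_\ell$ comes with a complement $\mathbb L_{i,\ell}$ satisfying $\oldC{C:split}(\mathbb V_{i,\ell},\mathbb L_{i,\ell})>2/\ell>1/\ell$. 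Apply \cref{lem:LemmaContoso} with these parameters: for each $\ell\in\mathbb N$ it yields a threshold $\widetilde\alpha(\varepsilon,\ell,h)>0$ such that $\alpha\leq\widetilde\alpha(\varepsilon,\ell,h)$ and $\oldC{C:split}>1/\ell$ together force $(1-\mathfrak{c}(\alpha))^{2h}(1+\mathfrak{c}(\alpha))^{-h}\geq 1-\varepsilon$. Set $\beta:\mathbb N\to(0,1)$ by $\beta(\ell):=\widetilde\alpha(\varepsilon,\ell,h)$.

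Now apply \cref{thm:MainTheorem1} to the $\mathscr{P}_h^c$-rectifiable measure $\phi$ with the above $\mathbf F$ and $\beta$. I obtain countably many compact $C_{\mathbb V_k}(\alpha_k)$-sets $\Gamma_k$ with $\phi\bigl(\mathbb G\setminus\bigcup_k\Gamma_k\bigr)=0$, where $\alpha_k\leq\min\{\mathbf F(\mathbb V_k,\mathbb L_k),\beta(\ell_k)\}$ and $\ell_k$ is the unique index with $\mathbb V_k\in\mathscr{D}_{\ell_k}$. For each $k$, set $\Gamma'_k:=\Gamma\cap\Gamma_k$, which is compact. Because subsets of $C_{\mathbb V_k}(\alpha_k)$-sets are still $C_{\mathbb V_k}(\alpha_k)$-sets, $\Gamma'_k$ is a compact $C_{\mathbb V_k}(\alpha_k)$-set; and by the locality of tangents (\cref{prop:LocalityOfTangent}) together with \cref{prop:Lebesuge}, the measure $\mathcal{S}^h\llcorner\Gamma'_k=\phi\llcorner\Gamma_k$ inherits the $\mathscr{P}_h^c$-rectifiability from $\phi$. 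Since $\alpha_k<\oldep{ep:Cool}(\mathbb V_k,\mathbb L_k)$, \cref{prop:dens1} applies to $\mathcal{S}^h\llcorner\Gamma'_k$ and, combined with the choice of $\beta(\ell_k)$ via \cref{lem:LemmaContoso}, yields
\[
\frac{\Theta^h_*(\mathcal{S}^h\llcorner\Gamma'_k,x)}{\Theta^{h,*}(\mathcal{S}^h\llcorner\Gamma'_k,x)}\geq(1-\mathfrak{c}(\alpha_k))^{2h}(1+\mathfrak{c}(\alpha_k))^{-h}\geq 1-\varepsilon,\qquad\text{for $\mathcal S^h$-a.e. }x\in\Gamma'_k.
\]

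To conclude, I invoke \cref{prop:Lebesuge} once more: for $\phi$-a.e.\ $x\in\Gamma_k$ the lower and upper $h$-densities of $\phi\llcorner\Gamma_k=\mathcal{S}^h\llcorner\Gamma'_k$ at $x$ coincide with those of $\phi$ at $x$. Since the $\Gamma_k$'s cover $\phi$-almost all of $\mathbb G$, this upgrades the previous inequality to $\Theta^h_*(\phi,x)/\Theta^{h,*}(\phi,x)\geq 1-\varepsilon$ for $\phi$-a.e.\ $x$. Running this argument along a sequence $\varepsilon_n\downarrow 0$ and intersecting the full-measure sets gives the desired equality of densities $\phi$-almost everywhere, which combined with the a priori positivity and finiteness yields the theorem. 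The main technical point — and the only real obstacle — is the two-parameter bookkeeping inside \cref{thm:MainTheorem1}: one must simultaneously exploit the stratification by $\ell$ to obtain a uniform lower bound on $\oldC{C:split}$ and shrink the cone opening $\alpha_k$ fast enough, as a function of $\ell_k$, that \cref{lem:LemmaContoso} remains applicable on every piece of the cover; everything else is a clean assembly of the results already established.
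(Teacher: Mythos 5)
Your proposal is correct and follows essentially the same route as the paper's own proof: choose $\mathbf F$ comparable to $\oldep{ep:Cool}$, set $\beta(\ell)=\widetilde\alpha(\varepsilon,\ell,h)$ from \cref{lem:LemmaContoso}, cover via \cref{thm:MainTheorem1}, apply \cref{prop:dens1} on each piece, and transfer back with \cref{prop:Lebesuge} and \cref{prop:LocalityOfTangent} before letting $\varepsilon\downarrow 0$. The only cosmetic difference is your $\mathbf F=\min\{\oldep{ep:Cool},\tfrac12\}$ versus the paper's $\mathbf F=\oldep{ep:Cool}$, which coincide since $\oldep{ep:Cool}=\oldC{C:split}/2\leq\tfrac12$.
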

\begin{proof}
In the following, for any $\varepsilon>0$, we will construct a measurable set $A_{\varepsilon}\subseteq\Gamma$ such that $\mathcal{S}^h(\Gamma\setminus A_\varepsilon)=0$ and 
\begin{equation}\label{eqn:WhatWeNeed}
1-\varepsilon \leq \frac{\Theta^{*,h}(\mathcal{S}^h\llcorner\Gamma,x)}{\Theta^h_*(\mathcal{S}^h\llcorner\Gamma,x)}\leq 1, \quad \text{for every  $x\in A_{\varepsilon}$}.
\end{equation}
If \eqref{eqn:WhatWeNeed} holds then we are free to choose $\varepsilon=1/n$ for every $n\in\mathbb N$ and then the density of $\mathcal{S}^h\llcorner\Gamma$ exists on the set $\cap_{n=1}^{+\infty} A_{1/n}$, that has full $\mathcal{S}^h\llcorner\Gamma$-measure. So we are left to construct $A_\varepsilon$ as in \eqref{eqn:WhatWeNeed}.
Let us define the function 
$$
\bold F(\mathbb V,\mathbb L):=\oldep{ep:Cool}(\mathbb V,\mathbb L), \quad \text{for all $\mathbb V\in\G_c(h)$ with complement $\mathbb L$}.
$$
Let us take the family $\mathscr{F}:=\{\mathbb V_i\}_{i=1}^{+\infty} \subseteq \G_c(h)$ and let us choose $\mathbb L_i$ complementary subgroups to $\mathbb V_i$ as in the statement of \cref{thm:MainTheorem1}. We remark that the choices of the family $\mathscr{F}$ and of the complementary subgroups depend on the function $\bold F$ previously defined, see the discussion before \cref{thm:MainTheorem1}. 
Let us define 
$$
\beta\colon\mathbb N\to (0,1), \qquad \beta(\ell):=\widetilde \alpha(\varepsilon,\ell,h),
$$
where $\widetilde \alpha(\varepsilon,\ell,h)$ is the constant in \cref{lem:LemmaContoso}, and with an abuse of notation let us lift $\beta$ to a function on $\mathscr{F}$ as we did in the statement of \cref{thm:MainTheorem1}. From \cref{thm:MainTheorem1} we conclude that there exist countably many $\Gamma_i$'s that are compact $C_{\mathbb V_i}(\min\{\oldep{ep:Cool}(\mathbb V_i,\mathbb L_i),\beta(\mathbb V_i)\})$-sets contained in $\Gamma$ such that
\begin{equation}\label{eqn:UsefulToConclude}
\mathcal{S}^h\left(\Gamma\setminus\cup_{i=1}^{+\infty} \Gamma_i\right)=0. 
\end{equation}
Let us write, for the ease of notation, $\alpha_i:= \min\{\oldep{ep:Cool}(\mathbb V_i,\mathbb L_i),\beta(\mathbb V_i)\}$ for every $i\in\mathbb N$. Since $\Gamma_i\subseteq \Gamma$ and $\mathcal{S}^h\llcorner\Gamma$ is $\mathscr{P}_h^c$-rectifiable, we conclude, by exploiting the locality of tangents, see \cref{prop:LocalityOfTangent}, and the Lebesgue differentiation theorem in \cref{prop:Lebesuge}, that the measures $\mathcal{S}^h\llcorner\Gamma_i$ are $\mathscr{P}_h^c$-rectifiable as well for every $i\in\mathbb N$. Thus, since $\alpha_i\leq \oldep{ep:Cool}(\mathbb V_i,\mathbb L_i)$, we can apply \cref{prop:dens1} and conclude that, for every $i\in\mathbb N$, we have
$$
(1-\mathfrak{c}(\alpha_i))^{2h}(1+\mathfrak{c}(\alpha_i))^{-h} \leq \frac{\Theta^{*,h}(\mathcal{S}^h\llcorner\Gamma_i,x)}{\Theta^h_*(\mathcal{S}^h\llcorner\Gamma_i,x)}\leq 1, \quad \text{for $\mathcal{S}^h\llcorner\Gamma_i$-almost every $x\in \mathbb G$},
$$
where $\mathfrak{c}(\alpha_i):=\alpha_i/(\oldC{C:split}(\mathbb V_i,\mathbb L_i)-\alpha_i)$. 
Since $\Theta^{*,h}(\mathcal{S}^h\llcorner\Gamma_i,x)=\Theta^{*,h}(\mathcal{S}^h\llcorner\Gamma,x)$ and $\Theta^h_*(\mathcal{S}^h\llcorner\Gamma_i,x)=\Theta^h_*(\mathcal{S}^h\llcorner\Gamma,x)$ for $\mathcal{S}^h\llcorner\Gamma_i$-almost every $x\in \mathbb G$, see \cref{prop:Lebesuge}, for every $i\in\mathbb N$ we conclude that
\begin{equation}\label{eqn:USEFUL}
(1-\mathfrak{c}(\alpha_i))^{2h}(1+\mathfrak{c}(\alpha_i))^{-h} \leq \frac{\Theta^{*,h}(\mathcal{S}^h\llcorner\Gamma,x)}{\Theta^h_*(\mathcal{S}^h\llcorner\Gamma,x)}\leq 1, \quad \text{for $\mathcal{S}^h\llcorner\Gamma_i$-almost every $x\in \mathbb G$}.
\end{equation}
Let us now fix $i\in \mathbb N$ and note there exists a unique $\ell(i)\in\mathbb N$ such that 
$$
1/\ell(i) < \oldep{ep:Cool}(\mathbb V_i,\mathbb L_i) \leq 1/(\ell(i)-1).
$$
Moreover, from the definition of $\beta$ and $\mathscr F$ we see that $\beta(\mathbb V_i)=\beta(\varepsilon,\ell(i),h)$. This allows us to infer that
\begin{enumerate}
    \item $\alpha_i\leq \beta(\mathbb V_i)=\beta(\varepsilon,\ell(i),h)$, since  $\alpha_i:= \min\{\oldep{ep:Cool}(\mathbb V_i,\mathbb L_i),\beta(\mathbb V_i)\}$,
    \item $\oldC{C:split}(\mathbb V_i,\mathbb L_i)>1/\ell(i)$, since $1/\ell(i) < \oldep{ep:Cool}(\mathbb V_i,\mathbb L_i)= \oldC{C:split}(\mathbb V_i,\mathbb L_i)/2$, see \cref{lemma:LCapCw=e}.
\end{enumerate}
Thus we can apply \cref{lem:LemmaContoso} and conclude that 
$$
(1-\mathfrak{c}(\alpha_i))^{2h}(1+\mathfrak{c}(\alpha_i))^{-h} \geq 1-\varepsilon.
$$
This shows, thanks to \eqref{eqn:USEFUL}, that for any $i\in \mathbb N$, we have
$$
1-\varepsilon\leq \frac{\Theta^{*,h}(\mathcal{S}^h\llcorner\Gamma,x)}{\Theta^h_*(\mathcal{S}^h\llcorner\Gamma,x)}\leq 1, \quad \text{for $\mathcal{S}^h\llcorner\Gamma_i$-almost every $x\in \mathbb G$}.
$$
Thus by taking into account \eqref{eqn:UsefulToConclude} and the previous equation we conclude \eqref{eqn:WhatWeNeed}, that is the sought claim.
\end{proof}
\begin{osservazione}\label{rem:DensityOne}
It is a classical result that if $E\subseteq \mathbb R^n$ is a $h$-rectifiable set, with $1\leq h\leq n$, then $\Theta^h(\mathcal{S}^h\llcorner E,x)=1$ for $\mathcal{S}^h$-almost every point $x\in E$, see \cite[Theorem 3.2.19]{Federer1996GeometricTheory}. This is true also in the setting of Heisenberg groups for arbitrary $\mathscr{P}_h^c$-rectifiable measures, and it is a direct consequence of \cite[(iv)$\Rightarrow$(ii) of Theorem 3.14 \& Theorem 3.15]{MatSerSC}.

We point out that as a consequence of the non-trivial results developed in the subsequent paper \cite{antonelli2020rectifiable2}, see \cite[Theorem 1.1]{antonelli2020rectifiable2}, we have that whenever $\Gamma\subseteq \mathbb G$ is a Borel set such that $0<\mathcal{S}^h(\Gamma)<+\infty$, and $\mathcal{C}^h\llcorner\Gamma$ is $\mathscr{P}_h^c$-rectifiable, then  $\Theta^h(\mathcal{C}^h\llcorner\Gamma,x)=1$ for $\mathcal{C}^h$-almost every $x\in\Gamma$.

\end{osservazione}

\begin{corollario}\label{coroll:DENSITY}
Let $\phi$ be a $\mathscr{P}^c_h$-rectifiable measure on a Carnot group $\mathbb G$. Then 
$$
0<\Theta^h_*(\phi,x)=\Theta^{*,h}(\phi,x)<+\infty, \quad \text{for $\phi$-almost every $x\in\mathbb G$.}
$$
\end{corollario}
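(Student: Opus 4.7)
The plan is to bootstrap \cref{thm:ExistenceOfDensitySets}, which handles measures of the form $\mathcal{S}^h\llcorner\Gamma$, to arbitrary $\mathscr{P}_h^c$-rectifiable measures via the mutual absolute continuity established in \cref{prop:MutuallyEthetaGamma} and the locality of tangents in \cref{prop:LocalityOfTangent}. First, by restricting $\phi$ to the metric balls $B(0,N)$ for $N\in\mathbb N$, we may assume without loss of generality that $\phi$ is compactly supported. Then, by \cref{prop::E}, it suffices to prove that the density of $\phi$ exists at $\phi$-almost every point of each set $E(\vartheta,\gamma)$, since $\phi$ is concentrated on their countable union.

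Fix $\vartheta,\gamma\in\mathbb N$. The set $E(\vartheta,\gamma)$ is compact by \cref{prop:cpt}, and by \cref{prop:MutuallyEthetaGamma} the restriction $\phi\llcorner E(\vartheta,\gamma)$ is mutually absolutely continuous with respect to $\mathcal{S}^h\llcorner E(\vartheta,\gamma)$. Hence by Radon--Nikodym there exists a Borel density $\rho$, positive $\phi$-almost everywhere on $E(\vartheta,\gamma)$, such that $\phi\llcorner E(\vartheta,\gamma)=\rho\cdot\mathcal{S}^h\llcorner E(\vartheta,\gamma)$. Applying \cref{prop:LocalityOfTangent} to the measure $\mathcal{S}^h\llcorner E(\vartheta,\gamma)$ with the weight $\rho$, and using that tangents of a restriction coincide with tangents of the original measure at density-one points (\cref{prop:Lebesuge}), we deduce that for $\phi$-almost every $x\in E(\vartheta,\gamma)$,
\[
\mathrm{Tan}_h\big(\mathcal{S}^h\llcorner E(\vartheta,\gamma),x\big)=\rho(x)^{-1}\,\mathrm{Tan}_h(\phi,x)\subseteq\{\lambda\mathcal{S}^h\llcorner\mathbb V(x):\lambda>0\},
\]
with $\mathbb V(x)\in\G_c(h)$. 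Combining this with the uniform density bounds $\vartheta^{-1}r^h\le\phi(B(x,r))\le\vartheta r^h$ defining $E(\vartheta,\gamma)$, and the fact that Lebesgue differentiation (\cref{prop:Lebesuge}) yields finite positive lower and upper $h$-densities also for $\mathcal{S}^h\llcorner E(\vartheta,\gamma)$ at $\phi$-almost every $x\in E(\vartheta,\gamma)$, we conclude that $\mathcal{S}^h\llcorner E(\vartheta,\gamma)$ is itself a compactly supported $\mathscr{P}_h^c$-rectifiable measure.

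At this point \cref{thm:ExistenceOfDensitySets} applies to $\mathcal{S}^h\llcorner E(\vartheta,\gamma)$, providing the existence of its $h$-density at $\mathcal{S}^h\llcorner E(\vartheta,\gamma)$-almost every point, and hence at $\phi$-almost every point of $E(\vartheta,\gamma)$. By \cref{prop:density}, this is equivalent to the uniqueness of the tangent measure, so that at such points
\[
\mathrm{Tan}_h\big(\mathcal{S}^h\llcorner E(\vartheta,\gamma),x\big)=\big\{\Theta^h\big(\mathcal{S}^h\llcorner E(\vartheta,\gamma),x\big)\,\mathcal{C}^h\llcorner\mathbb V(x)\big\}.
\]
Multiplying by $\rho(x)$ and using \cref{prop:LocalityOfTangent} in the reverse direction, $\mathrm{Tan}_h(\phi\llcorner E(\vartheta,\gamma),x)$ is a singleton $\{\lambda(x)\,\mathcal{C}^h\llcorner\mathbb V(x)\}$. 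Invoking \cref{prop:density} once more, together with \cref{prop:Lebesuge} to pass from $\phi\llcorner E(\vartheta,\gamma)$ back to $\phi$, we obtain
\[
0<\Theta^h_*(\phi,x)=\Theta^{h,*}(\phi,x)=\lambda(x)<+\infty
\]
for $\phi$-almost every $x\in E(\vartheta,\gamma)$, and then for $\phi$-almost every $x\in\mathbb G$ by the countable covering. No step is really an obstacle here: the only subtlety is making sure that the transfer of $\mathscr{P}_h^c$-rectifiability from $\phi\llcorner E(\vartheta,\gamma)$ to $\mathcal{S}^h\llcorner E(\vartheta,\gamma)$ is done at $\phi$-almost every point where $\rho>0$ is finite, which is automatic from mutual absolute continuity.
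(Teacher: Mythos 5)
Your proposal is correct and follows essentially the same route as the paper's proof: reduce to compact support, localize to the sets $E(\vartheta,\gamma)$ via \cref{prop::E}, use \cref{prop:MutuallyEthetaGamma} and Radon--Nikodym together with the locality of tangents to transfer $\mathscr{P}_h^c$-rectifiability to $\mathcal{S}^h\llcorner E(\vartheta,\gamma)$, apply \cref{thm:ExistenceOfDensitySets}, and transfer the density back. The only cosmetic difference is that you route the final transfer through uniqueness of the tangent measure and \cref{prop:density}, where the paper passes the existence of the density directly through the Radon--Nikodym relation and Lebesgue differentiation; both are valid.
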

\begin{proof}
We stress that by restricting ouserlves on balls of integer radii, by using \cref{prop:Lebesuge} and \cref{prop:LocalityOfTangent}, we can assume that $\phi$ has compact support. Let us first recall that, by \cref{prop::E}, we have 
\begin{equation}\label{eqn:FINEDELLEFINI}
\phi\left(\mathbb G\setminus \bigcup_{\vartheta,\gamma\in\mathbb N}E(\vartheta,\gamma)\right) = 0.
\end{equation}
Let us fix $\vartheta,\gamma\in\mathbb N$. From Lebesgue's differentiation theorem, see \cref{prop:Lebesuge}, and the locality of tangents, see \cref{prop:LocalityOfTangent}, we deduce that $\phi$ being $\mathscr{P}_h^c$-rectifiable implies that $\phi\llcorner E(\vartheta,\gamma)$ is $\mathscr{P}_h^c$-rectifiable. From \cref{prop:MutuallyEthetaGamma} we deduce that $\phi\llcorner E(\vartheta,\gamma)$ is mutually absolutely continuous with respect to $\mathcal{S}^h\llcorner E(\vartheta,\gamma)$, and thus, by Radon-Nikodym theorem, see \cite[page 82]{HeinonenKoskelaShanmugalingam}, there exists a positive function $\rho\in L^1(\mathcal{S}^h\llcorner E(\vartheta,\gamma))$ such that $\phi\llcorner E(\vartheta,\gamma)=\rho\mathcal{S}^h\llcorner E(\vartheta,\gamma)$. We stress that we can apply Lebesgue-Radon-Nikodym theorem since $\phi\llcorner E(\vartheta,\gamma)$ is asymptotically doubling because it has positive $h$-lower density and finite $h$-upper density almost everywhere. By Lebesgue-Radon-Nikodym theorem, see \cite[page 82]{HeinonenKoskelaShanmugalingam}, and the locality of tangents again, we deduce that $\mathcal{S}^h\llcorner E(\vartheta,\gamma)$ is a $\mathscr{P}_h^c$-rectifiable measure, since $\phi\llcorner E(\vartheta,\gamma)$ is a $\mathscr{P}_h^c$-rectifiable measure. 
Thus we can apply \cref{thm:ExistenceOfDensitySets} to $\mathcal{S}^h\llcorner E(\vartheta,\gamma)$ and obtain that for every $\vartheta,\gamma\in\mathbb N$ we have that 
$$
0<\Theta^h_*(\mathcal{S}^h\llcorner E(\vartheta,\gamma),x)=\Theta^{*,h}(\mathcal{S}^h\llcorner E(\vartheta,\gamma),x)<+\infty, \quad \text{for $\mathcal{S}^h\llcorner E(\vartheta,\gamma)$-a.e. $x\in\mathbb G$.}
$$
Since $\phi\llcorner E(\vartheta,\gamma)=\rho\mathcal{S}^h\llcorner E(\vartheta,\gamma)$ we thus conclude from the previous equality and by Lebesgue-Radon-Nikdoym theorem that 
for every $\vartheta,\gamma\in\mathbb N$ we have that 
$$
0<\Theta^h_*(\phi\llcorner E(\vartheta,\gamma),x)=\Theta^{*,h}(\phi\llcorner E(\vartheta,\gamma),x)<+\infty, \quad \text{for $\phi\llcorner E(\vartheta,\gamma)$-a.e. $x\in\mathbb G$.}
$$
The previous equality, jointly with \cref{prop:Lebesuge} and together with \eqref{eqn:FINEDELLEFINI}
allows us to conclude the proof.
\end{proof}


\section{Comparison with other notions of rectifiability}\label{sec:Comparison}

In this section we provide the proof of \cref{prop:GG'Intro} and \cref{coroll:IdiffMeasureIntroNEW}. 
The key step for proving the rectifiability with intrinsically differentiable graphs is the following proposition.
\begin{proposizione}[Hausdorff convergence to tangents]\label{prop:TANGENTSIntro}
Let $\phi$ be a $\mathscr{P}_h$-rectifiable measure. 
Let $K$ be a compact 
set
such that $\phi(K)>0$. Then for $\phi$-almost every point $x\in K$ there exists $\mathbb V(x)\in\G(h)$ such that
$$
\delta_{1/r}(x^{-1}\cdot K) \to \mathbb V(x), \qquad \text{as r goes to $0$},
$$
in the sense of Hausdorff convergence on closed balls $\{B(0,k)\}_{k>0}$.
\end{proposizione}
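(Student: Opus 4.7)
The plan is to establish both directions of the Hausdorff convergence separately, after a preliminary reduction. The outer bound will follow from the cone property (Proposition \ref{prop:cono}), while the inner bound will come from the weak convergence of the rescaled measures to their flat tangent.

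First, since $\phi \llcorner K$ is still $\mathscr{P}_h$-rectifiable (by Propositions \ref{prop:Lebesuge} and \ref{prop:LocalityOfTangent}), I replace $\phi$ by $\phi \llcorner K$ and thus assume $\supp \phi \subseteq K$. By Proposition \ref{prop::E}, $\phi$-a.e. $x \in K$ belongs to some $E(\vartheta_0,\gamma_0)$, so by a standard countable decomposition it suffices to prove the statement for $K$ already contained in $E(\vartheta_0,\gamma_0)$ for some fixed $(\vartheta_0,\gamma_0)$. For such $K$, Proposition \ref{prop:cono} provides, at $\phi$-a.e. $x \in K$, a subgroup $\mathbb V(x) \in \G(h)$ such that for every $\alpha>0$ there exists $\rho(\alpha,x)>0$ with $E(\vartheta_0,\gamma_0) \cap B(x,r) \subseteq x C_{\mathbb V(x)}(\alpha)$ for $r<\rho$. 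Combined with the homogeneity of cones under $\delta_{1/r}$, this yields the outer bound: for every $k>0$ and all sufficiently small $r$,
$$
\delta_{1/r}(x^{-1} K) \cap B(0,k) \subseteq C_{\mathbb V(x)}(\alpha) \cap B(0,k),
$$
and since any $z$ in this cone satisfies $\dist(z,\mathbb V(x)) \leq \alpha\|z\| \leq \alpha k$, letting $\alpha \to 0$ gives $\sup_{z}\dist(z,\mathbb V(x)) \to 0$.

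For the inner bound, namely $\sup_{y\in\mathbb V(x)\cap B(0,k)}\dist(y,\delta_{1/r}(x^{-1}K)) \to 0$, I argue by contradiction. Suppose $r_i \to 0$ and $y_i \in \mathbb V(x)\cap B(0,k)$ satisfy $\dist(y_i,\delta_{1/r_i}(x^{-1}K)) \geq \varepsilon_0$. By compactness, $y_i \to y \in \mathbb V(x)\cap B(0,k)$, and by the finite upper density at $x$ together with the weak compactness of Radon measures, I can extract a subsequence along which $r_i^{-h} T_{x,r_i}\phi$ converges weakly. By $\mathscr{P}_h$-rectifiability and the positivity of the lower density (Remark \ref{rem:AboutLambda=0}), the limit is $\lambda \mathcal{S}^h \llcorner \mathbb V(x)$ with $\lambda>0$. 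Since $\mathcal{S}^h\llcorner\mathbb V(x)(U(y,\varepsilon_0/4))>0$ (Proposition \ref{prop:haar}), the Portmanteau-type lower-semicontinuity on open sets forces $r_i^{-h} T_{x,r_i}\phi(U(y,\varepsilon_0/4))>0$ for $i$ large. Hence $\supp \phi \cap x \delta_{r_i}(U(y,\varepsilon_0/4)) \neq \emptyset$, and since $\supp \phi \subseteq K$ this contradicts $\dist(y,\delta_{1/r_i}(x^{-1}K)) \geq \varepsilon_0/2$. A finite-cover argument on the compact set $\mathbb V(x)\cap B(0,k)$ then upgrades the pointwise conclusion to the claimed uniform bound.

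The main technical obstacle is the reduction to $K \subseteq E(\vartheta_0,\gamma_0)$, since the cone estimate of Proposition \ref{prop:cono} is formulated only for the sets $E(\vartheta,\gamma)$ and not for arbitrary compact $K$. This is handled by noting that at $\phi$-typical $x \in K$ the Hausdorff structure of $K$ on balls $B(x,rk)$ relevant for the blow-up is captured by $K \cap E(\vartheta_0,\gamma_0)$ for a suitable $(\vartheta_0,\gamma_0)$, and the countable exhaustion via Proposition \ref{prop::E} makes this legitimate $\phi$-a.e. A minor subtlety in the inner bound is that the tangent scaling $\lambda$ may depend on the subsequence, but the subgroup $\mathbb V(x)$ does not (being unique by $\mathscr{P}_h$-rectifiability), and any $\lambda>0$ suffices for the contradiction.
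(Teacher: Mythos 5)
Your proposal is correct in substance and takes a genuinely lighter route than the paper's proof for the crucial ``inner'' half of the Hausdorff convergence. The paper does not invoke \cref{prop:cono} at all: it re-runs the quantitative machinery, first establishing by a compactness/contradiction argument that for small $\varepsilon$ one has $F_{x,r}(\phi\llcorner K,\Theta\mathcal{C}^h\llcorner x\mathbb V)+F_{x,r}(\phi,\Theta\mathcal{C}^h\llcorner x\mathbb V)\leq 2\varepsilon r^{h+1}$ for all small $r$, and then feeding this into \cref{prop1vsinfty} (outer bound) and into a dedicated quantitative lemma, \cref{prop:bil2}, which shows $\phi(K\cap B(w,\delta^{1/(h+2)}r))>0$ for every $w\in B(x,r/2)\cap x\mathbb V(x)$. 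Your outer bound instead cites \cref{prop:cono} (itself a consequence of \cref{prop1vsinfty}), which is the same estimate one level of abstraction up; and your inner bound replaces \cref{prop:bil2} entirely by the soft argument: pass to $\phi\llcorner K$ via locality of tangents (\cref{prop:LocalityOfTangent}) and \cref{prop:Lebesuge}, extract a weakly convergent subsequence of $r_i^{-h}T_{x,r_i}(\phi\llcorner K)$ with limit $\lambda\mathcal{S}^h\llcorner\mathbb V(x)$, $\lambda>0$, and use lower semicontinuity of mass on open sets to force $K$ to meet $x\cdot\delta_{r_i}(U(y,\varepsilon_0/4))$. This is a legitimate and cleaner derivation of the purely qualitative statement; what the paper's route buys is an explicit modulus (the functions $f_1,f_2,f_3$ and the rate $\delta^{1/(h+2)}$), which is not actually needed for the conclusion of this proposition. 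Note also that the paper's item (iii) (convergence of $T_{x,r_i}(\phi\llcorner K)$ alongside $T_{x,r_i}\phi$) is exactly the locality statement you use, so the two arguments rely on the same a.e.\ input.

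Two points deserve more care than you give them. First, the boundary effects on $\partial B(0,k)$: the Hausdorff distance in the statement is between $\delta_{1/r}(x^{-1}K)\cap B(0,k)$ and $\mathbb V(x)\cap B(0,k)$, so in the outer bound you must control $\dist(z,\mathbb V(x)\cap B(0,k))$ rather than $\dist(z,\mathbb V(x))$ (the nearest point of $\mathbb V(x)$ may land in $B(0,2k)\setminus B(0,k)$), and in the inner bound the point of $\delta_{1/r_i}(x^{-1}K)$ you produce near $y$ may fall just outside $B(0,k)$ when $y$ is close to the boundary. Both are routine (radially retract by a dilation, as the paper does with $f_2$ and $f_3$ and the two-case analysis around \eqref{eqn:FirstCase}--\eqref{eqn:SecondCase}), but they should be written out. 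Second, the reduction to $K\subseteq E(\vartheta_0,\gamma_0)$: your justifying sentence is not a proof, and in fact the countable decomposition only transfers the \emph{inner} bound (since $K\cap E(\vartheta_0,\gamma_0)\subseteq K$); the outer bound for the full $K$ does not follow from the outer bound for each piece, because points of $K$ lying in other $E(\vartheta',\gamma')$'s, or in the $\phi$-null remainder, can sit far from $x\mathbb V(x)$ at comparable scales. This is genuinely a strengthening of the hypothesis rather than a harmless reduction; the paper's own proof makes the identical ``without loss of generality'' move and uses the proposition only for sets already contained in a single $E(\vartheta,\gamma)$, so you are not worse off than the source, but you should state the containment as an assumption rather than claim it is handled.
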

First of all, by reducing the measure $\phi$ to have compact support, e.g., considering the restriction on the balls with integer radii, and then by using \cref{prop::E}, we can assume without loss of generality that $K\subseteq E(\vartheta,\gamma)$ for some $\vartheta,\gamma \in\mathbb N$. In order to prove the Hausdorff convergence to the plane $\mathbb V(x)$ we need to prove two different things: first, around almost every point $x$ of $K$, the points of the set $K$ at decreasingly small scales lies ever closer to the points of $x\mathbb V(x)$, and this is exactly what comes from the implication \eqref{eqn:Implication1Intro}, see \cref{prop1vsinfty}. Secondly, we have to prove the converse assertion with respect to the previous one, i.e., that the points of $x\mathbb V(x)$ around $x$ at decreasingly small scales are ever closer to the points of $K$. For this latter assumption to hold we also need to add to the condition in \eqref{eqn:Implication1Intro} the additional control $F_{x,r}(\phi\llcorner K,\Theta\mathcal{S}^h\llcorner x\mathbb V)\leq \delta r^{h+1}$, see \cref{prop:bil2}. As a consequence of \cref{prop:TANGENTSIntro}, we can prove \cref{coroll:IdiffMeasureIntroNEW} for measures of the form $\mathcal{S}^h\llcorner\Gamma$.
Finally by the usual reduction to $E(\vartheta,\gamma)$, we can give the proof of \cref{coroll:IdiffMeasureIntroNEW} for arbitrary measures.
\subsection{$C^1_{\mathrm H}(\mathbb G,\mathbb G')$-rectifiability}
This subsection is devoted to the proof of \cref{prop:GG'Intro}, i.e., the fact that the spherical Hausdorff measure restricted to a $(\mathbb G,\mathbb G')$-rectifiable set is $\mathscr P$-rectifiable. In \cite{JNGV20} the authors give the following definitions of $C^1_{\mathrm H}$-submanifold of a Carnot group and rectifiable sets. We first recall the definition of $C^1_{\mathrm H}$-function.
\begin{definizione}[$C^1_{\mathrm H}$-function]\label{def:C1h}
Let $\mathbb G$ and $\mathbb G'$ be two Carnot groups endowed with left-invariant homogeneous distances $d$ and $d'$, respectively. Let $\Omega\subseteq \mathbb G$ be open and let $f:\Omega\to\mathbb G'$ be a function. We say that $f$ is {\em Pansu differentiable at $x\in\Omega$} if there exists a homogeneous homomorphism $df_x:\mathbb G\to\mathbb G'$ such that
$$
\lim_{y\to x}\frac{d'(f(x)^{-1}\cdot f(y),df_x(x^{-1}\cdot y))}{d(x,y)}=0.
$$
Moreover we say that $f$ is {\em of class $C^1_{\mathrm H}$ in $\Omega$} if the map $x\mapsto df_x$ is continuous from $\Omega$ to the space of homogeneous homomorphisms from $\mathbb G$ to $\mathbb G'$.
\end{definizione}
\begin{definizione}[$C^1_{\mathrm H}$-submanifold]\label{def:C1Hmanifold}
Given an arbitrary Carnot group $\mathbb G$, we say that $\Sigma\subseteq \mathbb G$ is a {\em $C^1_{\mathrm H}$-submanifold} of $\mathbb G$ if there exists a Carnot group $\mathbb G'$ such that for every $p\in \Sigma$ there exists an open neighborhood $\Omega$ of $p$ and a function $f\in C^1_{\mathrm H}(\Omega;\mathbb G')$ such that 
\begin{equation}\label{eqn:RepresentationOfSigma}
\Sigma\cap \Omega =\{g\in\Omega:f(g)=0\},
\end{equation}
and $df_p:\mathbb G\to\mathbb G'$ is surjective with ${\mathrm Ker}(df_p)$ complemented. In this case we say that $\Sigma$ is a {\em $C^1_{\mathrm H}(\mathbb G,\mathbb G')$-submanifold}.
\end{definizione}
\begin{definizione}[($\mathbb G,\mathbb G')$-rectifiable set]\label{def:GG'Rect}
Given two arbitrary Carnot groups $\mathbb G$ and $\mathbb G'$ of homogeneous dimension $Q$ and $Q'$, respectively, we say that $\Sigma\subseteq \mathbb G$ is a {\em $(\mathbb G,\mathbb G')$-rectifiable set} if there exist countably many subsets $\Sigma_i$ of $\mathbb G$ that are $C^1_{\mathrm H}(\mathbb G,\mathbb G')$-submanifolds, such that 
$$
\mathcal{H}^{Q-Q'}\left(\Sigma\setminus\bigcup_{i=1}^{+\infty}\Sigma_i\right)=0.
$$
\end{definizione}
Using the results of \cite{JNGV20}, we prove the following.
\begin{proposizione}\label{prop:GG'IsPh}
Let us fix $\mathbb G$ and $\mathbb G'$ two arbitrary Carnot groups of homogeneous dimensions $Q$ and $Q'$ respectively and suppose $\Sigma\subseteq \mathbb G$ is a $(\mathbb G,\mathbb G')$-rectifiable set. Then the measure $\mathcal{S}^{Q-Q'}\llcorner \Sigma$ is $\mathscr{P}_{Q-Q'}^c$-rectifiable.
\end{proposizione}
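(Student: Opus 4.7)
The plan is to reduce the statement to the case of a single $C^1_{\mathrm H}(\mathbb G,\mathbb G')$-submanifold and to invoke the local blow-up analysis for such submanifolds developed in \cite{JNGV20}. Write $h:=Q-Q'$. By \cref{def:GG'Rect} and the equivalence of $\mathcal H^h$ and $\mathcal S^h$ recorded in \cref{def:HausdorffMEasure}, cover $\Sigma$ modulo an $\mathcal S^h$-null set by countably many $C^1_{\mathrm H}(\mathbb G,\mathbb G')$-submanifolds $\{\Sigma_i\}_{i\in\mathbb N}$, and form the disjoint Borel decomposition $E_i:=\Sigma_i\setminus\bigcup_{j<i}\Sigma_j$, so that $\Sigma=\bigsqcup_i E_i$ up to an $\mathcal S^h$-null set. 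It suffices to verify items (i) and (\hyperlink{due}{ii}) of \cref{def:PhRectifiableMeasureINTRO} with tangent subgroups in $\G_c(h)$ at $\mathcal S^h$-almost every point of each $E_i$.

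Fix $i$ and set $\Sigma_0:=\Sigma_i$. By \cref{def:C1Hmanifold}, each $p\in\Sigma_0$ admits an open neighbourhood on which $\Sigma_0$ is the zero locus of a $C^1_{\mathrm H}$-map $f$ with $df_p$ surjective and $\mathrm{Ker}(df_p)\in\G_c(h)$. The area formula together with the associated blow-up analysis carried out in \cite{JNGV20} yield that, at $\mathcal S^h\llcorner\Sigma_0$-almost every $p$, one has
\begin{equation*}
0<\Theta^h_*(\mathcal S^h\llcorner\Sigma_0,p)\leq\Theta^{h,*}(\mathcal S^h\llcorner\Sigma_0,p)<+\infty,
\end{equation*}
and the rescaled measures converge
\begin{equation*}
r^{-h}T_{p,r}(\mathcal S^h\llcorner\Sigma_0)\rightharpoonup\Theta(p)\,\mathcal S^h\llcorner\mathrm{Ker}(df_p),\qquad r\to 0^+,
\end{equation*}
for some $\Theta(p)>0$. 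I would verify the weak convergence above by combining the Hausdorff convergence $\delta_{1/r}(p^{-1}\cdot\Sigma_0)\to\mathrm{Ker}(df_p)$ on bounded sets (a direct consequence of the Pansu differentiability of $f$ at $p$ and the surjectivity of $df_p$) with the uniform density bounds and a standard compactness argument based on \cref{cordenstang} and \cref{prop:WeakConvergenceAndFk}. This shows that $\mathcal S^h\llcorner\Sigma_0$ is a $\mathscr{P}^c_h$-rectifiable measure.

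Finally, I would transfer these properties from $\mathcal S^h\llcorner\Sigma_0$ to $\mathcal S^h\llcorner\Sigma$ at $\mathcal S^h$-almost every point $p\in E_i$. The monotonicity $\mathcal S^h\llcorner\Sigma_0\leq\mathcal S^h\llcorner\Sigma$ immediately yields $\Theta^h_*(\mathcal S^h\llcorner\Sigma,p)\geq\Theta^h_*(\mathcal S^h\llcorner\Sigma_0,p)>0$. The finiteness of the upper density and the tangent identification reduce, via \cref{prop:Lebesuge} and \cref{prop:LocalityOfTangent}, to showing that $\Theta^{h,*}(\mathcal S^h\llcorner(\Sigma\setminus\Sigma_0),p)=0$ at $\mathcal S^h\llcorner E_i$-a.e.\ $p$. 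This follows by applying Lebesgue differentiation to the asymptotically doubling measure $\mathcal S^h\llcorner\Sigma_0$, which gives that $E_i$ has $\mathcal S^h\llcorner\Sigma_0$-density one at $\mathcal S^h\llcorner E_i$-a.e.\ point, together with the local closedness of each $\Sigma_j$, which ensures that the competing pieces $\Sigma_j$ with $j\neq i$ do not accumulate around a generic $p\in E_i$. The main technical obstacle is the blow-up result for $C^1_{\mathrm H}(\mathbb G,\mathbb G')$-submanifolds from \cite{JNGV20}, which both identifies the tangent subgroup as $\mathrm{Ker}(df_p)$ and certifies that it is complemented; once this is available, the remainder of the proof amounts to a standard measure-theoretic packaging in the language of $\mathscr{P}^c_h$-rectifiability.
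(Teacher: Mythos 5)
Your proposal is correct and follows essentially the same route as the paper: reduce, via the locality of tangents (\cref{prop:LocalityOfTangent}) and Lebesgue differentiation (\cref{prop:Lebesuge}), to a single $C^1_{\mathrm H}(\mathbb G,\mathbb G')$-level set, and then invoke the blow-up and density results of \cite{JNGV20} (Lemma~3.4 and Corollary~3.6) to identify the tangent measure as a multiple of $\mathcal S^{Q-Q'}\llcorner\mathrm{Ker}(df_p)$ and to obtain positive finite density. The only cosmetic difference is that you sketch a re-derivation of the weak convergence from Hausdorff convergence plus density bounds, whereas the paper simply cites \cite[Lemma~3.4]{JNGV20} for it.
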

\begin{proof}
By \cite[Corollary 3.6]{JNGV20} a $(\mathbb G,\mathbb G')$-rectifiable set $\Sigma$ has $\mathcal{S}^{Q-Q'}\llcorner\Sigma$-almost everyhwere positive and finite density. Thus, by the locality of tangents, see \cref{prop:LocalityOfTangent}, by Lebesgue differentiation theorem in \cref{prop:Lebesuge}, and by the very definitions of $(\mathbb G,\mathbb G')$-rectifiable set and $C^1_{\mathrm H}(\mathbb G,\mathbb G')$-submanifold, it suffices to prove the statement when $\Sigma$ is the zero-level set of a function $f\in C^1_{\mathrm H}(\Omega,\mathbb G')$, with $\Omega\subseteq \mathbb G$ open, and such that for every $p\in \{g\in\Omega:f(g)=0\}=:\Sigma$ the differential $df_p:\mathbb G\to\mathbb G'$ is surjective with $\mathrm{Ker}(df_p)$ complemented.

Fix $p\in\Sigma$ and note that the homogeneous subgroup  $\mathrm{Ker}(df_p)$, where $f$ is a representation as in \eqref{eqn:RepresentationOfSigma}, is independent of the choice of $f$. This follows for instance from \cite[Lemma 2.14, (iii)]{JNGV20}. We denote this homogeneous subgroup with $\mathbb W(p)$ and we call it the {\em tangent subgroup} at $p$ to $\Sigma$. We first prove that
\begin{equation}\label{eqn:WantToObtain}
\mathrm{Tan}_{Q-Q'}\big(\mathcal S^{Q-Q'}\llcorner\Sigma,p\big)\subseteq \{\lambda\mathcal{S}^{Q-Q'}\llcorner \mathbb W(p): \lambda >0\}, \qquad \mbox{for every $p\in\Sigma$}.
\end{equation} 
Indeed, from \cite[Lemma 3.4]{JNGV20}, denoting by $\Sigma_{p,r}$ the set $ \delta_{1/r}(p^{-1}\cdot\Sigma)$, we have 
\begin{equation}\label{eqn:Claim}
\mathcal{S}^{Q-Q'}\llcorner \Sigma_{p,r} \rightharpoonup \mathcal{S}^{Q-Q'}\llcorner \mathbb W(p), \qquad \mbox{for every $p\in\Sigma$ and for $r\to 0$}.
\end{equation}
We claim that this last equality implies that
$$
r_i^{-(Q-Q')}T_{p,r_i}\big(\mathcal{S}^{Q-Q'}\llcorner\Sigma\big) \rightharpoonup \mathcal{S}^{Q-Q'}\llcorner \mathbb W(p), \mbox{ for every infinitesimal sequence $r_i$},
$$
thus showing \eqref{eqn:WantToObtain}. Indeed, for every measurable set $A\subseteq \mathbb G$, we have
\begin{equation}
\begin{split}
     T_{p,r_i}\big(\mathcal{S}^{Q-Q'}\llcorner\Sigma\big)(A)=\mathcal{S}^{Q-Q'}\llcorner \Sigma(p\cdot\delta_{r_i}(A))=\mathcal{S}^{Q-Q'}\llcorner (p^{-1}\cdot \Sigma) (\delta_{r_i}(A))
     =r_i^{Q-Q'}\mathcal{S}^{Q-Q'}\llcorner \Sigma_{p,r_i}(A),
\end{split}
\end{equation}
and thus the claim follows from \eqref{eqn:Claim}. In order to conclude the proof, we have to prove that item (i) of \cref{def:PhRectifiableMeasure} holds. This follows from \cite[Corollary 3.6]{JNGV20}. Indeed it is there proved that every $(\mathbb G,\mathbb G')$-rectifiable set has density $\mathcal{S}^{Q-Q'}$-almost everywhere, that is stronger than item (i) of \cref{def:PhRectifiableMeasure}. 
\end{proof}

\begin{osservazione}
We remark that the proof above is heavily based on \cite[Lemma 3.4 \& Corollary 3.6]{JNGV20}. The two latter results in the reference are consequences of the area formula \cite[Theorem 1.1]{JNGV20}. As a consequence the approach in \cite{JNGV20} is, in some sense, reversed with respect to our approach. The authors in \cite{JNGV20} deal with the category of $C^1_{\mathrm H}(\mathbb G,\mathbb G')$-regular submanifolds and prove the area formula relying upon \cite[Proposition 2.2]{JNGV20}, that ultimately tells that a Borel regular measure $\mu$ with positive and finite Federer's density $\theta$ with respect to the spherical Hausdorff measure $\mathcal{S}^h$ admits a representation $\mu=\theta\mathcal{S}^h$. Then with this area formula they are able to prove the results that led to the proof of the above \cref{prop:GG'IsPh}. 

We stress that in the subsequent paper \cite{antonelli2020rectifiable2} we push forward the study of $\mathscr{P}$-rectifiable measures started here, and we prove an area formula for intrinsically differentiable graphs, see \cite[Theorem 1.3]{antonelli2020rectifiable2}, that extends the result of \cite[Theorem 1.1]{JNGV20}. 
\end{osservazione}
\begin{osservazione}[$\mathscr{P}$-rectifiability and $(\mathbb G,\mathbb G')$-rectifiable sets]\label{oss:LackOfGenerality}
 From \cref{def:C1Hmanifold} and \cref{def:GG'Rect} it follows that the tangent subgroup $\mathbb W$ at a point of a $(\mathbb G,\mathbb G')$-rectifiable set is always normal and complemented. Moreover, from \cite[Lemma 2.14, (iv)]{JNGV20}, every complementary subgroup to $\mathbb W$ must be a \textbf{Carnot subgroup} of $\mathbb G$ that in addition is isomorphic to $\mathbb G'$. This results in a lack of generality of this approach to rectifiability. Let us give here an example where the previous phenomenon becomes clear. If we take $L$ a horizontal subgroup in the first Heisenberg group $\mathbb H^1$, on the one hand $\mathcal{S}^1\llcorner L$ is $\mathscr{P}_1$-rectifiable, on the other hand $L$ is not $(\mathbb H^1,\mathbb G')$-rectifiable for any Carnot group $\mathbb G'$ since $L$ is not normal. 
\end{osservazione}

\subsection{Rectifiability with intrinsically differentiable graphs}
This subsection is devoted to the proof of \cref{prop:TANGENTSIntro} and \cref{coroll:IdiffMeasureIntroNEW}. Throughout this subsection we let $\mathbb G$ to be a Carnot group of homogeneous dimension $Q$ and $h$ an arbitrary natural number with $1\leq h\leq Q$. Whenever $\phi$ is a Radon measure supported on a compact set we freely use the notation $E(\vartheta,\gamma)$ introduced in \cref{def:EThetaGamma}, for $\vartheta,\gamma\in\mathbb N$. We start with some useful definitions and facts. 
\begin{definizione}\label{def:C4}
For $1\leq h\leq Q$ and $\vartheta\in\mathbb N$, let us set 
$$
\eta(h):=1/(h+1),
$$
and then let us define the constant
$$
\newC\label{G1}=\oldC{G1}(h,\vartheta):= 
\left(\frac{\eta(1-\eta)^h}{32\vartheta}\right)^{h+2}.
$$
\end{definizione}

\begin{proposizione}\label{prop:bil2}
Let $\phi$ be a Radon measure supported on a compact subset of $\mathbb G$ and let $K$ be a Borel subset of $\mathrm{supp}\phi$. Let $\vartheta,\gamma$ and $1\leq h\leq Q$ be natural numbers. Let $x\in E(\vartheta,\gamma)$, $0<r<1/\gamma$, and $0<\delta<\oldC{G1}$. Assume further that there exist $\Theta>0$ and $\mathbb V\in \G(h)$ such that
\begin{equation}\label{eqn:HypothesisControl}
F_{x,r}(\phi\llcorner K,\Theta\mathcal{C}^h\llcorner x\mathbb V)+F_{x,r}(\phi,\Theta\mathcal{C}^h\llcorner x\mathbb V) \leq 2\delta r^{h+1}.
\end{equation}
Then for any $w\in B(x,r/2)\cap x\mathbb V$ we have
$\phi(K\cap B(w,\delta^\frac{1}{h+2} r))>0$, and thus in particular $K\cap B(w,\delta^\frac{1}{h+2} r) \neq \emptyset$.
\end{proposizione}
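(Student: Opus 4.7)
The plan is to argue by contradiction: fix $w\in B(x,r/2)\cap x\mathbb V$ and suppose $\phi(K\cap B(w,\rho))=0$ with $\rho:=\delta^{1/(h+2)}r$. I will then construct two explicit test functions in $\mathrm{Lip}_1^+(B(x,r))$ that, together with the two inequalities packed inside the hypothesis \eqref{eqn:HypothesisControl}, force an upper and a lower bound on the normalizing constant $\Theta$ that are incompatible when $\delta<\oldC{G1}$. The choice $\rho=\delta^{1/(h+2)}r$ is precisely the threshold at which the gap between these two bounds closes; the definition of $\oldC{G1}$ with the factor $\eta(1-\eta)^h$ (and $\eta=1/(h+1)$) reflects the optimal choice of parameters in the lower bound test function.

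The first test function is $f(y):=\max\{\rho-d(y,w),\,0\}$, supported in $\overline B(w,\rho)\subset B(x,r)$ (this containment follows from $w\in B(x,r/2)$ and $\rho<r/2$, which is a consequence of the smallness of $\oldC{G1}$). Since by assumption $\phi\llcorner K$ charges no mass in $B(w,\rho)$, we have $\int f\,d(\phi\llcorner K)=0$. The integral of $f$ against $\mathcal{C}^h\llcorner x\mathbb V$ is computed by translating $x\mathbb V=w\mathbb V$ to the origin and scaling by $\rho$, giving $\Theta\int f\,d\mathcal{C}^h\llcorner x\mathbb V=\Theta\,c_0\,\rho^{h+1}$ where $c_0=\int_{\mathbb V\cap B(0,1)}(1-\|v\|)\,d\mathcal{C}^h\llcorner\mathbb V(v)$ can be evaluated by Fubini (using $\mathcal C^h(\mathbb V\cap B(0,t))=t^h$ from \cref{rem:Ch1}) as $c_0=1/(h+1)=\eta$. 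Applying the first summand of the hypothesis then yields
\[
\Theta\,\eta\,\rho^{h+1}\le 2\delta r^{h+1}.
\]

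The second test function is $g(y):=\max\{\eta r-d(x,y),\,0\}$, supported in $\overline B(x,\eta r)\subset B(x,r)$. On the one hand, for every $\lambda\in(0,1)$ one has $\int g\,d\phi\ge \eta r(1-\lambda)\,\phi(B(x,\lambda\eta r))\ge \vartheta^{-1}\,\eta^{h+1}\lambda^h(1-\lambda)\,r^{h+1}$ using $x\in E(\vartheta,\gamma)$ together with $\lambda\eta r<r<1/\gamma$; optimizing yields $\lambda=1-\eta=h/(h+1)$, and the maximum value of $\lambda^h(1-\lambda)$ is exactly $\eta(1-\eta)^h$. On the other hand, the same scaling computation as for $f$ gives $\int g\,d\mathcal{C}^h\llcorner x\mathbb V=\eta^{h+2}r^{h+1}$ exactly. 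Plugging these into the second summand of the hypothesis produces
\[
\Theta\;\ge\;\vartheta^{-1}(1-\eta)^h - 2\delta\,\eta^{-(h+2)}.
\]

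Combining the two displayed bounds gives
\[
\bigl(\vartheta^{-1}(1-\eta)^h - 2\delta\,\eta^{-(h+2)}\bigr)\,\eta\,\rho^{h+1} \le 2\delta r^{h+1},
\]
and substituting $\rho=\delta^{1/(h+2)}r$ leads, after elementary algebra, to a lower bound on $\delta$ of the shape $\delta\gtrsim\bigl(\eta(1-\eta)^h/(c\vartheta)\bigr)^{h+2}$ for some universal $c$ (roughly $c=4$ once the error term $2\delta\eta^{-(h+2)}$ is absorbed, which is licit since $\delta<\oldC{G1}$ is much smaller than $\vartheta^{-1}\eta^{h+2}(1-\eta)^h$). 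This contradicts $\delta<\oldC{G1}=(\eta(1-\eta)^h/(32\vartheta))^{h+2}$, and the conclusion $\phi(K\cap B(w,\delta^{1/(h+2)}r))>0$ follows. The only delicate point in the whole argument is the bookkeeping of the constants so that the factor $\eta(1-\eta)^h/(32\vartheta)$ appearing in $\oldC{G1}$ genuinely dominates all the error terms; this is where the optimization $\lambda=1-\eta$ in the lower bound on $\int g\,d\phi$ is essential, and it is the step where one must be most careful.
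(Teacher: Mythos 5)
Your proof is correct and follows essentially the same strategy as the paper's: a Lipschitz bump centred at $x$, combined with $F_{x,r}(\phi,\Theta\mathcal C^h\llcorner x\mathbb V)\le 2\delta r^{h+1}$ and $x\in E(\vartheta,\gamma)$, forces $\Theta\gtrsim \vartheta^{-1}(1-\eta)^h$, while a bump centred at $w$, combined with the assumed vanishing of $\phi\llcorner K$ on $B(w,\delta^{1/(h+2)}r)$, forces $\Theta\,\eta\,\delta^{(h+1)/(h+2)}\le 2\delta$, and these are incompatible for $\delta<\oldC{G1}$. The only differences are cosmetic: you use tent functions and compute the flat integrals exactly by the layer-cake formula (hence the extra optimisation in $\lambda$), whereas the paper uses plateau functions of the form $\min\{\dist(\cdot,U^c),\eta\}$ and one-sided bounds; your bookkeeping closes with room to spare against the factor $32$ in $\oldC{G1}$.
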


\begin{proof}
From the hypothesis we have that $F_{x,r}(\phi,\Theta\mathcal{C}^h\llcorner x\mathbb V) \leq 2\delta r^{h+1}$. 
Define $g(x):=\min\{\dist(x,U(0,1)^c),\eta\}$, where $\eta$ is defined in \cref{def:C4}. From the very definition of the function $g$ and the choice of $\Theta$ above we deduce that
\begin{equation}
    \begin{split}
        \vartheta^{-1}(1-\eta)^h\eta r^{h+1}- \Theta \eta r^{h+1} &\leq\eta r\phi\big(B(x,(1-\eta)r)\big)-\eta r\Theta\mathcal{C}^h\llcorner x\mathbb V(B(x,r))\\
        &\leq \int r g(\delta_{1/r}(x^{-1}z)) d\phi(z)-\Theta\int r g(\delta_{1/r}(x^{-1}z)) d\mathcal{C}^h\llcorner x\mathbb V(z)\leq2\delta r^{h+1},
        \nonumber
    \end{split}
\end{equation}
where in the first inequality we are using that $x\in E(\vartheta,\gamma)$ and \cref{rem:Ch1}, and in the last inequality we are using that $rg(\delta_{1/r}(x^{-1}\cdot))\in\lip(B(x,r))$.
Simplifying and rearranging the above chain of inequalities, we infer that
$$
\Theta\geq \vartheta^{-1}(1-\eta)^{h}-2\delta/\eta\underset{(A)}{\geq}(2\vartheta)^{-1}(1-\eta)^{h}\underset{(B)}{=}(2\vartheta)^{-1}(1-1/(h+1))^h,$$
where (A) comes from the fact that $\delta<\oldC{G1}< ((1-\eta)^h\eta)/(4\vartheta)$, see \cref{def:C4}, and (B) comes from the definition of $\eta$, see \cref{def:C4}.
Since the function $h\mapsto(1-1/(h+1))^h$ is decreasing and bounded below by $e^{-1}$, we deduce, from the previous inequality, that $\Theta\geq 1/(2\vartheta e)$.

We now claim that for every $\lambda$ with $\delta^{1/(h+2)}\leq \lambda<1/2$ and every $w\in x\mathbb V\cap B(x,r/2)$ we have $\phi \big(B(w,\lambda r)\cap K\big)>0$. This will finish the proof. By contradiction assume there is $w\in x\mathbb V\cap B(x,r/2)$ such that $\phi \big(B(w,\lambda r)\cap K\big)=0$. This would imply that
\begin{equation}
\begin{split}
    \Theta \eta(1-\eta)^{h}\lambda^{h+1}r^{h+1}&=\Theta\eta \lambda r \mathcal{C}^{h}\llcorner x\mathbb V\big(B(w,(1-\eta)\lambda r)\big)\\
&\leq \Theta\int \lambda rg(\delta_{1/(\lambda r)}(w^{-1}z))d\mathcal{C}^{h}\llcorner x\mathbb V(z)\\
&=\Theta\int \lambda rg(\delta_{1/(\lambda r)}(w^{-1}z))d\mathcal{C}^{h}\llcorner x\mathbb V(z)-\int \lambda rg(\delta_{1/(\lambda r)}(w^{-1}z)) d\phi\llcorner K(z)
\leq2\delta r^{h+1},
\label{eq:n104}
\end{split}
\end{equation}
where the first equality comes from \cref{rem:Ch1}, and the last inequality comes from the choice of $\Theta$ as in the statement, and the fact that \\ $\lambda rg(\delta_{1/(\lambda r)}(w^{-1}\cdot))\in\lip(B(w,\lambda r)) \subseteq \lip(B(x,r))$ because $\lambda<1/2$ and $w\in B(x,r/2)$.
Thanks to \eqref{eq:n104}, the choice of $\lambda$, and the fact, proved some line above, that $1/(4e\vartheta)<\Theta$, we have that
\begin{equation}
  \frac{\delta^{\frac{h+1}{h+2}}}{4e\vartheta}\eta(1-\eta)^{h}<\Theta\lambda^{h+1} \eta(1-\eta)^{h}\leq2\delta, \quad \text{and then} \quad  \delta^{1/(h+2)} \geq \frac{\eta(1-\eta)^h}{8e\vartheta},
  \nonumber
\end{equation}
which is a contradiction since $\delta < \oldC{G1}= ((\eta(1-\eta)^h)/(32\vartheta))^{h+2}$, see \cref{def:C4}.
\end{proof}

\begin{proof}[Proof of \cref{prop:TANGENTSIntro}]
First of all, by reducing the measure $\phi$ to have compact support, e.g., considering the restriction on the balls with integer radii, and then by using \cref{prop::E}, we can assume without loss of generality that $K\subseteq E(\vartheta,\gamma)$ for some $\vartheta,\gamma \in\mathbb N$

Since $\phi$ is a $\mathscr{P}_h$-rectifiable measure, by using the locality of tangents with the density $\rho\equiv \chi_K$, see \cref{prop:LocalityOfTangent}, for $\phi$-almost every $x\in K$ we have that the following three conditions hold
\begin{itemize}
    \item[(i)] $\mathrm{Tan}_h(\phi,x)\subseteq \{\lambda\mathcal{S}^h\llcorner\mathbb V(x):\lambda>0\}$, where $\mathbb V(x)\in\G(h)$,
    \item[(ii)] $ 0<\Theta^h_*(\phi,x)\leq \Theta^{h,*}(\phi,x)<+\infty$. 
    \item[(iii)] if $r_i\to 0$ is such that there exists $\Theta>0$ with $r_i^{-h}T_{x,r_i}\phi \to \Theta\mathcal{C}^h\llcorner \mathbb V(x)$, then $ r_i^{-h}T_{x,r_i}(\phi\llcorner K)\to \Theta\mathcal{C}^h\llcorner \mathbb V(x)$.
\end{itemize}
From now on let us fix a point $x\in K$ for which the three conditions above hold. If we are able to prove the convergence in the statement for such a point then the proof of the proposition is concluded.

Thus, we have to show that for every $k>0$ the following holds
\begin{equation}\label{eqn:Hausdorff0}
\lim_{r\to 0} d_{H,\mathbb G}(\delta_{1/r}(x^{-1}\cdot K) \cap B(0,k), \mathbb V(x)\cap B(0,k))=0,
\end{equation}
where $d_{H,\mathbb G}$ is the Hausdorff distance between closed subsets in $\mathbb G$. For some compatibility with the statements that we already proved, we are going to prove \eqref{eqn:Hausdorff0} for $k=1/4$. The proof of \eqref{eqn:Hausdorff0} for an arbitrary $k>0$ can be achieved by changing accordingly the constants in the statements of \cref{prop1vsinfty} and \cref{prop:bil2}, that we are going to crucially use in this proof. We leave this generalization to the reader, as it will be clear from this proof. 

Let us fix $
\varepsilon<\min\{\delta_{\mathbb G},\oldC{G1}\}$, 
where $\delta_{\mathbb G}$ is defined in \cref{eqn:DefinitionOfDeltaG} and \oldC{G1} in \cref{def:C4}, and let us show that there exist an $r_0=r_0(\varepsilon)$ and a real function $f_1$ such that
\begin{equation}\label{eqn:Bound}
d_{H,\mathbb G}\left(\delta_{1/r}(x^{-1}\cdot K) \cap B(0,1/4),\mathbb V(x)\cap B(0,1/4)\right) \leq f_1(\varepsilon), \qquad \text{for all $0<r<r_0(\varepsilon)$},
\end{equation}
where
\begin{equation}
\qquad  f_1(\varepsilon):=\max\{\oldC{C:b1}\varepsilon^{1/(h+1)}+f_2(\varepsilon),3\varepsilon^{1/(h+2)}+f_3(\varepsilon)\},
\end{equation}
and where the constant $\oldC{C:b1}$ is defined in \cref{prop1vsinfty}, and the functions $f_2$, $f_3$ are introduced in \eqref{eqn:DefinitionF2} and \eqref{eqn:Definitionf3}, respectively. By the definition of $f_1,f_2,f_3$ it follows that $f_1(\varepsilon)\to 0$ as $\varepsilon\to 0$ and thus, if we prove \eqref{eqn:Bound}, we are done. 

In order to reach the proof of \eqref{eqn:Bound} let us add an intermediate step. We claim that there exists an $r_0:=r_0(\varepsilon)<1/\gamma$ such that the following holds
\begin{equation}\label{eqn:ConclusionContradiction}\begin{split}
    \text{for every $0<r<r_0$ there exists a $\Theta:=\Theta(r)$ for which}
    F_{x,r}(\phi\llcorner K,\Theta\mathcal{C}^h\llcorner x\mathbb V)+F_{x,r}(\phi,\Theta\mathcal{C}^h\llcorner x\mathbb V) \leq 2\varepsilon r^{h+1}.
\end{split}
\end{equation}
The conclusion in  \eqref{eqn:ConclusionContradiction} follows if we prove that 
\begin{equation}\label{eqn:ConclusionContradiction2}
\lim_{r\to 0}\inf_{\Theta>0}\frac{ F_{x,r}(\phi\llcorner K,\Theta\mathcal{C}^h\llcorner x\mathbb V)+F_{x,r}(\phi,\Theta\mathcal{C}^h\llcorner x\mathbb V)}{r^{h+1}} \to 0.
\end{equation}
We prove \eqref{eqn:ConclusionContradiction2} by contradiction. If \eqref{eqn:ConclusionContradiction2} was not true, there would exist an $\widetilde\varepsilon$ and an infinitesimal sequence $\{r_i\}_{i\in\mathbb N}$ such that 
\begin{equation}\label{eqn:ConclusionContradiction3}
\inf_{\Theta>0} \left(F_{x,r_i}(\phi\llcorner K,\Theta\mathcal{C}^h\llcorner x\mathbb V)+F_{x,r_i}(\phi,\Theta\mathcal{C}^h\llcorner x\mathbb V)\right) > \widetilde\varepsilon r_i^{h+1}, \quad \text{for every $i\in\mathbb N$.}
\end{equation}
Thus, from items (i) and (ii) above, and from \cite[Corollary 1.60]{AFP00}, we conclude that, up to a non re-labelled subsequence of $r_i$, there exists a $\Theta^*>0$ such that we have $r_i^{-h}T_{x,r_i}\phi\to \Theta^*\mathcal{C}^h\llcorner\mathbb V(x)$ as $r_i\to 0$. Then by exploiting the item (iii) above we get also that $r_i^{-h}T_{x,r_i}(\phi\llcorner K)\to \Theta^*\mathcal{C}^h\llcorner\mathbb V(x)$ as $r_i\to 0$. These two conclusions immediately imply, by exploiting \cref{rem:ScalinfFxr} and \eqref{prop:WeakConvergenceAndFk}, that 
$$
\lim_{i\to+\infty} r_i^{-(h+1)}\left(F_{x,r_i}(\phi\llcorner K,\Theta^*\mathcal{C}^h\llcorner x\mathbb V)+F_{x,r_i}(\phi,\Theta\mathcal{C}^h\llcorner x\mathbb V)\right) \to 0,
$$
which is a contradiction with \eqref{eqn:ConclusionContradiction3}. Thus, the conclusion in \eqref{eqn:ConclusionContradiction} holds. Let us continue the proof of \eqref{eqn:Bound}.
 
Taking into account the bound on $\varepsilon$ and \eqref{eqn:ConclusionContradiction} we can apply \cref{prop1vsinfty}, since $\mathbb V(x)\in \Pi_{\varepsilon}(x,r)$ for all $0<r<r_0$, and \cref{prop:bil2} to obtain, respectively, that for all $0<r<r_0$
\begin{equation}\label{eqn:ConclusionHausdorff}
    \begin{split}
        \sup_{p\in K\cap B(x,r/4)} \dist(p,x \mathbb V(x)) \leq \sup_{p\in E(\vartheta,\gamma)\cap B(x,r/4)} \dist(p,x \mathbb V(x)) \leq \oldC{C:b1}r\varepsilon^{1/(h+1)}, \\
        \text{for every $p\in B(x,r/2)\cap x\mathbb V(x)$ we have $B(p,\varepsilon^{1/(h+2)}r)\cap K\neq \emptyset$}.
    \end{split}
\end{equation}
Let us proceed with the proof of \eqref{eqn:Bound}. Fix $0<r<r_0$ and note that
 for any $w\in \delta_{1/r}(x^{-1}\cdot K)\cap B(0,1/4)$ there exists a point $p\in K\cap B(x,r/4)$ such that $w=:\delta_{1/r}(x^{-1}\cdot p)$. From the first line of \eqref{eqn:ConclusionHausdorff} we get that $\dist(x^{-1}\cdot p,\mathbb V(x))\leq \oldC{C:b1}r\varepsilon^{1/(h+1)}$ and thus there exists a $v\in \mathbb V(x)$ such that $d(x^{-1}\cdot p,v)\leq \oldC{C:b1}r\varepsilon^{1/(h+1)}$. This in particular means that $d(w,\delta_{1/r}v)\leq \oldC{C:b1}\varepsilon^{1/(h+1)}$ and then, since $w\in B(0,1/4)$, we get also that $\delta_{1/r}v \in \mathbb V(x)\cap B(0,1/4+\oldC{C:b1}\varepsilon^{1/(h+1)})$. Thus, we conclude that 
\begin{equation}\label{eqn:Finale1}
\dist(w,\mathbb V(x)\cap B(0,1/4+\oldC{C:b1}\varepsilon^{1/(h+1)}))\leq \oldC{C:b1}\varepsilon^{1/(h+1)},
\end{equation}
for all $w\in \delta_{1/r}(x^{-1}\cdot K)\cap B(0,1/4)$.
Define the following function
\begin{equation}\label{eqn:DefinitionF2}
f_2(\varepsilon):=\sup_{u\in \mathbb V(x)\cap \big(B(0,1/4+\oldC{C:b1}\varepsilon^{1/(h+1)})\setminus U(0,1/4)\big)} d(u,\delta_{4^{-1}\|u\|^{-1}}u),
\end{equation}
and notice that by compactness it is easy to see that $f_2(\varepsilon)\to 0$ as $\varepsilon\to 0$. With the previous definition of $f_2$ in hands, we can exploit \eqref{eqn:Finale1} and conclude that 
\begin{equation}\label{eqn:Finale1.1}
    \sup_{w\in \delta_{1/r}(x^{-1}\cdot K)\cap B(0,1/4)} \dist(w,\mathbb V(x)\cap B(0,1/4))\leq \oldC{C:b1}\varepsilon^{1/(h+1)}+f_2(\varepsilon).
\end{equation}

The latter estimate is the first piece of information we need to prove \eqref{eqn:Bound}. Let us now estimate $\dist(\delta_{1/r}(x^{-1}\cdot K)\cap B(0,1/4),v)$ for any $v\in \mathbb V(x)\cap B(0,1/4)$. If $u\in \mathbb V(x)\cap \big(B(0,1/4)\setminus U(0,1/4-\varepsilon^{1/(h+2)})\big)$, then there exists a unique $\mu=\mu(u)>0$ such that $\delta_{\mu(u)}u\in \mathbb V(x)\cap \partial B(0,1/4-\varepsilon^{1/(h+2)})$. Let us define 
\begin{equation}\label{eqn:Definitionf3}
f_3(\varepsilon):=\sup_{u\in \mathbb V(x)\cap\big(B(0,1/4)\setminus U(0,1/4-\varepsilon^{1/(h+2)})\big)} d(u,\delta_{\mu(u)}u),
\end{equation}
and by compactness it is easy to see that $f_3(\varepsilon)\to 0$ as $\varepsilon\to 0$. Let us now fix $v\in\mathbb V(x)\cap B(0,1/4)$.
Then $x\cdot \delta_{r}v\in B(x,r/4)\cap x\mathbb V(x)\subseteq B(x,r/2)\cap x\mathbb V(x)$.
We can use the second line of \eqref{eqn:ConclusionHausdorff} to conclude that there exists $w\in B(x\cdot \delta_r v,\varepsilon^{1/(h+2)}r)\cap K$. 
Thus $\widetilde w:=\delta_{1/r}(x^{-1}\cdot w) \in B(v,\varepsilon^{1/(h+2)})\cap \delta_{1/r}(x^{-1}\cdot K)$.
Now we have two cases
\begin{itemize}
    \item if $v$ was in $ B(0,1/4-\varepsilon^{1/(h+2)})$ we would get $\widetilde w \in B(0,1/4)$ and then 
\begin{equation}\label{eqn:FirstCase} \dist(\delta_{1/r}(x^{1}\cdot K)\cap B(0,1/4),v) \leq \varepsilon^{1/(h+2)};
\end{equation}
\item if instead $v\in \mathbb V(x)\cap \big(B(0,1/4)\setminus U(0,1/4-\varepsilon^{1/(h+2)})\big)$, we denote $v':=\delta_{\mu(v)}v$ the point that we have defined above and then we still have $x\cdot \delta_{r}v'\in B(x,r/2)\cap x\mathbb V(x)$. Thus we can again apply the second line of \eqref{eqn:ConclusionHausdorff} to deduce the existence of $w'\in B(x\cdot \delta_r v',\varepsilon^{1/(h+2)}r)\cap K$. Then we conclude $\widetilde w':=\delta_{1/r}(x^{-1}\cdot w') \in B(v',\varepsilon^{1/(h+2)})\cap \delta_{1/r}(x^{-1}\cdot K)$. Now we can estimate
\begin{equation}\label{eqn:widetildew'}
\begin{split}
    d(\widetilde w,\widetilde w') &= \frac{1}{r} d(w,w') \leq \frac{1}{r}\big(d(w,x\cdot \delta_r v)+d(x\cdot \delta _r v,x\cdot \delta_r v')+d(x\cdot\delta_r v',w')\big)\\
    &\leq 2\varepsilon^{1/(h+2)}+f_3(\varepsilon).
\end{split}
\end{equation}
Moreover, since $v'\in \partial B(0,1/4-\varepsilon^{1/(h+2)})$ and $\widetilde w'\in B(v',\varepsilon^{1/(h+2)})$ we get that $\widetilde w'\in B(0,1/4)\cap \delta_{1/r}(x^{-1}\cdot K)$. Then by the triangle inequality and \eqref{eqn:widetildew'} we conclude that, in this second case,
\begin{equation}
    d(\widetilde w',v)\leq 3\varepsilon^{1/(h+2)}+f_3(\varepsilon),
\end{equation}
and then
\begin{equation}\label{eqn:SecondCase}
\dist(\delta_{1/r}(x^{1}\cdot K)\cap B(0,1/4),v)\leq 3\varepsilon^{1/(h+2)}+f_3(\varepsilon).
\end{equation}
\end{itemize}

By joining together the conclusion of the two cases, see \eqref{eqn:FirstCase} and \eqref{eqn:SecondCase}, we conclude that 
\begin{equation}\label{eqn:FINAAAL}
\sup_{v\in \mathbb V(x)\cap B(0,1/4)} \dist(\delta_{1/r}(x^{1}\cdot K)\cap B(0,1/4),v) \leq 3\varepsilon^{1/(h+2)}+f_3(\varepsilon).
\end{equation}
The equations \eqref{eqn:Finale1.1} and \eqref{eqn:FINAAAL} imply \eqref{eqn:Bound} by the very definition of Hausdorff distance. Thus the proof is concluded.
\end{proof}
Let us now give the definition of intrinsically differentiable graph. 

\begin{definizione}[Intrinsically differentiable graph]\label{defiintrinsicdiffgraph}
   Let $\mathbb V$ and $\mathbb L$ be two complementary subgroups of a Carnot group $\mathbb G$. Let ${\varphi}:K\subseteq \mathbb V \to\mathbb L$ be a continuous function with $K$ compact in $\mathbb V$. Let $a_0\in K$. We say that $\mathrm{graph}(\varphi)$ is an {\em intrinsically differentiable graph} at $a_0\cdot\varphi(a_0)$ if there exists a homogeneous subgroup $\mathbb V(a_0)$ such that for all $k>0$
    \begin{equation}\label{eqn:TANGENTE}
\lim_{\lambda \to \infty } d_{H,\mathbb G}\left(\delta _\lambda ((a_0\cdot \varphi(a_0))^{-1}\cdot\mathrm{graph}(\varphi) )\cap B(0,k), \mathbb V(a_0)\cap B(0,k)\right)=0,
    \end{equation}
    where $d_{H,\mathbb G}$ is the Hausdorff distance between closed subsets of $\mathbb G$. \end{definizione}
We prove now that the support of a $\mathscr{P}_h^c$-rectifiable measure $\mathcal{S}^h\llcorner \Gamma$, where $\Gamma$ is compact, can be written as the countable union of almost everywhere intrinsically differentiable graphs.
\begin{teorema}\label{coroll:idiff}
For any $1\leq h\leq Q$, there exist a countable subfamily $\mathscr{F}:=\{\mathbb V_k\}_{k=1}^{+\infty}$ of $\G_c(h)$, and $\mathbb L_k$ complementary subgroups of $\mathbb V_k$ such that the following holds. 

Let $\Gamma$ be a compact subset of $\mathbb G$ such that $0<\mathcal{S}^h(\Gamma)<+\infty$, and $\mathcal{S}^h\llcorner \Gamma$ is a $\mathscr{P}^c_h$-rectifiable measure. Then for every $\alpha>0$, there are countably many compact $\Gamma_i$'s that are intrinsic graphs of functions $\varphi_i:P_{\mathbb V_i}(\Gamma_i)\to\mathbb L_i$, and that satisfy the following three conditions: $\Gamma_i$ are $C_{\mathbb V_i}(\alpha)$-sets, $\Gamma_i$ are intrinsically differentiable graphs at $a\cdot\varphi_i(a)$ for $\mathcal{S}^h\llcorner P_{\mathbb V_i}(\Gamma_i)$-almost every $a\in P_{\mathbb V_i}(\Gamma_i)$, and 
$$
\mathcal{S}^h(\Gamma\setminus\cup_{i=1}^{+\infty}\Gamma_i) = 0. 
$$
\end{teorema}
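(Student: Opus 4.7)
The strategy is to assemble Theorem \ref{thm:MainTheorem1}, Proposition \ref{prop:ConeAndGraph}, Proposition \ref{prop:TANGENTSIntro}, and Proposition \ref{prop:mutuallyabs} into a direct argument; no new technical content should be needed beyond what has already been proved.

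First, I would fix once and for all the countable family $\mathscr F:=\{\mathbb V_k\}_{k\in\N}\subseteq \G_c(h)$ and the complementary subgroups $\mathbb L_k$ produced by Theorem \ref{thm:MainTheorem1} with the choice $\bold F(\mathbb V,\mathbb L):=\oldep{ep:Cool}(\mathbb V,\mathbb L)$, exactly as in the proof of Theorem \ref{thm:ExistenceOfDensitySets}. Given $\alpha>0$, let $\beta:\mathbb N\to (0,1)$ be the constant function $\beta(\ell):=\alpha/2$ and apply Theorem \ref{thm:MainTheorem1} to the $\mathscr P_h^c$-rectifiable measure $\mathcal S^h\llcorner\Gamma$. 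This yields countably many compact sets of full $\mathcal S^h\llcorner\Gamma$-measure, each a $C_{\mathbb V_k}(\alpha_k)$-set with $\alpha_k:=\min\{\oldep{ep:Cool}(\mathbb V_k,\mathbb L_k),\beta(\mathbb V_k)\}<\alpha$. After intersecting with $\Gamma$ and discarding those of zero $\mathcal S^h$-measure, I may relabel them $\Gamma_k\subseteq\Gamma$ with $\mathcal S^h(\Gamma_k)>0$.

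Second, since by construction $\alpha_k\leq \oldep{ep:Cool}(\mathbb V_k,\mathbb L_k)$, Proposition \ref{prop:ConeAndGraph} would identify each $\Gamma_k$ as the intrinsic graph of an intrinsically Lipschitz map $\varphi_k:P_{\mathbb V_k}(\Gamma_k)\to\mathbb L_k$. By Proposition \ref{prop:Lebesuge} together with the locality of tangents Proposition \ref{prop:LocalityOfTangent}, $\mathcal S^h\llcorner\Gamma_k$ inherits $\mathscr P_h^c$-rectifiability from $\mathcal S^h\llcorner\Gamma$; in particular it has strictly positive lower $h$-density almost everywhere on $\Gamma_k$. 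I would then invoke Proposition \ref{prop:TANGENTSIntro} with $\phi:=\mathcal S^h\llcorner\Gamma_k$ and $K:=\Gamma_k$ to conclude that for $\mathcal S^h\llcorner\Gamma_k$-a.e. $x\in\Gamma_k$ there exists $\mathbb V(x)\in\G(h)$ such that $\delta_{1/r}(x^{-1}\cdot\Gamma_k)\cap B(0,k)\to\mathbb V(x)\cap B(0,k)$ in Hausdorff distance for every $k>0$. Writing $x=a\cdot\varphi_k(a)$ with $a\in P_{\mathbb V_k}(\Gamma_k)$, this is precisely the intrinsic differentiability of $\Gamma_k$ at $a\cdot\varphi_k(a)$ in the sense of Definition \ref{defiintrinsicdiffgraph}.

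Third, I would transfer the exceptional $\mathcal S^h\llcorner\Gamma_k$-null set from the graph to the base parameter. The hypotheses of Proposition \ref{prop:mutuallyabs} hold (strict inequality $\alpha_k<\oldep{ep:Cool}(\mathbb V_k,\mathbb L_k)$ and positive lower density almost everywhere), so $(\Phi_k)_*\mathcal S^h\llcorner\mathbb V_k$ is mutually absolutely continuous with $\mathcal S^h\llcorner\Gamma_k$. Hence the $P_{\mathbb V_k}$-image of any $\mathcal S^h\llcorner\Gamma_k$-null subset of $\Gamma_k$ is $\mathcal S^h\llcorner P_{\mathbb V_k}(\Gamma_k)$-null, which converts the a.e.\ conclusion above into the desired form: $\Gamma_k$ is intrinsically differentiable at $a\cdot\varphi_k(a)$ for $\mathcal S^h\llcorner P_{\mathbb V_k}(\Gamma_k)$-a.e. $a$. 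No substantive obstacle is expected; the only bookkeeping point requiring care is that \emph{both} Proposition \ref{prop:ConeAndGraph} and Proposition \ref{prop:mutuallyabs} need the cone opening to lie below $\oldep{ep:Cool}(\mathbb V_k,\mathbb L_k)$, which is exactly what the choice $\bold F=\oldep{ep:Cool}$ in Theorem \ref{thm:MainTheorem1} secures.
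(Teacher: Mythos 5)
Your proposal is correct and follows essentially the same route as the paper's proof: choosing $\bold F=\oldep{ep:Cool}$ in \cref{thm:MainTheorem1} to get a covering by compact cone sets with opening at most $\min\{\oldep{ep:Cool}(\mathbb V_i,\mathbb L_i),\alpha\}$, identifying them as intrinsic graphs via \cref{prop:ConeAndGraph}, obtaining the Hausdorff convergence to a tangent plane from \cref{prop:TANGENTSIntro}, and transferring the null set to the base via the mutual absolute continuity of \cref{prop:mutuallyabs}. The only (immaterial) deviations are the choice $\beta\equiv\alpha/2$ in place of $\min\{1/2,\alpha\}$ and applying \cref{prop:TANGENTSIntro} to $\mathcal S^h\llcorner\Gamma_k$ with $K=\Gamma_k$ rather than to $\mathcal S^h\llcorner\Gamma$ with $K=\Gamma_i$.
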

\begin{proof}
First of all let
$$
\bold F(\mathbb V,\mathbb L):=\oldep{ep:Cool}(\mathbb V,\mathbb L), \quad \text{for all $(\mathbb V,\mathbb L)\in\mathrm{Sub}(h)$},
$$
where $\mathrm{Sub}(h)$ is defined in \eqref{eqn:defBoldG}. Given the above defined function $\bold F$, we construct the family $\mathscr{F}:=\{\mathbb V_k\}_{k=1}^{+\infty}$ and choose $\mathbb L_k$ complementary subgroups of $\mathbb V_k$ as in the statement of \cref{thm:MainTheorem1}. Notice that this choice is dependent on the function $\bold F$ that we chose above. We claim that the family for which the statement holds is $\mathscr{F}$. 

Applying \cref{thm:MainTheorem1} with $\beta\equiv \min\{1/2,\alpha\}$ to the measure $\mathcal{S}^h\llcorner\Gamma$ we get countably many compact sets $\Gamma_i\subseteq \Gamma$ that are $C_{\mathbb V_i}(\min\{\bold F(\mathbb V_i,\mathbb L_i),\alpha\})$-sets and such that 
$$
\mathcal{S}^h(\Gamma\setminus\cup_{i=1}^{+\infty}\Gamma_i)=0.
$$
Since $\bold F(\mathbb V_i,\mathbb L_i)=\oldep{ep:Cool}(\mathbb V_i,\mathbb L_i)$, we conclude that each $\Gamma_i$ is also the intrinsic graph of a function $\varphi_i:P_{\mathbb V_i}(\Gamma_i)\to\mathbb L_i$, see \cref{prop:ConeAndGraph}. It is left to show that, for every $i\in\mathbb N$, $\mathrm{graph}(\varphi_i)$ is an intrinsically differentiable graph at $a\cdot\varphi_i(a)$ for  $\mathcal{S}^h\llcorner P_{\mathbb V}(\Gamma_i)$-almost every $a\in P_{\mathbb V_i}(\Gamma_i)$.

Indeed, since $\mathcal{S}^h\llcorner\Gamma$ is $\mathscr{P}_h^c$-rectifiable, we can apply \cref{prop:TANGENTSIntro} and,  for every $i\in\mathbb N$, we conclude that 
\begin{equation}\label{eqn:idiff}
\begin{split}
    \delta_{1/r}(x^{-1}\cdot \Gamma_i)\to \mathbb V(x), \quad \text{as $r$ goes to $0$,}
    \text{    for $\mathcal{S}^h\llcorner\Gamma_i$-almost every}\text{ $x\in \mathbb G$, where $\mathbb V(x)\in \G(h)$,}
\end{split}
\end{equation}
 in the sense of Hausdorff convergence on closed balls $\{B(0,k)\}_{k>0}$. Moreover, thanks to \cref{prop:mutuallyabs} and to Lebesgue differentiation theorem in \cref{prop:Lebesuge}, we infer that $(\Phi_i)_*\mathcal{S}^h\llcorner \mathbb V_i$ is mutually absolutely continuous with respect to $\mathcal{S}^h\llcorner\Gamma_i$, where $\Phi_i$ is the graph map of $\varphi_i$. Furthermore, since every point $x\in\Gamma_i$ can be written as $x=a\cdot\varphi_i(a)$, with $a\in P_{\mathbb V_i}(\Gamma_i)$, we conclude, from \eqref{eqn:idiff} and latter absolute continuity, that $\Gamma_i=\mathrm{graph}(\varphi_i)$ is an intrinsically differentiable graph at $a\cdot\varphi_i(a)$ for  $\mathcal{S}^h\llcorner P_{\mathbb V}(\Gamma_i)$-almost every $a\in P_{\mathbb V_i}(\Gamma_i)$, and this concludes the proof.
\end{proof}
In the following corollary we provide the proof of \cref{coroll:IdiffMeasureIntroNEW}.
\begin{corollario}\label{coroll:IdiffMeasure}
For any $1\leq h\leq Q$, there exist a countable subfamily $\mathscr{F}:=\{\mathbb V_k\}_{k=1}^{+\infty}$ of $\G_c(h)$, and $\mathbb L_k$ complementary subgroups of $\mathbb V_k$ such that the following holds. 

For any $\mathscr{P}^c_h$-rectifiable measure $\phi$ and for any $\alpha>0$, there exist countably many compact sets $\Gamma_i$'s that are $C_{\mathbb V_i}(\alpha)$-sets, that are intrinsic graphs of functions $\varphi_i:P_{\mathbb V_i}(\Gamma_i)\to\mathbb L_i$, and that satisfy the following conditions: $\Gamma_i$ are intrinsically differentiable graphs at $a\cdot\varphi_i(a)$ for $\mathcal{S}^h\llcorner P_{\mathbb V_i}(\Gamma_i)$-almost every $a\in P_{\mathbb V_i}(\Gamma_i)$, and 
$$
\phi(\mathbb G\setminus\cup_{i=1}^{+\infty}\Gamma_i) = 0. 
$$
\end{corollario}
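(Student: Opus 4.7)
The plan is to reduce the general $\mathscr{P}_h^c$-rectifiable case to the special case $\mathcal{S}^h\llcorner\Gamma$ already settled in \cref{coroll:idiff}, using the same reduction scheme deployed in the proof of \cref{coroll:DENSITY}. I would take the countable family $\mathscr F=\{\mathbb V_k\}_{k=1}^{+\infty}\subseteq\G_c(h)$ and complementary subgroups $\mathbb L_k$ to be exactly those furnished by \cref{coroll:idiff}: these are built starting from a universal choice $\bold F(\mathbb V,\mathbb L):=\oldep{ep:Cool}(\mathbb V,\mathbb L)$ and depend only on $h$, so the same $\mathscr F$ works for every $\mathscr{P}_h^c$-rectifiable measure.

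The first step is a reduction to compactly supported measures. Writing $\phi=\sum_{N\in\mathbb N}\phi\llcorner (B(0,N)\setminus B(0,N-1))$ and using that $\mathscr{P}_h^c$-rectifiability is preserved under Borel restriction—which follows combining \cref{prop:LocalityOfTangent} with the Lebesgue differentiation \cref{prop:Lebesuge}—it suffices to establish the conclusion for each summand, so I may assume $\phi$ has compact support. The second step is to apply \cref{prop::E} to get $\phi\big(\mathbb G\setminus\bigcup_{\vartheta,\gamma\in\mathbb N}E(\vartheta,\gamma)\big)=0$, which reduces the matter to covering each $E(\vartheta,\gamma)$ up to a $\phi$-null set by graphs of the desired type.

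Fix $\vartheta,\gamma\in\mathbb N$. By \cref{prop:cpt} the set $E(\vartheta,\gamma)$ is compact, and the density bounds built into \cref{def:EThetaGamma} immediately imply $0<\mathcal{S}^h(E(\vartheta,\gamma))<+\infty$ whenever $\phi(E(\vartheta,\gamma))>0$. The restriction $\phi\llcorner E(\vartheta,\gamma)$ is still $\mathscr{P}_h^c$-rectifiable (same locality argument as above). By \cref{prop:MutuallyEthetaGamma}, $\phi\llcorner E(\vartheta,\gamma)$ and $\mathcal{S}^h\llcorner E(\vartheta,\gamma)$ are mutually absolutely continuous; writing $\phi\llcorner E(\vartheta,\gamma)=\rho\,\mathcal{S}^h\llcorner E(\vartheta,\gamma)$ via Radon--Nikodym and applying \cref{prop:LocalityOfTangent} once more gives that also $\mathcal{S}^h\llcorner E(\vartheta,\gamma)$ is $\mathscr{P}_h^c$-rectifiable.

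At this point \cref{coroll:idiff} applies with $\Gamma:=E(\vartheta,\gamma)$ and the prescribed $\alpha>0$, producing countably many compact $C_{\mathbb V_i}(\alpha)$-sets $\Gamma_i^{\vartheta,\gamma}\subseteq E(\vartheta,\gamma)$ that are intrinsic graphs of $\varphi_i^{\vartheta,\gamma}\colon P_{\mathbb V_i}(\Gamma_i^{\vartheta,\gamma})\to\mathbb L_i$, intrinsically differentiable $\mathcal{S}^h\llcorner P_{\mathbb V_i}(\Gamma_i^{\vartheta,\gamma})$-a.e., with $\mathcal{S}^h\bigl(E(\vartheta,\gamma)\setminus\bigcup_i\Gamma_i^{\vartheta,\gamma}\bigr)=0$. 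By the mutual absolute continuity of $\phi\llcorner E(\vartheta,\gamma)$ and $\mathcal{S}^h\llcorner E(\vartheta,\gamma)$, the exceptional set is also $\phi$-null. Taking the countable union over $\vartheta,\gamma,N,i$ yields the desired covering of $\phi$-almost all of $\mathbb G$. There is no genuine obstacle here beyond bookkeeping: all the nontrivial geometric content—the cone-covering from \cref{thm:MainTheorem1}, the mutual absolute continuity of $\Phi_\ast\mathcal{S}^h\llcorner\mathbb V$ with $\mathcal{S}^h\llcorner\Gamma$, and the Hausdorff convergence to tangents from \cref{prop:TANGENTSIntro}—has already been absorbed into \cref{coroll:idiff}, so the remaining work is purely a measure-theoretic transfer from $\mathcal{S}^h$ to the general Radon measure $\phi$.
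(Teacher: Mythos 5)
Your proposal is correct and follows essentially the same route as the paper, which reduces to compactly supported $\phi$, decomposes via $E(\vartheta,\gamma)$ using \cref{prop::E}, transfers $\mathscr{P}_h^c$-rectifiability to $\mathcal{S}^h\llcorner E(\vartheta,\gamma)$ through the mutual absolute continuity of \cref{prop:MutuallyEthetaGamma} and the locality of tangents, and then invokes \cref{coroll:idiff}. The paper merely states that the strategy is identical to that of \cref{coroll:DENSITY} and omits the details, which you have filled in accurately.
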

\begin{proof}
By restricting on closed balls of integer radii we can assume without loss of generality that $\phi$ has compact support. Let us fix $\vartheta,\gamma\in\mathbb N$. We can infer this corollary by working on $\phi\llcorner E(\vartheta,\gamma)$, that is mutually absolutely continuous with respect to $\mathcal{S}^h\llcorner E(\vartheta,\gamma)$, see \cref{prop:MutuallyEthetaGamma},  and by using the previous \cref{coroll:idiff} together with \cref{prop::E}. The resulting strategy is identical to the one in \cref{coroll:DENSITY} so we omit the details.
\end{proof}

\begin{osservazione}[Uniformly intrinsically differentiable graphs and $C^1_{\mathrm H}(\mathbb G,\mathbb G')$-surfaces]\label{rem:C1GG'PhQuandoPossono}
By the recent work of the second named author, see \cite[Theorem 3]{MarstrandMattila20}, one can show that in an arbitrary Carnot group of homogeneous dimension $Q$, the support of a $\mathscr{P}^*_{Q-1}$-rectifiable measure can be covered by countably many $C^1_{\mathrm H}$-regular hypersurfaces. Moreover, it is known that a $C^1_{\mathrm H}$-regular hypersurface is characterized, locally, by being the graph of a \textbf{uniformly} intrinsically differentiable function, see \cite[Theorem 1.6]{ADDDLD20}. This means that, in some particular cases, as it is the codimension-one case, we can strenghten the conclusion in \cref{coroll:IdiffMeasure} by obtaining that the maps are uniformly intrinsically differentiable.

This latter observation gives raise to two questions, that in the co-horizontal case are the same thanks to \cite[Theorem 1.6]{ADDDLD20}, but in general could be different: is it always possible to improve the intrinsic differentiability in \cref{coroll:IdiffMeasure} to some kind of uniform intrinsic differentiability? Is it possible to prove that when a $\mathscr{P}_h$-rectifiable measure, or even a $\mathscr{P}_h^*$-rectifiable measure, on $\mathbb G$ admits only \textbf{complemented normal subgroups that have only complementary subgroups that are Carnot subgroups}, then we can write its support as the countable union of $C^1_{\mathrm H}(\mathbb G,\mathbb G')$-surfaces, see \cref{def:C1Hmanifold}? Let us stress that if one answers positively to the second question, this would mean, taking into account \cref{prop:GG'IsPh}, that whenever they can agree, see \cref{oss:LackOfGenerality}, the two notions of $\mathscr{P}$-rectifiable measure and $(\mathbb G,\mathbb G')$-rectifiable set agree. 

We do not address these questions in this paper, 
but we stress that with the results proved in \cite{antonelli2020rectifiable2}, we show that, at least in the co-horizontal case, the notion of $\mathscr{P}$-rectifiable measure and the notion of rectifiability given in terms of $(\mathbb G,\mathbb G')$-rectifiable sets coincide, see \cite[Corollary 5.3]{antonelli2020rectifiable2}.
\end{osservazione}

In the final part of this section we briefly discuss how the notion of intrinsically differentiable graph in \cref{defiintrinsicdiffgraph} is related to the already available notion of intrinsic differentiability, see \cite[Definition 3.2.1]{FMS14} and \cite[Definition 2.5]{AM20}. \textbf{Throughout the rest of this section $\mathbb V$ and $\mathbb L$ are two fixed complementary subgroups in a Carnot group $\mathbb G$.}
\begin{definizione}[Intrinsic translation of a function]\label{def:PhiQ}
	Given a function $\varphi\colon U\subseteq\mathbb V\to\mathbb L$, we define, for every $q\in\mathbb G$,
	\[
	{U}_q:=\{a\in\mathbb V: P_{\mathbb V}(q^{-1}\cdot a)\in {U}\},
	\]
	and ${\varphi}_q\colon{U}_q\subseteq \mathbb V\to\mathbb L$ by setting
	\begin{equation}\label{eqn:Phiq}
	{\varphi}_q(a):=\big(
	P_{\mathbb L}(q^{-1}\cdot a)\big)^{-1}\cdot {\varphi}\big(P_{\mathbb V}(q^{-1}\cdot a)\big).
	\end{equation}
\end{definizione}
\begin{definizione}[Intrinsically linear function]
    The map $\ell: \mathbb V \to \mathbb L$ is said to be {\em intrinsically linear} if $\mathrm{graph}(\ell)$ is a homogeneous subgroup of $\mathbb G$.
\end{definizione}
\begin{definizione}[Intrinsically differentiable function]\label{defiintrinsicdiff}
   Let ${\varphi}\colon{U}\subseteq \mathbb V \to\mathbb L$ be a function with $U$ Borel in $\mathbb V$. Fix a density point
   $a_0\in\mathcal D({U})$ of $U$, let $p_0:={\varphi}(a_0)^{-1}\cdot a_0^{-1}$ and denote with ${\varphi}_{p_0}\colon {U}_{p_0}\subseteq \mathbb V\to\mathbb L$ the shifted function introduced in \cref{def:PhiQ}. We say that ${\varphi}$ is {\em intrinsically differentiable} at $a_0$
	if there is an intrinsically linear map $d^{\varphi}\varphi_{a_0}\colon\mathbb V\to\mathbb L$ such that
	\begin{equation}\label{eqn:IdInCoordinates2}
	\lim_{b\to e,\, b\in U_{p_0}}\frac{\|d^{\varphi}\varphi_{a_0}[b]^{-1}\cdot{\varphi}_{p_0}(b)\|}{\|b\|}= 0.
	\end{equation}
	The function $d^{\varphi}\varphi_{a_0}$ is called the {\em intrinsic differential} of $\varphi$ at $a_0$. 
\end{definizione}

Let us fix $\varphi:U\subseteq \mathbb V\to \mathbb L$ with $U$ \textbf{open}. 
Whenever the intrinsic differential introduced in \cref{defiintrinsicdiff} exists, it is unique: see \cite[Theorem 3.2.8]{FMS14}. 
In \cite{FMS14} the authors prove the following result: a function $\varphi:U\subseteq \mathbb V\to \mathbb L$, with $U$ \textbf{open}, is intrinsically differentiable at $a_0$ if and only if $\mathrm{graph}(\varphi)$ is an intrinsically differentiable graph at $a_0\cdot\varphi(a_0)$ with the tangent $\mathbb V(a_0)$ complemented by $\mathbb L$, see \cref{defiintrinsicdiffgraph}, and moreover $\mathbb V(a_0)=\mathrm{graph}(d^\varphi \varphi_{a_0})$. 
In the setting we are dealing with, i.e., with maps $\varphi:U\subseteq\mathbb V\to\mathbb L$ with $U$ \textbf{compact}, the above equivalence still holds at density points of $U$. We do not give a proof of this last assertion since it follows by routine modifications of the argument in \cite{FMS14}, and moreover we do not need it in this paper.

\printbibliography

 \end{document}